\def\R{\mathbb{R}}
\def\N{\mathbb{N}}
\def\T{\mathbb{T}}
\def\supp{\operatorname{supp}}
\def\dive{\operatorname{div}}
\def\curl{\operatorname{curl}}
\def\ker{\operatorname{ker}}
\newcommand{\norm}[1]{\left\lVert#1\right\rVert}
\newcommand{\abs}[1]{\left\lvert#1\right\rvert}
\newcommand{\defeq}{\mathrel{:\mkern-0.25mu=}}
\newcommand{\eqdef}{\mathrel{=\mkern-0.25mu:}}
\newtheorem{thm}{Theorem}[section]
\newtheorem{cor}[thm]{Corollary}
\newtheorem{prop}[thm]{Proposition}
\newtheorem{lem}[thm]{Lemma}
\newtheorem{assumption}[thm]{Assumption}
\theoremstyle{definition}
\newtheorem{defin}[thm]{Definition}
\newtheorem{rem}[thm]{Remark}
\numberwithin{equation}{section}
\begin{document}

\title{Proof of Taylor's conjecture on magnetic helicity conservation}


\author{Daniel Faraco}
\address{Departamento de Matem\'{a}ticas \\ Universidad Aut\'{o}noma de Madrid, E-28049 Madrid, Spain; ICMAT CSIC-UAM-UC3M-UCM, E-28049 Madrid, Spain}
\email{daniel.faraco@uam.es}
\thanks{D.F. was partially supported by ICMAT Severo Ochoa projects SEV-2011-0087 and SEV-2015-556, the grants MTM2014-57769-P-1 and MTM2014-57769-P-3 (Spain) and the ERC grant 307179-GFTIPFD. S.L. was supported by the ERC grant 307179-GFTIPFD}

\author{Sauli Lindberg}
\address{Departamento de Matem\'{a}ticas \\ Universidad Aut\'{o}noma de Madrid, E-28049 Madrid, Spain; ICMAT CSIC-UAM-UC3M-UCM, E-28049 Madrid, Spain}
\email{sauli.lindberg@uam.es}
\thanks{}

\subjclass[2010]{35Q35, 76W05, 76B03}

\keywords{Magnetohydrodynamics, magnetic helicity, mean-square magnetic potential, Taylor conjecture, compensated compactness}
\date{}

\begin{abstract}
We prove Taylor's conjecture which says that in 3D MHD, magnetic helicity is conserved in the ideal limit in bounded, simply connected, perfectly conducting domains. When the domain is multiply connected, magnetic helicity depends on the vector potential of the magnetic field. In that setting we show that magnetic helicity is conserved for a large and natural class of vector potentials but not in general for all vector potentials. As an analogue of Taylor's conjecture in 2D, we show that mean square magnetic potential is conserved in the ideal limit, even in multiply connected domains.
\end{abstract}

\maketitle

\section{Introduction}
Magnetohydrodynamics (MHD in short) couples Navier-Stokes equations with Maxwell's equations to study the macroscopic behaviour of electrically conducting fluids such as plasmas and liquid metals (see ~\cite{GLBL} and ~\cite{ST}). Given a domain $\Omega \subset \R^3$ and a time interval $[0,T)$, the Cauchy problem for the \emph{incompressible, viscous, resistive MHD equations} consists of the equations
\begin{align}
& \partial_t u + (u \cdot \nabla) u - (b \cdot \nabla) b - \nu \Delta u + \nabla \Pi = 0, \label{Resistive MHD} \\
& \partial_t b + \curl (b \times u) + \mu \curl \curl b = 0, \label{Resistive MHD2} \\
& \dive u = \dive b = 0, \label{Resistive MHD3} \\
& u(\cdot,0) = u_0, \; b(\cdot,0) = b_0, \label{Resistive MHD4}
  \end{align}
where $b$ is the magnetic field, $u$ is the velocity field, $\Pi$ is the total pressure, $\nu > 0$ is the coefficient of viscosity, $\mu > 0$ is the coefficient of resistivity and the initial datas $u_0$ and $b_0$ are divergence-free. The Navier-Stokes equations are a special case of MHD where $b \equiv 0$. Furthermore, setting $\mu = \nu = 0$ one obtains the \emph{ideal MHD equations}, while in the case $\mu = 0 < \nu$, \eqref{Resistive MHD}--\eqref{Resistive MHD3} are called the \emph{non-resistive MHD equations}.

In this work we consider Leray-Hopf solutions of \eqref{Resistive MHD}--\eqref{Resistive MHD4} in a bounded domain $\Omega$ of $\R^3$ that has a $\mathscr{C}^{1,1}$ boundary $\Gamma$. As we want to incorporate Tokamaks and other laboratory plasma configurations, it is mandatory to consider multiply connected domains (see Assumptions \ref{Assumption on Omega}--\ref{Assumption 2 on Omega} for the exact conditions on $\Omega$). We use the standard \emph{no-slip} and \emph{perfect conductivity} boundary conditions
\begin{align}
& u|_\Gamma = 0, \label{Resistive MHD5} \\
& b \cdot n|_\Gamma = 0 \qquad \text{and} \qquad (\curl b) \times n|_\Gamma = 0, \label{Resistive MHD6}
\end{align}
(see \textsection \ref{Leray-Hopf solutions of viscous, resistive MHD equations and the inviscid limit} for precise definitions). 

The existence of Leray-Hopf solutions in smooth simply connected domains goes back to ~\cite{DL} and ~\cite{ST}, and in ~\cite{XXW}, existence is shown under the slip without friction conditions on $u$. The more complicated case of smooth multiply connected domains is covered in the doctoral dissertation ~\cite{Khe}. Since ~\cite{Khe} is not readily available, we present our version of the proof for $\mathscr{C}^{1,1}$ multiply connected domains in the Appendix. For local-in-time existence and uniqueness of strong solutions as well as weak solutions in suitable Besov spaces for ideal MHD see ~\cite{MY}, ~\cite{Schmidt} and ~\cite{Secchi}, and for the case of non-resistive MHD see ~\cite{CMRR}, ~\cite{FMRR}, ~\cite{FMRR2} and ~\cite{LTY}. For further references see ~\cite[p. 57]{GLBL}.

\vspace{0.3cm}
In ideal 3D MHD, smooth solutions conserve the \emph{total energy} $2^{-1} \int_\Omega (\abs{u(x,t)}^2 + \abs{b(x,t)}^2) dx$ and the \emph{cross helicity} $\int_\Omega u(x,t) \cdot b(x,t) \, dx$ in time. In simply connected domains the \emph{magnetic helicity}
\[\int_\Omega \psi(x,t) \cdot b(x,t) \, dx,\]
where $\psi$ is a vector potential of $b$ (that is, $\curl \psi = b$), is also conserved by smooth solutions and is independent of the choice of $\psi$.

Recently obtained numerical evidence points, however, towards \emph{anomalous energy dissipation}, that is, the rate of total energy dissipation in viscous, resistive MHD \emph{does not tend to zero} when $\mu,\nu \to 0$ (when the Reynolds number and magnetic Reynolds number tend to infinity); see ~\cite{DA}, ~\cite{LBMM} and ~\cite{MP}. Thus, if ideal MHD equations are to be a good model for magnetohydrodynamic turbulence at very high Reynolds number and magnetic Reynolds number, then the equations must possess (physically realistic) energy dissipative solutions. This is in analogy to the celebrated Onsager Conjecture on Euler equations (see ~\cite{BDLSV}, ~\cite{CET}, ~\cite{Eyi}, ~\cite{Ise16} and ~\cite{Onsager}). In ideal MHD, bounded non-vanishing weak solutions with compact support in time (thus violating total energy conservation) were found in ~\cite{BLFNL}, while non-vanishing smooth strict subsolutions with compact support in space-time were constructed in ~\cite{FL}.

\vspace{0.3cm}
In stark contrast to total energy, magnetic helicity has proved to be a very robust time invariant of ideal MHD. First, Caflisch, Klapper and Steele showed in ~\cite{CKS} that magnetic helicity is conserved whenever $u \in C([0,T];B^{\alpha_1}_{3,\infty}(\T^3,\R^3))$ and $b \in C([0,T];B^{\alpha_2}_{3,\infty}(\T^3,\R^3))$ with $\alpha_1 + 2 \alpha_2 > 0$, and next Kang and Lee showed magnetic helicity conservation for $u,b \in C_w([0,T];L^2(\T^3,\R^3)) \cap L^3(0,T;L^3(\T^3,\R^3))$ in ~\cite{KL}. In ~\cite{FL}, the authors extended conservation to subsolutions and weak limits of solutions in $L^3(0,T;L^3(\T^3,\R^3))$.

It is still open whether magnetic helicity is conserved if $u$ and $b$ belong to the physically natural energy space $L^\infty(0,T;L^2(\T^3,\R^3))$. However, a straightforward adaptation of our Theorem \ref{Simply connected Taylor theorem} to the torus implies that conservation occurs if $u,b \in L^\infty(0,T;L^2(\T^3,\R^3))$ are a weak ideal limit of Leray-Hopf solutions (see Definition \ref{Definition of weak ideal limit} and Corollary \ref{Corollary on ideal limit}), which is arguably the physically relevant case.

\vspace{0.3cm}
It has been conjectured in the physics literature that magnetic helicity is approximately conserved at very low resistivities (see ~\cite{Tay} where the conjecture was first formulated by Taylor). Mathematically, the conjecture says that magnetic helicity is conserved in the ideal limit $\mu,\nu \to 0$ (see ~\cite[p. 444]{CKS}). Taylor's conjecture has been successful in explaining magnetic structures in laboratory plasmas, e.g., in the prediction of the relaxed state of a reversed field pinch, and lies at the heart of Taylor relaxation theory (for reviews with numerous further references see ~\cite{BCP} and ~\cite{Tay2}).

In Theorem \ref{Simply connected Taylor theorem} we prove Taylor's conjecture under weak and natural assumptions. We consider arbitrary weak limits of Leray-Hopf solutions when $\mu_j,\nu_j \to 0$ (which exist, up to a subsequence, whenever the $L^2$ norms of the initial datas are uniformly bounded). In particular, \emph{we do not assume that the weak limits satisfy the ideal MHD equations}. Recall that
\[L^2_\sigma(\Omega,\R^3) \defeq \{v \in L^2(\Omega,\R^3) \colon \dive v = 0, \; v \cdot n|_\Gamma = 0\}.\]

\begin{defin} \label{Definition of weak ideal limit}
Given Leray-Hopf solutions $(u_j, b_j)$ of \eqref{Resistive MHD}--\eqref{Resistive MHD6} with $\mu_j,\nu_j > 0$ and initial datas $u_{j,0},b_{j,0} \in L^2_\sigma(\Omega,\R^3)$ suppose that $\mu_j,\nu_j \to 0$ and that $u_{j,0} \rightharpoonup u_0$ and $b_{j,0} \rightharpoonup b_0$ in $L^2_\sigma(\Omega,\R^3)$. Assume that $u_j \overset{*}{\rightharpoonup} u$ and $b_j \overset{*}{\rightharpoonup} b$ in $L^\infty(0,T;L^2_\sigma(\Omega,\R^3))$. We then say that $(u,b)$ is a \emph{weak ideal limit of} $(u_j,b_j)$.

If instead $\mu_j \to 0$ and $\nu_j = \nu > 0$ for every $j \in \N$, we say that $(u,b)$ is a \emph{weak non-resistive limit of} $(u_j,b_j)$.
\end{defin}

Taylor's conjecture concerns the case where magnetic helicity is \emph{gauge invariant} (i.e. independent of the choice of the vector potential of $b$), that is, simply connected domains. The following theorem proves Taylor's conjecture.

\begin{thm} \label{Simply connected Taylor theorem}
Suppose $\Omega$ is simply connected and $(u,b)$ is a weak ideal limit of Leray-Hopf solutions $(u_j,b_j)$ with $\mu_j, \nu_j \to 0$. Then $\int_\Omega \psi(x,t) \cdot b(x,t) \, dx$ is a.e. constant in $t$ for every vector potential $\psi \in L^\infty(0,T;W^{1,2}(\Omega,\R^3))$ of $b$.
\end{thm}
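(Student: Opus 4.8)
The plan is to obtain enough strong (in-space) compactness for the magnetic fields $b_j$ to pass to the limit in the helicity integral, while controlling the dissipative terms produced by $\mu_j,\nu_j$. First I would fix a vector potential: since $\Omega$ is simply connected and $b_j(\cdot,t)\in L^2_\sigma$, for each $j$ there is a vector potential $\psi_j$ of $b_j$ with $\psi_j\cdot n|_\Gamma$ or $\curl\psi_j\cdot n$ normalized (e.g. $\dive\psi_j=0$, $\psi_j\times n|_\Gamma=0$), and $\|\psi_j(\cdot,t)\|_{W^{1,2}}\lesssim\|b_j(\cdot,t)\|_{L^2}$ uniformly; by gauge invariance in the simply connected case, $\int_\Omega\psi_j\cdot b_j\,dx$ equals the helicity for \emph{any} choice of potential, so I may work with this convenient one. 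The evolution equation \eqref{Resistive MHD2} for $b_j$ translates, after inverting $\curl$ on the relevant space, into an evolution equation for $\psi_j$ of the schematic form $\partial_t\psi_j = b_j\times u_j - \mu_j\curl b_j + \nabla(\text{gauge})$, which gives a uniform bound on $\partial_t\psi_j$ in, say, $L^2(0,T;W^{-1,2})$ (the bilinear term $b_j\times u_j$ is bounded in $L^\infty(0,T;L^1)\subset L^\infty(0,T;W^{-1,s})$ for suitable $s$, and $\mu_j\curl b_j\to 0$ strongly because the energy estimate gives $\mu_j\int_0^T\|\curl b_j\|_{L^2}^2\lesssim 1$, whence $\|\mu_j\curl b_j\|_{L^2(L^2)}\le\sqrt{\mu_j}\to 0$).

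Next I would run an Aubin–Lions argument: $\psi_j$ is bounded in $L^\infty(0,T;W^{1,2})$, $\partial_t\psi_j$ is bounded in $L^2(0,T;W^{-1,2})$, and $W^{1,2}\hookrightarrow L^2$ is compact (on the bounded $\mathscr{C}^{1,1}$ domain $\Omega$), so along a subsequence $\psi_j\to\psi$ strongly in $C([0,T];L^2(\Omega,\R^3))$, hence in particular $\psi_j\to\psi$ strongly in $L^2(\Omega\times(0,T))$; one checks $\curl\psi=b$ and $\psi\in L^\infty(0,T;W^{1,2})$. Then $\int_\Omega\psi_j(x,t)\cdot b_j(x,t)\,dx$ is constant in $t$ for each $j$ (smooth-solution helicity conservation for resistive MHD fails in general — resistivity dissipates helicity — so here is a subtlety: I should instead integrate the helicity balance and show the dissipation term $\to 0$; concretely $\frac{d}{dt}\int_\Omega\psi_j\cdot b_j\,dx = -2\mu_j\int_\Omega \curl b_j\cdot b_j\,dx$ up to boundary terms that vanish by \eqref{Resistive MHD6}, and $|\mu_j\int_0^T\!\!\int_\Omega\curl b_j\cdot b_j|\le\mu_j\|\curl b_j\|_{L^2(L^2)}\|b_j\|_{L^2(L^2)}\le\sqrt{\mu_j}\cdot C\to 0$). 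Passing to the limit, the strong convergence $\psi_j\to\psi$ in $L^2_{x,t}$ against the weak-$*$ convergence $b_j\overset{*}{\rightharpoonup}b$ in $L^\infty_tL^2_x$ lets me identify $\lim_j\int_0^T\!\!\int_\Omega\phi(t)\,\psi_j\cdot b_j = \int_0^T\!\!\int_\Omega\phi(t)\,\psi\cdot b$ for every $\phi\in C_c^\infty(0,T)$, so the limit helicity $t\mapsto\int_\Omega\psi\cdot b\,dx$ is a.e. equal to a constant. Finally, gauge invariance in the simply connected case (applied to the fixed-time function $b(\cdot,t)\in L^2_\sigma$ and any $\tilde\psi\in L^\infty_tW^{1,2}$ with $\curl\tilde\psi=b$) upgrades this to \emph{every} such $\tilde\psi$, since $\tilde\psi-\psi$ is curl-free, hence a gradient on the simply connected $\Omega$, and $\int_\Omega\nabla f\cdot b=-\int_\Omega f\,\dive b + \int_\Gamma f\,b\cdot n=0$.

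The main obstacle I anticipate is the compactness step for $\psi_j$: the uniform $L^\infty_tW^{1,2}$ bound and the $L^2_tW^{-1,2}$ bound on $\partial_t\psi_j$ both rest on the resistive energy inequality providing $\sup_j\|b_j\|_{L^\infty_tL^2_x}<\infty$ and $\sup_j\mu_j\|\curl b_j\|_{L^2_{x,t}}^2<\infty$, and on a clean elliptic theory for recovering $\psi_j$ from $b_j$ with controlled gauge on a $\mathscr{C}^{1,1}$ (possibly non-smooth-boundary) domain — the relevant div-curl estimates and the compactness of the vector-potential operator $L^2_\sigma\to W^{1,2}$ must be invoked carefully. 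A secondary, more technical point is the justification of the helicity balance identity for Leray-Hopf (not smooth) solutions $b_j$, which requires that $b_j$ be regular enough to test \eqref{Resistive MHD2} against $\psi_j$ — this is where the parabolic smoothing from $\mu_j>0$ is used, so the argument genuinely exploits having $\mu_j>0$ for each finite $j$ even though $\mu_j\to 0$.
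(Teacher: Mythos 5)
Your proposal is correct and follows essentially the same route as the paper's proof (which obtains Theorem \ref{Simply connected Taylor theorem} as the simply connected special case of Theorem \ref{Taylor theorem}): a canonically gauged potential with elliptic control, Aubin--Lions compactness for the potentials, the helicity balance for Leray--Hopf solutions with dissipation bounded by $\sqrt{\mu_j}$ via the energy inequality, pairing strong convergence of $\psi_j$ against weak-$*$ convergence of $b_j$, and finally gauge invariance since curl-free fields are gradients. The only soft spots are technical and fixable exactly as in the paper: $b_j\times u_j\in L^\infty_t L^1_x$ does not land in $W^{-1,2}$ in 3D, so one bounds $\partial_t\psi_j$ in a weaker dual space of divergence-free test fields (which also kills the gauge gradient), and the helicity balance for Leray--Hopf solutions is justified by mollifying in time rather than by any parabolic smoothing.
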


Although in Theorem \ref{Simply connected Taylor theorem} we do not assume that $u$ and $b$ satisfy the ideal MHD equations, we present a corollary on solutions of ideal MHD. If a solution $(u,b)$ lies in the energy space $L^\infty(0,T;L^2_\sigma(\Omega,\R^3))$, then we may choose representatives $u,b \in C_w([0,T);L^2_\sigma(\Omega,\R^3))$; this can be proved by slightly modifying \cite[Lemmas 2.1--2.2]{Gal}.

\begin{cor} \label{Corollary on ideal limit}
Suppose $\Omega$ is simply connected and $u,b \in C_w([0,T);L^2_\sigma(\Omega,\R^3))$ form a weak solution of ideal MHD. If $(u,b)$ is a weak ideal limit of Leray-Hopf solutions $(u_j,b_j)$, then $b$ conserves magnetic helicity in time.
\end{cor}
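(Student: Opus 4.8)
The plan is to derive Corollary~\ref{Corollary on ideal limit} from Theorem~\ref{Simply connected Taylor theorem} by combining the helicity conservation of the weak ideal limit with the classical fact that a sufficiently regular-in-time weak solution of ideal MHD conserves helicity pointwise in time for its own choice of potential, and then using gauge invariance in the simply connected case to identify the two. Concretely, fix a vector potential $\psi$ of $b$ of the regularity allowed in Theorem~\ref{Simply connected Taylor theorem}; since $\Omega$ is simply connected, $b \in C_w([0,T);L^2_\sigma(\Omega,\R^3))$ admits a potential $\psi \in C_w([0,T);W^{1,2}(\Omega,\R^3))$ (e.g. the Biot--Savart / div-curl potential with a suitable boundary condition, which depends continuously-weakly on $b$), so in particular $\psi \in L^\infty(0,T;W^{1,2}(\Omega,\R^3))$ and Theorem~\ref{Simply connected Taylor theorem} applies: the function $t \mapsto \int_\Omega \psi(x,t)\cdot b(x,t)\,dx$ is a.e.\ equal to a constant $H_0$.

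Next I would upgrade ``a.e.\ constant'' to ``constant for all $t$''. With the weakly continuous representatives $b, \psi \in C_w([0,T);L^2)$, and noting that $\psi$ lies in $W^{1,2}$ while $b \in L^2$, the pairing $t \mapsto \int_\Omega \psi(\cdot,t)\cdot b(\cdot,t)\,dx$ is continuous in $t$: if $t_k \to t$ then $\psi(\cdot,t_k)\rightharpoonup\psi(\cdot,t)$ in $W^{1,2}$, hence strongly in $L^2$ by Rellich compactness (the bound $\sup_t \|\psi(\cdot,t)\|_{W^{1,2}}<\infty$ gives precompactness), while $b(\cdot,t_k)\rightharpoonup b(\cdot,t)$ in $L^2$, and a strong-weak pair of convergences passes to the limit in the integral. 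A continuous function that is a.e.\ constant is everywhere constant, so $\int_\Omega \psi(x,t)\cdot b(x,t)\,dx \equiv H_0$ on $[0,T)$. In particular this holds at $t=0$, so $H_0 = \int_\Omega \psi(x,0)\cdot b_0(x)\,dx$ is exactly the magnetic helicity of the initial datum.

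Finally I would invoke gauge invariance: in a simply connected domain with $b\cdot n|_\Gamma=0$, any two $W^{1,2}$ vector potentials $\psi_1,\psi_2$ of $b$ differ by a gradient, $\psi_1-\psi_2 = \nabla f$ with $f \in W^{2,2}(\Omega)$, and $\int_\Omega \nabla f \cdot b\,dx = -\int_\Omega f\,\dive b\,dx + \int_\Gamma f\, b\cdot n\,dS = 0$; hence $\int_\Omega \psi_1\cdot b\,dx = \int_\Omega \psi_2\cdot b\,dx$, so the quantity $H_0$ computed above is independent of the choice of $\psi$ and equals ``the'' magnetic helicity of $b(\cdot,t)$ in the usual gauge-invariant sense. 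This establishes that $b$ conserves magnetic helicity in time. The main obstacle is the step upgrading a.e.\ equality to genuine pointwise-in-$t$ conservation: one must be careful that the chosen weakly continuous representative of $b$ is consistent with the representative for which Theorem~\ref{Simply connected Taylor theorem} gives the a.e.\ statement, and that the potential $\psi$ can indeed be taken weakly continuous in $t$ with a uniform $W^{1,2}$ bound so that the Rellich argument applies; once that bookkeeping is in place the remaining assertions are routine.
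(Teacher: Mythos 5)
Your proposal is correct and follows the route the paper intends: apply Theorem \ref{Simply connected Taylor theorem}, upgrade the a.e.\ constancy to constancy for all $t$ using the weakly continuous representative of $b$ together with a potential that is weakly continuous in time, and conclude by gauge invariance (Proposition \ref{Proposition on dependence on the vector potential}, since $L^2_H(\Omega,\R^3)=\{0\}$ here). The only bookkeeping worth noting is that the weakly continuous potential with a uniform $W^{1,2}$ bound is supplied directly by the paper's machinery: take $\psi(\cdot,t)=T_\Sigma b(\cdot,t)$ from Theorem \ref{Borchers-Sohr lemma}, which is a bounded linear operator and hence maps $b\in C_w([0,T);L^2_\sigma(\Omega,\R^3))$ to $\psi\in C_w([0,T);W^{1,2}(\Omega,\R^3))$, after which your Rellich and strong--weak pairing argument goes through as written.
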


While simply connected domains (and especially the torus $\T^3$) allow a relatively neat mathematical treatment, we also cover multiply connected domains in order to incorporate plasma containers in typical laboratory settings. The topology of multiply connected domains leads, however, to mathematical complications starting with the very definition of magnetic helicity.

\vspace{0.3cm}
Consider an arbitrary weak ideal limit $(u,b)$ of Leray-Hopf solutions $(u_j,b_j)$. If the domain $\Omega$ is multiply connected, then $\int_\Omega \psi(x,t) \cdot b(x,t) \, dx$ depends on the choice of the vector potential $\psi$. The basic reason behind this gauge dependence is the fact that when $\Omega$ is multiply connected, the orthogonal complement of $\ker(\curl)$ in $L^2(\Omega,\R^3)$ is a \emph{strict} subspace of $L^2_\sigma(\Omega,\R^3)$ -- in other words, the set of harmonic \emph{Neumann vector fields}
\[L^2_H(\Omega,\R^3) \defeq \{v \in L^2_\sigma(\Omega,\R^3) \colon \curl v = 0\}\]
is non-empty. For a physical interpretation of $L^2_H(\Omega,\R^3)$ see e.g. ~\cite[pp. 428--430]{CDTG}.

We write
\begin{equation} \label{Decomposition of L2sigma}
L^2_\sigma(\Omega,\R^3) = L^2_\Sigma(\Omega,\R^3) \oplus L^2_H(\Omega,\R^3);
\end{equation}
a useful intrinsic characterisation of $L^2_\Sigma(\Omega,\R^3)$ was given in ~\cite{FT} (see Theorem \ref{Foias-Temam decomposition theorem}). For the purposes of this article, it is also illuminating to use a characterisation familiar from Hodge-Friedrichs-Morrey decomposition theory (see ~\cite{Morrey}),
\begin{equation} \label{L2 Sigma 1}
L^2_\Sigma(\Omega,\R^3) = \{\curl \psi \colon \psi \in W^{1,2}(\Omega,\R^3), \; \psi \times n|_\Gamma = 0\}.
\end{equation}
In fact, we will need slightly more refined versions of \eqref{L2 Sigma 1}, see Theorem \ref{Time-dependent Borchers-Sohr} and Remark \ref{Remark on another gauge}.

Bearing in mind \eqref{Decomposition of L2sigma}, we decompose $b$ uniquely as
\begin{equation} \label{Decomposition of time-dependent fields}
b = b_\Sigma + b_H \qquad (b_\Sigma \in L^\infty(0,T;L^2_\Sigma(\Omega,\R^3)) \text{ and } b_H \in L^\infty(0,T;L^2_H(\Omega,\R^3)))
\end{equation}
and use similar notation for every $b_j$. In multiply connected domains, we prove that $\int_\Omega \psi(x,t) \cdot b(x,t) \, dx$ is conserved for all vector potentials $\psi \in L^\infty(0,T;W^{1,2}(\Omega,\R^3))$ of $b$ if and only if the harmonic part $b_H = 0$. There exist, however, weak ideal limits $(u,b)$ of Leray-Hopf solutions with $b_H \neq 0$ (see Proposition \ref{Answer of first question} for both claims).

We are thus led to the following question in multiply connected domains:
\begin{equation} \label{Second question}
\text{Is $\int_\Omega \psi(x,t) \cdot b(x,t) \, dx$ conserved for some natural class of potentials $\psi$?}
\end{equation}
We give a positive answer to \eqref{Second question} in Corollary \ref{Taylor corollary}. First, in Theorem \ref{Taylor theorem} we compute the magnetic helicity dissipation rate for arbitrary Leray-Hopf solutions and arbitrary vector potentials. In \eqref{Value of magnetic helicity} we are able to compute the dissipation rate also for weak ideal limits and all their potentials. Corollary \ref{Taylor corollary} then gives a condition on potentials that is coherent with \eqref{L2 Sigma 1} and yields magnetic helicity conservation.

\vspace{0.3cm}
We use the decomposition in \eqref{Decomposition of time-dependent fields} in order to give a formula for the time evolution of magnetic helicity. The components $b_\Sigma$ and $b_H$ of $b$ behave in rather differing ways; in particular, \emph{$b_H$ is constant in time} (see Proposition \ref{Decomposition of the magnetic field}). Because of difficulties described in \textsection \ref{The decomposition of vector potentials}, we also need to decompose $\psi$ in order to take advantage of the different features of $b_\Sigma$ and $b_H$:
\[\psi = \psi^\Sigma + \psi^H \qquad (\curl \psi^\Sigma = b_\Sigma \text{ and } \curl \psi^H = b_H).\]
The decomposition $\psi = \psi^\Sigma + \psi^H$ is not unique, and a judicious choice of the components $\psi^\Sigma, \psi^H \in L^\infty(0,T;W^{1,2}(\Omega,\R^3))$ is a fundamental part of the proof of Theorem \ref{Taylor theorem}. In fact, we end up performing a further decomposition of $\psi^\Sigma$, and the whole decomposition of $\psi$ is described in \textsection \ref{Good vector potentials} and \textsection \ref{The decomposition of vector potentials}.

In order to state Theorem \ref{Taylor theorem} we already note below that given $\psi$, there exists a canonical choice of $\psi^H$, and we use it for all the vector potentials in this article. In particular, with this choice, $\partial_t b_H = 0$ implies that $\partial_t \psi^H = 0$.

\begin{defin}
Suppose that $v = v_\Sigma + v_H \in L^\infty(0,T;L^2_\sigma(\Omega,\R^3))$ and that $\psi \in L^\infty(0,T;W^{1,2}(\Omega,\R^3))$ satisfies $\curl \psi = v$. We denote by $\psi^H$ the  unique mapping in $L^\infty(0,T; W^{1,2}(\Omega,\R^3) \cap L^2_\Sigma(\Omega,\R^3))$ such that $\curl \psi^H = b_H$ (see Theorem \ref{Amrouche-Bernardi-Dauge-Girault lemma}), and we furthermore denote $\psi^\Sigma \defeq \psi - \psi^H$.
\end{defin}

We are now in a position to state our main theorem; the strategy of the proof is described in \textsection \ref{The decomposition of vector potentials}--\ref{Strategy of the proof}, and the details are presented in \textsection \ref{Reduction to good vector potentials}--\ref{Completion of the proof}.

\begin{thm} \label{Taylor theorem}
Suppose a domain $\Omega \subset \R^3$ satisfies Assumption \ref{Assumption on Omega}, and assume that $(u,b)$ is a weak ideal limit or weak resistive limit of Leray-Hopf solutions $(u_j,b_j)$, $j \in \N$. Then any vector potentials $\psi_j$ and $\psi_{j,0}$ of $b_j$ and $b_{j,0}$ satisfy
\begin{align*}
    \int_\Omega \psi_j(x,t) \cdot b_j(x,t) \, dx
&= \int_\Omega \psi_{j,0}(x) \cdot b_{j,0}(x) \, dx \\
&- 2 \mu_j \int_0^t \int_\Omega b_j(x,\tau) \cdot \curl b_j(x,\tau) \, dx \, d\tau \\
&- \int_\Gamma [\psi_j^\Sigma(x,t)-\psi^\Sigma_{j,0}(x)] \times n \cdot \psi^H_{j,0}(x) \, dx
\end{align*}
for all $j \in \N$ and $t \in [0,T)$. Furthermore,
\begin{equation} \label{Value of magnetic helicity}
\int_\Omega \psi(x,t) \cdot b(x,t) \, dx = \int_\Omega \psi_0(x) \cdot b_0(x) dx - \int_\Gamma [\psi^\Sigma(x,t)-\psi^\Sigma_0(x)] \times n \cdot \psi^H_0(x) \, dx
\end{equation}
for a.e. $t \in (0,T)$ and all vector potentials $\psi$ and $\psi_0$ of $b$ and $b_0$.
\end{thm}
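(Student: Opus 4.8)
The plan is to establish the identity first at the level of the viscous, resistive approximations $(u_j,b_j)$, where the solutions are regular enough to differentiate in time, and then pass to the ideal (or non-resistive) limit. I would begin from the magnetic induction equation \eqref{Resistive MHD2} and compute $\frac{d}{dt}\int_\Omega \psi_j\cdot b_j\,dx$ by differentiating under the integral. Writing $\curl\psi_j=b_j$ and using $\partial_t b_j=-\curl(b_j\times u_j)-\mu_j\curl\curl b_j$, the two terms $\int\partial_t\psi_j\cdot b_j$ and $\int\psi_j\cdot\partial_t b_j$ need to be combined. The standard trick is to integrate by parts moving $\curl$ from $b_j$ onto $\psi_j$ in the first term, so that $\int\partial_t\psi_j\cdot b_j=\int\curl(\partial_t\psi_j)\cdot\psi_j+(\text{boundary terms})=\int\partial_t b_j\cdot\psi_j+(\text{boundary terms})$, whence $\frac{d}{dt}\int\psi_j\cdot b_j=2\int\psi_j\cdot\partial_t b_j+(\text{boundary terms})$. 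Plugging in the induction equation and using that $(\curl b_j)\times n|_\Gamma=0$ together with $b_j\cdot n|_\Gamma=0$ handles the transport term $\curl(b_j\times u_j)$, the resistive term contributes $-2\mu_j\int_\Omega b_j\cdot\curl b_j\,dx$ after another integration by parts (again with the boundary conditions \eqref{Resistive MHD6} making the boundary contribution vanish). Integrating in $\tau$ from $0$ to $t$ gives the first displayed formula, \emph{provided} we correctly account for the boundary terms generated in the step $\int\partial_t\psi_j\cdot b_j\leftrightarrow\int\partial_t b_j\cdot\psi_j$.

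**The heart of the matter** is precisely that boundary term, and this is where the canonical decomposition $\psi_j=\psi_j^\Sigma+\psi_j^H$ enters. For the component $\psi_j^\Sigma$, the gauge \eqref{L2 Sigma 1} ensures $\psi_j^\Sigma\times n|_\Gamma=0$, so it produces no boundary contribution when we integrate by parts. For the harmonic component, Proposition \ref{Decomposition of the magnetic field} gives $\partial_t b_H=0$, and by the canonical choice $\partial_t\psi_j^H=0$, so $\psi_j^H$ is time-independent and equals $\psi_{j,0}^H$. The surviving boundary term is therefore of the form $\int_0^t\frac{d}{d\tau}\big(\text{something involving }\psi_j^\Sigma\text{ and }\psi_j^H\big)\,d\tau$, which telescopes to $-\int_\Gamma[\psi_j^\Sigma(x,t)-\psi_{j,0}^\Sigma(x)]\times n\cdot\psi_{j,0}^H(x)\,dx$. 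I would carry out this bookkeeping carefully: one first reduces to \textbf{good vector potentials} (as announced in \textsection\ref{Good vector potentials}, \textsection\ref{Reduction to good vector potentials}), i.e. a convenient further decomposition of $\psi_j^\Sigma$ with enough regularity and favourable boundary behaviour to justify every integration by parts, and then checks that the answer is independent of which good representative was chosen — this is legitimate because changing $\psi_j$ by a gradient changes $\int\psi_j\cdot b_j$ only through a boundary term that, using $b_j\cdot n|_\Gamma=0$, is easily tracked, and changes the right-hand side of the identity by the same amount.

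**The main obstacle** is the passage to the limit in the last displayed formula \eqref{Value of magnetic helicity}. The terms $\int_\Omega\psi_{j,0}\cdot b_{j,0}\,dx$ converge by the assumed weak convergence of initial data combined with compactness of the $\curl^{-1}$-type operator (weak-$L^2$ times strongly-convergent-potential), and the resistive term $-2\mu_j\int_0^t\int_\Omega b_j\cdot\curl b_j$ is controlled by the uniform energy bound times $\sqrt{\mu_j}\,\|\sqrt{\mu_j}\,\curl b_j\|_{L^2_tL^2_x}$, which the Leray-Hopf energy inequality bounds and which carries a prefactor $\sqrt{\mu_j}\to0$ (in the non-resistive case $\mu_j\to0$ this still works; the term simply vanishes in the limit). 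The genuinely delicate point is the boundary term $\int_\Gamma[\psi_j^\Sigma(x,t)-\psi_{j,0}^\Sigma(x)]\times n\cdot\psi_{j,0}^H(x)\,dx$: one must show $\psi_j^\Sigma(\cdot,t)\times n$ converges in a topology strong enough to pair against $\psi_{j,0}^H\times n$ on $\Gamma$. Here I would exploit that $\psi_{j,0}^H$ lives in the finite-dimensional space of harmonic Neumann fields (so its dependence on $j$ is through finitely many real parameters converging to those of $\psi_0^H$), and that the trace $\psi_j^\Sigma(\cdot,t)\times n$ can be expressed via a boundary pairing that only sees $b_j(\cdot,t)$ weakly and the \emph{fixed} harmonic test field — a compensated-compactness/div-curl structure that lets the weak-$*$ convergence $b_j\overset{*}{\rightharpoonup}b$ pass through. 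Once the boundary term is identified as a continuous function of $(b(\cdot,t),b_0)$, evaluating the identity for a.e.\ $t$ and recognising the right-hand side as independent of the choice of potentials $\psi,\psi_0$ (by the gauge-invariance argument above, now at the limit level) completes the proof.
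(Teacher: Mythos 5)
Your outline for the fixed-$j$ identity is essentially the right formal computation, but two of its ingredients are stated incorrectly and one is unjustified. First, for a \emph{general} potential $\psi_j$ the component $\psi_j^\Sigma=\psi_j-\psi_j^H$ does \emph{not} satisfy $\psi_j^\Sigma\times n|_\Gamma=0$; if it did, the boundary term in the statement would always vanish, contradicting Proposition \ref{Answer of first question}. Only the canonical choice $\Psi_j^\Sigma=T_\Sigma b_{j,\Sigma}\in W^{1,2}_0$ has vanishing tangential trace. Second, your justification for reducing to good potentials ("changing $\psi_j$ by a gradient\ldots") misses the essential gauge freedom in a multiply connected domain: two potentials of $b_j$ differ by a gradient \emph{plus a time-dependent harmonic Neumann field}; the gradient drops out against $b_j\in L^2_\sigma$, but the harmonic part does not, and it is exactly what produces the boundary term. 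Third, an arbitrary $\psi_j\in L^\infty(0,T;W^{1,2})$ has no time regularity, so a computation based on $\partial_t\psi_j$ and the pairing $\langle\partial_t\psi_j\times n,\psi_j\rangle_\Gamma$ cannot be run for it; the paper avoids this with a purely spatial reduction (Lemma \ref{Reduction lemma}) plus stationarity of $b_{j,H}$, and, in the canonical gauge, mollifies in time and arranges never to pair $\partial_t\Psi_j^\Sigma$ with $b_{j,H}$ (the harmonic pairing is handled by testing the weak induction equation with $\psi_j^H$, the $\Sigma$-pairing by the orthogonality $\ker(\curl)\perp L^2_\Sigma$). These points are repairable.

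The genuine gap is in the passage $j\to\infty$. Formula \eqref{Value of magnetic helicity} is not obtained by passing to the limit term by term in the $j$-level identity: the $\psi_j$ there are arbitrary and unrelated to $\psi$, so convergence of $\psi_j^\Sigma(\cdot,t)\times n$ is neither available nor needed; the boundary term in \eqref{Value of magnetic helicity} arises by applying the stationary reduction to the \emph{limit} field $b$ once one knows $\mathscr{M}(b;t)=\lim_j\mathscr{M}(b_j;0)$ for a.e. $t$. The real difficulty, which your proposal never addresses, is the leftmost equality in \eqref{Double aim}: $\int_\Omega(\Psi_j^\Sigma+\psi_j^H)\cdot b_j\,dx$ is a product of two sequences converging only weakly-$*$ in $L^\infty(0,T;L^2)$. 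Your "weak-$L^2$ times strongly-convergent-potential" mechanism works for the initial data (one fixed time, Rellich--Kondrachov), but for the time-dependent fields spatial compactness of $T_\Sigma$ alone is not enough: without equicontinuity in time, $b_j\overset{*}{\rightharpoonup}b$ does not force $T_\Sigma b_{j,\Sigma}\to T_\Sigma b_\Sigma$ strongly in $L^2_tL^2_x$, and $\mathscr{M}(b_j;\cdot)$ need not converge to $\mathscr{M}(b;\cdot)$. One must use the equation to bound $\partial_t\Psi_j^\Sigma$ in a negative space and invoke Aubin--Lions — in the paper this is Lemma \ref{Lemma on strong limit of vector potentials}, which mollifies in time, writes $\partial_t(\Psi_j^\Sigma*\chi^{\delta_j})+(b_j\times u_j)*\chi^{\delta_j}-\mu_j\curl b_j*\chi^{\delta_j}$ as a gradient on simply connected subdomains, and estimates $b_j\times u_j$ in $L^2_tL^1_x$ against $W^{1,4}_{0,\sigma}$ test fields. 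A compensated-compactness argument aimed at boundary traces does not supply this step, so as written the proof of \eqref{Value of magnetic helicity} is missing its central ingredient.
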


Formula \eqref{Value of magnetic helicity} allows us to show magnetic helicity conservation for a large class of vector potentials. The class is specified in \eqref{Natural condition 2}, and its naturality is apparent from \eqref{L2 Sigma 1} and \eqref{Value of magnetic helicity}.

\begin{cor} \label{Taylor corollary}
Suppose the assumptions of Theorem \ref{Taylor theorem} hold. If
\begin{equation} \label{Natural condition 2}
\psi^\Sigma_j \times n|_\Gamma = \psi^\Sigma \times n|_\Gamma = 0 \quad{and} \quad \psi_{j,0}^\Sigma \times n|_\Gamma = \psi_0^\Sigma \times n|_\Gamma = 0,
\end{equation}
then
\begin{equation} \label{Magnetic helicity for good potentials}
\int_\Omega \psi(x,t) \cdot b(x,t) \, dx = \int_\Omega \psi_0(x) \cdot b_0(x) dx = \lim_{j \to \infty} \int_\Omega \psi_{j,0}(x) \cdot b_{j,0}(x) \, dx
\end{equation}
for a.e. $t \in (0,T)$. In particular, under condition \eqref{Natural condition 2}, the magnetic helicity of $b$ is independent of the choice of $\psi$.
\end{cor}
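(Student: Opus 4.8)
The plan is to extract everything from formula \eqref{Value of magnetic helicity} of Theorem \ref{Taylor theorem}, supplemented by a short compensated-compactness argument for the initial helicities.

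First I would insert \eqref{Natural condition 2} into \eqref{Value of magnetic helicity}. Since $\psi^\Sigma(\cdot,t) \times n|_\Gamma = 0$ and $\psi^\Sigma_0 \times n|_\Gamma = 0$, the integrand $[\psi^\Sigma(x,t)-\psi^\Sigma_0(x)] \times n$ of the boundary term vanishes identically on $\Gamma$, so \eqref{Value of magnetic helicity} collapses to $\int_\Omega \psi(x,t) \cdot b(x,t)\,dx = \int_\Omega \psi_0(x) \cdot b_0(x)\,dx$ for a.e.\ $t \in (0,T)$. As this identity holds simultaneously for every admissible $\psi$ (the left side) and every admissible $\psi_0$ (the right side), both sides must be independent of the respective choice and equal to a single common constant; this gives the first equality of \eqref{Magnetic helicity for good potentials} and its ``in particular'' clause. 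Running the same argument on the first displayed formula of Theorem \ref{Taylor theorem} --- whose extra summand $2\mu_j \int_0^t \int_\Omega b_j \cdot \curl b_j\,dx\,d\tau$ is, for fixed $j$ and $t$, a fixed constant --- shows in the same way that $\int_\Omega \psi_{j,0}(x) \cdot b_{j,0}(x)\,dx$ does not depend on the admissible vector potential $\psi_{j,0}$ of $b_{j,0}$.

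Next I would establish the limit in \eqref{Magnetic helicity for good potentials}. By the previous step I am free to compute $\lim_{j\to\infty}\int_\Omega \psi_{j,0} \cdot b_{j,0}\,dx$ with a convenient choice of admissible potentials: take $\psi_{j,0} = \psi^\Sigma_{j,0} + \psi^H_{j,0}$, where $\psi^\Sigma_{j,0}$ is the potential of $b_{\Sigma,j,0}$ furnished by the quantitative form of \eqref{L2 Sigma 1} (Theorem \ref{Time-dependent Borchers-Sohr}), so that $\psi^\Sigma_{j,0} \times n|_\Gamma = 0$ and $\norm{\psi^\Sigma_{j,0}}_{W^{1,2}} \le C\,\norm{b_{\Sigma,j,0}}_{L^2}$, and $\psi^H_{j,0}$ is the canonical potential of $b_{H,j,0}$. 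Because $L^2_H(\Omega,\R^3)$ is finite dimensional and $b_{H,j,0}$ is the $L^2_H$-projection of $b_{j,0}$, the weak convergence $b_{j,0} \rightharpoonup b_0$ forces $b_{H,j,0} \to b_{H,0}$ and hence $\psi^H_{j,0} \to \psi^H_0$ strongly in $W^{1,2}$. Since $(b_{j,0})$ is bounded in $L^2$, the sequence $(\psi_{j,0})$ is bounded in $W^{1,2}(\Omega,\R^3)$; after passing to a subsequence, $\psi_{j,0} \rightharpoonup \psi_\ast$ in $W^{1,2}$ and, by Rellich's theorem, $\psi_{j,0} \to \psi_\ast$ strongly in $L^2$. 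Weak continuity of $\curl$ and of the tangential trace give $\curl \psi_\ast = b_0$ and $\psi^\Sigma_\ast \times n|_\Gamma = 0$, so $\psi_\ast$ is admissible. Pairing the strongly convergent $\psi_{j,0}$ with the weakly convergent $b_{j,0}$ then yields $\int_\Omega \psi_{j,0} \cdot b_{j,0}\,dx \to \int_\Omega \psi_\ast \cdot b_0\,dx = \int_\Omega \psi_0 \cdot b_0\,dx$, the last step by the gauge independence already established. As every subsequence has a further subsequence with this same limit, the whole sequence converges, which completes \eqref{Magnetic helicity for good potentials}.

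Given Theorem \ref{Taylor theorem}, the one step that is not bookkeeping is producing the uniformly $W^{1,2}$-bounded potentials $\psi^\Sigma_{j,0}$ with vanishing tangential trace, i.e.\ the quantitative refinement of \eqref{L2 Sigma 1}; the strong-times-weak passage to the limit and the finite-dimensional treatment of the harmonic parts are routine.
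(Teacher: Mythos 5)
Your argument is correct and follows essentially the paper's own route: condition \eqref{Natural condition 2} kills the boundary term in \eqref{Value of magnetic helicity}, gauge independence among admissible potentials is what Lemma \ref{Reduction lemma} encodes, and your treatment of the initial helicities (canonical potentials $T_\Sigma b_{j,0,\Sigma}+T_H b_{j,0,H}$, weak convergence plus Rellich compactness, strong-times-weak pairing) is precisely the content of Lemma \ref{Lemma on magnetic helicities of initial datas}. The only cosmetic difference is your subsequence detour with weak continuity of $\curl$ and the tangential trace, which is avoidable since $T_\Sigma\circ P_\Sigma$ and $T_H\circ P_H$ are bounded linear, so $\Psi^\Sigma_{j,0}+\psi^H_{j,0}\rightharpoonup \Psi^\Sigma_0+\psi^H_0$ in $W^{1,2}(\Omega,\R^3)$ directly.
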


In \textsection \ref{A two-dimensional analogue} we prove a two-dimensional analogue of Theorem \ref{Taylor theorem}: in bounded, multiply connected Lipschitz domains, mean-square magnetic potential is conserved in the weak ideal limit. In 2D, there exists a canonical choice of potentials, and so we can follow the philosophy of ~\cite{FL} which is based on $\mathcal{H}^1$-$\operatorname{BMO}$ duality and compensated compactness. In fact, we also show that in multiply connected domains, all solutions of ideal MHD in the energy space conserve magnetic mean-square potential, extending a similar result on the torus $\T^2$ from ~\cite{FL}.

\vspace{0.3cm}
In three dimentions, when magnetic field lines are allowed to cross $\Gamma$, that is, the assumption $b \cdot n|_{\Gamma} = 0$ is dropped, magnetic helicity is no longer gauge invariant even for smooth solutions of ideal MHD in simply connected domains. In such a setting the so-called \emph{relative magnetic helicity}, defined in ~\cite{BF} and ~\cite{FA}, can be studied instead. We defer a treatment of relative magnetic helicity to a subsequent work.

\section{Background}

In this chapter we review tools and results needed in this article. We first fix our assumptions on the domain $\Omega$ in \textsection \ref{Assumptions on the domain} and recall basic material on boundary traces of Sobolev and $L^p$ functions in \textsection \ref{Traces of Sobolev functions}; \textsection \ref{Bochner spaces} reviews some standard results on time-dependent mappings in Bochner spaces, and in \textsection \ref{Leray-Hopf solutions of viscous, resistive MHD equations and the inviscid limit} we discuss Leray-Hopf solutions of viscous, resistive 3D MHD equations and the notion of inviscid, non-resistive limit.

\subsection{Assumptions on the domain} \label{Assumptions on the domain}
We start by fixing our assumptions on the domain $\Omega$, and we illustrate the assumptions in Figure \ref{Torus figures}. Our exposition follows ~\cite[pp. 835--836]{ABDG} (see also ~\cite{Tem}).

\begin{assumption} \label{Assumption on Omega}
The domain $\Omega \subset \R^3$ is bounded and its boundary $\Gamma$ is of class $\mathscr{C}^{1,1}$ and has a finite number of connected components denoted by $\Gamma_1,\ldots,\Gamma_K$.
\end{assumption}

Another assumption is introduced in order to produce a simply connected domain by making cuts into $\Omega$. The cuts will, however, only play an implicit role in this article.

\begin{assumption} \label{Assumption 2 on Omega}
There exist connected open surfaces $\Sigma_j$, $1 \le j \le N$, contained in $\Omega$ and satisfying the following conditions:
\renewcommand{\labelenumi}{(\roman{enumi})}
\begin{enumerate}

\item Each surface $\Sigma_j$ is an open subset of a smooth manifold $\mathscr{M}_j$.

\item The boundary of each $\Sigma_j$ is contained in $\partial \Omega$.

\item $\bar{\Sigma}_i \cap \bar{\Sigma}_j = \emptyset$ whenever $i \neq j$.

\item The open set $\dot{\Omega} \defeq \Omega \setminus \sup_{j=1}^N \Sigma_j$ is simply connected and pseudo-Lipschitz (see Definition \ref{Definition of pseudo-Lipschitz domains} below).
\end{enumerate}
The sets $\Sigma_j$ are called \emph{cuts}.
\end{assumption}

\begin{figure}[b]
    \center
    \subfloat[Projection of a torus $\Omega \subset \R^3$ into the $xy$-plane.]{\includegraphics[width=0.47\textwidth]{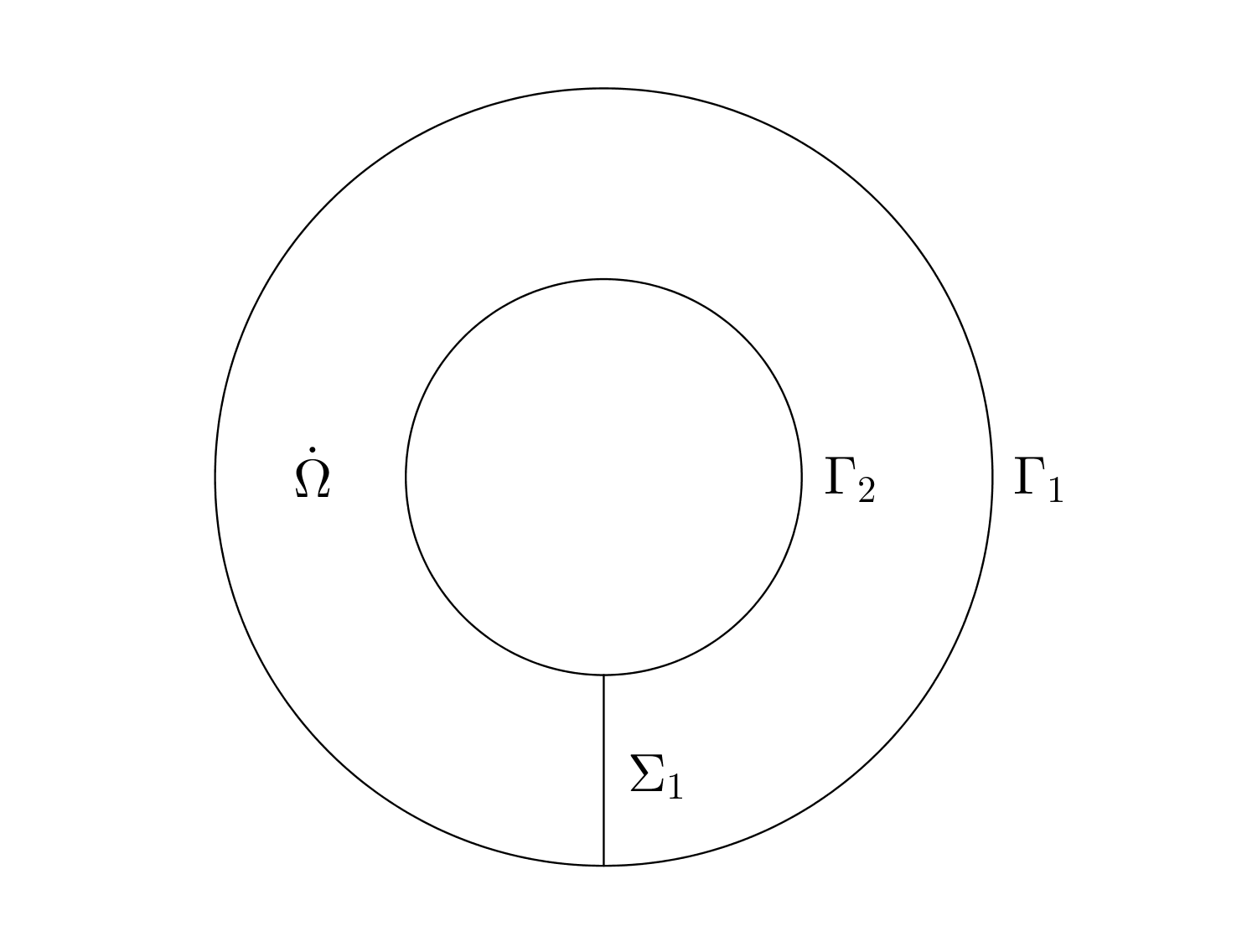}}
    \hfill
    \subfloat[Projection of a double torus into the $xy$-plane.]
{\includegraphics[width=0.53\textwidth]{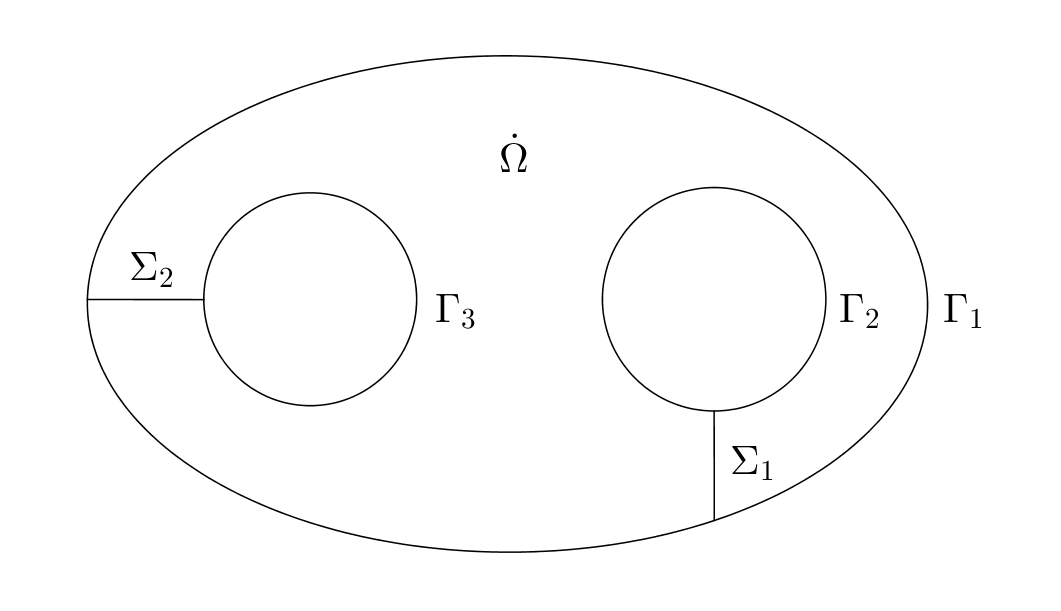}}
    \caption{}
    \label{Torus figures}
\end{figure}

The notion of a pseudo-Lipschitz domain is a generalization of a Lipschitz domain that allows the domain to locally lie on both sides of its boundary.

\begin{defin} \label{Definition of pseudo-Lipschitz domains}
A bounded domain $\Delta \subset \R^3$ is called \emph{pseudo-Lipschitz} if for every $x \in \partial \Delta$ there exists an integer $r(x) \in \{1,2\}$ and a radius $\rho_0 > 0$ such that whenever $0 < \rho < \rho_0$, the intersection $\Delta \cap B(x,\rho)$ has $r(x)$ connected components, each one with a Lipschitz boundary.
\end{defin}

Assumptions \ref{Assumption on Omega}--\ref{Assumption 2 on Omega} are standard in the study of fluid dynamics in multiply connected domains (see e.g. ~\cite{ABDG}, ~\cite{FT} and ~\cite{Tem}) and will remain in place for the rest of this article (except \textsection \ref{A two-dimensional analogue} where we discuss the two-dimensional setting). In particular, a solid torus clearly satisfies Assumptions \ref{Assumption on Omega}--\ref{Assumption 2 on Omega}.

\subsection{Traces of Sobolev functions} \label{Traces of Sobolev functions}
We recall results on boundary traces, normal traces and tangential traces and refer to ~\cite{Gal2}, ~\cite{GR} and ~\cite{Necas} for the proofs. In Theorems \ref{Trace theorem}--\ref{Tangential trace theorem} the assumption that $\Gamma$ is $\mathscr{C}^{1,1}$ can in fact be relaxed to $\Gamma$ being Lipschitz regular. The first trace theorem we present is a special case of ~\cite[Theorem II.4.1]{Gal2}.

\begin{thm} \label{Trace theorem}
Let $1 \le p < \infty$. Then there exists a unique, continuous linear map $\gamma \colon W^{1,p}(\Omega) \to L^p(\Gamma)$ such that for every $u \in C^\infty(\bar{\Omega})$ we have $\gamma(u) = u$ on $\Gamma$.
\end{thm}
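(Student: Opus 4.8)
The plan is to establish the trace estimate on smooth functions and then extend by density. First I would record the standard density fact: since $\Gamma$ is $\mathscr{C}^{1,1}$ (Lipschitz regularity already suffices), $C^\infty(\bar{\Omega})$ is dense in $W^{1,p}(\Omega)$ for every $1 \le p < \infty$. Granting this, uniqueness of $\gamma$ is immediate, because two continuous linear maps $W^{1,p}(\Omega) \to L^p(\Gamma)$ agreeing on the dense subspace $C^\infty(\bar{\Omega})$ must coincide. Existence then reduces to the a priori bound
\[
\norm{u}_{L^p(\Gamma)} \le C(\Omega,p)\,\norm{u}_{W^{1,p}(\Omega)} \qquad \text{for all } u \in C^\infty(\bar{\Omega}),
\]
after which $\gamma$ is defined as the unique continuous linear extension of the restriction map $u \mapsto u|_\Gamma$; the identity $\gamma(u) = u|_\Gamma$ for smooth $u$ and the linearity of $\gamma$ then hold by construction.

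To prove the a priori bound I would localise and flatten the boundary. Cover $\Gamma$ by finitely many open sets $U_1,\dots,U_m$ in each of which, after a rotation, $\Gamma$ is the graph of a Lipschitz function, fix a subordinate partition of unity $\zeta_1,\dots,\zeta_m$ together with an interior cutoff $\zeta_0$ supported away from $\Gamma$ (which contributes nothing to the boundary integral), and note $\norm{u}_{L^p(\Gamma)}^p = \sum_{i=1}^m \int_\Gamma \zeta_i \abs{u}^p$. For each $i$, a bi-Lipschitz change of variables straightens $\Gamma \cap U_i$ to a piece of a hyperplane, reducing matters to estimating $\int_{Q'} \abs{v(x',0)}^p\,dx'$ for $v \in C^\infty$ on a box $Q' \times (0,a)$. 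There I would write, with a cutoff $\eta \in C^\infty([0,a])$ satisfying $\eta(0)=1$ and $\eta(a)=0$,
\[
\abs{v(x',0)}^p = -\int_0^a \partial_t\bigl(\eta(t)\,\abs{v(x',t)}^p\bigr)\,dt,
\]
expand by the product and chain rules (for $p=1$ first replacing $\abs{\,\cdot\,}$ by $\sqrt{(\,\cdot\,)^2+\varepsilon^2}$ and letting $\varepsilon \to 0$), apply Young's inequality to the term $\abs{v}^{p-1}\abs{\partial_t v}$, integrate in $x'$, and sum over $i$.

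The one genuine technical point — the main obstacle — is bookkeeping the flattening maps: one must verify that the Lipschitz chart carries $W^{1,p}(\Omega)$ to $W^{1,p}$ of the model domain with a constant depending only on the Lipschitz character of $\Gamma$ (so that $\nabla v$ is comparable to $\nabla u$), and that surface measure on $\Gamma \cap U_i$ transforms to Lebesgue measure on the hyperplane with density bounded above and below. Both facts follow from $\Gamma$ being at least Lipschitz, which is precisely why the hypothesis $\Gamma \in \mathscr{C}^{1,1}$ can be relaxed as indicated in the text; everything else in the argument is a routine covering-and-summation exercise.
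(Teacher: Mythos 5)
Your proposal is correct; note that the paper itself gives no proof of this statement but simply cites it as a special case of \cite[Theorem II.4.1]{Gal2}, and your localisation--flattening--fundamental-theorem-of-calculus argument with Young's inequality, followed by extension by density (which also gives uniqueness), is exactly the standard argument behind that reference. One small imprecision: after composing with a merely bi-Lipschitz flattening chart the function $v = u \circ \Phi$ is Lipschitz rather than $C^\infty$, so in the model box you should either invoke absolute continuity on lines for Lipschitz (or $W^{1,p}$) functions to justify the identity $\abs{v(x',0)}^p = -\int_0^a \partial_t\bigl(\eta(t)\abs{v(x',t)}^p\bigr)\,dt$, or perform the one-dimensional integration transverse to the graph in the original chart before flattening; either fix is routine and does not affect the argument.
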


We denote by $W^{1-1/p,p}(\Gamma)$ the subspace of $L^p(\Gamma)$ of functions for which
\[\|u\|_{W^{1-1/p,p}(\Gamma)} \defeq \|u\|_{L^p(\Omega)} + \left( \int_{\Gamma} \int_{\Gamma} \frac{|u(x)-u(y)|^p}{|x-y|^{1+p}} \, dS(x) \, dS(y) \right)^\frac{1}{p} < \infty.\]
The space $W^{1-1/p,p}(\Gamma)$ is dense in $L^p(\Gamma)$ and complete in the norm $\|\cdot\|_{W^{1-1/p,p}(\Gamma)}$. When $1 < p < \infty$, the trace operator $\gamma$ is a Banach space isomorphism from the quotient space $W^{1,p}(\Omega) / W_0^{1,p}(\Omega)$ onto $W^{1-1/p,p}(\Gamma)$ (see ~\cite[\textsection 2.5, Theorems 5.5 and 5.7]{Necas}):

\begin{thm} \label{Trace is an isomorphism}
Let $1 < p < \infty$. If $u \in W^{1,p}(\Omega)$, then $\gamma(u) \in W^{1-1/p,p}(\Gamma)$ and
\[\|\gamma(u)\|_{W^{1-1/p,p}(\Gamma)} \lesssim_{\Omega,p} \|u\|_{W^{1,p}(\Omega)}.\]
Conversely, given $w \in W^{1-1/p,p}(\Gamma)$ there exists $u \in W^{1,p}(\Omega)$ such that $\gamma(u) = w$ and $\|u\|_{W^{1,p}(\Omega)} \lesssim_{\Omega,p} \|\gamma(u)\|_{W^{1-1/p,p}(\Gamma)}$.
\end{thm}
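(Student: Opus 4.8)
This is the classical Gagliardo trace theorem; the plan is to reduce both assertions to the model case of the half-space $\R^n_+ \defeq \{x = (x',x_n) \colon x' \in \R^{n-1},\ x_n > 0\}$ with flat boundary $\R^{n-1}$, via localisation and boundary flattening. Since $\Gamma$ is $\mathscr{C}^{1,1}$, each $x_0 \in \Gamma$ has a neighbourhood $B(x_0,\rho)$ and a $\mathscr{C}^{1,1}$-diffeomorphism $\Phi$ carrying $\Omega \cap B(x_0,\rho)$ onto a half-ball and $\Gamma \cap B(x_0,\rho)$ onto a flat disc, with $\Phi,\Phi^{-1}$ and their first derivatives bounded; such a change of variables preserves $W^{1,p}$ with two-sided norm bounds, and, being bi-Lipschitz on the boundary piece, it preserves the Gagliardo seminorm $\bigl(\iint_{\Gamma\times\Gamma} |u(x)-u(y)|^p |x-y|^{-(1+p)}\,dS(x)\,dS(y)\bigr)^{1/p}$ up to a constant depending only on the Lipschitz data and on $p,n$ (because $|x-y|$ and $|\Phi(x)-\Phi(y)|$ are comparable and the surface measures have bounded densities). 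Fixing a partition of unity $\{\varphi_i\}$ subordinate to finitely many such charts (plus one interior patch, whose trace vanishes) and using that multiplication by a fixed Lipschitz function is bounded on $W^{1,p}(\Omega)$ and on $W^{1-1/p,p}(\Gamma)$, it suffices to establish both claims for $\R^n_+$.

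For the direct estimate on $\R^n_+$, Theorem \ref{Trace theorem} already gives $\gamma(u) \in L^p(\R^{n-1})$ with $\|\gamma(u)\|_{L^p} \lesssim \|u\|_{W^{1,p}}$, so only the seminorm must be controlled. Taking first $u \in C^\infty(\overline{\R^n_+}) \cap W^{1,p}(\R^n_+)$, I would use Gagliardo's argument: represent $u(x',0) - u(y',0)$ as an integral of $\nabla u$ along a family of broken paths joining $(x',0)$ to $(y',0)$ through heights comparable to $\rho \defeq |x'-y'|$, apply Hölder in the path variable, and integrate in $x'$ and $y'$; the exponent $(n-1) + (p-1) = n+p-2$ in the denominator of the seminorm is exactly what the resulting weights produce, and for $n=3$ this is the exponent $1+p$ appearing in the definition of $\|\cdot\|_{W^{1-1/p,p}(\Gamma)}$. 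This yields $\iint |u(x',0)-u(y',0)|^p|x'-y'|^{-(n+p-2)}\,dx'\,dy' \lesssim \|\nabla u\|_{L^p(\R^n_+)}^p$. For general $u \in W^{1,p}(\R^n_+)$ one approximates by functions smooth up to the boundary, uses continuity of $\gamma$ from Theorem \ref{Trace theorem}, and passes to the limit in the seminorm by Fatou's lemma.

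For the lifting, given $w \in W^{1-1/p,p}(\R^{n-1})$ I would set $u(x',x_n) \defeq \int_{\R^{n-1}} \eta_{x_n}(x'-y')\,w(y')\,dy'$, where $\eta \in C^\infty_c(\R^{n-1})$ has $\int \eta = 1$ and $\eta_t(z) \defeq t^{-(n-1)}\eta(z/t)$. Since $\int \nabla\eta = 0$ and $\int \zeta = 0$ for $\zeta(\xi) \defeq -\dive(\xi\,\eta(\xi))$, differentiation gives $\nabla_{x'}u(x',x_n) = x_n^{-1}\int (\nabla\eta)_{x_n}(x'-y')\,[w(y')-w(x')]\,dy'$ and $\partial_{x_n}u(x',x_n) = x_n^{-1}\int \zeta_{x_n}(x'-y')\,[w(y')-w(x')]\,dy'$; raising to the $p$-th power, applying Hölder against the nonnegative kernel $|\nabla\eta|_{x_n}$ (resp. $|\zeta|_{x_n}$), whose $L^1$ mass is independent of $x_n$, then using Fubini and integrating in $x_n > 0$ (after the substitution $x_n = |x'-y'|/s$) produces precisely the Gagliardo seminorm of $w$, so $\|\nabla u\|_{L^p(\R^n_+)} \lesssim \|w\|_{W^{1-1/p,p}(\R^{n-1})}$, while $\|u\|_{L^p(\R^n_+)} \lesssim \|w\|_{L^p(\R^{n-1})}$ by Minkowski's integral inequality. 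That $\gamma(u) = w$ follows from $u(\cdot,x_n) \to w$ in $L^p(\R^{n-1})$ as $x_n \to 0^+$ (standard mollification) and continuity of $\gamma$. Pulling these half-space constructions back through the charts and summing against $\{\varphi_i\}$ — so that the trace of the assembled extension is $\sum_i \varphi_i w = w$ — finishes the proof. The main obstacle is the pair of Gagliardo seminorm computations on the half-space: the path-integral estimate (direct part) and the cancellation estimate for $\nabla u$ (lifting part), where the exponent $1-1/p$ is pinned down and the Hölder/Fubini bookkeeping must be done carefully; the localisation and flattening are routine once bi-Lipschitz invariance of the boundary seminorm is recorded. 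Alternatively one may simply invoke \cite[\textsection 2.5, Theorems 5.5 and 5.7]{Necas}, or the interpolation identity $W^{1-1/p,p}(\R^{n-1}) = (L^p(\R^{n-1}),W^{1,p}(\R^{n-1}))_{1-1/p,p}$ together with the Lions trace theorem for Bochner--Sobolev functions.
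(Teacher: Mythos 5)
The paper does not prove this statement at all: it is quoted as background, with the proof delegated to N\v{e}cas (\textsection 2.5, Theorems 5.5 and 5.7), so there is no internal argument to compare against. Your proposal reconstructs the classical Gagliardo proof — localisation by a partition of unity, $\mathscr{C}^{1,1}$ boundary flattening with bi-Lipschitz invariance of the fractional seminorm, the path-integral estimate for the direct bound on $\R^n_+$, and the mollification-at-scale-$x_n$ extension with the cancellation trick $\int\nabla\eta=0$, $\int\zeta=0$ for the lifting — and this is a correct and complete outline; the exponent bookkeeping ($d+sp=(n-1)+(p-1)$, i.e. $1+p$ for $n=3$) is right, and the derivative estimates for the lifting are exactly the standard computation. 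One small repair is needed in the lifting step: as written, $u(x',x_n)=\int\eta_{x_n}(x'-y')w(y')\,dy'$ satisfies $\|u(\cdot,x_n)\|_{L^p(\R^{n-1})}\le\|w\|_{L^p(\R^{n-1})}$ uniformly in $x_n$, but this is not $p$-integrable over the unbounded range $x_n\in(0,\infty)$, so $u\notin L^p(\R^n_+)$; you must either multiply by a cutoff $\theta(x_n)$ with $\theta(0)=1$ and compact support (the extra term $\theta'(x_n)\,\eta_{x_n}*w$ is harmless) or observe that after pulling back through the charts only a bounded half-ball is relevant — either fix is routine, but it should be stated. With that adjustment your argument buys something the paper does not attempt, namely a self-contained proof rather than a citation, at the cost of length; citing N\v{e}cas (or the real-interpolation identification of $W^{1-1/p,p}$ as the trace space), as you note in your last sentence, is what the authors actually do.
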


For convenience we will denote the trace $\gamma(u)$ simply by $u$. Whenever $u \in C^\infty(\bar{\Omega},\R^3)$, the \emph{normal trace} $u \cdot n$ and the \emph{tangential trace} $u \times n$ are well-defined on the boundary $\Gamma$ and the generalized Gauss identity and Green's formula
\begin{equation} \label{Gauss identity}
\langle u \cdot n, \varphi \rangle_{\Gamma} = \int_\Omega u(x) \cdot \nabla \varphi(x) \, dx + \int_\Omega \varphi(x) \dive u(x) \, dx, \qquad \varphi \in W^{1,p'}(\Omega)
\end{equation}
\begin{equation} \label{Green's formula}
\langle u \times n, \psi \rangle_{\Gamma} = \int_\Omega \curl u(x) \cdot \psi(x) \, dx - \int_\Omega u(x) \cdot \curl \psi(x) \, dx, \qquad \psi \in W^{1,p'}(\Omega,\R^3)
\end{equation}
hold, where $\langle u \cdot n, \varphi \rangle_{\Gamma}$ and $\langle u \times n, \psi \rangle_{\Gamma}$ are standard surface integrals (but can also be interpreted in terms of $W^{-1/p,p}(\Gamma)$--$W^{1-1/p',p'}(\Gamma)$ duality).

Normal and tangential traces are extended to the function spaces defined next: when $1 < p < \infty$, $H^p(\dive,\Omega) \defeq \{v \in L^p(\Omega,\R^3) \colon \dive v \in L^p(\Omega)\}$ is endowed with the norm $\norm{v}_{H^p(\dive,\Omega)} \defeq (\norm{v}_{L^p(\Omega)}^p + \norm{\dive v}_{L^p(\Omega)}^p)^{1/p}$, while $H^p(\curl,\Omega) \defeq \{v \in L^p(\Omega,\R^3) \colon \curl v \in L^p(\Omega,\R^3)\}$ is given the norm $\norm{v}_{H^p(\curl,\Omega)} \defeq (\norm{v}_{L^p(\Omega)}^p + \norm{\curl v}_{L^p(\Omega)}^p)^{1/p}$.

\begin{thm} \label{Normal trace theorem}
Suppose $1 < p < \infty$. Then the normal trace has a unique bounded extension $u \mapsto u \cdot n \colon H^p(\dive,\Omega) \to W^{-1/p,p}(\Gamma)$ and the generalized Gauss identity \eqref{Gauss identity} holds.
\end{thm}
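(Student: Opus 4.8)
The plan is to prove the a priori bound $\norm{u \cdot n}_{W^{-1/p,p}(\Gamma)} \lesssim_{\Omega,p} \norm{u}_{H^p(\dive,\Omega)}$ on the subspace $C^\infty(\bar\Omega,\R^3)$, where the normal trace is classically defined, and then extend by density. For the density step I would invoke the fact that $C^\infty(\bar\Omega,\R^3)$ is dense in $H^p(\dive,\Omega)$: via a partition of unity, local flattening of the $\mathscr{C}^{1,1}$ boundary, an inward translation, and mollification, one approximates $u$ by smooth fields $u_k$ with $u_k \to u$ and $\dive u_k \to \dive u$ in $L^p(\Omega)$. (Alternatively one can bypass density altogether; see the last paragraph.)

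For the bound itself I would argue by duality, recalling that $W^{-1/p,p}(\Gamma)$ is the dual of the trace space $W^{1-1/p',p'}(\Gamma)$, with $1/p + 1/p' = 1$. Fix $\varphi \in W^{1-1/p',p'}(\Gamma)$. By Theorem \ref{Trace is an isomorphism} there exists $\Phi \in W^{1,p'}(\Omega)$ with $\gamma(\Phi) = \varphi$ and $\norm{\Phi}_{W^{1,p'}(\Omega)} \lesssim_{\Omega,p} \norm{\varphi}_{W^{1-1/p',p'}(\Gamma)}$. Inserting $\Phi$ into the classical Gauss identity \eqref{Gauss identity} and applying Hölder's inequality yields
\[
\abs{\langle u \cdot n, \varphi \rangle_\Gamma} \le \norm{u}_{L^p(\Omega)} \norm{\nabla \Phi}_{L^{p'}(\Omega)} + \norm{\dive u}_{L^p(\Omega)} \norm{\Phi}_{L^{p'}(\Omega)} \lesssim_{\Omega,p} \norm{u}_{H^p(\dive,\Omega)} \norm{\varphi}_{W^{1-1/p',p'}(\Gamma)}.
\]
Taking the supremum over $\varphi$ in the unit ball of $W^{1-1/p',p'}(\Gamma)$ gives the claimed estimate for smooth $u$.

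Since $u \mapsto u \cdot n$ is thus a bounded linear map from the dense subspace $C^\infty(\bar\Omega,\R^3)$ of $H^p(\dive,\Omega)$ into the Banach space $W^{-1/p,p}(\Gamma)$, it admits a unique bounded linear extension to all of $H^p(\dive,\Omega)$. To see that \eqref{Gauss identity} survives the extension, take $u \in H^p(\dive,\Omega)$ and smooth $u_k \to u$ in $H^p(\dive,\Omega)$; write \eqref{Gauss identity} for each $u_k$ and pass to the limit, the left-hand side converging because $u_k \cdot n \to u \cdot n$ in $W^{-1/p,p}(\Gamma)$ and the right-hand side by $L^p$--$L^{p'}$ Hölder duality. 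Uniqueness of the extension is automatic from density.

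I expect the only genuinely delicate ingredient to be the density of smooth fields in $H^p(\dive,\Omega)$, which is where the $\mathscr{C}^{1,1}$ (or even just Lipschitz) regularity of $\Gamma$ enters; the rest is soft functional analysis plus Hölder's inequality. A cleaner route avoids density entirely: define $\langle u \cdot n, \varphi \rangle_\Gamma \defeq \int_\Omega u \cdot \nabla \Phi \, dx + \int_\Omega \Phi \, \dive u \, dx$ for an arbitrary extension $\Phi \in W^{1,p'}(\Omega)$ of $\varphi$. This is well defined because the difference of two extensions lies in $W_0^{1,p'}(\Omega)$, on which the right-hand side vanishes by the very definition of the distributional divergence; the bound is then immediate from the display above with no smoothness on $u$ needed, and \eqref{Gauss identity} holds by construction.
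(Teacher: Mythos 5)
Your argument is correct and coincides in essence with the standard proof that the paper defers to (\cite[Theorem III.2.2]{Gal2}): a duality estimate obtained by lifting $\varphi \in W^{1-1/p',p'}(\Gamma)$ to $\Phi \in W^{1,p'}(\Omega)$ via Theorem \ref{Trace is an isomorphism} and applying H\"{o}lder to the Gauss identity, combined with the density of $C^\infty(\bar\Omega,\R^3)$ in $H^p(\dive,\Omega)$. The only caveat worth noting is that your ``cleaner route'' by itself yields existence of a bounded operator satisfying \eqref{Gauss identity} but not the \emph{uniqueness} of the extension asserted in the statement, which still requires the density of smooth fields that you establish in the first part.
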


For a proof of Theorem \ref{Normal trace theorem} see ~\cite[Theorem III.2.2]{Gal2}. In a similar vein, a tangential trace is well-defined whenever $v \in L^p(\Omega,\R^3)$ and $\curl v \in L^p(\Omega,\R^3)$:

\begin{thm} \label{Tangential trace theorem}
Suppose $1 < p < \infty$. Then the tangential trace has a unique bounded extension $u \mapsto u \times n \colon H_p(\curl;\Omega) \to W^{-1/p,p}(\Gamma,\R^3)$ and the generalized Green's formula \eqref{Green's formula} holds.
\end{thm}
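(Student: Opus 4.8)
The plan is to mimic the proof of the normal-trace extension (Theorem \ref{Normal trace theorem}): combine Green's formula \eqref{Green's formula} with the surjectivity of the trace operator from Theorem \ref{Trace is an isomorphism}. Since $1-1/p' = 1/p$, the target space $W^{-1/p,p}(\Gamma,\R^3)$ is the dual of $W^{1/p,p'}(\Gamma,\R^3) = W^{1-1/p',p'}(\Gamma,\R^3)$, so it suffices to attach to each $u \in H^p(\curl,\Omega)$ a bounded linear functional on $W^{1-1/p',p'}(\Gamma,\R^3)$ that reduces to the classical surface integral $\langle u \times n, \cdot \rangle_\Gamma$ when $u$ is smooth, and to check continuity in $u$.

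First I would fix $u \in C^\infty(\bar\Omega,\R^3)$ and observe that the right-hand side of \eqref{Green's formula}, namely $\int_\Omega \curl u \cdot \psi \, dx - \int_\Omega u \cdot \curl \psi \, dx$, depends on $\psi \in W^{1,p'}(\Omega,\R^3)$ only through its trace. Indeed, for $\psi \in C_c^\infty(\Omega,\R^3)$ this expression vanishes (it is the definition of the distributional $\curl$), and since $u, \curl u \in L^p(\Omega,\R^3)$ the map $\psi \mapsto \int_\Omega \curl u \cdot \psi \, dx - \int_\Omega u \cdot \curl \psi \, dx$ is continuous on $W^{1,p'}(\Omega,\R^3)$; hence it vanishes on all of $W_0^{1,p'}(\Omega,\R^3)$ and factors through $W^{1,p'}(\Omega,\R^3)/W_0^{1,p'}(\Omega,\R^3) \cong W^{1-1/p',p'}(\Gamma,\R^3)$. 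Now, given $g \in W^{1-1/p',p'}(\Gamma,\R^3)$, use Theorem \ref{Trace is an isomorphism} (with $p'$ in place of $p$) to choose $\psi \in W^{1,p'}(\Omega,\R^3)$ with $\gamma \psi = g$ and $\norm{\psi}_{W^{1,p'}(\Omega)} \lesssim_{\Omega,p} \norm{g}_{W^{1-1/p',p'}(\Gamma)}$, and set $\langle u \times n, g \rangle_\Gamma \defeq \int_\Omega \curl u \cdot \psi \, dx - \int_\Omega u \cdot \curl \psi \, dx$. By the previous step this is independent of the lift $\psi$, and Hölder's inequality gives
\[\abs{\langle u \times n, g \rangle_\Gamma} \le \left(\norm{\curl u}_{L^p(\Omega)} + \norm{u}_{L^p(\Omega)}\right) \norm{\psi}_{W^{1,p'}(\Omega)} \lesssim_{\Omega,p} \norm{u}_{H^p(\curl,\Omega)} \, \norm{g}_{W^{1-1/p',p'}(\Gamma)}.\]
Thus $u \times n$ extends from $C^\infty(\bar\Omega,\R^3)$ to a linear map into $W^{-1/p,p}(\Gamma,\R^3)$ with $\norm{u \times n}_{W^{-1/p,p}(\Gamma)} \lesssim_{\Omega,p} \norm{u}_{H^p(\curl,\Omega)}$.

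To finish, invoke the density of $C^\infty(\bar\Omega,\R^3)$ in $H^p(\curl,\Omega)$: taking a smooth sequence $u_k \to u$ in $H^p(\curl,\Omega)$, the bound above shows $(u_k \times n)_k$ is Cauchy in $W^{-1/p,p}(\Gamma,\R^3)$, so it converges to a limit which we define to be $u \times n$; this is independent of the sequence (again by the bound), linear, and bounded. Passing to the limit in both sides of \eqref{Green's formula} — the right-hand side being continuous in $u$ for fixed $\psi \in W^{1,p'}(\Omega,\R^3)$, the left-hand side by construction — yields the generalized Green's formula for all $u \in H^p(\curl,\Omega)$, and uniqueness of the bounded extension is immediate from density.

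The main obstacle is precisely the density of $C^\infty(\bar\Omega,\R^3)$ in $H^p(\curl,\Omega)$; this is the one genuinely non-formal ingredient and relies on the (Lipschitz) regularity of $\Gamma$. The standard route is a partition of unity reducing to a graph domain, a small translation pushing the support of each piece strictly into $\Omega$, and mollification, with Hölder's inequality replacing Cauchy–Schwarz in the familiar $p=2$ argument of Girault–Raviart; alternatively one simply cites the references given above. Everything else — the factorization through the trace, the duality identification $W^{-1/p,p}(\Gamma,\R^3) = (W^{1-1/p',p'}(\Gamma,\R^3))^*$, and the limiting argument — is routine.
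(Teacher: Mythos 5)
Your argument is correct and is essentially the standard proof that the paper delegates to the cited references (the analogue of \cite[Theorem III.2.2]{Gal2} for the normal trace): define the functional on $W^{1-1/p',p'}(\Gamma,\R^3)$ via a lift, check independence of the lift through vanishing on $W_0^{1,p'}(\Omega,\R^3)$, bound it by H\"{o}lder and the right inverse of the trace, and pass to the limit. One streamlining worth noting: the identity $\int_\Omega \curl u \cdot \psi = \int_\Omega u \cdot \curl \psi$ for $\psi \in C_c^\infty(\Omega,\R^3)$ holds for \emph{every} $u \in H^p(\curl,\Omega)$ by the very definition of the distributional curl, and extends to $\psi \in W_0^{1,p'}(\Omega,\R^3)$ by density of $C_c^\infty$ there; so you can define $\langle u \times n, g \rangle_\Gamma$ directly for all $u \in H^p(\curl,\Omega)$, not just smooth $u$, and the generalized Green's formula is then immediate. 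With this ordering, the density of $C^\infty(\bar\Omega,\R^3)$ in $H^p(\curl,\Omega)$ --- the one non-formal ingredient you flag --- is needed only to see that this map agrees with (and is the unique bounded extension of) the classical tangential trace, not to construct it, which makes the argument more robust.
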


Finally we mention a characterisation of $W^{1,2}(\Omega,\R^3)$ by Foias and Temam (see e.g. ~\cite[Corollary 3.7]{GR}). Here Lipschitz continuity of $\Gamma$ would not be sufficient (see ~\cite[p. 832]{ABDG}).

\begin{thm} \label{Foias-Temam theorem}
We have $W^{1,2}(\Omega,\R^3) = \{v \in L^2(\Omega,\R^3) \colon \dive v \in L^2(\Omega), \curl v \in L^2(\Omega,\R^3), v \cdot n \in W^{1/2,2}(\Gamma)\}$ and
\[\|v\|_{W^{1,2}(\Omega)} \lesssim_\Omega \|v\|_{L^2(\Omega)} + \|\dive v\|_{L^2(\Omega)} + \|\curl v\|_{L^2(\Omega)} + \|v \cdot n\|_{W^{1/2,2}(\Gamma)}\]
for all $v \in W^{1,2}(\Omega,\R^3)$.
\end{thm}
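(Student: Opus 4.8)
The plan is to prove the set equality by establishing the two inclusions and to obtain the displayed norm bound along the way. The inclusion $W^{1,2}(\Omega,\R^3)\subseteq\{\cdots\}$ is immediate: if $v\in W^{1,2}(\Omega,\R^3)$ then $\dive v$ and $\curl v$ are linear combinations of first-order partials of $v$, hence lie in $L^2$; by Theorem \ref{Trace is an isomorphism} with $p=2$ the trace of $v$ lies in $W^{1/2,2}(\Gamma,\R^3)$, and since $\Gamma$ is $\mathscr{C}^{1,1}$ the unit normal $n$ is Lipschitz on $\Gamma$, so the product $v\cdot n$ remains in $W^{1/2,2}(\Gamma)$ (the product of a Lipschitz function and a $W^{1/2,2}(\Gamma)$ function lies in $W^{1/2,2}(\Gamma)$, by a direct estimate of the Gagliardo seminorm). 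Moreover each of the four quantities on the right-hand side is trivially $\lesssim_\Omega\|v\|_{W^{1,2}}$. It thus remains to prove the reverse inclusion together with the estimate $\|v\|_{W^{1,2}}\lesssim_\Omega\|v\|_{L^2}+\|\dive v\|_{L^2}+\|\curl v\|_{L^2}+\|v\cdot n\|_{W^{1/2,2}(\Gamma)}$ for $v$ in the right-hand set.

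The key reduction is to the case of a solenoidal field with vanishing normal trace. Given $v$ in the right-hand set, let $p$ be the zero-mean solution of the Neumann problem $\Delta p=\dive v$ in $\Omega$, $\partial_n p=v\cdot n$ on $\Gamma$; the compatibility condition $\int_\Omega\dive v=\langle v\cdot n,1\rangle_\Gamma$ holds by the generalized Gauss identity \eqref{Gauss identity}. Because $\Gamma$ is $\mathscr{C}^{1,1}$ and the Neumann datum lies in $W^{1/2,2}(\Gamma)$, $W^{2,2}$-elliptic regularity gives $p\in W^{2,2}(\Omega)$ with $\|\nabla p\|_{W^{1,2}}\lesssim_\Omega\|\dive v\|_{L^2}+\|v\cdot n\|_{W^{1/2,2}(\Gamma)}$. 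Then $z\defeq v-\nabla p$ satisfies $\dive z=0$, $\curl z=\curl v\in L^2$, and $z\cdot n|_\Gamma=0$. Granting the core estimate $\|z\|_{W^{1,2}}\lesssim_\Omega\|z\|_{L^2}+\|\curl z\|_{L^2}$ for such fields, we conclude $v=z+\nabla p\in W^{1,2}(\Omega,\R^3)$ and, using $\|z\|_{L^2}\le\|v\|_{L^2}+\|\nabla p\|_{L^2}$, the asserted bound; applying the same computation to a $v$ already known to be in $W^{1,2}$ yields the estimate displayed in the statement.

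It remains to establish the core estimate, which is where the $\mathscr{C}^{1,1}$ hypothesis is essential. For $z$ \emph{smooth} with $\dive z=0$ and $z\cdot n|_\Gamma=0$ one has $-\Delta z=\curl\curl z$, and pairing this with $z$, integrating by parts and using Green's formula \eqref{Green's formula}, gives $\int_\Omega|\nabla z|^2=\int_\Omega|\curl z|^2+B(z)$, where $B(z)$ is a boundary integral whose integrand is a quadratic form in the trace of $z$ with coefficients built from the curvatures of $\Gamma$. Since $\Gamma\in\mathscr{C}^{1,1}$ these coefficients are bounded, so $|B(z)|\le C_\Omega\|z\|_{L^2(\Gamma)}^2\le\varepsilon\|z\|_{W^{1,2}}^2+C_{\varepsilon,\Omega}\|z\|_{L^2}^2$ by the trace inequality and interpolation; absorbing the $\varepsilon$-term gives the estimate for smooth $z$. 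The genuine obstacle is that a general $z$ in the reduced class is a priori only $L^2$, so this computation is not directly available; one must justify it by approximation — regularising $z$ while preserving $\dive z=0$ and $z\cdot n|_\Gamma=0$, or exhausting $\Omega$ by smooth interior domains, or running a tangential difference-quotient argument near the boundary — and it is precisely here that Lipschitz regularity of $\Gamma$ fails to control the curvature term. I expect this approximation/boundary-regularity step, rather than any algebraic manipulation, to be the main difficulty; alternatively one may simply invoke \cite[Corollary 3.7]{GR}, where exactly this is carried out.
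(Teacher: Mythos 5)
The paper does not prove this statement at all: Theorem \ref{Foias-Temam theorem} is quoted as a known characterisation due to Foias and Temam, with a pointer to \cite[Corollary 3.7]{GR} and the remark that Lipschitz regularity of $\Gamma$ would not suffice. So there is no proof in the paper to compare against; what you have written is a reconstruction of the standard literature proof, and as an outline it is essentially correct. The easy inclusion, the multiplication of the $W^{1/2,2}(\Gamma)$ trace by the Lipschitz normal, the reduction via the Neumann problem $\Delta p=\dive v$, $\partial_n p=v\cdot n$ (with compatibility from \eqref{Gauss identity} and $W^{2,2}$ regularity on a $\mathscr{C}^{1,1}$ domain), and the identity $\int_\Omega|\nabla z|^2=\int_\Omega|\curl z|^2+\int_\Omega|\dive z|^2+B(z)$ with $B$ controlled by the (a.e.\ bounded) curvature of a $\mathscr{C}^{1,1}$ boundary, followed by the trace/interpolation absorption, are exactly the ingredients of the classical argument (cf.\ also \cite[Theorem 2.9 ff.]{ABDG}); note in passing that your core estimate is valid as stated even in the multiply connected case, since the $\|z\|_{L^2}$ term on the right absorbs the harmonic Neumann fields.

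The one genuine gap, which you correctly identify yourself, is the final regularity step: the integration-by-parts identity only yields an a priori bound for fields \emph{already known} to be smooth (or $W^{1,2}$), whereas the content of the reverse inclusion is precisely that a field $z\in L^2$ with $\dive z=0$, $\curl z\in L^2$, $z\cdot n|_\Gamma=0$ actually \emph{belongs} to $W^{1,2}(\Omega,\R^3)$. Passing from the smooth estimate to this membership requires either a density result for such fields in the graph norm that preserves the boundary condition, or a direct boundary-regularity argument (local flattening plus tangential difference quotients), and this is where the $\mathscr{C}^{1,1}$ hypothesis genuinely enters; your sketch does not carry this out but delegates it to \cite[Corollary 3.7]{GR}. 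Since that is exactly the citation the paper itself uses for the whole theorem, your fallback coincides with the paper's treatment; just be aware that, read as a self-contained proof, the proposal establishes the estimate for $v\in W^{1,2}$ but not the set equality without that outsourced step.
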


\subsection{Bochner spaces} \label{Bochner spaces}
We recall some basic facts on time-dependent mappings in Bochner spaces in a generality needed in this article. We do not discuss the definitions of Bochner measurability and Bochner integrability but refer to ~\cite{HVNVW} for a thorough introduction to Bochner spaces and to ~\cite{Rou} for a shorter one with an emphasis on applications in PDE's.

Whenever $1 \le p < \infty$ and $X$ is a Banach space, the Bochner space $L^p(0,T;X)$ consists of (classes with respect to equality a.e. $t \in (0,T)$ of) Bochner integrable functions $v \colon (0,T) \to X$ satisfying $\int_0^T \|v(\cdot,t)\|_X^p dt < \infty$. For  $L^\infty(0,T;X)$ the corresponding condition is $\norm{\norm{v(\cdot,t)}_X}_{L^\infty(0,T)} < \infty$. If $1 \le p < \infty$ and $X^*$ is separable, then $(L^p(0,T;X))^* = L^{p'}(0,T;X^*)$ with the duality pairing given by
\[\langle f, v \rangle_{L^{p'}(0,T;X^*)-L^p(0,T;X)} \defeq \int_0^T \langle f(\cdot,t), v(\cdot,t) \rangle_{X^*-X} dt\]
(see ~\cite[Corollary 1.3.22]{HVNVW}). Furthermore, then $L^p(0,T;X)$ is separable (see ~\cite[Proposition 1.2.29]{HVNVW}) and thus every bounded sequence in $L^{p'}(0,T;X^*)$ has a weak-$*$ convergent subsequence. We also denote by $C_w([0,T); X)$ the set of mappings $v \colon [0,T) \to X$ defined at every $t \in [0,T)$ and satisfying $t_j \to t$ in $[0,T)$ $\Rightarrow$ $v(\cdot,t_j) \rightharpoonup v(\cdot,t)$ in $X$.

Whenever $f \in L^1(0,T;X)$, $0 < \delta < T/2$ and $\theta \in C_c^\infty(\R)$ with $\supp(\theta) \subset (-\delta,\delta)$, we define $f * \theta \in C^\infty(\delta,T-\delta; X)$ by $f * \theta(\cdot,t) \defeq \int_0^T \theta(t-s) f(\cdot,s) \, ds \in X$. We record a variant of Young's convolution inequality.

\begin{lem} \label{Lemma on time integrability of mollified functions}
Suppose $p,q,r \in [1,\infty]$ with $1/p+1/q=1+1/r$ and $1 \le s < \infty$. If $v \in L^p(0,T; L^s(\Omega))$ and $\theta \in C_c^\infty(\R)$ with $\supp(\theta) \subset (-\delta,\delta)$, then
\[\norm{v * \theta}_{L^r(\delta,T-\delta; L^s(\Omega))} \le \norm{v}_{L^p(0,T; L^s(\Omega))} \norm{\theta}_{L^q(-\delta,\delta)}.\]
\end{lem}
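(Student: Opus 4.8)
The plan is to reduce the Bochner-space estimate to the classical scalar Young convolution inequality on $\R$, passing through Minkowski's integral inequality in $L^s(\Omega)$.

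First I would extend $v$ by zero outside $(0,T)$, so that $v \in L^p(\R; L^s(\Omega))$ with $\norm{v}_{L^p(\R; L^s(\Omega))} = \norm{v}_{L^p(0,T; L^s(\Omega))}$, and observe that for every $t \in (\delta, T-\delta)$ one has $v * \theta(\cdot,t) = \int_\R \theta(t-s) v(\cdot,s) \, ds$, since $\supp(\theta) \subset (-\delta,\delta)$ forces the integrand to vanish for $s \notin (0,T)$; hence the zero-extension is harmless on the relevant time interval. Bochner measurability of $v$ guarantees that $(x,s) \mapsto \theta(t-s) v(x,s)$ is jointly measurable and that $s \mapsto g(s) \defeq \norm{v(\cdot,s)}_{L^s(\Omega)}$ is a measurable function, with $g \in L^p(\R)$ and $\norm{g}_{L^p(\R)} = \norm{v}_{L^p(0,T; L^s(\Omega))}$.

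Next, for each fixed $t \in (\delta, T-\delta)$, Minkowski's integral inequality applied in $L^s(\Omega)$ gives
\[\norm{v * \theta(\cdot,t)}_{L^s(\Omega)} \le \int_\R \abs{\theta(t-s)} \, \norm{v(\cdot,s)}_{L^s(\Omega)} \, ds = (\abs{\theta} * g)(t).\]
Then I would invoke the classical Young convolution inequality on $\R$: since $p,q,r \in [1,\infty]$ with $1/p + 1/q = 1 + 1/r$,
\[\norm{\abs{\theta} * g}_{L^r(\R)} \le \norm{\theta}_{L^q(\R)} \, \norm{g}_{L^p(\R)} = \norm{\theta}_{L^q(-\delta,\delta)} \, \norm{v}_{L^p(0,T; L^s(\Omega))}.\]
Restricting the $L^r$-norm on the left to the subinterval $(\delta, T-\delta)$ and combining with the pointwise bound above yields the claim.

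The computation is routine; the only points that require a little care are the joint measurability needed to legitimize Minkowski's inequality, which is exactly what Bochner measurability of $v$ supplies, and the observation that the support condition on $\theta$ makes the zero-extension harmless, so no boundary contributions appear. The borderline exponent cases in which one of $p,q,r$ equals $1$ or $\infty$ are covered by the same argument, since the scalar Young inequality holds there as well.
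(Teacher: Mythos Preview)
Your proof is correct and follows essentially the same approach as the paper: apply Minkowski's integral inequality in $L^s(\Omega)$ to reduce to the scalar function $t \mapsto \norm{v(\cdot,t)}_{L^s(\Omega)}$, then invoke Young's convolution inequality on the real line. Your version is in fact slightly more careful than the paper's, which writes out the computation only for $r<\infty$ and does not discuss the zero-extension or measurability points you raise.
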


\begin{proof}
By Minkowski's integral inequality and Young's convolution inequality,
\[\begin{array}{lcl}
& & \displaystyle \int_\delta^{T-\delta} \left( \int_\Omega |v * \theta(x,t)|^s dx \right)^\frac{r}{s} dt \\
&=& \displaystyle \int_\delta^{T-\delta} \left( \int_\Omega \left| \int_0^T v(x,\tau) \theta(t-\tau) \, d\tau \right|^s dx \right)^\frac{r}{s} dt \\
&\le& \displaystyle \int_\delta^{T-\delta} \left( \int_0^T \left( \int_\Omega \abs{v(x,\tau)}^s dx \right)^\frac{1}{s} \abs{\theta(t-\tau)} d\tau \right)^r dt \\
&=& \displaystyle \int_\delta^{T-\delta} (\norm{x \mapsto v(x,\cdot)}_{L^s(\Omega)} * \abs{\theta}(t))^r dt \\
&\le& \displaystyle \norm{v}_{L^p(0,T; L^s(\Omega))}^r \norm{\theta}_{L^q(-\delta,\delta)}^r.
\end{array}\]
\end{proof}

We fix, for the rest of this article, an even mollifier $\chi \in C_c^\infty(\R)$ with $\supp(\chi) \subset (-1,1)$ and $\int_{-1}^1 \chi(t) \, dt = 1$. We denote $\chi^\delta(t) \defeq \delta^{-1} \chi(t/\delta)$ for all $\delta > 0$ and $t \in \R$. For every $f \in L^1(0,T;L^1(\Omega))$ we denote $f_\delta \defeq f * \chi^\delta$. For a proof of the following mollifier approximation lemma see ~\cite[Proposition 1.2.32]{HVNVW}.

\begin{lem} \label{Mollifier approximation lemma}
Let $0 < \epsilon < T/2$ and suppose $1 \le p,q < \infty$ and $f \in L^p(0,T;L^q(\Omega))$. Then $\|f_\delta - f\|_{L^p(\epsilon,T-\epsilon;L^q(\Omega))} \to 0$ as $\delta \to 0$.
\end{lem}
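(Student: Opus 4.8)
The plan is to run the standard three-$\eta$ density argument, with Lemma \ref{Lemma on time integrability of mollified functions} supplying a bound on the mollification operator that is uniform in $\delta$. First I would record that, since $\supp(\chi^\delta) \subset (-\delta,\delta)$ and $\norm{\chi^\delta}_{L^1(\R)} = 1$, applying Lemma \ref{Lemma on time integrability of mollified functions} with the convolution exponent equal to $1$ and the output time-exponent equal to $p$ (so that indeed $1/p + 1/1 = 1 + 1/p$) gives
\[\norm{h_\delta}_{L^p(\epsilon,T-\epsilon;L^q(\Omega))} \le \norm{h_\delta}_{L^p(\delta,T-\delta;L^q(\Omega))} \le \norm{h}_{L^p(0,T;L^q(\Omega))}\]
for every $h \in L^p(0,T;L^q(\Omega))$ and every $0 < \delta < \epsilon$; the first inequality merely uses $(\epsilon,T-\epsilon) \subset (\delta,T-\delta)$, which is the interval on which $h_\delta = h * \chi^\delta$ is a priori defined. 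Hence the linear maps $h \mapsto h_\delta$, for $0 < \delta < \epsilon$, are uniformly bounded (in fact contractions) from $L^p(0,T;L^q(\Omega))$ into $L^p(\epsilon,T-\epsilon;L^q(\Omega))$.

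Next I would invoke density: since $1 \le p < \infty$, the simple functions, and therefore the $L^q(\Omega)$-valued continuous functions on $[0,T]$ that vanish near $\{0,T\}$, are dense in $L^p(0,T;L^q(\Omega))$. Given $\eta > 0$, pick such a $g$ with $\norm{f - g}_{L^p(0,T;L^q(\Omega))} < \eta$. Extended by zero, $g$ is uniformly continuous as a map $\R \to L^q(\Omega)$, so $\omega(s) \defeq \sup_{t} \norm{g(\cdot,t) - g(\cdot,t-s)}_{L^q(\Omega)} \to 0$ as $s \to 0$; writing $g_\delta(\cdot,t) - g(\cdot,t) = \int_\R \chi^\delta(s)\,(g(\cdot,t-s) - g(\cdot,t))\,ds$ and using $\int_\R \chi^\delta = 1$ then yields $\sup_{t \in [\epsilon,T-\epsilon]} \norm{g_\delta(\cdot,t) - g(\cdot,t)}_{L^q(\Omega)} \le \sup_{\abs{s} \le \delta} \omega(s) \to 0$, whence $\norm{g_\delta - g}_{L^p(\epsilon,T-\epsilon;L^q(\Omega))} \to 0$ as $\delta \to 0$.

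Finally I would combine the two steps: for $0 < \delta < \epsilon$,
\[\norm{f_\delta - f}_{L^p(\epsilon,T-\epsilon;L^q(\Omega))} \le \norm{f_\delta - g_\delta}_{L^p(\epsilon,T-\epsilon;L^q(\Omega))} + \norm{g_\delta - g}_{L^p(\epsilon,T-\epsilon;L^q(\Omega))} + \norm{g - f}_{L^p(\epsilon,T-\epsilon;L^q(\Omega))},\]
where the first term equals $\norm{(f-g)_\delta}_{L^p(\epsilon,T-\epsilon;L^q(\Omega))} \le \norm{f-g}_{L^p(0,T;L^q(\Omega))} < \eta$ by the first step, the last term is $< \eta$ trivially, and the middle term tends to $0$ by the second step; thus $\limsup_{\delta \to 0} \norm{f_\delta - f}_{L^p(\epsilon,T-\epsilon;L^q(\Omega))} \le 2\eta$, and letting $\eta \to 0$ concludes. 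I do not anticipate a genuine obstacle: the only point requiring care is the bookkeeping with the interval of definition — $f_\delta$ lives a priori only on $(\delta,T-\delta)$, which is exactly why the statement is localised to the interior interval $(\epsilon,T-\epsilon)$ — and the rest is the routine density-plus-uniform-continuity machinery (one could alternatively simply cite \cite[Proposition 1.2.32]{HVNVW}).
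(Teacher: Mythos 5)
Your proof is correct. The paper does not actually prove this lemma itself --- it simply cites \cite[Proposition 1.2.32]{HVNVW}, as you anticipate at the end --- so what you have written is a self-contained substitute for that citation, built in the standard way: uniform-in-$\delta$ boundedness of $h \mapsto h_\delta$ from $L^p(0,T;L^q(\Omega))$ to $L^p(\epsilon,T-\epsilon;L^q(\Omega))$ (obtained, nicely, from the paper's own Lemma \ref{Lemma on time integrability of mollified functions} with convolution exponent $1$), density of $L^q(\Omega)$-valued continuous functions, uniform continuity to handle the smooth representative, and the three-term triangle inequality. The bookkeeping about $f_\delta$ only being defined on $(\delta,T-\delta)$ is handled correctly, and the identity $g_\delta - g = \int \chi^\delta(s)\,(g(\cdot,\cdot-s)-g)\,ds$ is legitimate on $[\epsilon,T-\epsilon]$ once $\delta<\epsilon$, since the support of $\chi^\delta(t-\cdot)$ then lies inside $(0,T)$. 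One small caveat: the paper only assumes $\chi$ is an even compactly supported mollifier with $\int\chi=1$, not that $\chi\ge 0$, so $\norm{\chi^\delta}_{L^1(\R)}$ need not equal $1$ and your maps need not be contractions; but they are uniformly bounded by $\norm{\chi}_{L^1(\R)}$, and the same constant appears harmlessly in the modulus-of-continuity step, so the argument goes through verbatim with $\eta$ replaced by a constant multiple of $\eta$.
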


The following interpolation inequalities will also be useful to us.

\begin{lem} \label{Interpolation lemma}
For every $v,w \in L^\infty(0,T; L^2(\Omega,\R^3)) \cap L^2(0,T;W^{1,2}(\Omega,\R^3))$ we have
\begin{align*}
& \norm{v}_{L^4(0,T;L^3(\Omega))} \lesssim_\Omega \norm{v}_{L^2(0,T;W^{1,2}(\Omega)}^{1/2} \norm{v}_{L^\infty(0,T;L^2(\Omega))}^{1/2}, \\
& \norm{v \otimes w}_{L^1(0,T;W^{1,3/2}(\Omega))} \lesssim_\Omega \|v\|_{L^2(0,T;W^{1,2}(\Omega))} \|w\|_{L^2(0,T;W^{1,2}(\Omega))}, \\
& \norm{v \otimes w}_{L^{4/3}(0,T;L^2(\Omega))} \lesssim_\Omega \|v\|_{L^2(0,T;W^{1,2}(\Omega))}^{3/4} \|v\|_{L^\infty(0,T;L^2(\Omega))}^{1/4} \\
&\cdot \|w\|_{L^2(0,T;W^{1,2}(\Omega))}^{3/4} \|w\|_{L^\infty(0,T;L^2(\Omega))}^{1/4},
\end{align*}
where $v \otimes w \defeq [v_i w_j]_{i,j=1}^3$ is the tensor product of $v$ and $w$.
\end{lem}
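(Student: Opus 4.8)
The plan is to reduce all three estimates to pointwise-in-$t$ Gagliardo--Nirenberg--Sobolev inequalities in the spatial variable, followed by Hölder's inequality in time. The only spatial input needed is the Sobolev embedding $W^{1,2}(\Omega) \hookrightarrow L^6(\Omega)$, which holds since $\Omega$ is a bounded $\mathscr{C}^{1,1}$ (hence $W^{1,2}$-extension) domain, together with the elementary interpolation $L^q = [L^2,L^6]_\theta$ on the bounded set $\Omega$. Concretely, interpolating $L^3$ between $L^2$ and $L^6$ gives $\norm{f}_{L^3(\Omega)} \lesssim_\Omega \norm{f}_{L^2(\Omega)}^{1/2} \norm{f}_{W^{1,2}(\Omega)}^{1/2}$, and interpolating $L^4$ gives $\norm{f}_{L^4(\Omega)} \lesssim_\Omega \norm{f}_{L^2(\Omega)}^{1/4} \norm{f}_{W^{1,2}(\Omega)}^{3/4}$; these two pointwise inequalities drive everything.

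For the first inequality I would raise $\norm{v(\cdot,t)}_{L^3(\Omega)} \lesssim_\Omega \norm{v(\cdot,t)}_{L^2(\Omega)}^{1/2} \norm{v(\cdot,t)}_{W^{1,2}(\Omega)}^{1/2}$ to the fourth power, bound the factor $\norm{v(\cdot,t)}_{L^2(\Omega)}^2$ by $\norm{v}_{L^\infty(0,T;L^2(\Omega))}^2$ for a.e. $t$, integrate over $(0,T)$ so that the remaining time integral is exactly $\norm{v}_{L^2(0,T;W^{1,2}(\Omega))}^2$, and take fourth roots. For the second inequality I would use the product rule $\partial_k(v_i w_j) = (\partial_k v_i) w_j + v_i (\partial_k w_j)$ and estimate each summand in $L^{3/2}(\Omega)$ by Hölder with $\tfrac12+\tfrac16=\tfrac23$, namely $\norm{(\partial v) w}_{L^{3/2}(\Omega)} \le \norm{\partial v}_{L^2(\Omega)} \norm{w}_{L^6(\Omega)} \lesssim_\Omega \norm{v}_{W^{1,2}(\Omega)} \norm{w}_{W^{1,2}(\Omega)}$ and symmetrically; the zeroth-order part $\norm{v\otimes w}_{L^{3/2}(\Omega)}$ is handled by Hölder with $\tfrac13+\tfrac13$ and $W^{1,2}(\Omega)\hookrightarrow L^6(\Omega)\hookrightarrow L^3(\Omega)$. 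This yields the pointwise bound $\norm{v\otimes w(\cdot,t)}_{W^{1,3/2}(\Omega)} \lesssim_\Omega \norm{v(\cdot,t)}_{W^{1,2}(\Omega)} \norm{w(\cdot,t)}_{W^{1,2}(\Omega)}$, and Cauchy--Schwarz in $t$ finishes it.

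For the third inequality I would estimate $\norm{v\otimes w(\cdot,t)}_{L^2(\Omega)} \le \norm{v(\cdot,t)}_{L^4(\Omega)} \norm{w(\cdot,t)}_{L^4(\Omega)}$ and substitute the $L^4$-interpolation above to obtain $\norm{v\otimes w(\cdot,t)}_{L^2(\Omega)} \lesssim_\Omega \norm{v(\cdot,t)}_{L^2(\Omega)}^{1/4} \norm{v(\cdot,t)}_{W^{1,2}(\Omega)}^{3/4} \norm{w(\cdot,t)}_{L^2(\Omega)}^{1/4} \norm{w(\cdot,t)}_{W^{1,2}(\Omega)}^{3/4}$. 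Raising to the power $4/3$, pulling the $L^2$-in-space factors out of the integral as $\norm{v}_{L^\infty(0,T;L^2(\Omega))}^{1/3} \norm{w}_{L^\infty(0,T;L^2(\Omega))}^{1/3}$, applying Cauchy--Schwarz in $t$ to $\int_0^T \norm{v(\cdot,t)}_{W^{1,2}(\Omega)} \norm{w(\cdot,t)}_{W^{1,2}(\Omega)}\,dt$, and finally taking the power $3/4$ yields the stated bound.

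There is essentially no obstacle here; the estimates are classical. The only points meriting a line of care are that the Sobolev and interpolation constants depend on $\Omega$ alone (guaranteed by the $\mathscr{C}^{1,1}$ regularity through a universal extension operator), and that the ``$\norm{\cdot}_{L^\infty(0,T;\cdot)}$'' factors are used as genuine pointwise a.e.\ bounds, which is immediate from the definition of the Bochner norm.
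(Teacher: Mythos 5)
Your proposal is correct and follows essentially the same route as the paper: pointwise-in-$t$ Hölder and Lebesgue interpolation in space combined with the Sobolev embedding $W^{1,2}(\Omega)\hookrightarrow L^6(\Omega)$, followed by Hölder (Cauchy--Schwarz) in time. You merely spell out the details that the paper delegates to a standard reference for the first inequality and to ``similar reasoning'' for the third, and your exponent bookkeeping checks out in all three estimates.
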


\begin{proof}
The first inequality is a standard interpolation and can be found e.g. at ~\cite[p. 74]{RRS} (up to a use of the Sobolev embedding $W^{1,2}(\Omega) \subset L^6(\Omega)$). For the second one note that at a.e. $t \in (0,T)$, H\"{o}lder's inequality and the Sobolev embedding $W^{1,2}(\Omega) \hookrightarrow L^6(\Omega)$ yield 
\[\norm{\abs{v} \abs{\nabla w}}_{L^{3/2}(\Omega)}
\le \norm{v}_{L^6(\Omega)} \norm{\nabla w}_{L^2(\Omega)}
\lesssim_\Omega \norm{v}_{W^{1,2}(\Omega)} \norm{w}_{W^{1,2}(\Omega)}.\]
A similar inequality holds for $\abs{w} \abs{\nabla w}$ and $\abs{v} \abs{w}$, and one then uses the Cauchy-Schwarz inequality on time integrals to finish the proof. Similar reasoning is used to prove the third inequality of the lemma.
\end{proof}

We also recall the Aubin-Lions Lemma which we formulate in a form that suffices for the purposes of this article (see ~\cite[Lemma 7.7]{Rou}).

\begin{lem} \label{Aubin-Lions lemma}
Let $X$, $Y$ and $Z$ be reflexive Banach spaces such that $X$ embeds compactly into $Y$ and $Y$ embeds into $Z$. Suppose $1 < p < \infty$ and $1 \le q \le \infty$. Then $\{u \in L^p(0,T;X) \colon \partial_t u \in L^q(0,T;Z)\}$ embeds compactly into $L^p(0,T;Y)$.
\end{lem}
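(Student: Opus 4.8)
The plan is to derive the compactness from two ingredients that do not involve time: a quantitative (``Ehrling-type'') form of the compact embedding of $X$ into $Y$, and the Fr\'echet--Kolmogorov--Riesz compactness criterion for vector-valued $L^p$-spaces (in the form due to Simon). Write $W \defeq \{u \in L^p(0,T;X) : \partial_t u \in L^q(0,T;Z)\}$, normed by $\norm{u}_W \defeq \norm{u}_{L^p(0,T;X)} + \norm{\partial_t u}_{L^q(0,T;Z)}$; note that continuity of the embedding $X \hookrightarrow Z$ gives $W \subset W^{1,1}(0,T;Z)$, so every $u \in W$ has an absolutely continuous representative $[0,T] \to Z$ with $u(t) - u(s) = \int_s^t \partial_t u(\tau)\,d\tau$ in $Z$, and the embedding $W \hookrightarrow L^p(0,T;Y)$ is continuous. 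It therefore suffices to show that an arbitrary bounded sequence $(u_n)$ in $W$ has a subsequence converging in $L^p(0,T;Y)$.

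First I would prove the interpolation inequality: for every $\varepsilon > 0$ there is $C_\varepsilon > 0$ with $\norm{v}_Y \le \varepsilon \norm{v}_X + C_\varepsilon \norm{v}_Z$ for all $v \in X$. This is the standard contradiction argument: if it fails for some $\varepsilon_0 > 0$, pick $v_k \in X$ with $\norm{v_k}_X = 1$ and $\norm{v_k}_Y > \varepsilon_0 + k \norm{v_k}_Z$; then $(v_k)$ is bounded in $X$, so by compactness of the embedding of $X$ into $Y$ a subsequence converges in $Y$ to some $w$, while $\norm{v_k}_Z \le \norm{v_k}_Y / k \le C/k \to 0$ (using continuity of $X \hookrightarrow Y$) forces $w = 0$ via continuity of $Y \hookrightarrow Z$, contradicting $\norm{v_k}_Y > \varepsilon_0$.

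Next, for the bounded sequence $(u_n)$ I would verify the two hypotheses of the Fr\'echet--Kolmogorov criterion guaranteeing relative compactness in $L^p(0,T;Y)$ (valid for $1 \le p < \infty$, in particular here): (a) for all $0 < t_1 < t_2 < T$ the set $\big\{ \int_{t_1}^{t_2} u_n(t)\,dt : n \in \N \big\}$ is relatively compact in $Y$; and (b) $\sup_n \norm{u_n(\cdot + h) - u_n}_{L^p(0,T-h;Y)} \to 0$ as $h \to 0^+$. For (a), H\"older in time gives $\big\lVert \int_{t_1}^{t_2} u_n\,dt \big\rVert_X \le (t_2-t_1)^{1/p'} \norm{u_n}_{L^p(0,T;X)}$, which is bounded, and bounded subsets of $X$ are precompact in $Y$. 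For (b), I would apply the interpolation inequality pointwise in $t$ to $u_n(\cdot+h)-u_n$ and integrate:
\[
\norm{u_n(\cdot+h)-u_n}_{L^p(0,T-h;Y)} \le \varepsilon\,\norm{u_n(\cdot+h)-u_n}_{L^p(0,T-h;X)} + C_\varepsilon\,\norm{u_n(\cdot+h)-u_n}_{L^p(0,T-h;Z)},
\]
bound the first term by $2\varepsilon \sup_n \norm{u_n}_{L^p(0,T;X)}$, and for the second use $u_n(t+h) - u_n(t) = \int_t^{t+h} \partial_t u_n(s)\,ds$ in $Z$ together with H\"older in $s$ and Young's convolution inequality (cf.\ Lemma \ref{Lemma on time integrability of mollified functions}) to obtain $\norm{u_n(\cdot+h)-u_n}_{L^p(0,T-h;Z)} \le C(p,q,T)\, h^{\theta} \norm{\partial_t u_n}_{L^q(0,T;Z)}$ for some exponent $\theta = \theta(p,q) > 0$. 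Choosing first $\varepsilon$ small and then $h$ small yields (b), and the compactness criterion completes the proof.

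The main obstacle is keeping the translation estimate in (b) uniform in $n$ with a \emph{strictly positive} power $h^\theta$ across the whole range $q \in [1,\infty]$: when $q = 1$ the naive bound $\int_t^{t+h} \norm{\partial_t u_n(s)}_Z\,ds \le h^{1/q'} \norm{\partial_t u_n}_{L^q(t,t+h;Z)}$ is useless since $q' = \infty$, and one must instead view $t \mapsto \int_t^{t+h} \norm{\partial_t u_n(s)}_Z\,ds$ as the convolution of $\norm{\partial_t u_n(\cdot)}_Z \in L^1(0,T)$ with $\mathbf{1}_{[0,h]}$ and invoke $\norm{f * \mathbf{1}_{[0,h]}}_{L^p} \le \norm{f}_{L^1}\, h^{1/p}$ (the same type of Young estimate as in Lemma \ref{Lemma on time integrability of mollified functions}); an analogous Young estimate covers $1 < q < p$, while $q \ge p$ is handled by the plain H\"older bound above. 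A secondary, purely bookkeeping point is justifying the vector-valued Fr\'echet--Kolmogorov criterion itself -- in particular the absence of concentration of mass near $t = 0$ and $t = T$, which condition (b) also controls -- and for this one may simply appeal to Simon's compactness theorem.
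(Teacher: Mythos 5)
Your argument is correct, but it is worth noting that the paper does not prove this lemma at all: it is quoted directly from the literature (\cite[Lemma 7.7]{Rou}), so there is no internal proof to compare against. What you have written is essentially Simon's classical proof of the Aubin--Lions--Simon lemma: the Ehrling-type inequality $\norm{v}_Y \le \varepsilon \norm{v}_X + C_\varepsilon \norm{v}_Z$ obtained by the standard compactness--contradiction argument is sound, condition (a) follows correctly from H\"older in time plus the compact embedding $X \hookrightarrow Y$, and your treatment of the equicontinuity condition (b) is careful where it needs to be -- in particular you correctly identify that the naive H\"older bound degenerates at $q=1$ and replace it by the Young-type convolution estimate (the same mechanism as Lemma \ref{Lemma on time integrability of mollified functions}), which yields a positive power $h^{\theta}$ uniformly over the whole range $1 \le q \le \infty$ once you split into the cases $q < p$ and $q \ge p$. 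Two remarks on what each route buys: your proof never uses reflexivity of $X$, $Y$, $Z$, so it actually establishes a slightly more general statement than the one quoted (the reflexivity hypothesis in the paper is inherited from Roub\'{\i}\v{c}ek's formulation, whose proof extracts weak limits); on the other hand your argument is not fully self-contained either, since it delegates the vector-valued Fr\'echet--Kolmogorov characterisation of relative compactness in $L^p(0,T;Y)$ (including the endpoint behaviour near $t=0$ and $t=T$) to Simon's theorem, which is a reasonable but nontrivial external input of the same order as the citation the paper itself uses.
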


\subsection{Leray-Hopf solutions of viscous, resistive MHD equations and the inviscid, non-resistive limit} \label{Leray-Hopf solutions of viscous, resistive MHD equations and the inviscid limit}
We recall the definition and present an existence theorem on Leray-Hopf solutions of viscous, resistive 3D MHD equations. When $1 < p < \infty$, we denote the relevant function spaces by
\begin{align*}
& C_{c,\sigma}^\infty(\Omega,\R^3) \defeq \{\varphi \in C_c^\infty(\Omega,\R^3) \colon \dive \varphi = 0\}, \\
& L^p_\sigma(\Omega,\R^3) \defeq \overline{C_{c,\sigma}^\infty(\Omega,\R^3)}^{L^p(\Omega,\R^3)} = \{v \in L^p(\Omega,\R^3) \colon \dive v = 0, \; v \cdot n|_{\Gamma} = 0\}, \\
& W^{1,p}_{0,\sigma}(\Omega,\R^3) \defeq \overline{C_{c,\sigma}^\infty(\Omega,\R^3)}^{W_0^{1,p}(\Omega,\R^3)} = \{v \in W_0^{1,p}(\Omega,\R^3) \colon \dive v = 0\}, \\
& W^{1,p}_\sigma(\Omega,\R^3) \defeq \{v \in W^{1,p}(\Omega,\R^3) \colon \dive v = 0, \; v \cdot n|_{\Gamma} = 0\}.
\end{align*}
(for the two identities see e.g. ~\cite[Theorems III.2.3 and III.4.1]{Gal2}).
Leray-Hopf solutions of MHD are defined by the following standard variational formulation.

\begin{defin} \label{Leray-Hopf solutions}
Let $u_0,b_0 \in L^2_\sigma(\Omega,\R^3)$. Suppose that $u \in C_w([0,T);L^2_\sigma(\Omega,\R^3)) \cap L^2(0,T; W^{1,2}_{0,\sigma}(\Omega))$ and $b \in C_w([0,T);L^2_\sigma(\Omega,\R^3)) \cap L^2(0,T; W^{1,2}_\sigma(\Omega))$ satisfy $\partial_t u \in L^1(0,T;(W^{1,2}_{0,\sigma}(\Omega,\R^3))^*)$ and $\partial_t b \in L^1(0,T;(W^{1,2}_\sigma(\Omega,\R^3))^*)$, and that
\begin{align}
& \langle \partial_t u, \varphi \rangle + \int_{\Omega} (u \cdot \nabla u - b \cdot \nabla b) \cdot \varphi + \nu \int_\Omega \nabla u : \nabla \varphi = 0, \label{Resistive MHD weak definition 1} \\
& \langle \partial_t b, \theta \rangle + \int_{\Omega} b \times u \cdot \curl \theta + \mu \int_\Omega \curl b \cdot \curl \theta = 0 \label{Resistive MHD weak definition 2a}
\end{align}
hold at a.e. $t \in [0,T)$ and every $\varphi \in W^{1,2}_{0,\sigma}(\Omega,\R^3)$ and $\theta \in W^{1,2}_\sigma(\Omega,\R^3)$. Suppose furthermore that $u(\cdot,0) = u_0$ and $b(\cdot,0) = b_0$ and that $u$ and $b$ satisfy the \emph{energy inequality}
\[\begin{array}{lcl}
& & \displaystyle \frac{1}{2} \int_{\Omega} (\abs{u(x,t)}^2 + \abs{b(x,t)}^2) \, dx \\
&+& \displaystyle \int_0^t \int_{\Omega} (\nu \abs{\nabla u(x,\tau)}^2 + \mu \abs{\curl b(x,\tau)}^2) \, dx \, d\tau \\
&\le& \displaystyle \frac{1}{2} \int_{\Omega} (\abs{u_0(x)}^2 + \abs{b_0(x)}^2) \, dx
\end{array}\]
for all $t \in (0,T)$. Then $(u,b)$ is called a \emph{Leray-Hopf solution} of \eqref{Resistive MHD}--\eqref{Resistive MHD6}.
\end{defin}

Note that \eqref{Resistive MHD weak definition 2a} captures in a weak sense the condition $(\curl b) \times n|_\Gamma = 0$. Also note that \eqref{Resistive MHD weak definition 2a} and the condition $b(\cdot,0) = b_0$ imply
\begin{equation} \label{Resistive MHD weak definition 2}
\int_0^T \partial_t \eta \int_\Omega b \cdot \theta - \int_0^T \eta \int_{\Omega} b \times u \cdot \curl \theta - \mu \int_0^t \eta \int_\Omega \curl b \cdot \curl \theta + \eta(0) \int_\Omega b_0 \cdot \theta = 0
\end{equation}
for all $\eta \in C_c^\infty([0,T))$ and $\theta \in W^{1,2}_\sigma(\Omega,\R^3)$. As mentioned in the introduction, we present a proof of the following theorem in the Appendix.

\begin{thm} \label{Theorem on Leray-Hopf solutions}
Let $u_0,b_0 \in L^2_\sigma(\Omega;\R^3)$. Then there exists a Leray-Hopf solution $(u,b)$ of \eqref{Resistive MHD}--\eqref{Resistive MHD6}.
\end{thm}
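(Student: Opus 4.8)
The plan is to construct $(u,b)$ by a Galerkin scheme adapted to the two different boundary conditions, and then pass to the limit. First I would fix an $L^2_\sigma(\Omega,\R^3)$-orthonormal basis $\{\phi_k\}_{k\in\N}\subset W^{1,2}_{0,\sigma}(\Omega,\R^3)$ consisting of eigenfunctions of the Stokes operator (the self-adjoint operator associated with the coercive form $\int_\Omega \nabla v:\nabla w$ on $W^{1,2}_{0,\sigma}$), and an $L^2_\sigma(\Omega,\R^3)$-orthonormal basis $\{\beta_k\}_{k\in\N}\subset W^{1,2}_\sigma(\Omega,\R^3)$ consisting of eigenfunctions of the self-adjoint operator associated with the symmetric form $a(v,w)=\int_\Omega(\curl v\cdot\curl w+v\cdot w)$ on $W^{1,2}_\sigma$. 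The key point is that $a$ is coercive on $W^{1,2}_\sigma(\Omega,\R^3)$ — equivalently, $\{v\in L^2_\sigma(\Omega,\R^3)\colon \curl v\in L^2(\Omega,\R^3)\}=W^{1,2}_\sigma(\Omega,\R^3)$ with comparable norms — which is precisely the Gaffney–Friedrichs estimate contained in Theorem \ref{Foias-Temam theorem}, and is the one place where the $\mathscr{C}^{1,1}$ regularity of $\Gamma$ is essential. Since $W^{1,2}_{0,\sigma}$ and $W^{1,2}_\sigma$ embed compactly in $L^2_\sigma$, these operators have compact resolvent, and the $L^2$-orthogonal projections $P_m$, $Q_m$ onto the first $m$ modes are also bounded (by one) on $W^{1,2}_{0,\sigma}$, resp.\ $W^{1,2}_\sigma$, hence on their duals. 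The nontrivial kernel $L^2_H(\Omega,\R^3)$ of $\curl$ in the multiply connected case is harmless here, since we only need a Hilbert basis of $W^{1,2}_\sigma$ with bounded projections, not an orthogonal decomposition into $\curl$-eigenspaces.

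Writing $u^m=\sum_{k\le m}g^m_k(t)\phi_k$, $b^m=\sum_{k\le m}h^m_k(t)\beta_k$ and projecting \eqref{Resistive MHD weak definition 1}--\eqref{Resistive MHD weak definition 2a} onto these spans yields a quadratic ODE system with initial data $g^m_k(0)=\langle u_0,\phi_k\rangle$, $h^m_k(0)=\langle b_0,\beta_k\rangle$, locally solvable by Cauchy–Lipschitz. Testing the $m$-th system against $u^m$ and $b^m$ themselves, the Navier--Stokes transport term drops out and the two coupling terms cancel: for divergence-free fields with $u^m|_\Gamma=0$ the identity $(b\cdot\nabla)b=(\curl b)\times b+\tfrac12\nabla|b|^2$ together with $(\curl b\times b)\cdot u=\curl b\cdot(b\times u)$ gives $\int_\Omega(b^m\cdot\nabla b^m)\cdot u^m=\int_\Omega(b^m\times u^m)\cdot\curl b^m$. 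Hence
\[
\tfrac12\tfrac{d}{dt}\big(\|u^m\|_{L^2}^2+\|b^m\|_{L^2}^2\big)+\nu\|\nabla u^m\|_{L^2}^2+\mu\|\curl b^m\|_{L^2}^2=0,
\]
so the solutions are global and $u^m,b^m$ are bounded in $L^\infty(0,T;L^2_\sigma)$, $\nabla u^m$ and $\curl b^m$ in $L^2(0,T;L^2)$; by the Gaffney estimate of Theorem \ref{Foias-Temam theorem} (using $\dive b^m=0$, $b^m\cdot n|_\Gamma=0$) the latter upgrades to $b^m$ bounded in $L^2(0,T;W^{1,2}_\sigma)$. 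Lemma \ref{Interpolation lemma} then bounds $u^m\otimes u^m$, $b^m\otimes b^m$, $b^m\times u^m$ in $L^{4/3}(0,T;L^2(\Omega))$, and testing the projected equations against elements of the factor spaces (using boundedness of $P_m,Q_m$ on the dual spaces) gives $\partial_t u^m$ bounded in $L^{4/3}(0,T;(W^{1,2}_{0,\sigma})^*)$ and $\partial_t b^m$ bounded in $L^{4/3}(0,T;(W^{1,2}_\sigma)^*)$.

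With these bounds I would extract, along a subsequence, weak-$*$ limits $u^m\overset{*}{\rightharpoonup}u$, $b^m\overset{*}{\rightharpoonup}b$ in $L^\infty(0,T;L^2_\sigma)$ and weak limits in $L^2(0,T;W^{1,2})$, and by the Aubin--Lions Lemma \ref{Aubin-Lions lemma} (with $X=W^{1,2}_{0,\sigma}$ or $W^{1,2}_\sigma$, $Y=L^2_\sigma$, $Z$ the corresponding dual) strong convergence $u^m\to u$, $b^m\to b$ in $L^2(0,T;L^2_\sigma)$. Interpolating this with the uniform $L^2(0,T;L^6)$ bound yields strong convergence in $L^2(0,T;L^3(\Omega))$, which suffices to pass to the limit in the quadratic terms against any fixed $\eta(t)\phi_k$, resp.\ $\eta(t)\beta_k$, $\eta\in C_c^\infty([0,T))$ — via the splitting $fg-f_\infty g_\infty=(f-f_\infty)g+f_\infty(g-g_\infty)$ estimated with the $L^3$-strong/$L^6$-bounded pairing (needed because $\curl\beta_k\in L^2$ only). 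The linear and time-derivative terms pass by weak convergence, and totality of the bases gives \eqref{Resistive MHD weak definition 1}--\eqref{Resistive MHD weak definition 2a} for all admissible test functions. Since $\partial_t u\in L^{4/3}(0,T;(W^{1,2}_{0,\sigma})^*)$ and $u\in L^\infty(0,T;L^2_\sigma)$ (likewise for $b$), after modification on a null set $u,b\in C_w([0,T);L^2_\sigma)$, and $P_m u_0\to u_0$, $Q_m b_0\to b_0$ in $L^2$ force $u(\cdot,0)=u_0$, $b(\cdot,0)=b_0$. The energy inequality follows by taking $\liminf$ in the Galerkin energy identity, using weak lower semicontinuity of the $L^2(\Omega)$ and $L^2(0,t;L^2(\Omega))$ norms together with $\|P_m u_0\|_{L^2}\le\|u_0\|_{L^2}$ and $\|Q_m b_0\|_{L^2}\le\|b_0\|_{L^2}$. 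The main obstacle throughout is the coercivity/regularity input: unlike Navier--Stokes, the dissipation controls only $\curl b$, so the choice of basis, the $L^2(0,T;W^{1,2})$ bound on $b$, and hence the compactness all rest on the $\mathscr{C}^{1,1}$ Gaffney estimate rather than on a Poincaré-type inequality.
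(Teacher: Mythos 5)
Your proposal is correct and follows the paper's overall scheme (Galerkin approximation with Stokes eigenfunctions for $u$, energy identity via the cancellation of the coupling terms, a priori bounds, Aubin--Lions compactness, passage to the limit, and lower semicontinuity for the energy inequality), but it takes a genuinely different route in the one place where multiple connectivity matters: the construction of the basis for the magnetic field. The paper first splits $L^2_\sigma(\Omega,\R^3)=L^2_\Sigma(\Omega,\R^3)\oplus L^2_H(\Omega,\R^3)$, puts the harmonic Neumann fields $h_1,\ldots,h_N$ explicitly into the basis, and diagonalises the pure curl--curl form only on $L^2_\Sigma$ (Lemmas \ref{Equivalent inner product}, \ref{Orthogonal decomposition lemma}, \ref{Magnetostatic lemma} and Proposition \ref{Proposition on projections}); you instead diagonalise the coercive form $a(v,w)=\int_\Omega(\curl v\cdot\curl w+v\cdot w)$ on all of $W^{1,2}_\sigma(\Omega,\R^3)$, with coercivity supplied by the Foias--Temam/Gaffney estimate of Theorem \ref{Foias-Temam theorem}, so the kernel $L^2_H$ never has to be isolated. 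Both constructions deliver the same essentials: an $L^2_\sigma$-orthonormal basis in $W^{1,2}_\sigma$ whose $L^2$-projections coincide with the form-orthogonal projections and are therefore uniformly bounded on $W^{1,2}_\sigma$ and its dual, which is exactly what is needed for the $\partial_t b_n$ bound and the Aubin--Lions argument. Your version is shorter and avoids the auxiliary decomposition lemmas; the paper's version buys the explicit structural information that $b_{n,H}$ is a combination of the smooth fields $h_1,\ldots,h_N$ (so, e.g., the resistive term acts trivially on the harmonic modes and the $W^{1,2}$ bound on $b_{n,H}$ is immediate), which is convenient but not logically necessary for Theorem \ref{Theorem on Leray-Hopf solutions}. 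The remaining steps of your argument (global solvability of the ODE system from the energy equality, the $L^{4/3}$-in-time bounds via Lemma \ref{Interpolation lemma}, strong $L^2_tL^2_x$ convergence, passage to the limit against $\eta(t)\phi_k$ and $\eta(t)\beta_k$ followed by density, weak $L^2$-continuity in time and attainment of the initial data, and the energy inequality at a.e.\ $t$ using pointwise-in-time convergence along a subsequence) match the paper's Lemmas \ref{Lemma on Galerkin approximations}--\ref{Lemma on equivalent definitions of weak solution}; the only minor gloss is that the fixed-time lower semicontinuity step should be phrased, as in the paper, via a.e.-in-$t$ convergence of $u_n(\cdot,t),b_n(\cdot,t)$ extracted from the strong $L^2(0,T;L^2)$ convergence.
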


Theorems \ref{Simply connected Taylor theorem} and \ref{Taylor theorem} do not assume that the inviscid, non-resistive (i.e. ideal) limit, defined below, holds. However, we mention the notion for completeness and also because it falls under the scope of Corollary \ref{Corollary on ideal limit}. It is a fundamental open problem under what conditions the inviscid, non-resistive limit holds in 3D MHD (see ~\cite{DLe}, ~\cite{WWLW}, ~\cite{Wu}, ~\cite{WW}, ~\cite{XXW} and ~\cite{Zhang} for partial results).

\begin{defin} \label{Definition of inviscid limit}
Suppose viscosities $\nu_j > 0$ and resistivities $\mu_j > 0$ satisfy $\nu_j, \mu_j \to 0$ and that divergence-free initial datas $u_{j,0} \to u_0$ and $b_{j,0} \to b_0$ in $L^2_\sigma(\Omega,\R^3)$. Assume that for every $j \in \N$, $(u_j,b_j)$ is a Leray-Hopf solution of \eqref{Resistive MHD}--\eqref{Resistive MHD6} and that $u,b \in L^\infty(0,T;L^2_\sigma(\Omega,\R^3))$ form a solution of \eqref{Resistive MHD}--\eqref{Resistive MHD4} with $\mu = \nu = 0$. We say that $(u,b)$ is the \emph{inviscid, non-resistive limit} or \emph{ideal limit} of $(u_j,b_j)$ (in the energy norm) if $\|u_j-u\|_{L^\infty(0,T;L^2(\Omega))} \to 0$ and $\|b_j - b\|_{L^\infty(0,T;L^2(\Omega))} \to 0$. We then also say that \emph{the inviscid, non-resistive limit holds for $(u_j,b_j)$ and $(u,b)$}.
\end{defin}

\section{Vector potentials and gauge dependence of magnetic helicity} \label{Vector potentials and the definition of magnetic helicity}
The aim of this section is to discuss the notion of magnetic helicity in multiply connected domains and to recall the existence of vector potentials satisfying the assumptions of Corollary \ref{Taylor corollary}.

\subsection{Magnetic helicity in multiply connected domains}
We first recall the Helmholtz-Weyl decomposition of $L^2(\Omega,\R^3)$ which is, in fact, valid in every domain of $\R^n$ for all $n \ge 2$ (see ~\cite[Theorem III.1.1]{Gal}).
\begin{thm} \label{Helmholtz-Weyl decomposition}
$L^2(\Omega) = L^2_\sigma(\Omega,\R^3) \oplus \nabla W^{1,2}(\Omega,\R^3)$.
\end{thm}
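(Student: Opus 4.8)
The plan is to obtain the decomposition as the orthogonal splitting of the Hilbert space $L^2(\Omega,\R^3)$ induced by the subspace $G \defeq \{\nabla p \colon p \in W^{1,2}(\Omega)\}$. The first step is to check that $G$ is closed: since $\Omega$ is, by Assumption \ref{Assumption on Omega}, a bounded connected Lipschitz domain, the Poincaré--Wirtinger inequality shows that the linear map $p + \R \mapsto \nabla p$ from the Banach space $W^{1,2}(\Omega)/\R$ into $L^2(\Omega,\R^3)$ is bounded and bounded below; hence its range $G$ is complete, and therefore closed in $L^2(\Omega,\R^3)$. Standard Hilbert space theory then gives the orthogonal splitting $L^2(\Omega,\R^3) = G \oplus G^{\perp}$, which is in particular a topological direct sum, so everything reduces to identifying $G^{\perp}$ with $L^2_\sigma(\Omega,\R^3)$.

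To prove $G^{\perp} = L^2_\sigma(\Omega,\R^3)$ I would treat the two inclusions separately. For ``$\supseteq$'': if $v \in C^\infty_{c,\sigma}(\Omega,\R^3)$ and $p \in W^{1,2}(\Omega)$, then integration by parts gives $\int_\Omega v \cdot \nabla p = -\int_\Omega (\dive v)\, p = 0$ (no boundary term, as $v$ is compactly supported), and since $L^2_\sigma(\Omega,\R^3)$ is by definition the $L^2$-closure of $C^\infty_{c,\sigma}(\Omega,\R^3)$, the relation $v \perp G$ passes to the limit. For ``$\subseteq$'': let $f \in G^{\perp}$. Testing $\int_\Omega f \cdot \nabla p = 0$ against $p \in C_c^\infty(\Omega)$ shows $\dive f = 0$ in the distributional sense, so $f \in H^2(\dive,\Omega)$ and its normal trace $f \cdot n \in W^{-1/2,2}(\Gamma)$ is well defined by Theorem \ref{Normal trace theorem}. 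The generalized Gauss identity \eqref{Gauss identity} then yields $\langle f \cdot n, p \rangle_\Gamma = \int_\Omega f \cdot \nabla p + \int_\Omega p \, \dive f = 0$ for every $p \in W^{1,2}(\Omega)$; since, by Theorem \ref{Trace is an isomorphism}, the traces of $W^{1,2}(\Omega)$ functions exhaust $W^{1/2,2}(\Gamma)$, which is the predual of $W^{-1/2,2}(\Gamma)$, it follows that $f \cdot n|_\Gamma = 0$. Hence $f$ satisfies $\dive f = 0$ and $f \cdot n|_\Gamma = 0$, i.e.\ $f \in L^2_\sigma(\Omega,\R^3)$. Combining the two inclusions with the orthogonal splitting of the first step yields the theorem, and uniqueness of the decomposition is automatic from orthogonality.

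The step I expect to demand the most care is the final one: deducing $f \cdot n|_\Gamma = 0$ from the vanishing of $\langle f\cdot n,\, \cdot\,\rangle_\Gamma$ on all of $W^{1,2}(\Omega)$ and, relatedly, reconciling the two descriptions of $L^2_\sigma(\Omega,\R^3)$ used above — the definition as the $L^2$-closure of $C^\infty_{c,\sigma}(\Omega,\R^3)$ and the characterization via $\dive$ and the normal trace — which is precisely where the regularity of $\Gamma$ (here $\mathscr{C}^{1,1}$, though Lipschitz regularity would already suffice) enters. For the version of the statement valid on an arbitrary domain of $\R^n$, $n \ge 2$, where no boundary regularity is available, one works throughout with the density definition of $L^2_\sigma$, as in \cite[Theorem III.1.1]{Gal}, to which I would refer for that generality.
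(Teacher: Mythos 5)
Your argument is correct, but there is nothing in the paper to compare it against: Theorem \ref{Helmholtz-Weyl decomposition} is only \emph{recalled} there, with a pointer to \cite[Theorem III.1.1]{Gal}, and no proof is given. Your proof is the standard one for a bounded Lipschitz (here $\mathscr{C}^{1,1}$) domain and it fits the paper's toolkit: closedness of $\nabla W^{1,2}(\Omega)$ via Poincar\'{e}--Wirtinger on $W^{1,2}(\Omega)/\R$, the Hilbert-space splitting, and the identification of the orthogonal complement using Theorems \ref{Trace is an isomorphism} and \ref{Normal trace theorem} together with the Gauss identity \eqref{Gauss identity}. Two remarks. First, you could avoid invoking the nontrivial identity $\overline{C^\infty_{c,\sigma}(\Omega,\R^3)}^{L^2}=\{v\in L^2:\dive v=0,\ v\cdot n|_\Gamma=0\}$ (which the paper itself only cites, from \cite[Theorems III.2.3 and III.4.1]{Gal2}) altogether: the inclusion $L^2_\sigma(\Omega,\R^3)\subseteq G^\perp$ follows directly from the Gauss identity applied to any $v$ with $\dive v=0$ and $v\cdot n|_\Gamma=0$, so both inclusions can be run with the $\dive$/normal-trace characterisation that the paper takes as its primary definition, making the argument self-contained modulo the trace theorems. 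Second, the generality you flag at the end is real: the cited theorem in \cite{Gal} holds in arbitrary domains of $\R^n$, but there the gradient space must be taken as $\{\nabla p:\ p\in W^{1,2}_{loc}(\Omega),\ \nabla p\in L^2(\Omega)\}$; for the bounded Lipschitz domains of Assumption \ref{Assumption on Omega} this coincides with $\nabla W^{1,2}(\Omega)$, which is why your more elementary route suffices for everything the paper uses.
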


In ~\cite{FT}, Foias and Temam performed a further direct decomposition of $L^2_\sigma(\Omega,\R^3)$ into a part that has a vanishing flux across the cuts and an harmonic part (see ~\cite[Proposition 1.1]{FT} or ~\cite[Appendix I, Lemma 1.4]{Tem}). We present the decomposition of Foias and Temam in the notation of ~\cite{YG}.

\begin{thm} \label{Foias-Temam decomposition theorem}
$L^2_\sigma(\Omega,\R^3) = L^2_\Sigma(\Omega,\R^3) \oplus L^2_H(\Omega,\R^3)$, where
\begin{align*}
        L^2_\Sigma(\Omega,\R^3)
&\defeq \left\{ v \in L^2_\sigma(\Omega,\R^3) \colon \int_{\Sigma_i} v(x) \cdot n(x) \, dS(x) = 0 \text{ for } i = 1,\ldots,N \right\}, \\
        L^2_H(\Omega,\R^3)
&\defeq \{v \in L^2_\sigma(\Omega,\R^3) \colon \curl v = 0\}.
\end{align*}
\end{thm}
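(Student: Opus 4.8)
The asserted splitting is an $L^2$-orthogonal Hodge-type decomposition, and the plan is to reduce everything to an explicit understanding of the harmonic part $L^2_H(\Omega,\R^3)$. The key structural fact I would establish first is that $L^2_H(\Omega,\R^3)$ is finite dimensional, with $\dim L^2_H(\Omega,\R^3)=N$, and that it admits a concrete basis built from ``cut potentials''; granting this, the decomposition comes out by projecting off the harmonic part via the flux functionals across the cuts. So, for each $j\in\{1,\dots,N\}$, I would solve on the simply connected pseudo-Lipschitz domain $\dot\Omega=\Omega\setminus\bigcup_i\Sigma_i$ the mixed problem $\Delta q_j=0$ in $\dot\Omega$, $\partial_n q_j=0$ on $\Gamma$, with prescribed \emph{constant} jump $[q_j]_{\Sigma_i}:=q_j^+-q_j^-=\delta_{ij}$ and continuous normal derivative across each cut; this has a solution $q_j\in W^{1,2}(\dot\Omega)$, unique up to an additive constant, by Lax--Milgram on the appropriate affine subspace of $W^{1,2}(\dot\Omega)$. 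Since the jump of $q_j$ across each $\Sigma_i$ is constant and its normal derivative is continuous there, the gradient $\nabla q_j$ carries no singular part on any $\Sigma_i$, so it extends to a field $\tilde q_j\in L^2(\Omega,\R^3)$ with $\dive\tilde q_j=0$, $\curl\tilde q_j=0$ in $\Omega$ and $\tilde q_j\cdot n|_\Gamma=0$; thus $\tilde q_j\in L^2_H(\Omega,\R^3)$.

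Next I would show $\{\tilde q_1,\dots,\tilde q_N\}$ is a basis of $L^2_H(\Omega,\R^3)$. If $h\in L^2_H(\Omega,\R^3)$, then $\curl h=0$ on the simply connected domain $\dot\Omega$, so by the Poincaré lemma on pseudo-Lipschitz domains $h=\nabla\phi$ with $\phi\in W^{1,2}(\dot\Omega)$; single-valuedness of $h$ forces $\phi$ to have constant jumps $c_i:=[\phi]_{\Sigma_i}$ and continuous normal derivative, while $\Delta\phi=\dive h=0$ and $\partial_n\phi|_\Gamma=0$, so $\phi-\sum_i c_iq_i$ is a harmonic Neumann potential on $\Omega$, hence constant, whence $h=\sum_i c_i\tilde q_i$. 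Linear independence follows from the jump constraints (if $\sum_i a_i\tilde q_i=0$ then $\sum_i a_iq_i$ is constant on $\dot\Omega$, forcing each $a_k=[\sum_i a_iq_i]_{\Sigma_k}=0$). Thus $\dim L^2_H(\Omega,\R^3)=N$. I would then record the flux functionals $F_i(v):=\int_{\Sigma_i}v\cdot n\,dS$ for $v\in L^2_\sigma(\Omega,\R^3)$ (well defined via the normal trace on $\Sigma_i$, using $\dive v=0$) and set $M_{ij}:=F_i(\tilde q_j)$. Integrating by parts on $\dot\Omega$ against $q_i$, using $\Delta q_j=0$, $\partial_n q_j|_\Gamma=0$, the continuity of $\partial_n q_j$ across the cuts, and $[q_i]_{\Sigma_k}=\delta_{ik}$, one gets $\int_\Omega\tilde q_i\cdot\tilde q_j\,dx=\int_{\dot\Omega}\nabla q_i\cdot\nabla q_j\,dx=M_{ij}$; hence $M$ is the Gram matrix of the $\tilde q_j$, so it is symmetric and, by linear independence, positive definite, in particular invertible. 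The same one-potential computation gives the identity $\int_\Omega v\cdot\tilde q_i\,dx=F_i(v)$ for every $v\in L^2_\sigma(\Omega,\R^3)$.

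Assembling the decomposition is then linear algebra. Given $v\in L^2_\sigma(\Omega,\R^3)$, set $a:=M^{-1}(F_1(v),\dots,F_N(v))$ and $v_H:=\sum_i a_i\tilde q_i\in L^2_H(\Omega,\R^3)$; then $F_j(v-v_H)=F_j(v)-(Ma)_j=0$ for all $j$, so $v_\Sigma:=v-v_H\in L^2_\Sigma(\Omega,\R^3)$, giving $L^2_\sigma=L^2_\Sigma+L^2_H$. If $v\in L^2_\Sigma\cap L^2_H$, then $v=\sum_i a_i\tilde q_i$ with $(Ma)_j=F_j(v)=0$ for all $j$, so $a=0$ and $v=0$; the sum is direct. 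Orthogonality is immediate from the identity of the previous paragraph: for $v\in L^2_\Sigma(\Omega,\R^3)$ and $h=\sum_i a_i\tilde q_i\in L^2_H(\Omega,\R^3)$ one has $\int_\Omega v\cdot h\,dx=\sum_i a_i F_i(v)=0$. Hence $L^2_\sigma(\Omega,\R^3)=L^2_\Sigma(\Omega,\R^3)\oplus L^2_H(\Omega,\R^3)$ is an orthogonal direct sum (and in particular $L^2_\Sigma(\Omega,\R^3)$, being the orthogonal complement of the finite-dimensional space $L^2_H(\Omega,\R^3)$ in $L^2_\sigma(\Omega,\R^3)$, is closed).

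I expect the genuinely delicate points to be concentrated entirely on the cut domain $\dot\Omega$: the solvability and the precise jump / normal-derivative properties of the cut potentials $q_j$, the verification that $\nabla q_j$ extends across the cuts to an element of $L^2_H(\Omega,\R^3)$ (no singular curl or divergence supported on the $\Sigma_i$), a rigorous interpretation of the flux functionals $F_i$ and of the two-sided boundary terms on $\Sigma_i$ in Green's formula, and the validity of the Poincaré lemma and of integration by parts on a merely pseudo-Lipschitz (rather than Lipschitz) domain. These rest on the trace machinery of \textsection\ref{Traces of Sobolev functions} together with the results of~\cite{ABDG}; once they are in place the remaining argument is routine linear algebra.
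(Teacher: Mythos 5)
Your argument is correct, and it is essentially the classical proof of this decomposition: the paper itself gives no proof but cites Foias--Temam and Temam (Appendix I, Lemmas 1.3--1.4; see also Proposition 3.14 of~\cite{ABDG}), where exactly your construction appears --- cut potentials $q_j$ on $\dot\Omega$ with constant jumps and continuous normal derivatives, their gradients spanning the $N$-dimensional space $L^2_H(\Omega,\R^3)$, and the identity $\int_\Omega v\cdot\tilde q_i\,dx=\int_{\Sigma_i}v\cdot n\,dS$ yielding both the orthogonality and the closedness of $L^2_\Sigma(\Omega,\R^3)$. The delicate points you flag (solvability of the cut problems, absence of singular parts of $\dive$ and $\curl$ on the cuts, the weak definition of the flux functionals, and integration by parts on the pseudo-Lipschitz domain $\dot\Omega$) are precisely the technical content of those cited references, so your reduction of the theorem to them is faithful to the intended proof.
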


By Theorem \ref{Foias-Temam theorem}, $L^2_H(\Omega,\R^3) \subset W^{1,2}_\sigma(\Omega,\R^3)$.

\begin{defin}
We denote the projections onto $L^2_\Sigma(\Omega,\R^3)$ and $L^2_H(\Omega,\R^3)$ by $P_\Sigma \colon L^2_\sigma(\Omega,\R^3) \to L^2_\Sigma(\Omega,\R^3)$ and $P_H \colon L^2_\sigma(\Omega,\R^3) \to L^2_H(\Omega,\R^3)$. For every $v \in L^2_\sigma(\Omega,\R^3)$ we denote $v_\Sigma \defeq P_\Sigma v$ and $v_H \defeq P_H v$.
\end{defin}
The vector space $L^2_H(\Omega,\R^3)$ is $N$-dimensional. For a characterisation of an orthonormal basis $\{h_1,\ldots,h_N\}$ of $L^2_H(\Omega,\R^3)$ see ~\cite[Appendix I, Lemma 1.3]{Tem} or ~\cite[Proposition 3.14]{ABDG}. Theorems \ref{Helmholtz-Weyl decomposition} and \ref{Foias-Temam decomposition theorem} yield the decomposition
\begin{equation} \label{Variant of Helmholtz decomposition}
L^2(\Omega,\R^3) = L^2_\Sigma(\Omega,\R^3) \oplus \ker(\curl).
\end{equation}
Furthermore, $L^2_\sigma(\Omega,\R^3) \subset \{\curl \psi \colon \psi \in W^{1,2}(\Omega,\R^3)\}$ (see ~\cite[Appendix I, Proposition 1.3]{Tem}). We record the following simple observation.

\begin{prop} \label{Proposition on dependence on the vector potential}
Suppose $b \in L^2_\sigma(\Omega,\R^3)$. Then the value $\int_\Omega \psi(x) \cdot b(x) \, dx$ is independent of the solution $\psi \in W^{1,2}(\Omega,\R^3)$ of $\curl \psi = b$ if and only if $b \in L^2_\Sigma(\Omega,\R^3)$.
\end{prop}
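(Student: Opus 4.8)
The plan is to reduce the gauge-independence of $\int_\Omega \psi \cdot b\,dx$ to the orthogonality decomposition \eqref{Variant of Helmholtz decomposition}. First I would note that if $\psi_1, \psi_2 \in W^{1,2}(\Omega,\R^3)$ both satisfy $\curl \psi_1 = \curl \psi_2 = b$, then $\psi_1 - \psi_2 \in \ker(\curl) \cap W^{1,2}(\Omega,\R^3)$, so the difference of the two helicity integrals is $\int_\Omega (\psi_1 - \psi_2) \cdot b\,dx$ with $\psi_1 - \psi_2 \in \ker(\curl)$. Hence $\int_\Omega \psi \cdot b\,dx$ is independent of the choice of potential $\psi$ if and only if $\int_\Omega \varphi \cdot b\,dx = 0$ for every $\varphi \in \ker(\curl) \cap W^{1,2}(\Omega,\R^3)$.

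The next step is to upgrade this to a statement about all of $\ker(\curl) \subset L^2(\Omega,\R^3)$. For the ``if'' direction, if $b \in L^2_\Sigma(\Omega,\R^3)$, then by \eqref{Variant of Helmholtz decomposition} $b$ is $L^2$-orthogonal to $\ker(\curl)$, so in particular $\int_\Omega \varphi \cdot b\,dx = 0$ for all $\varphi \in \ker(\curl) \cap W^{1,2}$, giving gauge independence by the previous paragraph. For the ``only if'' direction, suppose $\int_\Omega \psi \cdot b\,dx$ is gauge independent; I want to conclude $b \perp \ker(\curl)$ in $L^2$, i.e. $b \in L^2_\Sigma(\Omega,\R^3)$ by \eqref{Variant of Helmholtz decomposition}. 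By \eqref{Variant of Helmholtz decomposition} it suffices to show $b$ is orthogonal to every element of $\ker(\curl)$; writing an arbitrary $g \in \ker(\curl)$ via the Helmholtz-Weyl decomposition (Theorem \ref{Helmholtz-Weyl decomposition}) as $g = h + \nabla p$ with $h \in L^2_H(\Omega,\R^3)$ and $\nabla p \in \nabla W^{1,2}(\Omega)$ (note $\curl g = 0$ forces the $L^2_\Sigma$-component to vanish), one treats the two pieces separately. Since $b \in L^2_\sigma(\Omega,\R^3)$, one has $\int_\Omega b \cdot \nabla p\,dx = 0$ directly (this is the defining orthogonality $L^2_\sigma \perp \nabla W^{1,2}$), so only the harmonic part matters: it remains to show $\int_\Omega b \cdot h\,dx = 0$ for every $h \in L^2_H(\Omega,\R^3)$. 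But $L^2_H(\Omega,\R^3) \subset W^{1,2}_\sigma(\Omega,\R^3)$ and every $h \in L^2_H(\Omega,\R^3)$ is itself in $\ker(\curl) \cap W^{1,2}(\Omega,\R^3)$, hence is of the form $\psi_1 - \psi_2$ for two potentials of $b$ precisely when $b$ admits a $W^{1,2}$ potential at all — which it does, since $L^2_\sigma(\Omega,\R^3) \subset \{\curl\psi : \psi \in W^{1,2}(\Omega,\R^3)\}$. Concretely: fix one potential $\psi_0$ of $b$; then $\psi_0 + h$ is another potential, and gauge independence forces $\int_\Omega (\psi_0 + h)\cdot b\,dx = \int_\Omega \psi_0 \cdot b\,dx$, i.e. $\int_\Omega h \cdot b\,dx = 0$. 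Combining the two pieces gives $\int_\Omega g \cdot b\,dx = 0$ for all $g \in \ker(\curl)$, so $b \in L^2_\Sigma(\Omega,\R^3)$.

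The main obstacle — really the only subtlety — is making sure the test vector fields used to probe gauge dependence are legitimate: I must know that $b$ has at least one vector potential in $W^{1,2}(\Omega,\R^3)$ (so that the hypothesis of gauge independence is not vacuous) and that perturbing by any $h \in L^2_H(\Omega,\R^3)$ stays within $W^{1,2}$ potentials of $b$. Both follow from facts already recorded: $L^2_\sigma(\Omega,\R^3) \subset \{\curl \psi : \psi \in W^{1,2}(\Omega,\R^3)\}$ and $L^2_H(\Omega,\R^3) \subset W^{1,2}_\sigma(\Omega,\R^3)$ with $\curl h = 0$. Once these are in hand, the proof is just bookkeeping with the two orthogonal decompositions (Theorems \ref{Helmholtz-Weyl decomposition} and \ref{Foias-Temam decomposition theorem}, i.e. \eqref{Variant of Helmholtz decomposition}), and no estimates are needed.
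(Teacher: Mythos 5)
Your proposal is correct and follows essentially the same route as the paper: gauge independence is reduced to orthogonality against curl-free $W^{1,2}$ fields, the ``only if'' direction is obtained by perturbing a fixed potential by harmonic Neumann fields (the paper tests against the basis $h_1,\ldots,h_N$ and invokes $L^2_\sigma = L^2_\Sigma \oplus L^2_H$, while you test against general $h \in L^2_H$ and split $\ker(\curl)$ via Helmholtz--Weyl), and the ``if'' direction follows from \eqref{Variant of Helmholtz decomposition} in both. Your version merely makes explicit some steps the paper leaves implicit (existence of a $W^{1,2}$ potential and the equivalence of gauge independence with vanishing against curl-free fields); no gap.
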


\begin{proof}
If $b \in L^2_\sigma(\Omega,\R^3)$ and $\int_\Omega \phi(x) \cdot b(x) \, dx = 0$ for all $\phi \in W^{1,2}(\Omega,\R^3)$ with $\curl \phi = 0$, then in particular $\int_\Omega b(x) \cdot h_i(x) \, dx = 0$ for all $i \in \{1,\ldots,N\}$, giving $b \in L^2_\Sigma(\Omega,\R^3)$. The converse follows immediately from \eqref{Variant of Helmholtz decomposition}.
\end{proof}

Consequently, magnetic helicity is independent of the vector potential for every $b(\cdot,t) \in L^2_\sigma(\Omega,\R^3)$ precisely when $L^2_H(\Omega,\R^3) = \{0\}$. In Proposition \ref{Answer of first question} this helps us to characterise, in multiply connected domains, those magnetic fields whose magnetic helicity is conserved for every vector potential.

\begin{prop} \label{Answer of first question}
Suppose the mappings $u_j,b_j,u_{j,0},b_{j,0},u,b,u_0,b_0$ satisfy the assumptions of Theorem \ref{Taylor theorem}. Then the following conditions are equivalent.

\renewcommand{\labelenumi}{(\roman{enumi})}
\begin{enumerate}

\item $\int_\Omega \psi(x,t) \cdot b(x,t) \, dx$ is a.e. constant for every $\psi \in L^\infty(0,T;W^{1,2}(\Omega,\R^3))$ with $\curl \psi = b$.

\item $b_H = 0$.

\item $b_{0,H} = 0$.
\end{enumerate}
If $\Omega$ is multiply connected, there exist $u_j,b_j,u_{j,0},b_{j,0},u,b,u_0,b_0$ such that (i)--(iii) are not satisfied.
\end{prop}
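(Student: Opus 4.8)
\emph{Overview.} The plan is to prove the cycle of equivalences through the two implications (ii)$\Leftrightarrow$(iii) and (ii)$\Leftrightarrow$(i), using formula \eqref{Value of magnetic helicity} from Theorem \ref{Taylor theorem} as the main input, and then to exhibit the failure of (i)--(iii) in a multiply connected $\Omega$ by a stationary solution built from a harmonic Neumann field. For (ii)$\Leftrightarrow$(iii) I would first observe that for each $j$ the harmonic part $b_{j,H}$ is constant in time: testing \eqref{Resistive MHD weak definition 2a} with $\theta = h$ for $h \in L^2_H(\Omega,\R^3) \subset W^{1,2}_\sigma(\Omega,\R^3)$ annihilates both the convective and the resistive terms, since $\curl h = 0$, so $\langle \partial_t b_j, h \rangle = 0$ for a.e.\ $t$, whence $\int_\Omega b_j(x,t)\cdot h(x)\,dx = \int_\Omega b_{j,0}(x)\cdot h(x)\,dx$ for all $t$ (cf.\ Proposition \ref{Decomposition of the magnetic field}). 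Integrating this identity against an arbitrary $\eta \in C_c^\infty(0,T)$, passing to the limit using $b_j \overset{*}{\rightharpoonup} b$ in $L^\infty(0,T;L^2_\sigma(\Omega,\R^3))$ on the left and $b_{j,0} \rightharpoonup b_0$ in $L^2_\sigma(\Omega,\R^3)$ on the right, and then letting $h$ range over a basis of the finite-dimensional space $L^2_H(\Omega,\R^3)$, I obtain $b_H(x,t) = b_{0,H}(x)$ for a.e.\ $t$; in particular $b_H = 0$ iff $b_{0,H} = 0$, which is (ii)$\Leftrightarrow$(iii).

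For (ii)$\Rightarrow$(i), assume $b_H = 0$; then $b_{0,H} = 0$ by the previous step, so the canonical potentials $\psi^H$ and $\psi^H_0$ have vanishing curl and lie in $L^2_\Sigma(\Omega,\R^3)$, hence vanish because $L^2_\Sigma(\Omega,\R^3) \cap \ker(\curl) = \{0\}$ by \eqref{Variant of Helmholtz decomposition}. Substituting $\psi^H_0 = 0$ into \eqref{Value of magnetic helicity} kills the boundary integral for every choice of potential, leaving $\int_\Omega \psi(x,t)\cdot b(x,t)\,dx = \int_\Omega \psi_0(x)\cdot b_0(x)\,dx$ for a.e.\ $t$, which is (i). For the converse I argue the contrapositive: suppose $b_H \neq 0$. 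Recall that $b$ admits at least one vector potential $\psi_* \in L^\infty(0,T;W^{1,2}(\Omega,\R^3))$, and set $c \defeq \norm{b_H}_{L^2(\Omega)}^2 > 0$, which is a genuine constant because $b_H$ is constant in time. Comparing $\psi_1 \defeq \psi_*$ with $\psi_2 \defeq \psi_* + \mathbf{1}_{(0,T/2)}(t)\,b_H$ --- both in $L^\infty(0,T;W^{1,2}(\Omega,\R^3))$ with $\curl \psi_1 = \curl \psi_2 = b$ since $\curl b_H = 0$ --- and using the orthogonality $b_H \perp L^2_\Sigma(\Omega,\R^3)$, I find $\int_\Omega \psi_2(x,t)\cdot b(x,t)\,dx - \int_\Omega \psi_1(x,t)\cdot b(x,t)\,dx = c\,\mathbf{1}_{(0,T/2)}(t)$ for a.e.\ $t$, which is not a.e.\ constant; hence $\int_\Omega \psi(x,t)\cdot b(x,t)\,dx$ cannot be a.e.\ constant for both $\psi_1$ and $\psi_2$, so (i) fails.

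Finally, for the counterexample in a multiply connected domain, $L^2_H(\Omega,\R^3) \neq \{0\}$, so I pick $h \in L^2_H(\Omega,\R^3)$ with $h \neq 0$ and any $\nu_j, \mu_j \to 0$. I claim $(u_j,b_j) \defeq (0,h)$, with data $(u_{j,0},b_{j,0}) \defeq (0,h)$, is a Leray-Hopf solution: in \eqref{Resistive MHD weak definition 1} the convective and viscous terms vanish because $u_j \equiv 0$ and the magnetic term vanishes because $(h\cdot\nabla)h = \tfrac12\nabla \abs{h}^2$ is a gradient, so it pairs to zero with any $\varphi \in W^{1,2}_{0,\sigma}(\Omega,\R^3)$ (approximate $\varphi$ in $W^{1,2}$ by elements of $C_{c,\sigma}^\infty(\Omega,\R^3)$, noting $h\cdot\nabla h \in L^{3/2}(\Omega,\R^3)$ pairs with $L^3(\Omega,\R^3)$); in \eqref{Resistive MHD weak definition 2a} the terms vanish because $b_j \times u_j = 0$ and $\curl h = 0$; and the energy inequality holds with equality. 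Since then $u_{j,0} \rightharpoonup 0$, $b_{j,0} \rightharpoonup h$ and $u_j \overset{*}{\rightharpoonup} 0$, $b_j \overset{*}{\rightharpoonup} h$, the pair $(0,h)$ is a weak ideal limit with $b_H = h \neq 0$, violating (ii) and hence (i) and (iii).

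\emph{Main obstacle.} The proposition is comparatively soft once Theorem \ref{Taylor theorem} is in hand; the points requiring care are the time-independence of $b_{j,H}$ together with the correct weak-$*$ limit passage in the proof of (ii)$\Leftrightarrow$(iii), the verification that the merely $L^{3/2}$ nonlinearity $(h\cdot\nabla)h$ tests to zero against divergence-free fields vanishing on $\Gamma$ in the counterexample, and the availability of a time-dependent $W^{1,2}$ potential $\psi_*$ of $b$ used in the contrapositive (guaranteed by the potential-theoretic results already invoked in the paper, e.g.\ Theorem \ref{Time-dependent Borchers-Sohr}).
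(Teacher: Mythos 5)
Your proposal is correct, and for the three equivalences it follows essentially the paper's route: (ii)$\Leftrightarrow$(iii) is the stationarity of the harmonic part (you re-derive Lemma \ref{Decomposition of the magnetic field} by testing the induction equation with harmonic Neumann fields, exactly as the paper does), (ii)$\Rightarrow$(i) is read off from \eqref{Value of magnetic helicity} since $b_{0,H}=0$ forces $\psi^H_0=0$, and (i)$\Rightarrow$(ii) is the gauge-dependence observation of Proposition \ref{Proposition on dependence on the vector potential}; your time-localized gauge change $\psi_* \mapsto \psi_* + \mathbf{1}_{(0,T/2)}(t)\,b_H$ is a nice way of making explicit the step the paper compresses into ``direct corollary'' (the paper's fixed-time statement must indeed be upgraded to defeat ``a.e.\ constant in $t$'', and your construction does this cleanly; if $T=\infty$ one just replaces $T/2$ by any finite time). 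Where you genuinely diverge is the final non-conservation claim: the paper combines Lemma \ref{Decomposition of the magnetic field} with the existence theorem (Theorem \ref{Theorem on Leray-Hopf solutions}, proved in the Appendix), i.e.\ it takes \emph{any} initial data with $b_{0,H}\neq 0$, solves, extracts a weak ideal limit, and notes its harmonic part survives; you instead exhibit the explicit stationary Leray--Hopf solution $(u_j,b_j)=(0,h)$ with $h\in L^2_H(\Omega,\R^3)\setminus\{0\}$, verifying the weak formulation directly (the key points being $h\in W^{1,2}_\sigma$ by Theorem \ref{Foias-Temam theorem}, $(h\cdot\nabla)h=\tfrac12\nabla|h|^2\in L^{3/2}$ pairing to zero against $W^{1,2}_{0,\sigma}$ by density, and $\curl h=0$ killing the induction and dissipation terms). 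Your construction is more elementary and self-contained — it bypasses the Galerkin existence theory entirely and even gives a sequence converging strongly, not just weakly-$*$ — while the paper's argument shows the slightly stronger fact that \emph{every} choice of initial data with nonvanishing harmonic part produces such a limit; for the bare existence statement in the proposition, either suffices.
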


\begin{proof}

The equivalence (i) $\Leftrightarrow$ (ii) is a direct corollary of Theorem \ref{Taylor theorem} and Proposition \ref{Proposition on dependence on the vector potential}, and the equivalence (ii) $\Leftrightarrow$ (iii) follows immediately from Lemma \ref{Decomposition of the magnetic field}. The last claim follows by combining Lemma \ref{Decomposition of the magnetic field} and Theorem \ref{Theorem on Leray-Hopf solutions}.
\end{proof}

Proposition \ref{Answer of first question} indicates that in multiply connected domains, magnetic helicity conservation can only hold in the weak ideal limit if some restrictions are imposed on the vector potential.

\subsection{Good vector potentials} \label{Good vector potentials}
As stated in Corollary \ref{Taylor corollary}, a condition that allows magnetic helicity conservation in multiply connected domains is given by
\begin{equation} \label{Natural condition}
\psi^\Sigma \times n|_\Gamma = 0 \qquad \text{and} \qquad \psi^\Sigma_0 \times n|_\Gamma = 0.
\end{equation}
We will, in fact, obtain Theorem \ref{Taylor theorem} as a consequence of the fact that \eqref{Natural condition} leads to magnetic helicity conservation. For more information on condition \eqref{Natural condition} see e.g. ~\cite{ABDG}, ~\cite{KY} and ~\cite{YG}.

Our next aim is to specify vector potentials that satisfy \eqref{Natural condition}. For the $L^2_\Sigma$ part of the magnetic field we essentially use vector potentials found by Borchers and Sohr in ~\cite[Corollary 2.2]{BS}. The boundary condition $\langle \partial_n(\dive \psi), 1 \rangle_{\Gamma_i} = 0$, added by Amrouche, Bernardi, Dauge and Girault in ~\cite{ABDG}, ensures uniqueness. Theorem \ref{Borchers-Sohr lemma} follows from ~\cite[Corollary 3.19]{ABDG} and ~\cite[Theorem 3.20]{ABDG}.

\begin{thm} \label{Borchers-Sohr lemma}
For every $v \in L^2_\Sigma(\Omega,\R^3)$ there exists a unique $T_\Sigma v  \defeq \Psi^\Sigma\in W^{1,2}_0(\Omega,\R^3)$ such that
\[\curl \Psi^\Sigma = v, \qquad \dive (\Delta \Psi^\Sigma) = 0, \qquad \langle \partial_n(\dive \Psi^\Sigma), 1 \rangle_{\Gamma_i} = 0 \quad (i = 1,\ldots,K).\]
Furthermore, $T_\Sigma \colon L^2_\Sigma(\Omega,\R^3) \to W_0^{1,2}(\Omega,\R^3)$ is linear and bounded.
\end{thm}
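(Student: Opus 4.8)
The plan is to reduce the statement to the existence and uniqueness theory for a Stokes-type boundary value problem, following the approach of Amrouche--Bernardi--Dauge--Girault. Given $v \in L^2_\Sigma(\Omega,\R^3)$, I would first recall from \cite{BS} (or from \cite{ABDG}) that the condition $\int_{\Sigma_i} v \cdot n \, dS = 0$ for $i = 1, \dots, N$ together with $\dive v = 0$ and $v \cdot n|_\Gamma = 0$ is exactly what is needed for $v$ to admit a vector potential in $W^{1,2}_0(\Omega,\R^3)$; indeed, these flux conditions are the compatibility conditions that make the relevant Neumann-type problem solvable. The existence of \emph{some} $\Psi \in W^{1,2}_0(\Omega,\R^3)$ with $\curl \Psi = v$ is the content of \cite[Theorem 3.17 or Corollary 3.19]{ABDG}.

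The next step is to upgrade such a $\Psi$ to one that also satisfies $\dive(\Delta \Psi^\Sigma) = 0$ and the boundary conditions $\langle \partial_n(\dive \Psi^\Sigma), 1\rangle_{\Gamma_i} = 0$. Here I would look for $\Psi^\Sigma = \Psi + \nabla q$ for a suitable scalar $q$: since $\curl(\nabla q) = 0$, adding a gradient does not disturb the equation $\curl \Psi^\Sigma = v$, while it gives one the freedom to adjust $\dive \Psi^\Sigma = \dive \Psi + \Delta q$. The conditions $\dive(\Delta \Psi^\Sigma) = 0$ and $\langle \partial_n(\dive \Psi^\Sigma), 1\rangle_{\Gamma_i} = 0$ together say that $\pi \defeq \dive \Psi^\Sigma$ is harmonic with vanishing boundary flux through each component $\Gamma_i$ of $\Gamma$. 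Because $\Psi^\Sigma \in W^{1,2}_0(\Omega,\R^3)$ forces $\Psi^\Sigma \cdot n|_\Gamma = 0$, an integration by parts (the generalized Gauss identity \eqref{Gauss identity}) shows $\int_{\Gamma_i} \partial_n \pi = 0$ automatically holds once one knows $\pi$ solves the right problem; conversely the uniqueness will come from the fact that a harmonic function whose normal derivative has zero mean on each boundary component, and which moreover equals $\dive$ of a field in $W^{1,2}_0$, is forced to vanish — this is essentially \cite[Theorem 3.20]{ABDG}, which characterizes $T_\Sigma v$ as the unique such potential. I would cite that theorem for both the existence of the correcting gradient and the uniqueness.

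For uniqueness directly: if $\Psi_1, \Psi_2$ are two candidates, then $w \defeq \Psi_1 - \Psi_2 \in W^{1,2}_0(\Omega,\R^3)$ satisfies $\curl w = 0$ and $\dive(\Delta w) = 0$ with $\langle \partial_n(\dive w), 1\rangle_{\Gamma_i} = 0$. Since $\curl w = 0$ and $w \cdot n|_\Gamma = 0$ and $w$ has the further structure (being in $W^{1,2}_0$ it is tangentially \emph{and} normally vanishing), one shows $w \in L^2_H(\Omega,\R^3) \cap W^{1,2}_0 = \{0\}$ — a Neumann vector field that vanishes on the whole boundary is zero, because its components are harmonic with zero trace. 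The linearity and boundedness of $T_\Sigma$ then follow from the open mapping theorem applied to the (continuous, bijective onto its image) construction, or more concretely from the a priori estimate $\|\Psi^\Sigma\|_{W^{1,2}(\Omega)} \lesssim_\Omega \|v\|_{L^2(\Omega)}$ furnished by the same references (using $\dive \Psi^\Sigma$ harmonic, $\curl \Psi^\Sigma = v$, $\Psi^\Sigma \cdot n|_\Gamma = 0$, and Theorem \ref{Foias-Temam theorem}).

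The main obstacle is the bookkeeping around the two systems of boundary components: the cuts $\Sigma_1,\dots,\Sigma_N$ (governing when a vector potential in $W^{1,2}_0$ exists at all) versus the boundary components $\Gamma_1,\dots,\Gamma_K$ (governing the gauge-fixing condition $\langle \partial_n(\dive \Psi^\Sigma),1\rangle_{\Gamma_i} = 0$). One must be careful that the flux conditions defining $L^2_\Sigma$ are precisely the solvability conditions, and that the $K$ scalar conditions $\langle \partial_n(\dive\Psi^\Sigma),1\rangle_{\Gamma_i}=0$ — together with $\pi$ harmonic — pin down the free additive constant(s) in the gradient correction $q$ uniquely. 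Since all of this is carried out in \cite[\S 3]{ABDG}, the cleanest route is to state the reduction explicitly and invoke \cite[Corollary 3.19, Theorem 3.20]{ABDG} for the remaining analytic content.
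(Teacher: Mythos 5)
Your bottom line coincides with the paper's proof: in the paper, Theorem \ref{Borchers-Sohr lemma} is established exactly by the reduction you end with, namely by invoking \cite[Corollary 3.19]{ABDG} and \cite[Theorem 3.20]{ABDG} (the Borchers--Sohr potential of \cite{BS}, with the flux conditions $\langle \partial_n(\dive \Psi^\Sigma),1\rangle_{\Gamma_i}=0$ added by Amrouche--Bernardi--Dauge--Girault precisely to ensure uniqueness). So as a proof-by-citation your proposal is acceptable and matches the paper. (Minor attribution point: \cite[Theorem 3.17]{ABDG} gives the \emph{tangential} potential with $\psi\times n|_\Gamma=0$, which is in general not in $W^{1,2}_0$; the $W^{1,2}_0$ potential is the content of Theorem 3.20 itself.)

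The direct arguments you sketch, however, contain a genuine gap that would sink them if written out. In your uniqueness step you pass from $w=\Psi_1-\Psi_2$, $\curl w=0$, $w\in W^{1,2}_0(\Omega,\R^3)$ to $w\in L^2_H(\Omega,\R^3)\cap W^{1,2}_0(\Omega,\R^3)$; but $L^2_H$ requires $\dive w=0$, and the candidates are \emph{not} divergence-free -- only $\dive(\Delta\Psi_i)=0$ is assumed. Proving $\dive w=0$ is the crux, and it genuinely uses the flux conditions: since $\curl w=0$ and $w\in W^{1,2}_0$, the zero extension is curl-free in $\R^3$, so $w=\nabla p$ with $p$ biharmonic, $p$ constant (say $c_i$) on each $\Gamma_i$ and $\partial_n p=0$ on $\Gamma$; then (formally) $\int_\Omega(\Delta p)^2=-\int_\Omega\nabla(\Delta p)\cdot\nabla p=-\sum_i c_i\langle\partial_n(\Delta p),1\rangle_{\Gamma_i}=0$, whence $\dive w=\Delta p=0$, $\Delta w=0$ and $w=0$. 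Without the conditions on each $\Gamma_i$, uniqueness actually fails when $K\ge 2$ (add $\nabla q$ with $q$ biharmonic, locally constant on $\Gamma$, $\partial_n q=0$). Two related slips in your existence sketch: the correcting gradient must satisfy $\nabla q\in W^{1,2}_0(\Omega,\R^3)$ for $\Psi^\Sigma$ to stay in $W^{1,2}_0$, so $q$ solves a biharmonic problem, not a Poisson/Neumann problem; and harmonicity of $\pi=\dive\Psi^\Sigma$ only forces the \emph{total} flux $\langle\partial_n\pi,1\rangle_\Gamma$ to vanish, not the flux through each individual $\Gamma_i$ -- those $K$ conditions are exactly the gauge fixing, not an automatic consequence. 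None of this affects the citation route, since \cite[Theorem 3.20]{ABDG} supplies existence, uniqueness and the estimate giving the boundedness of $T_\Sigma$.
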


For the space $L^2_H(\Omega,\R^3)$ a natural choice of potentials is a special case of ~\cite[Theorem 3.12]{ABDG} and ~\cite[Corollary 3.16]{ABDG}:

\begin{thm} \label{Amrouche-Bernardi-Dauge-Girault lemma}
For every $v \in L^2_H(\Omega,\R^3)$ there exists a unique $T_H v \defeq \psi^H \in W^{1,2}(\Omega,\R^3) \cap L^2_\Sigma(\Omega,\R^3)$ such that $\curl (\psi^H) = v$. Furthermore, $T_H \colon L^2_H(\Omega,\R^3) \to W^{1,2}(\Omega,\R^3)$ is linear and bounded.
\end{thm}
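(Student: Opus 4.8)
The plan is to construct $\psi^H$ in two stages — first a divergence-free potential with vanishing normal trace, then its projection onto $L^2_\Sigma(\Omega,\R^3)$ — and then to extract uniqueness, linearity and boundedness almost for free from the direct-sum structure of $L^2_\sigma(\Omega,\R^3)$ and the finite-dimensionality of $L^2_H(\Omega,\R^3)$.

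For existence, fix $v \in L^2_H(\Omega,\R^3)$. Since $v \in L^2_\sigma(\Omega,\R^3)$, the inclusion $L^2_\sigma(\Omega,\R^3) \subset \{\curl \phi \colon \phi \in W^{1,2}(\Omega,\R^3)\}$ recorded after \eqref{Variant of Helmholtz decomposition} supplies some $\phi \in W^{1,2}(\Omega,\R^3)$ with $\curl \phi = v$. I would then fix the gauge by solving the weak Neumann problem $\Delta p = \dive \phi$ in $\Omega$, $\partial_n p = \phi \cdot n$ on $\Gamma$, for $p \in W^{1,2}(\Omega)/\R$ (the compatibility condition holds since $\int_\Omega \dive \phi = \langle \phi \cdot n, 1 \rangle_\Gamma$), and set $\psi_1 \defeq \phi - \nabla p$, so that $\curl \psi_1 = v$, $\dive \psi_1 = 0$ and $\psi_1 \cdot n|_\Gamma = 0$, i.e. $\psi_1 \in L^2_\sigma(\Omega,\R^3)$. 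Using the Foias--Temam splitting of Theorem \ref{Foias-Temam decomposition theorem}, write $\psi_1 = (\psi_1)_\Sigma + (\psi_1)_H$ and take $\psi^H \defeq (\psi_1)_\Sigma$. Because $(\psi_1)_H \in L^2_H(\Omega,\R^3)$ has vanishing curl, still $\curl \psi^H = v$, and $\psi^H \in L^2_\Sigma(\Omega,\R^3)$ by construction. To upgrade to $W^{1,2}$, I would invoke Theorem \ref{Foias-Temam theorem}: $\psi^H$ has $\dive \psi^H = 0 \in L^2$, $\curl \psi^H = v \in L^2$ and $\psi^H \cdot n|_\Gamma = 0 \in W^{1/2,2}(\Gamma)$, hence $\psi^H \in W^{1,2}(\Omega,\R^3)$. (Alternatively, $\mathscr{C}^{1,1}$ elliptic regularity for $p$ gives $p \in W^{2,2}$ and $\psi_1, \psi^H \in W^{1,2}$ directly.)

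For uniqueness, if $\psi_a, \psi_b \in W^{1,2}(\Omega,\R^3) \cap L^2_\Sigma(\Omega,\R^3)$ both have curl $v$, then $w \defeq \psi_a - \psi_b$ lies in $L^2_\Sigma(\Omega,\R^3)$ and has $\curl w = 0$, so $w \in L^2_H(\Omega,\R^3)$; since the sum in Theorem \ref{Foias-Temam decomposition theorem} is direct, $w \in L^2_\Sigma \cap L^2_H = \{0\}$. This makes $T_H$ a well-defined map; linearity follows at once, since if $\psi^H_i = T_H v_i$ then $a_1\psi^H_1 + a_2\psi^H_2 \in W^{1,2} \cap L^2_\Sigma$ has curl $a_1 v_1 + a_2 v_2$ and hence equals $T_H(a_1 v_1 + a_2 v_2)$ by uniqueness. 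Boundedness is then automatic because $L^2_H(\Omega,\R^3)$ is $N$-dimensional, so every linear map out of it is continuous; one could also simply track the constants through the Neumann solve, the bounded projection $P_\Sigma$ and Theorem \ref{Foias-Temam theorem}.

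The construction is otherwise routine; the one genuinely delicate point is the $W^{1,2}$ regularity of $\psi^H$, which is exactly where the $\mathscr{C}^{1,1}$ regularity of $\Gamma$ enters — through second-order elliptic regularity for $p$, or, more cleanly, through the Foias--Temam characterisation of $W^{1,2}(\Omega,\R^3)$ in Theorem \ref{Foias-Temam theorem}, which can fail on merely Lipschitz domains.
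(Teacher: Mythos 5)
Your argument is correct, but it is not the route the paper takes: the paper does not prove this statement at all, it simply quotes it as a special case of Amrouche--Bernardi--Dauge--Girault (their Theorem 3.12 and Corollary 3.16), where the tangential, divergence-free vector potential with vanishing fluxes through the cuts is constructed and estimated directly. Your proof instead assembles the result from ingredients already stated in the paper: surjectivity of $\curl \colon W^{1,2}(\Omega,\R^3) \to L^2_\sigma(\Omega,\R^3)$, a Neumann correction (equivalently, the Helmholtz--Weyl decomposition of Theorem \ref{Helmholtz-Weyl decomposition}) to land in $L^2_\sigma(\Omega,\R^3)$, the Foias--Temam splitting of Theorem \ref{Foias-Temam decomposition theorem} to replace the potential by its $L^2_\Sigma$-component without changing its curl, and the Foias--Temam characterisation of $W^{1,2}(\Omega,\R^3)$ (Theorem \ref{Foias-Temam theorem}) to recover Sobolev regularity; uniqueness and linearity follow from $L^2_\Sigma(\Omega,\R^3) \cap L^2_H(\Omega,\R^3) = \{0\}$, and boundedness from the $N$-dimensionality of $L^2_H(\Omega,\R^3)$ (one could equally use \eqref{Variant of Helmholtz decomposition} to skip the Neumann solve). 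All steps check out, including your correct identification that the $\mathscr{C}^{1,1}$ hypothesis enters precisely through Theorem \ref{Foias-Temam theorem} (or through $W^{2,2}$ elliptic regularity for the Neumann problem); in the alternative route one also uses $L^2_H(\Omega,\R^3) \subset W^{1,2}_\sigma(\Omega,\R^3)$ to see that subtracting the harmonic part preserves $W^{1,2}$. What the citation buys the paper is brevity and a quantitative estimate valid in the wider ABDG framework; what your argument buys is a self-contained proof from results the paper already records, at the price of the non-quantitative (finite-dimensional) boundedness argument, which is harmless here since no explicit constant is needed.
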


We use Theorems \ref{Borchers-Sohr lemma} and \ref{Amrouche-Bernardi-Dauge-Girault lemma} to record an existence theorem about vector potentials satisfying \eqref{Natural condition}.

\begin{cor} \label{Time-dependent Borchers-Sohr}
For every $v \in L^\infty(0,T;L^2_\sigma(\Omega,\R^3))$, the mappings $\Psi^\Sigma(x,t) \defeq T_\Sigma v_\Sigma(x,t)$ and $\psi^H(x,t) \defeq T_H v_H(x,t)$ belong to $L^\infty(0,T;W^{1,2}(\Omega,\R^3))$ and satisfy $\curl \Psi^\Sigma = v_\Sigma$, $\curl \psi^H = v_H$ and \eqref{Natural condition}.
\end{cor}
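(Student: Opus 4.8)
The plan is to reduce everything to the pointwise‑in‑time statements already contained in Theorems \ref{Borchers-Sohr lemma} and \ref{Amrouche-Bernardi-Dauge-Girault lemma}, the only genuinely new ingredient being the treatment of the time variable in the Bochner sense. Concretely, I would apply the operators $T_\Sigma$ and $T_H$ to the (a.e.\ defined) slices $v_\Sigma(\cdot,t)$ and $v_H(\cdot,t)$ and check that the resulting curves are strongly measurable and essentially bounded into $W^{1,2}(\Omega,\R^3)$, while the algebraic identities $\curl \Psi^\Sigma = v_\Sigma$, $\curl \psi^H = v_H$ and the boundary condition \eqref{Natural condition} hold at a.e.\ fixed $t$.

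First I would note that, since the decomposition $L^2_\sigma(\Omega,\R^3) = L^2_\Sigma(\Omega,\R^3) \oplus L^2_H(\Omega,\R^3)$ of Theorem \ref{Foias-Temam decomposition theorem} is topological, the projections $P_\Sigma$ and $P_H$ are bounded linear operators on $L^2_\sigma(\Omega,\R^3)$. Hence, for $v \in L^\infty(0,T;L^2_\sigma(\Omega,\R^3))$, the maps $v_\Sigma(\cdot,t) \defeq P_\Sigma v(\cdot,t)$ and $v_H(\cdot,t) \defeq P_H v(\cdot,t)$ are strongly measurable — composition of a strongly measurable map with a bounded linear operator is strongly measurable — and satisfy $\norm{v_\Sigma(\cdot,t)}_{L^2(\Omega)} + \norm{v_H(\cdot,t)}_{L^2(\Omega)} \lesssim_\Omega \norm{v(\cdot,t)}_{L^2(\Omega)}$ for a.e.\ $t$, so $v_\Sigma \in L^\infty(0,T;L^2_\Sigma(\Omega,\R^3))$ and $v_H \in L^\infty(0,T;L^2_H(\Omega,\R^3))$. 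Composing once more with the bounded linear operators $T_\Sigma \colon L^2_\Sigma(\Omega,\R^3) \to W^{1,2}_0(\Omega,\R^3)$ (Theorem \ref{Borchers-Sohr lemma}) and $T_H \colon L^2_H(\Omega,\R^3) \to W^{1,2}(\Omega,\R^3)$ (Theorem \ref{Amrouche-Bernardi-Dauge-Girault lemma}) again preserves strong measurability, so $\Psi^\Sigma(\cdot,t) = T_\Sigma v_\Sigma(\cdot,t)$ and $\psi^H(\cdot,t) = T_H v_H(\cdot,t)$ are strongly measurable $(0,T) \to W^{1,2}(\Omega,\R^3)$, with $\norm{\Psi^\Sigma(\cdot,t)}_{W^{1,2}(\Omega)} + \norm{\psi^H(\cdot,t)}_{W^{1,2}(\Omega)} \lesssim_\Omega \norm{v(\cdot,t)}_{L^2(\Omega)}$ for a.e.\ $t$; this places both in $L^\infty(0,T;W^{1,2}(\Omega,\R^3))$.

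It then remains to read off the remaining properties at a.e.\ fixed $t$. Theorems \ref{Borchers-Sohr lemma} and \ref{Amrouche-Bernardi-Dauge-Girault lemma} give $\curl \Psi^\Sigma(\cdot,t) = v_\Sigma(\cdot,t)$ and $\curl \psi^H(\cdot,t) = v_H(\cdot,t)$, while $\Psi^\Sigma(\cdot,t) \in W^{1,2}_0(\Omega,\R^3)$ has vanishing trace on $\Gamma$, hence in particular $\Psi^\Sigma(\cdot,t) \times n|_\Gamma = 0$, which is exactly \eqref{Natural condition} (recall that $\psi^H = T_H v_H \in L^2_\Sigma(\Omega,\R^3)$ is the canonical harmonic potential, so $\Psi^\Sigma$ is indeed the $\Sigma$‑component of $\Psi^\Sigma + \psi^H$). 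I do not expect a serious obstacle: the sole point requiring care is the Bochner measurability of the two compositions, which is precisely the standard fact that bounded linear operators map strongly measurable functions to strongly measurable functions, combined with the uniform‑in‑$t$ operator bounds that upgrade boundedness to $L^\infty$ in time.
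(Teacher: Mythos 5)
Your proof is correct and takes essentially the same route the paper intends: apply Theorems \ref{Borchers-Sohr lemma} and \ref{Amrouche-Bernardi-Dauge-Girault lemma} slicewise in $t$, with strong measurability and the $L^\infty$-in-time bound coming from the fact that $T_\Sigma \circ P_\Sigma$ and $T_H \circ P_H$ are bounded linear operators from $L^2_\sigma(\Omega,\R^3)$ into $W^{1,2}(\Omega,\R^3)$, which is exactly the remark the paper makes right after the statement. Your closing observation that $\Psi^\Sigma(\cdot,t) \in W^{1,2}_0(\Omega,\R^3)$ forces $\Psi^\Sigma \times n|_\Gamma = 0$, so that \eqref{Natural condition} holds for the canonical decomposition, is also how the paper reads the condition.
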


Given $v \in L^\infty(0,T;L^2_\sigma(\Omega,\R^3))$, the time-dependent mappings $T_\Sigma v_\Sigma$ and $T_H v_H$ are strongly measurable, which follows from the fact that $T_\Sigma \circ P_\Sigma$ and $T_H \circ P_H$ are bounded linear operators from $L^2_\sigma(\Omega,\R^3)$ into $W^{1,2}(\Omega,\R^3)$.

\begin{rem} \label{Remark on another gauge}
Another choice of vector potentials that satisfies \eqref{Natural condition} (a special case of the \emph{Coulomb gauge}) is given in ~\cite[Theorem 3.17]{ABDG} and ~\cite[Corollary 3.19]{ABDG}: for every $v \in L^2_\Sigma(\Omega,\R^3)$ there exists a unique $\phi \in W^{1,2}(\Omega,\R^3)$ with $\curl \phi = v$, $\dive \phi = 0$, $\phi \times n = 0$ on $\Gamma$ and $\langle \phi \cdot n, 1 \rangle_{\Gamma_i} = 0$ for all $i \in \{1,\ldots,N\}$ - the condition $\psi^\Sigma \cdot n|_\Gamma = 0$ is thus traded for $\dive \phi = 0$.
\end{rem}

We will also need a scalar potential for time-dependent curl-free $L^p$ vector fields in simply connected domains.

\begin{lem} \label{Helmholtz-Hodge in domains}
Let $1 \le p \le \infty$ and $1 < q < \infty$, and suppose $\Omega' \subset \Omega$ is a simply connected domain with smooth boundary. If $v \in L^p(0,T; L^q(\Omega',\R^3))$ is curl-free, then there exists a unique $g \in L^p(0,T; W^{1,q}(\Omega'))$ such that $v = \nabla g$ and $\int_{\Omega'} g(x,t) \, dx = 0$. Furthermore, $\norm{g(\cdot,t)}_{W^{1,q}(\Omega')} \lesssim_{\Omega',q} \norm{v(\cdot,t)}_{L^q(\Omega')}$ for a.e. $t \in (0,T)$.
\end{lem}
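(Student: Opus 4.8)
The plan is to reduce everything to the time-independent statement by applying a single \emph{bounded linear solution operator} pointwise in $t$, so that the time regularity and measurability are essentially automatic.

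First I would establish the static version. Fix $1 < q < \infty$ and set $X_q \defeq \{w \in L^q(\Omega',\R^3) \colon \curl w = 0\}$, a closed subspace of $L^q(\Omega',\R^3)$ since $\curl \colon L^q \to W^{-1,q}$ is continuous. On the simply connected domain $\Omega'$, the classical de Rham/Poincar\'e lemma says that a curl-free distribution is a distributional gradient $\nabla g$; since $\nabla g = w \in L^q(\Omega')$ and $\Omega'$ is bounded with smooth (in particular Lipschitz) boundary, the Ne\v{c}as inequality $\norm{g - \bar g}_{L^q(\Omega')} \lesssim_{\Omega',q} \norm{\nabla g}_{L^q(\Omega')}$, where $\bar g$ denotes the mean of $g$ over $\Omega'$, forces $g - \bar g \in W^{1,q}(\Omega')$. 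Hence $\nabla \colon W^{1,q}(\Omega')/\R \to X_q$ is a continuous bijection between Banach spaces, so an isomorphism by the open mapping theorem; composing its inverse with the normalisation to zero mean produces a bounded linear operator $S \colon X_q \to W^{1,q}(\Omega')$ with $\nabla (Sw) = w$, $\int_{\Omega'} Sw \, dx = 0$ and $\norm{Sw}_{W^{1,q}(\Omega')} \lesssim_{\Omega',q} \norm{w}_{L^q(\Omega')}$.

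Next I would pass to the time-dependent setting. Since $v \in L^p(0,T; L^q(\Omega',\R^3))$ is curl-free, $\curl v(\cdot,t) = 0$ for a.e.\ $t$, so $v(\cdot,t) \in X_q$ for a.e.\ $t$ and $t \mapsto v(\cdot,t)$ is strongly measurable into the closed subspace $X_q$. Defining $g(\cdot,t) \defeq S v(\cdot,t)$, the composition of a strongly measurable map with the bounded linear operator $S$ is again strongly measurable into $W^{1,q}(\Omega')$, and the pointwise estimate $\norm{g(\cdot,t)}_{W^{1,q}(\Omega')} \lesssim_{\Omega',q} \norm{v(\cdot,t)}_{L^q(\Omega')}$ yields both the claimed bound and $g \in L^p(0,T;W^{1,q}(\Omega'))$ (taking essential suprema when $p = \infty$); by construction $\nabla g = v$ and $\int_{\Omega'} g(\cdot,t)\,dx = 0$. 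For uniqueness, if $g_1,g_2$ both satisfy the conclusion, then $\nabla(g_1-g_2)(\cdot,t) = 0$ on the connected set $\Omega'$ for a.e.\ $t$, so $g_1(\cdot,t)-g_2(\cdot,t)$ is a constant, which vanishes by the zero-mean normalisation.

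The only genuine content is the static $L^q$ Poincar\'e lemma on simply connected smooth domains together with the boundedness of $S$ (which rests on the open mapping theorem and the Ne\v{c}as inequality); I expect this to be the main point, while the time-dependent statement is then a routine instance of the principle that bounded linear operators preserve Bochner strong measurability and $L^p$ bounds.
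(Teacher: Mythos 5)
Your proposal is correct and takes essentially the same route as the paper: solve the static problem at a.e.\ fixed $t$, normalise to zero mean for uniqueness, obtain the bound from a Poincar\'e-type inequality, and deduce strong measurability of $g$ from the linearity (and boundedness) of the solution map $v(\cdot,t) \mapsto g(\cdot,t)$. The only difference is that the paper simply cites \cite[Lemma III.1.1]{Gal2} for the static existence of $g(\cdot,t) \in W^{1,q}(\Omega')$ and uses the Poincar\'e inequality directly for the norm estimate, whereas you reprove that static lemma via de Rham, the Ne\v{c}as inequality and the open mapping theorem.
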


\begin{proof}
For the existence of $g(\cdot,t)$ a.e. $t \in (0,T)$ see e.g. ~\cite[Lemma III.1.1]{Gal2}. Given $t \in (0,T)$ the uniqueness of $g(\cdot,t)$ is clear and the norm estimate follows from the Poincar\'{e} inequality. Since $v(\cdot,t) \mapsto g(\cdot,t)$ is linear, $g$ is strongly measurable.
\end{proof}

\section{The proof of Theorems \ref{Simply connected Taylor theorem} and \ref{Taylor theorem}} \label{The proof of Taylor theorem}
Theorem \ref{Taylor theorem} is proved in this section, and Theorem \ref{Simply connected Taylor theorem} is obtained as a special case. We begin by motivating our decomposition of vector potentials.

\subsection{The decomposition of vector potentials} \label{The decomposition of vector potentials}
Given a weak ideal or non-resistive limit $(u,b)$, our aim is to compute $\int_\Omega \psi(x,t) \cdot b(x,t) \, dx$ at a.e. $t \in (0,T)$ for every vector potential $\psi \in L^\infty(0,T;W^{1,2}(\Omega,\R^3))$ of $b$. However, we do not assume that $(u,b)$ satisfies the ideal MHD equations and so no neat formula for $\int_\Omega \psi(x,t) \cdot b(x,t) \, dx$ is readily available. We therefore wish to relate $\int_\Omega \psi(x,t) \cdot b(x,t) \, dx$ to $\int_\Omega \psi_j(x,t) \cdot b_j(x,t) \, dx$ and compute the latter for every $\psi_j$.

A natural idea for computing $\int_\Omega \psi_j(x,t) \cdot b_j(x,t) \, dx$ (which works without major complications in simply connected domains) is to write
\begin{align*}
\int_\Omega \psi_j(x,t) \cdot b_j(x,t) \, dx
&= \int_0^t \int_\Omega [\partial_\tau \psi_j(x,\tau) \cdot b_j(x,\tau) + \psi_j(x,\tau) \cdot \partial_\tau b_j(x,\tau)] \, dx \, d\tau \\
&+ \int_\Omega \psi_j(x,0) \cdot b_j(x,0) \, dx
\end{align*}
and use the induction equation
\begin{equation} \label{Resistive MHD2 copy}
\partial_t b_j + \curl (b_j \times u_j) + \mu_j \curl \curl b_j = 0
\end{equation}
 on $\partial_\tau \psi_j$ and $\partial_\tau b_j$. In the multiply connected case, however, \eqref{Resistive MHD2 copy} leads (formally) to $\partial_t \psi_j = - b_j \times u_j - \mu_j \curl b_j + \sum_{i=1}^N d_i(t) h_i + \nabla g$, where $\{h_1,\ldots,h_N\}$ is an orthonormal basis of $L^2_H(\Omega,\R^3)$, and the product $\sum_{i=1}^N d_i(t) h_i \cdot b_j$ seems very difficult to control. We therefore maneuver carefully in the proof of Lemma \ref{Distributional derivative of magnetic helicity} to make sure that \emph{we do not multiply $\partial_t \psi_j$ and $b_{j,H}$ at any point of the argument}.

\vspace{0.3cm}
The considerations above prompt us to decompose $\psi_j$ and take advantage of the differences between $b_{j,\Sigma}$ and $b_{j,H}$. Using the notation of Corollary \ref{Time-dependent Borchers-Sohr}, we write
\begin{equation} \label{Decomposition of psi}
\psi_j = (\psi_j - \Psi^\Sigma_j - \psi^H_j) + \Psi^\Sigma_j + \psi^H_j.
\end{equation}
In \eqref{Decomposition of psi}, $\partial_t \psi_j^H = 0$, while the condition $\Psi_j^\Sigma \times n|_\Gamma = 0$ ensures that many natural integrations by parts do not create unwanted boundary terms. These properties play a key role in the proof of Lemma \ref{Distributional derivative of magnetic helicity}. Finally, the 'bad part' $\psi_j-\psi_j^\Sigma-\psi_j^H$ is curl-free and, consequently, $\int_\Omega (\psi_j(x,t)-\Psi_j^\Sigma(x,t)-\psi_j^H(x)) \cdot b_j(x,t) \, dx$ can be given a simple formula (see Lemma \ref{Reduction lemma}).

Furthermore, while $\psi_j$ need not converge to $\psi$ in any useful sense, the 'good parts' $\Psi^\Sigma_j + \psi^H_j$ of the potentials satisfy
\begin{equation} \label{Strong convergence of potentials}
\Psi_j^\Sigma + \psi^H_j \to \Psi^\Sigma + \psi^H \qquad \text{in } L^2_{loc}(0,T;L^2(\Omega,\R^3))
\end{equation}
(see Lemma \ref{Lemma on strong limit of vector potentials}). The basic reason is that $\Psi^\Sigma_j$ and $\psi^H_j$ depend linearly on $b_j$, which allows us to exploit the weak-$*$ convergence $b_j \overset{*}{\rightharpoonup} b$ in $L^\infty(0,T;L^2_\sigma(\Omega,\R^3))$.

\subsection{An overview of the proof} \label{Strategy of the proof}
The proof of Theorem \ref{Taylor theorem} is reduced to the special case of $\psi_j = \Psi^\Sigma_j + \psi^H_j$ and $\psi = \Psi^\Sigma + \psi^H$ in Lemma \ref{Reduction lemma}. We therefore introduce a shorthand notation for magnetic helicity in this gauge.

\begin{defin} \label{Notation for canonical magnetic helicity}
Given $v \in L^\infty(0,T;L^2_\sigma(\Omega,\R^3))$ and $\Psi^\Sigma \defeq T_\Sigma v$, $\psi^H \defeq T_H v$ we denote
\[\mathscr{M}(v;t) \defeq \int_\Omega (\Psi^\Sigma(x,t) + \psi^H(x,t)) \cdot v(x,t) \, dx.\]
\end{defin}
Our aim is to show that
\begin{equation} \label{Formula of M for Leray-Hopf solutions}
\mathscr{M}(b_j;t) = \mathscr{M}(b_j;0) - 2 \mu_j \int_0^t \int_\Omega b_j(x,\tau) \cdot \curl b_j(x,\tau) \, dx \, d\tau
\end{equation}
for every $j \in \N$ and $t \in [0,T)$ and that given $\eta \in C_c^\infty(0,T)$,
\begin{equation} \label{Double aim}
\int_0^T \eta(t) \mathscr{M}(b;t) \, dt
= \lim_{j \to \infty} \int_0^T \eta(t) \mathscr{M}(b_j;t) \, dt
= \lim_{j \to \infty} \int_0^T \eta(t) \mathscr{M}(b_j;0) \, dt.
\end{equation}
Once \eqref{Double aim} is proved, \eqref{Value of magnetic helicity} follows for $\psi = \Psi^\Sigma + \psi^H$ rather easily (see Lemma \ref{Lemma on magnetic helicities of initial datas}).

The leftmost equality in \eqref{Double aim} is proved by showing \eqref{Strong convergence of potentials} and recalling that $b_j \rightharpoonup b$ in $L^2(0,T;L^2(\Omega,\R^3))$. The proof of \eqref{Strong convergence of potentials} uses the Aubin-Lions Lemma as a main tool and is presented in \textsection \ref{Strong convergence of the vector potentials}. The rightmost equality of \eqref{Double aim} is proved in \textsection \ref{Completion of the proof} by showing that the double integral on the right-hand side of \eqref{Formula of M for Leray-Hopf solutions} vanishes at the limit $j \to \infty$.

We finally mention that in the proof of Theorem \ref{Taylor theorem} we will on several occasions pass to a subsequence without relabeling it. The limit \eqref{Value of magnetic helicity} will however hold for the whole sequence $(b_j)_{j=1}^\infty$ as every subsequence will have a subsequence satisfying \eqref{Value of magnetic helicity}.

\subsection{Reduction to good vector potentials} \label{Reduction to good vector potentials}
The following lemma shows that it suffices to prove the claims of Theorem \ref{Taylor theorem} and Corollary \ref{Taylor corollary} for the potentials of Definition \ref{Notation for canonical magnetic helicity}. It also indicates to what extent gauge invariance of magnetic helicity fails in multiply connected domains.

\begin{lem} \label{Reduction lemma}
Suppose $\psi = \psi^\Sigma + \psi^H \in L^\infty(0,T;W^{1,2}(\Omega,\R^3))$ is a vector potential of $v \in L^\infty(0,T;L^2_\sigma(\Omega,\R^3))$. Then
\[\int_\Omega \psi(x,t) \cdot v(x,t) \, dx = \mathscr{M}(v;t) - \int_\Gamma \psi^\Sigma(x,t) \times n \cdot \psi^H(x) \, dx\]
at a.e. $t \in (0,T)$.
\end{lem}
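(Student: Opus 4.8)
The plan is to split the integral $\int_\Omega \psi \cdot v \, dx$ according to the decomposition $\psi = \psi^\Sigma + \psi^H$ and $v = v_\Sigma + v_H$, and to handle each of the four resulting pieces using either the defining properties of the potentials $T_\Sigma v$, $T_H v$ or Green's formula \eqref{Green's formula}. Recall $\curl \Psi^\Sigma = v_\Sigma$, $\curl \psi^H = v_H$, where I write $\Psi^\Sigma \defeq T_\Sigma v_\Sigma$; also $\curl \psi^\Sigma = v_\Sigma$ and $\curl \psi^H = v_H$ for the given potential. The key observation is that $w \defeq \psi^\Sigma - \Psi^\Sigma$ and $\psi^H$ (the canonical one) have the same curl, so $w$ is curl-free; more precisely, both $\psi^\Sigma$ and $\Psi^\Sigma$ are vector potentials of the \emph{same} field $v_\Sigma$, hence $\curl(\psi^\Sigma - \Psi^\Sigma) = 0$. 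Since $v_\Sigma \in L^2_\Sigma(\Omega,\R^3)$, decomposition \eqref{Variant of Helmholtz decomposition} gives $\int_\Omega (\psi^\Sigma - \Psi^\Sigma) \cdot v_\Sigma \, dx = 0$ at a.e.\ $t$, because $v_\Sigma$ is $L^2$-orthogonal to $\ker(\curl)$. Analogously, $\psi^H - \psi^H_{\mathrm{can}}$ (where $\psi^H_{\mathrm{can}} \defeq T_H v_H$) is curl-free, so $\int_\Omega (\psi^H - \psi^H_{\mathrm{can}}) \cdot v_H \, dx$ — wait, $v_H \in L^2_H \subset \ker(\curl)$, so this does not vanish directly; instead one pairs with $v_\Sigma$ below.

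Concretely, I would expand
\[
\int_\Omega \psi \cdot v \, dx = \int_\Omega \psi^\Sigma \cdot v_\Sigma + \int_\Omega \psi^\Sigma \cdot v_H + \int_\Omega \psi^H \cdot v_\Sigma + \int_\Omega \psi^H \cdot v_H,
\]
and do the same for $\mathscr{M}(v;t) = \int_\Omega (\Psi^\Sigma + \psi^H_{\mathrm{can}}) \cdot (v_\Sigma + v_H)$. Subtracting, the four differences are: (1) $\int_\Omega (\psi^\Sigma - \Psi^\Sigma) \cdot v_\Sigma = 0$ by the curl-free/orthogonality argument above; (2) $\int_\Omega (\psi^\Sigma - \Psi^\Sigma) \cdot v_H$, which I evaluate by Green's formula \eqref{Green's formula} with the curl-free field $\psi^\Sigma - \Psi^\Sigma$ and a vector potential of $v_H$: since $v_H = \curl \psi^H_{\mathrm{can}}$ and $\curl(\psi^\Sigma - \Psi^\Sigma) = 0$, \eqref{Green's formula} yields $\int_\Omega (\psi^\Sigma - \Psi^\Sigma)\cdot v_H = -\langle (\psi^\Sigma - \Psi^\Sigma)\times n, \psi^H_{\mathrm{can}}\rangle_\Gamma$; using $\Psi^\Sigma \times n|_\Gamma = 0$ (Corollary \ref{Time-dependent Borchers-Sohr}) this becomes $-\int_\Gamma \psi^\Sigma \times n \cdot \psi^H_{\mathrm{can}}\, dx$; (3) $\int_\Omega (\psi^H - \psi^H_{\mathrm{can}}) \cdot v_\Sigma$: here $v_\Sigma = \curl \Psi^\Sigma$, so \eqref{Green's formula} with the field $v_\Sigma$ and test field $\psi^H - \psi^H_{\mathrm{can}}$ gives $\int_\Omega v_\Sigma \cdot (\psi^H - \psi^H_{\mathrm{can}}) = \int_\Omega \curl \Psi^\Sigma \cdot (\psi^H - \psi^H_{\mathrm{can}})$; applying Green's formula the other way, $= \int_\Omega \Psi^\Sigma \cdot \curl(\psi^H - \psi^H_{\mathrm{can}}) + \langle \Psi^\Sigma \times n, \psi^H - \psi^H_{\mathrm{can}}\rangle_\Gamma = 0 + 0$, since $\curl(\psi^H - \psi^H_{\mathrm{can}}) = 0$ and $\Psi^\Sigma \times n|_\Gamma = 0$; (4) $\int_\Omega (\psi^H - \psi^H_{\mathrm{can}}) \cdot v_H$: here I must check this vanishes — this is where I must be careful. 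Note $\psi^H$ and $\psi^H_{\mathrm{can}}$ are both potentials of $v_H$; their difference is curl-free, and I should use the fact (from the Definition preceding Theorem \ref{Taylor theorem}) that $\psi^H_{\mathrm{can}} = \psi^H$ is in fact the \emph{canonical} choice, i.e. these two are literally the same object by our convention, so term (4) is zero by definition and term (3)'s $\psi^H - \psi^H_{\mathrm{can}}$ is likewise zero.

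Let me re-examine: in the paper's convention, $\psi^H$ already \emph{denotes} the canonical $T_H v_H$, and $\psi^\Sigma \defeq \psi - \psi^H$. So only the $\Sigma$-part differs between $\psi^\Sigma$ and $\Psi^\Sigma = T_\Sigma v_\Sigma$. Hence terms (3) and (4) are identically zero and I need only handle (1) and (2). The upshot is
\[
\int_\Omega \psi \cdot v \, dx - \mathscr{M}(v;t) = \int_\Omega (\psi^\Sigma - \Psi^\Sigma)\cdot v_\Sigma + \int_\Omega (\psi^\Sigma - \Psi^\Sigma)\cdot v_H = 0 - \int_\Gamma \psi^\Sigma \times n \cdot \psi^H \, dx,
\]
which is exactly the claimed formula. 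The main obstacle — really the only non-formal point — is justifying that all the traces and the pairing $\langle(\psi^\Sigma - \Psi^\Sigma)\times n, \psi^H\rangle_\Gamma$ reduce to genuine surface integrals $\int_\Gamma$ rather than merely $W^{-1/2,2}(\Gamma)$–$W^{1/2,2}(\Gamma)$ dualities; this follows because $\psi^\Sigma - \Psi^\Sigma \in W^{1,2}(\Omega,\R^3)$ has an $L^2(\Gamma)$ tangential trace by Theorem \ref{Trace theorem} and $\psi^H \in W^{1,2}(\Omega,\R^3)$ likewise, so \eqref{Green's formula} applies with $p = p' = 2$ and the pairing is an honest integral. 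A secondary point is measurability in $t$, which is immediate since each operation ($P_\Sigma$, $T_\Sigma$, $T_H$, traces) is bounded and linear, and the identity holds pointwise a.e.\ in $t$ once it holds for fixed $t$.
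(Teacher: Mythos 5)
Your argument is correct and, after the mid-proof self-correction (recognizing that $\psi^H$ is by convention the canonical potential $T_H v_H$, so only the $\Sigma$-parts differ), it reduces to exactly the paper's proof: the curl-free difference $\psi^\Sigma - \Psi^\Sigma$ is orthogonal to $L^2_\Sigma(\Omega,\R^3)$, Green's formula \eqref{Green's formula} converts its pairing with $v_H = \curl\psi^H$ into a boundary term, and $\Psi^\Sigma \times n|_\Gamma = 0$ kills the remaining trace. The only difference is cosmetic — you apply Green's formula with the roles of the two fields swapped relative to the paper, which is equivalent by antisymmetry of the boundary triple product.
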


\begin{proof}
By using the definition of $\mathscr{M}(v;t)$ and the facts that $\psi^\Sigma - \Psi^\Sigma \in \ker \curl = L^2_\Sigma(\Omega,\R^3)^\perp$  and $\Psi^\Sigma \times n|_\Gamma = 0$ we obtain
\begin{align*}
\int_\Omega \psi(x,t) \cdot v(x,t) \, dx
&= \mathscr{M}(v;t) + \int_\Omega (\psi^\Sigma(x,t) - \Psi^\Sigma(x,t)) \cdot v_H(x,t) \, dS(x) \\
&= \mathscr{M}(v;t) + \int_\Gamma \psi^H(x) \times n \cdot (\psi^\Sigma(x,t) -\Psi^\Sigma) \, dS(x) \\
&= \mathscr{M}(v;t) - \int_\Gamma \psi^\Sigma(x,t) \times n \cdot \psi^H(x) \, dx
\end{align*}
at a.e. $t \in (0,T)$.
\end{proof}

\subsection{Stationarity of the harmonic parts of magnetic fields}
We next show that for Leray-Hopf solutions and their weak ideal limits, the harmonic part of the magnetic field is stationary.

\begin{lem} \label{Decomposition of the magnetic field}
Under the assumptions of Theorem \ref{Taylor theorem}, for every $j \in \N$ the harmonic parts $b_{j,H}$ and $b_H$ are of the forms $b_{j,H}(x,t) = b_{j,0,H}(x)$ and $b_H(x,t) = b_{0,H}(x) = \lim_{j \to \infty} b_{j,0,H}(x)$.
\end{lem}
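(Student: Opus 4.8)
The plan is to test the weak induction equation against the time-independent harmonic Neumann fields and exploit that they are curl-free. Fix an orthonormal basis $\{h_1,\ldots,h_N\}$ of $L^2_H(\Omega,\R^3)$; since $L^2_H(\Omega,\R^3)\subset W^{1,2}_\sigma(\Omega,\R^3)$ (a consequence of Theorem \ref{Foias-Temam theorem}), each $h_i$ is an admissible test field in \eqref{Resistive MHD weak definition 2}. Because $\curl h_i=0$, both the convection term $\int_\Omega b_j\times u_j\cdot\curl h_i$ and the resistive term $\mu_j\int_\Omega\curl b_j\cdot\curl h_i$ vanish identically, so \eqref{Resistive MHD weak definition 2} collapses to
\[
\int_0^T\partial_t\eta(t)\int_\Omega b_j(x,t)\cdot h_i(x)\,dx\,dt + \eta(0)\int_\Omega b_{j,0}(x)\cdot h_i(x)\,dx = 0
\]
for every $\eta\in C_c^\infty([0,T))$. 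Restricting first to $\eta\in C_c^\infty(0,T)$ shows that $t\mapsto\int_\Omega b_j(x,t)\cdot h_i(x)\,dx$ has vanishing distributional derivative, hence is a.e. constant; the full class of $\eta$ then identifies this constant as $\int_\Omega b_{j,0}\cdot h_i$. Since $b_j\in C_w([0,T);L^2_\sigma(\Omega,\R^3))$, the identity $\int_\Omega b_j(x,t)\cdot h_i(x)\,dx=\int_\Omega b_{j,0}\cdot h_i$ holds for every $t\in[0,T)$, and as $b_{j,H}(\cdot,t)=\sum_{i=1}^N\big(\int_\Omega b_j(x,t)\cdot h_i(x)\,dx\big)h_i$ we conclude $b_{j,H}(x,t)=b_{j,0,H}(x)$. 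Note the argument is insensitive to whether $\nu_j\to0$ or $\nu_j\equiv\nu$, so it covers both the weak ideal and the weak non-resistive case.

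For the limit, observe that $P_H$ has finite rank, so the weak convergence $b_{j,0}\rightharpoonup b_0$ in $L^2_\sigma(\Omega,\R^3)$ forces $\int_\Omega b_{j,0}\cdot h_i\to\int_\Omega b_0\cdot h_i$ for each $i$, and hence $b_{j,0,H}\to b_{0,H}$ in $L^2(\Omega,\R^3)$. To identify $b_H$, fix $\eta\in C_c^\infty(0,T)$ and apply the weak-$*$ convergence $b_j\overset{*}{\rightharpoonup}b$ in $L^\infty(0,T;L^2_\sigma(\Omega,\R^3))$ to the test function $\eta(t)h_i(x)$, using the identity just established:
\[
\int_0^T\eta(t)\int_\Omega b(x,t)\cdot h_i(x)\,dx\,dt = \lim_{j\to\infty}\Big(\int_0^T\eta(t)\,dt\Big)\int_\Omega b_{j,0}\cdot h_i = \Big(\int_0^T\eta(t)\,dt\Big)\int_\Omega b_0\cdot h_i.
\]
Since $\eta$ is arbitrary, $\int_\Omega b(x,t)\cdot h_i(x)\,dx=\int_\Omega b_0\cdot h_i$ for a.e. $t$ and all $i$, whence $b_H(x,t)=\sum_{i=1}^N\big(\int_\Omega b_0\cdot h_i\big)h_i=b_{0,H}(x)=\lim_{j\to\infty}b_{j,0,H}(x)$.

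I do not anticipate a serious obstacle: the points needing a little care are the admissibility of $h_i$ as a test field (handled by $L^2_H\subset W^{1,2}_\sigma$), the passage from ``zero distributional derivative'' to ``a.e. constant'' and then to ``constant for every $t$'' via weak-in-time continuity, and the elementary observation that weak convergence becomes strong after a finite-rank projection. The rest is a direct substitution into the weak formulation, and the crucial simplification — that multiplying by $h_i$ kills precisely the convective and resistive terms — is exactly what makes $b_{j,H}$ stationary.
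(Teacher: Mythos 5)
Your proof is correct and follows essentially the same route as the paper: test the weak induction equation \eqref{Resistive MHD weak definition 2} with $\theta = \eta(t)h_i(x)$, note that $\curl h_i = 0$ kills the convective and resistive terms so the coefficients $\int_\Omega b_j\cdot h_i$ are constant in time, and then pass to the limit using $b_{j,0}\rightharpoonup b_0$ and $b_j\overset{*}{\rightharpoonup} b$. The only cosmetic difference is that the paper first observes continuity of these coefficients via $b_j\in C_w([0,T);L^2_\sigma(\Omega,\R^3))$ and then identifies the constant, whereas you identify it through the $\eta(0)$ boundary term; both are fine.
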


\begin{proof}
Given $j \in \N$ we write $b_{j,H}(x,t) = \sum_{i=1}^N c^j_i(t) h_i(x)$ and set out to prove that $c^1_j,\ldots,c^N_j$ are constants independent of $t$. We fix $i \in \{1,\ldots,N\}$ and first show that $c^i_j$ is continuous. Since $b_j$ is weakly $L^2$-continuous in time, we get
\[c_i^j(t_k) = \int_\Omega b_j(x,t_k) \cdot h_i(x) \, dx \to \int_\Omega b_j(x,t) \cdot h_i(x) \, dx = c_i^j(t)\]
whenever $t_k \to t$ in $[0,T)$. Now let $\eta \in C_c^\infty(0,T)$ and set $\theta(x,t) \defeq \eta(t) h_i(x)$ in \eqref{Resistive MHD weak definition 2}, getting $\int_0^T \eta'(t) c^j_i(t) \, dt = 0$, which implies that $c_i^j(t) = c_i^j(0)$ for all $t \in [0,T)$. Fixing $i \in \{1,\ldots,N\}$ and $\eta \in C_c^\infty(0,T)$ we get
\[c^j_i(0) \int_0^T \eta(t) \, dt
= \int_0^T \int_\Omega \eta(t) h_i(x) \cdot b_j(x,t) \, dx \, dt \to \int_0^T \int_\Omega \eta(t) h_i(x) \cdot b(x,t) \, dx \, dt,\]
which yields the statement on $b_H$.
\end{proof}

\subsection{Strong convergence of good vector potentials} \label{Strong convergence of the vector potentials}
The aim of this section is to prove \eqref{Strong convergence of potentials} via the Aubin-Lions Lemma. This requires uniform control of the norms $\norm{\partial_t \psi_j}_{L^1(0,T;X)}$ in some (reflexive) Banach space $X \supset L^2(\Omega,\R^3)$. Note that since $\partial_t \psi_j^H = 0$, \eqref{Resistive MHD2} yields
\begin{equation} \label{Vanishing curl}
\curl(\partial_t \psi^\Sigma_j + b_j \times u_j + \mu_j \curl b_j) = 0.
\end{equation}
If $\Omega' \subset \Omega$ is a simply connected subdomain, we can thus write $\partial_t \psi^\Sigma_j + b_j \times u_j + \mu_j \curl b_j = \nabla g$ in $\Omega'$. It is, however, not immediately clear how well-behaved $\partial_t \psi_j^\Sigma$ (and, thus, $g$) is. In order to circumvent this issue we mollify in time via the functions $t \mapsto \chi^\delta(t)$ mentioned in \textsection \ref{Bochner spaces} and write $\partial_t (\psi_j^\Sigma * \chi^\delta) = \psi_j^\Sigma * \partial_t \chi^\delta$. 

\begin{lem} \label{Lemma on strong limit of vector potentials}
The vector potentials $\Psi_j^\Sigma + \psi_j^H$ converge in $L^2_{loc}(0,T;L^2(\Omega,\R^3))$ to the vector potential $\Psi^\Sigma + \psi^H$ of $b$.
\end{lem}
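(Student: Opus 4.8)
The plan is to establish strong $L^2_{loc}(0,T;L^2(\Omega,\R^3))$ convergence by treating the two summands separately and exploiting the linear dependence of the potentials on the magnetic field. First I would observe that $\psi_j^H = T_H b_{j,H}$ with $b_{j,H}(x,t) = b_{j,0,H}(x)$ time-independent (Lemma \ref{Decomposition of the magnetic field}), and that $b_{j,0,H} \to b_{0,H}$ strongly in the finite-dimensional space $L^2_H(\Omega,\R^3)$; since $T_H$ is a bounded linear operator into $W^{1,2}(\Omega,\R^3)$, we get $\psi_j^H \to \psi^H = T_H b_H$ strongly in $W^{1,2}(\Omega,\R^3)$, uniformly in $t$, hence in $L^2_{loc}(0,T;L^2(\Omega,\R^3))$. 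The real content is therefore the convergence $\Psi_j^\Sigma = T_\Sigma b_{j,\Sigma} \to \Psi^\Sigma = T_\Sigma b_\Sigma$.

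For $\Psi_j^\Sigma$ the strategy is the Aubin-Lions Lemma (Lemma \ref{Aubin-Lions lemma}) applied with $X = W^{1,2}_0(\Omega,\R^3)$, $Y = L^2(\Omega,\R^3)$ and a suitable reflexive $Z \supset L^2(\Omega,\R^3)$. The uniform bound in $L^2(0,T;X)$ is immediate: $b_j$ is uniformly bounded in $L^2(0,T;W^{1,2}(\Omega,\R^3))$ by the energy inequality, $P_\Sigma$ is an orthogonal projection, and $T_\Sigma$ is bounded into $W^{1,2}_0(\Omega,\R^3)$ by Theorem \ref{Borchers-Sohr lemma}. The time-derivative bound is the delicate point. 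Formally, differentiating $\curl \Psi_j^\Sigma = b_{j,\Sigma}$ in time and using the induction equation \eqref{Resistive MHD2} gives $\curl(\partial_t \Psi_j^\Sigma + b_j \times u_j + \mu_j \curl b_j) = 0$ on $\dot\Omega$, but $\partial_t \Psi_j^\Sigma$ is only known to be a distribution. To make this rigorous I would mollify in time: set $\Psi_{j,\delta}^\Sigma = \Psi_j^\Sigma * \chi^\delta$, so $\partial_t \Psi_{j,\delta}^\Sigma = \Psi_j^\Sigma * \partial_t \chi^\delta = T_\Sigma(b_{j,\Sigma} * \partial_t\chi^\delta)$ is smooth in $t$; on simply connected subdomains write $\partial_t \Psi_{j,\delta}^\Sigma + (b_j\times u_j + \mu_j\curl b_j)*\chi^\delta = \nabla g_{j,\delta}$ via Lemma \ref{Helmholtz-Hodge in domains}, control $g_{j,\delta}$ and hence $\partial_t\Psi_{j,\delta}^\Sigma$ in $L^{4/3}(0,T;L^2)$ or a similar space uniformly in $\delta$ using Lemma \ref{Interpolation lemma} on $b_j\times u_j$ (which lies in $L^{4/3}(0,T;L^2(\Omega))$) and the uniform bound $\mu_j\|\curl b_j\|_{L^2L^2}\to 0$, and then let $\delta\to 0$ with Lemma \ref{Mollifier approximation lemma}. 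This yields $\partial_t\Psi_j^\Sigma$ uniformly bounded in $L^{4/3}(0,T;Z)$ for an appropriate negative-order reflexive space $Z$, so Aubin-Lions gives precompactness of $(\Psi_j^\Sigma)$ in $L^2_{loc}(0,T;L^2(\Omega,\R^3))$.

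To identify the limit I would argue as follows: along a subsequence $\Psi_j^\Sigma \to \Phi$ strongly in $L^2_{loc}(0,T;L^2)$; since $T_\Sigma P_\Sigma \colon L^2_\sigma \to L^2$ is bounded linear and $b_j \overset{*}{\rightharpoonup} b$ in $L^\infty(0,T;L^2_\sigma)$, for any test function the weak limit of $\Psi_j^\Sigma = T_\Sigma P_\Sigma b_j$ is $T_\Sigma P_\Sigma b = \Psi^\Sigma$, forcing $\Phi = \Psi^\Sigma$; because every subsequence has a further subsequence converging to the same limit, the whole sequence converges. Combining with the $\psi_j^H$ part gives the claim. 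The main obstacle is the time-derivative estimate for $\Psi_j^\Sigma$: since $\partial_t\Psi_j^\Sigma$ is a priori merely distributional and the naive use of the induction equation produces an uncontrolled harmonic term $\sum_i d_i(t)h_i$, the whole point is to bypass that term by working with the $\curl$-free combination on simply connected subdomains and mollifying in time — organizing that estimate so that the bounds are uniform in both $j$ and the mollification parameter $\delta$ is the technical heart of the proof.
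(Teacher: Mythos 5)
Your overall plan (stationarity plus finite--dimensionality for $\psi_j^H$; time--mollification, Lemma \ref{Helmholtz-Hodge in domains} on a simply connected subdomain, Aubin--Lions, and identification of the limit through the linearity of $T_\Sigma P_\Sigma$ and the weak-$*$ convergence of $b_j$) is exactly the paper's, but the step you yourself call the technical heart has a genuine gap. Your proposal to ``control $g_{j,\delta}$ and hence $\partial_t\Psi^\Sigma_{j,\delta}$'' is circular: the only estimate available for the scalar potential (the norm bound in Lemma \ref{Helmholtz-Hodge in domains}) controls $\norm{g_{j,\delta}(\cdot,t)}_{W^{1,q}(\Omega')}$ by the $L^q$ norm of the \emph{whole} curl-free field $\Psi_j^\Sigma * \partial_t\chi^{\delta} + (b_j\times u_j)*\chi^{\delta} + \mu_j \curl b_j *\chi^{\delta}$, which contains precisely the term $\partial_t\Psi^\Sigma_{j,\delta}$ you are trying to bound, and for that term no uniform-in-$\delta$ Lebesgue bound is available a priori (obtaining one is exactly what is at stake). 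The paper never estimates $g_j$ at all: it takes the negative-order space to be $(W^{1,4}_{0,\sigma}(\Omega',\R^3))^*$, so that $\nabla g_j$ pairs to zero against divergence-free, compactly supported test fields, and the time-derivative bound \eqref{Aim for Aubin-Lions} then follows from bounds on $(b_j\times u_j)*\chi^{\delta_j}$ and $\mu_j\curl b_j *\chi^{\delta_j}$ alone, uniformly in $\delta$ by Lemma \ref{Lemma on time integrability of mollified functions}.

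There is a second, related problem: your bound $b_j\times u_j\in L^{4/3}(0,T;L^2(\Omega))$ via Lemma \ref{Interpolation lemma} is not uniform in $j$. That lemma needs the $L^2(0,T;W^{1,2})$ norms of both $u_j$ and $b_j$, while the energy inequality only controls $\sqrt{\nu_j}\,\norm{\nabla u_j}_{L^2(0,T;L^2)}$ and $\sqrt{\mu_j}\,\norm{\curl b_j}_{L^2(0,T;L^2)}$; these norms blow up like $\nu_j^{-1/2}$, $\mu_j^{-1/2}$ in the ideal limit (and the magnetic one still blows up in the non-resistive limit), so the Aubin--Lions hypotheses would not hold uniformly. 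The only product bound that is uniform in $j$ from the energy inequality is $b_j\times u_j\in L^\infty(0,T;L^1(\Omega))$, and this is what forces the dual of a space embedding into $L^\infty(\Omega')$, e.g. $W^{1,4}_{0,\sigma}(\Omega',\R^3)$: with that choice one gets $\norm{\partial_t(\Psi_j^\Sigma*\chi^{\delta_j})}_{L^2(\epsilon,T-\epsilon;(W^{1,4}_{0,\sigma}(\Omega'))^*)}\lesssim \norm{b_j\times u_j}_{L^2(0,T;L^1(\Omega))}+\mu_j\norm{\curl b_j}_{L^2(0,T;L^2(\Omega))}$, uniformly in $j$ and $\delta$. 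The remaining ingredients of your argument (the harmonic part, the mollification bookkeeping with $j$-dependent $\delta_j$, and the subsequence identification $\tilde\psi=T_\Sigma b_\Sigma$) agree with the paper and are fine.
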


\begin{proof}
Lemma \ref{Decomposition of the magnetic field} implies that $\psi_j^H$ converges to $\psi^H$ in $L^2(0,T;L^2(\Omega,\R^3))$. The more elaborate part is the strong convergence of the potentials $\Psi_j^\Sigma$.

We fix a sequence of numbers $\epsilon_j \in (0,T/2)$. We then choose another sequence of numbers $\delta_j \in (0,\epsilon_j)$ such that $\norm{\Psi^\Sigma_j * \chi^{\delta_j} - \Psi^\Sigma_j}_{L^2(\epsilon_j,T-\epsilon_j)} < 1/j$ for all $j \in \N$, so that it suffices to prove the convergence of the sequence $(\Psi_j^\Sigma * \chi^{\delta_j})_{j=1}^\infty$ in $L^2_{loc}(0,T;;L^2(\Omega,\R^3))$. We fix a non-empty simply connected, smooth subdomain $\Omega' \subset \Omega$ and aim to show that
\begin{equation} \label{Aim for Aubin-Lions}
\sup_{j \in \N} (\|\Psi_j^\Sigma * \chi^{\delta_j}\|_{L^2(\epsilon,T-\epsilon; W^{1,2}(\Omega,\R^3))} + \|\partial_t \Psi_j^\Sigma * \chi^{\delta_j}\|_{L^2(\epsilon,T-\epsilon; (W^{1,4}_{0,\sigma}(\Omega',\R^3))^*)}) < \infty;
\end{equation}
the Aubin-Lions Lemma then gives norm convergence of a subsequence of $(\Psi_j^\Sigma)_{j=1}^\infty$ to some $\tilde{\psi} \in L^2_{loc}(0,T;L^2(\Omega))$. On the other hand, for a further subsequence, $\Psi_j^\Sigma = T_\Sigma b_{j,\Sigma} \rightharpoonup T_\Sigma b_\Sigma$ in $L^2(0,T;W^{1,2}(\Omega,\R^3))$, and thus $\tilde{\psi} = T_\Sigma b_\Sigma \eqdef \Psi^\Sigma$.

For \eqref{Aim for Aubin-Lions} we fix $j \in \N$ and use the fact that by Lemma \ref{Borchers-Sohr lemma}, Theorem \ref{Theorem on Leray-Hopf solutions} and Lemma \ref{Lemma on time integrability of mollified functions},
\begin{align*}
\|\Psi_j^\Sigma* \chi^{\delta_j}\|_{L^2(\epsilon,T-\epsilon; W^{1,2}(\Omega,\R^3))}
&\lesssim \|b_{j,\Sigma} * \chi^{\delta_j}\|_{L^2(\epsilon,T-\epsilon;L^2(\Omega))} \le \|b_{j,\Sigma}\|_{L^2(0,T;L^2(\Omega))} \\
&\le \|b_j\|_{L^2(0,T;L^2(\Omega))}
\le \sqrt{T} \|b_{j,0}\|_{L^2(\Omega)}.
\end{align*}
In order to control the norm of $\partial_t (\Psi_j^\Sigma * \chi^{\delta_j})$ we note that $\partial_t (\Psi_j * \chi^{\delta_j}) = \Psi_j^\Sigma * \partial_t \chi^{\delta_j} \in L^\infty(\epsilon,T-\epsilon;W^{1,2}_0(\Omega'))$, and so Lemma \ref{Helmholtz-Hodge in domains} and \eqref{Vanishing curl} yield
\begin{equation} \label{Evolution of the vector potential with resistivity}
\Psi_j^\Sigma * \partial_t \chi^{\delta_j} + (b_j \times u_j) * \chi^{\delta_j} - \mu_j \curl b_j * \chi^{\delta_j} = \nabla g_j,
\end{equation}
where $g_j \in L^\infty(\epsilon,T-\epsilon;W^{1,2}(\Omega'))$. We estimate, at every $t \in (\epsilon,T-\epsilon)$,
\begin{align*}
   \left| \int_{\Omega'} \partial_t [\Psi_j^\Sigma * \chi^{\delta_j}](x,t) \cdot v(x) \, dx \right|
&\le \|b_j \times u_j * \chi^{\delta_j}(\cdot,t)\|_{L^1(\Omega)} \|v\|_{L^\infty(\Omega)} \\
&+ \mu_j \|\curl b_j * \chi^{\delta_j}(\cdot,t)\|_{L^2(\Omega)} \|v\|_{L^2(\Omega)}
\end{align*}
for all $v \in W^{1,4}_{0,\sigma}(\Omega',\R^3)$ so that, by Lemma \ref{Lemma on time integrability of mollified functions},
\begin{align*}
\|\partial_t \Psi_j^\Sigma * \chi^{\delta_j}\|_{L^2(\epsilon,T-\epsilon; (W^{1,4}_{0,\sigma}(\Omega))^*)}
&\lesssim_\Omega \|b_j \times u_j * \chi^{\delta_j}\|_{L^2(\epsilon,T-\epsilon; L^1(\Omega))} \\
&+ \mu_j \|\curl b_j * \chi^{\delta_j}\|_{L^2(\epsilon,T-\epsilon;L^2(\Omega))} \\
&\le \|b_j \times u_j\|_{L^2(0,T;L^1(\Omega))} + \mu_j \|\curl b_j\|_{L^2(0,T;L^2(\Omega))}
\end{align*}
which yields \eqref{Aim for Aubin-Lions}.
\end{proof}

\subsection{Completion of the proof} \label{Completion of the proof}
The proof of Theorem \ref{Taylor theorem} will be finished in the following two lemmas by showing \eqref{Formula of M for Leray-Hopf solutions} and controlling the size of the integral term in \eqref{Formula of M for Leray-Hopf solutions}. A third lemma then proves Corollary \ref{Taylor corollary}.

\begin{lem} \label{Distributional derivative of magnetic helicity}
For every $j \in \N$ and every $t \in [0,T)$ we have
\[\mathscr{M}(b_j;t) = \mathscr{M}(b_j;0) - 2 \mu_j \int_0^t \int_\Omega b_j(x,\tau) \cdot \curl b_j(x,\tau) \, dx \, d\tau.\]
\end{lem}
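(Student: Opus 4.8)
The plan is to compute $\mathscr{M}(b_j;t)$ by differentiating in time in a distributional sense, using the induction equation \eqref{Resistive MHD2 copy} together with the carefully chosen gauge $\psi_j = \Psi_j^\Sigma + \psi_j^H$. Recall that $\mathscr{M}(b_j;t) = \int_\Omega (\Psi_j^\Sigma(x,t) + \psi_j^H(x,t)) \cdot b_j(x,t)\,dx$, where $\Psi_j^\Sigma = T_\Sigma b_{j,\Sigma}$ and $\psi_j^H = T_H b_{j,H}$. By Lemma \ref{Decomposition of the magnetic field}, $b_{j,H}$ is constant in time, hence $\psi_j^H$ is constant in time. The essential point, emphasized in \textsection \ref{The decomposition of vector potentials}, is that we must never pair $\partial_t \psi_j$ with $b_{j,H}$; the decomposition is designed precisely so that this product never appears.

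First I would mollify in time to gain smoothness: set $\Psi_{j,\delta}^\Sigma \defeq \Psi_j^\Sigma * \chi^\delta$ and $b_{j,\delta} \defeq b_j * \chi^\delta$, so that on a fixed subinterval $(\epsilon, T-\epsilon)$ these are $C^\infty$ in time with values in the relevant Sobolev spaces. I would then compute $\frac{d}{dt}\int_\Omega (\Psi_{j,\delta}^\Sigma + \psi_j^H) \cdot b_{j,\delta}\,dx$ by the product rule, obtaining two types of terms: (i) $\int_\Omega \partial_t \Psi_{j,\delta}^\Sigma \cdot b_{j,\delta}$ and (ii) $\int_\Omega (\Psi_{j,\delta}^\Sigma + \psi_j^H) \cdot \partial_t b_{j,\delta}$. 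For term (ii), I would substitute the mollified induction equation $\partial_t b_{j,\delta} = -\curl(b_j \times u_j)_\delta - \mu_j \curl\curl b_{j,\delta}$ and integrate by parts via Green's formula \eqref{Green's formula}; since $\Psi_{j,\delta}^\Sigma \times n|_\Gamma = 0$ (by Theorem \ref{Borchers-Sohr lemma}) and $\psi_j^H \times n|_\Gamma = 0$ (by the fact that $\psi_j^H \in L^2_\Sigma$ and \eqref{L2 Sigma 1}/Corollary \ref{Time-dependent Borchers-Sohr}, which gives \eqref{Natural condition}), the boundary terms vanish. For term (i), I would use the identity $\partial_t \Psi_{j,\delta}^\Sigma = \Psi_j^\Sigma * \partial_t \chi^\delta$ together with \eqref{Vanishing curl}, which after mollification reads $\curl(\Psi_j^\Sigma * \partial_t\chi^\delta + (b_j\times u_j)_\delta + \mu_j \curl b_{j,\delta}) = 0$; pairing this curl-free field against $b_{j,\delta} \in L^2_\sigma$ and using \eqref{Variant of Helmholtz decomposition} (orthogonality of $\ker\curl$ and $L^2_\Sigma$) — noting $b_{j,\delta} - b_{j,\delta,H} = b_{j,\delta,\Sigma} \in L^2_\Sigma$, and that the $b_{j,H}$ component is handled separately since it is constant — lets me rewrite term (i) in terms of $(b_j\times u_j)_\delta$ and $\mu_j\curl b_{j,\delta}$ paired with $b_{j,\delta}$. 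Crucially $\int_\Omega (b_j \times u_j) \cdot b_j = 0$ pointwise, so the advection contributions from (i) and (ii) either cancel against each other or vanish, leaving only $-2\mu_j \int_\Omega b_{j,\delta} \cdot \curl b_{j,\delta}\,dx$ after the two resistive terms combine.

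The main obstacle is the bookkeeping around the harmonic part: one must verify that the $b_{j,H}$ pieces genuinely drop out rather than producing an uncontrolled $\sum_i d_i(t) h_i \cdot b_j$ term, and that all the integrations by parts are justified at the mollified level where everything is smooth in time and lies in $W^{1,2}$ in space. Concretely, $\psi_j^H$ being constant kills its time derivative, while pairing $\partial_t b_{j,\delta}$ against $\psi_j^H$ only involves $\curl\psi_j^H = b_{j,H}$ after integration by parts and $\curl b_{j,\delta}$, which recombines cleanly; and the $\Psi^\Sigma$–$b_H$ cross terms vanish because $\Psi_j^\Sigma \in W_0^{1,2}$ so $\langle \Psi_j^\Sigma \times n, \cdot\rangle_\Gamma = 0$. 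Having established the identity $\frac{d}{dt}\int_\Omega (\Psi_{j,\delta}^\Sigma + \psi_j^H)\cdot b_{j,\delta}\,dx = -2\mu_j \int_\Omega b_{j,\delta}\cdot\curl b_{j,\delta}\,dx$ on $(\epsilon, T-\epsilon)$, I would integrate from a point near $\epsilon$ to $t$, then send $\delta \to 0$ using Lemma \ref{Mollifier approximation lemma} (the products converge in $L^1$ since $b_j, \Psi_j^\Sigma, \curl b_j$ all lie in $L^2(0,T;L^2)$ and $\Psi_j^\Sigma \in L^2(0,T;W^{1,2}) \hookrightarrow L^2(0,T;L^6)$), then send $\epsilon \to 0$ using the weak $L^2$-continuity in time of $b_j$ together with continuity of $T_\Sigma, T_H$, to recover the stated formula for every $t \in [0,T)$.
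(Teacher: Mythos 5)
Your overall strategy (mollify in time, use the curl-freeness \eqref{Vanishing curl} and the orthogonality $\ker(\curl)=(L^2_\Sigma)^\perp$ for the $\Sigma$-part, exploit $\partial_t\psi_j^H=\partial_t b_{j,H}=0$, use $b_j\cdot(b_j\times u_j)=0$, and pass $\delta\to 0$) is the same as the paper's, but there is a genuine error in your treatment of the harmonic part. You justify the vanishing of the boundary terms in term (ii) by asserting $\psi_j^H\times n|_\Gamma=0$, citing $\psi_j^H\in L^2_\Sigma$, \eqref{L2 Sigma 1} and Corollary \ref{Time-dependent Borchers-Sohr}. This is false in general: membership in $L^2_\Sigma$ constrains the \emph{normal} trace and the fluxes across the cuts, and condition \eqref{Natural condition} in Corollary \ref{Time-dependent Borchers-Sohr} concerns only the $\Sigma$-components, not $\psi^H$. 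In fact $\psi_j^H\times n|_\Gamma=0$ cannot hold unless $b_{j,H}=0$: by \eqref{L2 Sigma 1} it would force $b_{j,H}=\curl\psi_j^H\in L^2_\Sigma$, while $b_{j,H}\in L^2_H$ and $L^2_\Sigma\cap L^2_H=\{0\}$; were your claim true, the boundary correction in Lemma \ref{Reduction lemma} would always vanish and helicity would be gauge invariant in multiply connected domains, contradicting Proposition \ref{Answer of first question}. The alternative way to kill that boundary term, namely using $(\curl b_j)\times n|_\Gamma=0$ and $u_j|_\Gamma=0$ to say the tangential trace of $(b_j\times u_j)_\delta+\mu_j\curl b_{j,\delta}$ vanishes, is also not available at your level of regularity: for Leray--Hopf solutions that boundary condition is encoded only weakly through \eqref{Resistive MHD weak definition 2a}, and substituting the strong form $\partial_t b_{j,\delta}=-\curl(b_j\times u_j)_\delta-\mu_j\curl\curl b_{j,\delta}$ and then invoking Green's formula against the non-trace-free $\psi_j^H$ cannot be justified.

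The fix, which is what the paper does, is to never integrate by parts against $\psi_j^H$ at all: the contribution $\int_0^T\partial_t\eta_\delta\int_\Omega\psi_j^H\cdot b_j$ is handled by plugging the admissible test function $\theta=\eta_\delta\,\psi_j^H\in W^{1,2}_\sigma(\Omega,\R^3)$ directly into the weak formulation \eqref{Resistive MHD weak definition 2}. This places the curl on $\psi_j^H$, producing $\curl\psi_j^H=b_{j,H}$ paired with $(b_j\times u_j)_\delta+\mu_j\curl b_{j,\delta}$, and no trace of $\curl b_j$ or of $\psi_j^H$ is ever needed. Your handling of the $\Sigma$-part is fine (there the boundary pairing involves the full trace of $\Psi_{j,\delta}^\Sigma\in W^{1,2}_0$, which does vanish, or can be bypassed via orthogonality as in the paper), and your limiting arguments in $\delta$ and the final passage to all $t\in[0,T)$ via weak $L^2$-continuity of $b_j$ are acceptable; the harmonic boundary term is the one step that, as written, would fail.
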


\begin{proof}
We intend to show that
\begin{equation} \label{Aimed distribution identity}
\partial_t \mathscr{M}(b_j;t) = - 2 \mu_j \int_\Omega b_j(x,t) \cdot \curl b_j(x,t) \, dx
\end{equation}
in the sense of distributions; the claim then follows since the Cauchy-Schwarz inequality gives $\partial_t \mathscr{M}(b_j;\cdot) \in L^1(0,T)$.

Let $\eta \in C_c^\infty(0,T)$ and note that $\int_0^T \eta(t) \mathscr{M}(b_j;t) \, dt = \lim_{\delta \to 0} \int_0^T \eta(t) \mathscr{M}(b_{j,\delta};t) \, dt$ by Lemma \ref{Mollifier approximation lemma}. Fix $\delta \in (0,T/2)$ such that $\supp(\eta) \subset [2\delta,T-2\delta]$. Then, integrating by parts several times and using the facts that $\Psi_j^\Sigma \times n|_\Gamma = 0$ and $\partial_t \Psi_j^H = \partial_t b_{j,H} = 0$ we get
\begin{align*}
\int_0^T \partial_t \eta(t) \mathscr{M}(b_{j,\delta};t) \, dt
&= \int_0^T \partial_t \eta(t) \int_\Omega (\Psi_j^\Sigma + \psi_j^H)_\delta(x,t) \cdot (b_{j,\Sigma} + b_{j,H})_\delta(x,t) \, dx \, dt \\
&= -2 \int_0^T \eta(t) \int_\Omega \partial_t \Psi_{j,\delta}^\Sigma(x,t) \cdot b_{j,\Sigma,\delta}(x,t) \, dx \, dt \\
&+ 2 \int_0^T \partial_t \eta_\delta(t) \int_\Omega \psi_j^H(x) \cdot b_{j,\Sigma}(x,t) \, dx \, dt \eqdef I_1 + I_2.
\end{align*}
For $I_1$ we note that $\partial_t \Psi_{j,\delta}^\Sigma + (b_j \times u_j)_\delta + \mu_j \curl b_{j,\delta} \in L^\infty(\delta,T-\delta;\ker(\curl)) = L^\infty(\delta,T-\delta;(L^2_\Sigma(\Omega,\R^3))^\perp)$ so that
\[I_1 = 2 \int_0^T \eta(t) \int_\Omega [(b_j \times u_j)_\delta(x,t) + \mu_j \curl b_{j,\delta}(x,t)] \cdot (b_{j,\delta}^\Sigma(x,t) \, dx \, dt.\]
For $I_2$ we note that since $\partial_t (\psi_j^H \cdot b_{j,H}) = 0$, we get
\[I_2
= 2 \int_0^T \partial_t \eta_\delta(t) \int_\Omega \psi_j^H(x) \cdot b_j(x,t) \, dx \, dt.\]
By setting $\theta(x,t) \defeq 2 \eta_\delta(t) \psi_j^H(x)$ in \eqref{Resistive MHD weak definition 2} we obtain
\begin{align*}
   I_2
&= 2 \int_0^T \eta_\delta(t) \int_\Omega \curl \psi_j^H(x) \cdot [b_j \times u_j(x,t) + \mu_j \curl b_j(x,t)] \, dx \, dt \\
&= 2 \int_0^T \eta(t) \int_\Omega b_j^H(x) \cdot [(b_j \times u_j)_\delta(x,t) + \mu_j \curl b_{j,\delta}(x,t)] \, dx \, dt.
\end{align*}
Collecting the identities, taking the limit $\delta \to 0$ (via Lemma \ref{Interpolation lemma}) and using the pointwise identity $b_j \cdot b_j \times u_j = 0$ we conclude that
\[\int_0^T \partial_t \eta(t) \mathscr{M}(b_j;t) \, dt = 2 \mu_j \int_0^T \eta(t) \int_\Omega b_j(x,t) \cdot \curl b_j(x,t) \, dx,\]
which yields \eqref{Aimed distribution identity}.
\end{proof}

The following estimate, which goes back to ~\cite{Berger}, completes the proof of Theorem \ref{Taylor theorem}.

\begin{lem}
For every $j \in \N$,
\[\mu_j \int_0^T \int_\Omega |b_j(x,t) \cdot \curl b_j(x,t)| \, dx \, dt \lesssim_T \sqrt{\mu_j} (\|u_{j,0}\|_{L^2}^2 + \|b_{j,0}\|_{L^2}^2).\] 
\end{lem}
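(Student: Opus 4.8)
The plan is to control the quantity $\mu_j \int_0^T \int_\Omega |b_j \cdot \curl b_j|\,dx\,dt$ by applying the Cauchy–Schwarz inequality in $x$ and then in $t$, and feeding in the energy inequality from Definition \ref{Leray-Hopf solutions}. First I would write, at a.e.\ $t \in (0,T)$,
\[
\int_\Omega |b_j(x,t) \cdot \curl b_j(x,t)|\,dx \le \|b_j(\cdot,t)\|_{L^2(\Omega)} \|\curl b_j(\cdot,t)\|_{L^2(\Omega)}.
\]
Multiplying by $\mu_j$ and splitting $\mu_j = \sqrt{\mu_j}\cdot\sqrt{\mu_j}$, I would integrate in $t$ and apply Cauchy–Schwarz on $(0,T)$ to get
\[
\mu_j \int_0^T \int_\Omega |b_j \cdot \curl b_j|\,dx\,dt
\le \sqrt{\mu_j}\,\Big(\int_0^T \|b_j(\cdot,t)\|_{L^2}^2\,dt\Big)^{1/2}
\Big(\mu_j \int_0^T \|\curl b_j(\cdot,t)\|_{L^2}^2\,dt\Big)^{1/2}.
\]

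Next I would bound each of the two factors using the energy inequality. The energy inequality gives, for all $t \in (0,T)$,
\[
\tfrac12 \int_\Omega (|u_j(x,t)|^2 + |b_j(x,t)|^2)\,dx + \mu_j \int_0^t \int_\Omega |\curl b_j|^2 \le \tfrac12 \int_\Omega (|u_{j,0}|^2 + |b_{j,0}|^2)\,dx,
\]
so in particular $\|b_j(\cdot,t)\|_{L^2}^2 \le \|u_{j,0}\|_{L^2}^2 + \|b_{j,0}\|_{L^2}^2$ for a.e.\ $t$, whence $\int_0^T \|b_j(\cdot,t)\|_{L^2}^2\,dt \le T(\|u_{j,0}\|_{L^2}^2 + \|b_{j,0}\|_{L^2}^2)$; and also $\mu_j \int_0^T \|\curl b_j(\cdot,t)\|_{L^2}^2\,dt \le \tfrac12(\|u_{j,0}\|_{L^2}^2 + \|b_{j,0}\|_{L^2}^2)$ by letting $t \to T$. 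Combining these two estimates with the displayed inequality yields
\[
\mu_j \int_0^T \int_\Omega |b_j \cdot \curl b_j|\,dx\,dt \le \sqrt{\mu_j}\,\sqrt{T}\,\big(\|u_{j,0}\|_{L^2}^2 + \|b_{j,0}\|_{L^2}^2\big)\cdot \tfrac{1}{\sqrt 2},
\]
which is the claimed bound with implied constant $\sqrt{T/2}$.

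There is essentially no obstacle here: the argument is a clean two-step Cauchy–Schwarz combined directly with the energy inequality, and no regularity beyond $b_j \in L^2(0,T;W^{1,2}_\sigma(\Omega))$ (which guarantees $\curl b_j \in L^2(0,T;L^2(\Omega))$) is needed. The only point requiring a word of care is that the term $\mu_j\int_0^t\|\curl b_j\|_{L^2}^2$ in the energy inequality is monotone in $t$, so its supremum over $t \in (0,T)$ — equivalently its value as $t \to T$ — is bounded by $\tfrac12(\|u_{j,0}\|_{L^2}^2 + \|b_{j,0}\|_{L^2}^2)$; this is exactly where the dissipation term in the energy inequality is exploited, and it is what makes the factor $\sqrt{\mu_j}$ (rather than merely a power of $\mu_j$ smaller than $1$) appear.
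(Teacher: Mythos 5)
Your argument is correct and is essentially the same as the paper's: the paper applies the pointwise Young inequality $\mu_j\,|b_j\cdot\curl b_j|\le \tfrac{\sqrt{\mu_j}}{2}(|b_j|^2+\mu_j|\curl b_j|^2)$ and then the energy inequality, which is the same weighted splitting you achieve via Cauchy--Schwarz in $x$ and then in $t$. Both routes use the energy inequality in exactly the same way (uniform bound on $\|b_j(\cdot,t)\|_{L^2}^2$ and the dissipation bound on $\mu_j\int_0^T\|\curl b_j\|_{L^2}^2$), so there is nothing to add.
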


\begin{proof}
By Young's inequality and the energy inequality,
\[\begin{array}{lcl}
& & \displaystyle \mu_j \int_0^T \int_\Omega |b_j(x,t) \cdot \curl b_j(x,t)| \, dx \, dt \\
&\le& \displaystyle \frac{\sqrt{\mu_j}}{2} \int_0^T \int_\Omega (|b_j(x,t)|^2 + \mu_j |\curl b_j(x,t)|^2) \, dx \, dt \\
&\lesssim_T& \displaystyle \sqrt{\mu_j} (\|u_{j,0}\|_{L^2}^2 + \|b_{j,0}\|_{L^2}^2).
\end{array}\]
\end{proof}

A simple lemma gives the rightmost equality in \eqref{Magnetic helicity for good potentials}.

\begin{lem} \label{Lemma on magnetic helicities of initial datas}
$\lim_{j \to \infty} \int_\Omega (\Psi^\Sigma_{j,0}(x) + \psi^H_{j,0}(x)) \cdot b_{j,0}(x) \, dx = \int_\Omega (\Psi^\Sigma_{j,0}(x) + \psi^H_{j,0}(x)) \cdot b_0(x) \, dx$.
\end{lem}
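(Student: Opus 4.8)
The plan is to recognise $\int_\Omega (\Psi^\Sigma_{j,0} + \psi^H_{j,0}) \cdot b_{j,0} \, dx$ as a bounded quadratic form in $b_{j,0}$ whose associated linear operator gains a derivative, and then to pass to the limit by a weak--strong duality argument. Write $L \defeq T_\Sigma \circ P_\Sigma + T_H \circ P_H$, so that $\Psi^\Sigma_{j,0} + \psi^H_{j,0} = L b_{j,0}$ and, with $\Psi^\Sigma_0 \defeq T_\Sigma b_{0,\Sigma}$ and $\psi^H_0 \defeq T_H b_{0,H}$ the corresponding potentials of $b_0$, also $\Psi^\Sigma_0 + \psi^H_0 = L b_0$. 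By Theorems \ref{Borchers-Sohr lemma} and \ref{Amrouche-Bernardi-Dauge-Girault lemma} together with the boundedness of the orthogonal projections $P_\Sigma$ and $P_H$, the map $L$ is a bounded linear operator from $L^2_\sigma(\Omega,\R^3)$ into $W^{1,2}(\Omega,\R^3)$; since $\Omega$ is bounded with $\mathscr{C}^{1,1}$ boundary, the Rellich--Kondrachov theorem makes the embedding $W^{1,2}(\Omega,\R^3) \hookrightarrow L^2(\Omega,\R^3)$ compact, so $L$ is a compact operator on $L^2_\sigma(\Omega,\R^3)$.

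The key steps are then as follows. First, from $b_{j,0} \rightharpoonup b_0$ in $L^2_\sigma(\Omega,\R^3)$ and the boundedness of $L$ one obtains $L b_{j,0} \rightharpoonup L b_0$ in $W^{1,2}(\Omega,\R^3)$, and in particular $(L b_{j,0})_j$ is bounded in $W^{1,2}(\Omega,\R^3)$. Second, compactness of the embedding gives, along a subsequence, $L b_{j,0} \to \xi$ strongly in $L^2(\Omega,\R^3)$; since strong and weak $L^2$-limits coincide, $\xi = L b_0$, and the usual argument (every subsequence has a further subsequence converging to the same limit) upgrades this to $L b_{j,0} \to L b_0$ strongly in $L^2(\Omega,\R^3)$ along the whole sequence. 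Third, splitting
\[\int_\Omega L b_{j,0} \cdot b_{j,0} \, dx - \int_\Omega L b_0 \cdot b_0 \, dx = \int_\Omega (L b_{j,0} - L b_0) \cdot b_{j,0} \, dx + \int_\Omega L b_0 \cdot (b_{j,0} - b_0) \, dx,\]
the first integral is bounded in absolute value by $\norm{L b_{j,0} - L b_0}_{L^2(\Omega)} \sup_k \norm{b_{k,0}}_{L^2(\Omega)} \to 0$ (strong convergence of the potentials against the uniform $L^2$-bound on $(b_{k,0})_k$ that weak convergence provides), while the second integral tends to $0$ because $b_{j,0} \rightharpoonup b_0$ in $L^2(\Omega,\R^3)$ and $L b_0 \in L^2(\Omega,\R^3)$. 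The same estimates give, as a byproduct, $\int_\Omega L b_{j,0} \cdot b_0 \, dx \to \int_\Omega L b_0 \cdot b_0 \, dx$, which matches the right-hand side of the statement.

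I do not anticipate a genuine obstacle. The only point requiring any care is the identification of the strong $L^2$-limit of $(L b_{j,0})_j$ with $L b_0$ and the passage from a subsequence to the full sequence, both of which are standard; everything else reduces to the compactness of $L$ on $L^2_\sigma(\Omega,\R^3)$ and the elementary fact that the pairing of a strongly convergent sequence with a weakly convergent (hence bounded) one converges to the pairing of the limits.
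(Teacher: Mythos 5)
Your proposal is correct and follows essentially the same route as the paper: boundedness of $T_\Sigma \circ P_\Sigma + T_H \circ P_H$ from $L^2_\sigma(\Omega,\R^3)$ to $W^{1,2}(\Omega,\R^3)$ gives weak $W^{1,2}$-convergence of the potentials, Rellich--Kondrachov upgrades this to strong $L^2$-convergence, and the claim follows by pairing the strongly convergent potentials against the weakly convergent $b_{j,0}$. The only difference is cosmetic: the paper invokes the compact embedding directly, whereas you spell out the subsequence extraction and the splitting of the difference, which is just a more explicit version of the same argument.
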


\begin{proof}
By assumption, $b_{j,0} \rightharpoonup b_0$ in $L^2_\sigma(\Omega,\R^3)$, and therefore $\Psi^\Sigma_{j,0} + \psi^H_{j,0} = T_\Sigma b_{j,0,\Sigma} + T_H b_{j,0,H} \rightharpoonup T_\Sigma b_{0,\Sigma} + T_H b_{0,H} = \Psi^\Sigma_0 + \psi^H_0$ in $W^{1,2}(\Omega,\R^3)$. The Rellich-Kondrachov Theorem then yields $\|(\Psi^\Sigma_{j,0} + \psi^H_{j,0})-(\Psi^\Sigma_0 + \psi^H_0)\|_{L^2(\Omega)} \to 0$, which implies the claim.
\end{proof}

\section{A two-dimensional analogue} \label{A two-dimensional analogue}
Magnetic helicity has a two-dimensional counterpart, the mean-square magnetic potential. It is defined as the $L^2$ energy of the canonical stream function of $b$, and it is conserved in time by smooth solutions of ideal 2D MHD. In \textsection \ref{Mean-square magnetic potential and statement of the theorem} we define the mean-square magnetic potential in multiply connected domains and formulate Theorem \ref{2D Taylor theorem} which says that it is also conserved in the weak ideal limit. As a byproduct, we prove that if a weak solution of 2D ideal MHD lies in the energy space, then it conserves mean-square magnetic potential in time. The proof of Theorem \ref{2D Taylor theorem} is presented in \textsection \ref{Proof of 2D TAylor theorem}. As main tools, apart from ones already used in 3D, we use C. Fefferman's $\mathcal{H}^1$--$\operatorname{BMO}$ duality theorem from ~\cite{FS72} and the Hardy space theory of compensated compactness quantities of Coifman, Lions, Meyer and Semmes from ~\cite{CLMS93}.

\subsection{Mean-square magnetic potential and statement of the theorem} \label{Mean-square magnetic potential and statement of the theorem}
In two dimensions, the \emph{viscous, resistive MHD equations} are given by
\begin{align}
& \partial_t u + (u \cdot \nabla) u - (b \cdot \nabla) b - \nu \Delta u+ \nabla \Pi = 0, \label{2D resistive MHD} \\
& \partial_t b - \nabla^\perp (b \times u) - \mu \nabla^\perp (\curl b) = 0, \label{2D resistive MHD2} \\
& \dive u = \dive b = 0, \label{2D resistive MHD3} \\
& u(\cdot,0) = u_0, \; b(\cdot,0) = b_0, \label{2D resistive MHD4}
  \end{align}
where $\nabla^\perp = (-\partial_2, \partial_1)$ and $\curl = \nabla^\perp \cdot$.
We now record our assumptions on the domain; we weaken the regularity condition that we placed on the boundary in three dimensions. Assumption \ref{Assumption 3 on Omega} is strong enough to ensure the existence of a canonical stream function for every vector field in $L^2_\sigma(\Omega,\R^2)$ (see Theorem \ref{Stream function theorem}).

\begin{assumption} \label{Assumption 3 on Omega}
The set $\Omega \subset \R^2$ is open and bounded. Furthermore, $\Omega$ is connected and its boundary $\Gamma$ is Lipschitz-continuous and has a finite number of connected components denoted by $\Gamma_1,\ldots,\Gamma_K$.
\end{assumption}

The boundary conditions corresponding to \eqref{Resistive MHD5}--\eqref{Resistive MHD6} are
\begin{align}
& u|_\Gamma = 0, \label{2D resistive MHD5} \\
& b \cdot n|_\Gamma = 0 \qquad \text{and} \qquad \curl b|_\Gamma = 0. \label{2D resistive MHD6}
\end{align}
Equations \eqref{2D resistive MHD}--\eqref{2D resistive MHD6} are understood in analogy to the 3D case, but \eqref{Resistive MHD weak definition 2a} needs to be replaced by the formula
\begin{equation} \label{2D momentum equation interpretation} 
\langle \partial_t b, \theta \rangle + \int_\Omega b \times u \curl \theta + \mu \int_\Omega \curl b \curl \theta = 0.
\end{equation}

We enumerate $\Gamma_1,\ldots,\Gamma_K$ in such a way that $\Gamma_1$ is the boundary of the unbounded component of $\R^2 \setminus \overline{\Omega}$. Following ~\cite[p. 40]{GR} we denote
\[\Phi \defeq \{\psi \in W^{1,2}(\Omega) \colon \psi|_{\Gamma_1} = 0, \; \psi|_{\Gamma_i} \text{ is constant for } 2 \le i \le K\};\]
note that if $\Gamma$ is connected, then $\Phi = W^{1,2}_0(\Omega)$.
The following theorem gives a canonical choice of stream functions (see ~\cite[Corollary I.3.1]{GR}).

\begin{thm} \label{Stream function theorem}
The mapping $-\nabla^\perp \colon \Phi \to L^2_\sigma(\Omega,\R^2)$ is an isomorphism.
\end{thm}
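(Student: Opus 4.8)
The plan is to treat the four standard ingredients of a Banach-space isomorphism — being well defined into the target, injectivity, a two-sided norm bound, and surjectivity — of which only surjectivity requires genuine work. I would equip $\Phi$ with the subspace norm of $W^{1,2}(\Omega)$ (it is a closed subspace) and $L^2_\sigma(\Omega,\R^2)$ with the $L^2$ norm.

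\emph{Well-definedness, injectivity, and the norm bound.} For $\psi\in W^{1,2}(\Omega)$ one has $\dive(-\nabla^\perp\psi)=\partial_1\partial_2\psi-\partial_2\partial_1\psi=0$ in $\mathcal D'(\Omega)$. On each boundary component $\Gamma_i$, with $\tau$ the unit tangent obtained by rotating $n$ by $\pi/2$, the algebraic identity $(-\nabla^\perp\psi)\cdot n=\nabla\psi\cdot\tau=\partial_\tau\psi$ holds; since $\psi|_{\Gamma_i}$ is constant, its tangential derivative vanishes, so $(-\nabla^\perp\psi)\cdot n|_\Gamma=0$ and hence $-\nabla^\perp\psi\in L^2_\sigma(\Omega,\R^2)$. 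If $-\nabla^\perp\psi=0$ then $\nabla\psi=0$, so $\psi$ is constant on the connected set $\Omega$, and $\psi|_{\Gamma_1}=0$ forces $\psi\equiv 0$: the map is injective. Since $\Gamma_1$ carries positive surface measure, the Poincaré inequality on the bounded connected Lipschitz domain $\Omega$ (Assumption \ref{Assumption 3 on Omega}) gives $\|\psi\|_{W^{1,2}(\Omega)}\lesssim_\Omega\|\nabla\psi\|_{L^2(\Omega)}=\|{-\nabla^\perp\psi}\|_{L^2(\Omega)}\le\|\psi\|_{W^{1,2}(\Omega)}$, so $-\nabla^\perp$ is bounded above and below on $\Phi$; once surjectivity is known, this displayed estimate makes it a Banach-space isomorphism.

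\emph{Surjectivity.} Given $v\in L^2_\sigma(\Omega,\R^2)$, I would produce $\psi$ as a primitive of the field $v^\perp:=(-v_2,v_1)$, i.e. with $\nabla\psi=v^\perp$, which is equivalent to $-\nabla^\perp\psi=v$. The $1$-form $v^\perp\cdot dx$ is closed since $d(v^\perp\cdot dx)=(\dive v)\,dx_1\wedge dx_2=0$, so a local primitive exists; on a multiply connected domain the obstruction to a global primitive is the family of periods $\oint_{\Gamma_i}v^\perp\cdot dx$ over the boundary cycles, a complete set of invariants by the planar de Rham theorem. A short computation identifies $\oint_{\Gamma_i}v^\perp\cdot dx$ with the flux $\langle v\cdot n,1\rangle_{\Gamma_i}$, and each of these vanishes because $v\in L^2_\sigma(\Omega,\R^2)$ means $v\cdot n|_\Gamma=0$. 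Hence $v^\perp$ admits a global primitive $\psi\in W^{1,2}_{loc}(\Omega)$, unique up to an additive constant; the Poincaré-type fact that a distribution on a bounded Lipschitz domain with $L^2$ gradient lies in $L^2(\Omega)$ gives $\psi\in W^{1,2}(\Omega)$. Finally $\partial_\tau\psi=(-\nabla^\perp\psi)\cdot n=v\cdot n|_\Gamma=0$ on $\Gamma$, so $\psi$ is constant on each connected $\Gamma_i$; after subtracting its value on $\Gamma_1$ we have $\psi\in\Phi$ with $-\nabla^\perp\psi=v$.

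\emph{Main obstacle.} Everything outside surjectivity is bookkeeping with traces and the Poincaré inequality. The real content is the exactness step: passing from ``closed with vanishing periods'' to ``exact'' on a multiply connected Lipschitz planar domain at merely $L^2$ regularity, together with the identification of the periods with the component fluxes $\langle v\cdot n,1\rangle_{\Gamma_i}$ — and with the verification that these per-component normal-trace fluxes are well defined for $v\in L^2_\sigma(\Omega,\R^2)$, which holds for bounded Lipschitz domains with finitely many boundary components. This is classical planar potential theory and is essentially the content of \cite[Corollary I.3.1]{GR}, but it is precisely the place where the topology of $\Omega$ enters.
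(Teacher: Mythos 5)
The paper offers no proof of this theorem at all — it simply quotes it from \cite[Corollary I.3.1]{GR} — and your argument is a correct reconstruction of exactly that classical proof: divergence-free plus vanishing normal trace makes the rotated field a closed $L^2$ form with zero periods around each hole, hence exact, with the trace/Poincar\'e bookkeeping giving injectivity and the two-sided norm bound. Since you explicitly flag the exactness-from-vanishing-periods step (and its $L^2$-level technicalities) as the content of the cited corollary, your proposal matches the paper's treatment and is fine.
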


When $v \in L^2_\sigma(\Omega,\R^2)$, we call $(-\nabla^\perp)^{-1} v \in \Phi$ the \emph{stream function of} $v$. Leray-Hopf solutions are defined in direct analogy to Definition \ref{Leray-Hopf solutions}.

\begin{defin}
Suppose $(u,b)$ is a Leray-Hopf solution of \eqref{2D resistive MHD}--\eqref{2D resistive MHD6} and $\psi \in C_w([0,T);\Phi)$ is the stream function of $b$. For every $t \in [0,T)$, $\int_\Omega \abs{\psi(x,t)}^2 dx$ is called the \emph{mean-square magnetic potential of $b$ at time} $t$.
\end{defin}

We formulate an analogue of Theorem \ref{Taylor theorem} for the mean-square magnetic potential, denoting the stream functions of the initial datas $b_{j,0}$ and $b_0$ by $\psi_{j,0}$ and $\psi_0$. The weak ideal limit and weak non-resistive limit are defined in direct analogy to Definition \ref{Definition of weak ideal limit}.

\begin{thm} \label{2D Taylor theorem}
Suppose $\Omega \subset \R^2$ satisfies Assumption \ref{Assumption 3 on Omega}, and assume that $u,b \in L^\infty(0,T;L^2_\sigma(\Omega,\R^3))$ are a weak ideal limit or weak non-resistive limit of Leray-Hopf solutions $(u_j,b_j)$, $j \in \N$. Then $b \in C_w([0,T);L^2_\sigma(\Omega,\R^2))$, $\partial_t b - \nabla^\perp (b \times u) = 0$ with $b(\cdot,0) = b_0$ and
\begin{equation} \label{Limit of magnetic mean square potentials}
\int_\Omega \abs{\psi(x,t)}^2 dx = \int_\Omega \abs{\psi_0(x)}^2 dx = \lim_{j \to \infty} \int_\Omega \abs{\psi_{j,0}(x)}^2 dx
\end{equation}
for all $t \in [0,T)$.
\end{thm}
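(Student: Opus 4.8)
The plan is to transfer everything to the scalar stream functions. Put $\psi_j := (-\nabla^\perp)^{-1}b_j$ and $\psi := (-\nabla^\perp)^{-1}b$ using the isomorphism of Theorem \ref{Stream function theorem}, so that $\psi_j \in C_w([0,T);\Phi)$ and $\abs{\nabla\psi_j} = \abs{b_j}$; by the energy inequality $\{b_j\},\{u_j\}$ are bounded in $L^\infty(0,T;L^2_\sigma(\Omega,\R^2))$, $\{\mu_j^{1/2}\curl b_j\}$ is bounded in $L^2(0,T;L^2(\Omega))$ and $\|\curl b_j\|_{L^2(0,T;L^2)} \lesssim \mu_j^{-1/2}$ (each $\mu_j > 0$), whence $\{\psi_j\}$ is bounded in $L^\infty(0,T;W^{1,2}(\Omega))$ and $\psi_j \overset{*}{\rightharpoonup} \psi$. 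Since $\partial_t b_j = \nabla^\perp(b_j\times u_j + \mu_j\curl b_j)$ with $b_j\times u_j = u_j\cdot\nabla\psi_j$, I would first record the identity
\[\psi_j(\cdot,t) = \psi_{j,0} - \int_0^t \left[ b_j(\cdot,\tau)\times u_j(\cdot,\tau) + \mu_j\curl b_j(\cdot,\tau) \right] d\tau \qquad (t\in[0,T)),\]
obtained by integrating \eqref{2D resistive MHD2} in time, applying $(-\nabla^\perp)^{-1}$, and pinning the spatial constant by evaluating on $\Gamma_1$, where $\psi_j\equiv 0$, $u_j\equiv 0$ and $\curl b_j\equiv 0$; a rigorous version uses the weak formulation together with a two-dimensional analogue of Lemma \ref{Decomposition of the magnetic field} for the traces $\psi_j|_{\Gamma_i}$.

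Next I would pass to the limit $j\to\infty$. From the identity, $\partial_t\psi_j = -(b_j\times u_j + \mu_j\curl b_j)$ is bounded in $L^2(0,T;W^{-1,r}(\Omega))$ for some $r<2$ (using $\|b_j\times u_j\|_{L^1(\Omega)} \le \|b_j\|_{L^2}\|u_j\|_{L^2}$ and $L^1(\Omega)\hookrightarrow W^{-1,r}(\Omega)$ in two dimensions), while $\{\psi_j\}$ is bounded in $L^2(0,T;W^{1,2}(\Omega))$; as $W^{1,2}(\Omega)\hookrightarrow\hookrightarrow L^2(\Omega)\hookrightarrow W^{-1,r}(\Omega)$, the Aubin--Lions Lemma \ref{Aubin-Lions lemma} gives (along a subsequence) $\psi_j\to\psi$ strongly in $L^2(0,T;L^2(\Omega))$. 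Combined with $u_j\rightharpoonup u$ in $L^2(0,T;L^2(\Omega))$ this yields $\psi_j u_j\rightharpoonup\psi u$ weakly in $L^1((0,T)\times\Omega)$, hence $b_j\times u_j = \dive(\psi_j u_j)\to\dive(\psi u) = b\times u$ in $\mathscr{D}'$, while $\mu_j\curl b_j\to 0$ in $L^2(0,T;L^2)$; passing to the limit in the displayed identity and using a standard $C_w$-regularity lemma gives $b\in C_w([0,T);L^2_\sigma(\Omega,\R^2))$, $\partial_t b - \nabla^\perp(b\times u) = 0$, $b(\cdot,0) = b_0$, and therefore $\psi\in C_w([0,T);W^{1,2}(\Omega))$ with $\psi(\cdot,0) = \psi_0$. (Here one may, following \cite{FL}, rewrite $b\times u = -\nabla^\perp\xi\cdot\nabla\psi$ with $u = -\nabla^\perp\xi$ as a Jacobian lying in the Hardy space $\mathcal{H}^1$ via the Coifman--Lions--Meyer--Semmes theory; this sharpens the convergence of the nonlinearity and, through C. Fefferman's $\mathcal{H}^1$--$\operatorname{BMO}$ duality, is what yields the announced byproduct --- mean-square potential conservation for \emph{all} energy-space weak solutions of ideal 2D MHD --- where no approximating sequence is available.)

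For the conservation I would differentiate $\int_\Omega\psi_j(x,t)^2\,dx$ along the identity: the advective contribution vanishes since $\int_\Omega\psi_j(u_j\cdot\nabla\psi_j)\,dx = \tfrac12\int_\Omega u_j\cdot\nabla(\psi_j^2)\,dx = \tfrac12\langle u_j\cdot n,\psi_j^2\rangle_\Gamma = 0$ (using $\dive u_j = 0$, $u_j\cdot n|_\Gamma = 0$), and Green's formula gives $\int_\Omega\psi_j\curl b_j\,dx = \int_\Omega\abs{b_j}^2\,dx + \sum_{i=2}^K\psi_j|_{\Gamma_i}\oint_{\Gamma_i}b_j\cdot d\ell$; hence for every $t\in[0,T)$,
\[\int_\Omega\psi_j(x,t)^2\,dx = \int_\Omega\psi_{j,0}(x)^2\,dx - 2\mu_j\int_0^t\int_\Omega\abs{b_j(x,\tau)}^2\,dx\,d\tau - 2\mu_j\int_0^t\sum_{i=2}^K\psi_j|_{\Gamma_i}\oint_{\Gamma_i}b_j\cdot d\ell\,d\tau.\]
The middle term is $O(\mu_j)$ by the energy inequality; the boundary term is $O(\mu_j^{1/2})$, because the traces $\psi_j|_{\Gamma_i}$ are uniformly bounded while $\bigl|\oint_{\Gamma_i}b_j\cdot d\ell\bigr| \lesssim \|b_j(\cdot,\tau)\|_{L^2} + \|\curl b_j(\cdot,\tau)\|_{L^2}$ and $\mu_j\int_0^T(\|b_j\|_{L^2} + \|\curl b_j\|_{L^2})\,d\tau \lesssim_T \mu_j^{1/2}(\|u_{j,0}\|_{L^2}^2 + \|b_{j,0}\|_{L^2}^2)$. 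Thus $\sup_{t\in[0,T)}\bigl| \int_\Omega\psi_j(x,t)^2\,dx - \int_\Omega\psi_{j,0}(x)^2\,dx \bigr| \to 0$. Finally I would let $j\to\infty$ in this display: on one hand $b_{j,0}\rightharpoonup b_0$ in $L^2_\sigma(\Omega,\R^2)$ gives $\psi_{j,0}\rightharpoonup\psi_0$ in $W^{1,2}(\Omega)$, so by Rellich--Kondrachov $\psi_{j,0}\to\psi_0$ in $L^2(\Omega)$ and $\int_\Omega\psi_{j,0}^2\to\int_\Omega\psi_0^2$; on the other hand, boundedness of $\{b_j\}$ in $L^\infty(0,T;L^2)$ with $\{\partial_t b_j\}$ bounded in $L^2(0,T;X)$ for a suitable negative-order space $X$ forces $b_j(\cdot,t)\rightharpoonup b(\cdot,t)$ in $L^2_\sigma(\Omega,\R^2)$ for \emph{every} $t$, hence $\psi_j(\cdot,t)\rightharpoonup\psi(\cdot,t)$ in $W^{1,2}(\Omega)$, and since $\{\psi_j(\cdot,t)\}_j$ is bounded in $W^{1,2}(\Omega)$, Rellich--Kondrachov upgrades this to $\psi_j(\cdot,t)\to\psi(\cdot,t)$ in $L^2(\Omega)$, so $\int_\Omega\psi_j(x,t)^2\,dx\to\int_\Omega\psi(x,t)^2\,dx$ for every $t$. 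Combining the three limits yields \eqref{Limit of magnetic mean square potentials}.

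I expect the main obstacle to be identifying the weak limit of the nonlinear term $b_j\times u_j$ strongly enough to obtain the limit equation --- and, for the byproduct, justifying the renormalization $\partial_t\abs{\psi}^2 + u\cdot\nabla\abs{\psi}^2 = 0$ when $u$ lies only in $L^\infty(0,T;L^2)$ --- which is exactly where $\mathcal{H}^1$--$\operatorname{BMO}$ duality and the Hardy-space bounds for compensated-compactness quantities enter, compounded by the care required near the Lipschitz boundary $\Gamma$ and with the time-independence of the traces $\psi_j|_{\Gamma_i}$.
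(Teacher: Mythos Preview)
Your approach is essentially correct but diverges from the paper's in two places, and the comparison is instructive.

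For the Aubin--Lions step, the paper bounds $\partial_t\psi_j$ in $L^1(0,T;W^{-1,2}(\Omega))$ by invoking the Coifman--Lions--Meyer--Semmes div--curl estimate and Fefferman's $\mathcal{H}^1$--$\operatorname{BMO}$ duality directly on $b_j\times u_j$ (after zero extension to $\R^2$). Your route via the Sobolev embedding $L^1(\Omega)\hookrightarrow W^{-1,r}(\Omega)$ for $r<2$ is more elementary and works equally well; it just means the negative-order space in Aubin--Lions is $W^{-1,r}$ rather than $W^{-1,2}$.

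For the conservation identity, the paper takes a genuinely different path: it first passes to the limit to obtain the ideal induction equation, and then proves a self-contained lemma (Lemma \ref{Lemma for ideal MHD}) showing that \emph{every} weak solution of ideal 2D MHD in the energy space conserves $\int_\Omega|\psi|^2$. That lemma mollifies $\psi$ in time, uses $\partial_t\psi+b\times u=0$, extends everything by zero to $\R^2$, and interprets $\int\Psi(B\times U)$ as an $\operatorname{VMO}$--$\mathcal{H}^1$ pairing, which is then shown to vanish by a further spatial mollification. In particular the paper never sees your boundary circulation terms $\sum_{i\ge2}\psi_j|_{\Gamma_i}\oint_{\Gamma_i}b_j\cdot d\ell$; these are an artifact of computing the dissipation at the Leray--Hopf level. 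Your dissipation-formula route mirrors the 3D argument more closely and is valid --- the circulations are controlled by $\|b_j(\cdot,\tau)\|_{W^{1,2}}$ via the trace theorem, and the energy inequality then gives the $O(\mu_j^{1/2})$ bound --- but it does not by itself yield the byproduct (conservation for \emph{all} energy-space solutions), which is precisely what the $\mathcal{H}^1$--$\operatorname{BMO}$ machinery buys in the paper.

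One point to tighten: your claim that $\psi_j(\cdot,t)\to\psi(\cdot,t)$ in $L^2(\Omega)$ for \emph{every} $t$ via pointwise weak convergence $b_j(\cdot,t)\rightharpoonup b(\cdot,t)$ requires an equicontinuity/Arzel\`a--Ascoli argument you only gesture at. It is cleaner to use your $L^2(0,T;L^2)$ convergence to get $\int_\Omega|\psi(x,t)|^2\,dx=\int_\Omega|\psi_0|^2$ for a.e.\ $t$, and then upgrade to all $t$ via the continuity $\psi\in C_w([0,T);\Phi)\subset C([0,T);L^2(\Omega))$ that you have already established.
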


Note that Theorem \ref{2D Taylor theorem} is stronger than Theorem \ref{Taylor theorem} in the sense that the induction equation $\partial_t b - \nabla^\perp (b \times u) = 0$ holds in the weak ideal limit. Theorem \ref{2D Taylor theorem} is proved in the following subsection.

\subsection{Proof of Theorem \ref{2D Taylor theorem}} \label{Proof of 2D TAylor theorem}
Our first task is to prove that the induction equation $\partial_t b - \nabla^\perp (b \times u) = 0$ holds and $b(\cdot,0) = b_0$. We begin the proof by showing a 2D analogue of Lemma \ref{Lemma on strong limit of vector potentials}.

\begin{lem} \label{Lemma on strong limit of stream functions}
$\psi_j \to \psi$ in $L^2(0,T;L^2(\Omega))$.
\end{lem}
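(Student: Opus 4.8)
The plan is to mimic the proof of Lemma \ref{Lemma on strong limit of vector potentials}, using the Aubin-Lions Lemma, but now exploiting the fact that in 2D the stream function is \emph{canonical and depends linearly on $b$} via the isomorphism $(-\nabla^\perp)^{-1}\colon L^2_\sigma(\Omega,\R^2)\to\Phi$ from Theorem \ref{Stream function theorem}. First I would record the easy half: since $b_j\overset{*}{\rightharpoonup}b$ in $L^\infty(0,T;L^2_\sigma(\Omega,\R^2))$ and the operator $(-\nabla^\perp)^{-1}$ is bounded and linear, we get $\psi_j\rightharpoonup\psi$ in $L^2(0,T;W^{1,2}(\Omega))$ (so the limit is correctly identified), and also a uniform bound $\sup_j\|\psi_j\|_{L^2(0,T;W^{1,2}(\Omega))}\lesssim\sqrt T\sup_j\|b_{j,0}\|_{L^2}<\infty$ from the energy inequality. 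This gives the first of the two Aubin-Lions ingredients, with $X=W^{1,2}(\Omega)$, $Y=L^2(\Omega)$ (compact embedding by Rellich--Kondrachov, using only that $\Omega$ is bounded Lipschitz), and $Z$ a suitable larger space such as $(W^{1,2}_0(\Omega))^*$ or $W^{-1,2}(\Omega)$.

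The heart of the matter is the uniform bound on $\partial_t\psi_j$ in $L^q(0,T;Z)$. Here the 2D structure is friendlier than 3D: applying $(-\nabla^\perp)^{-1}$ to the induction equation \eqref{2D resistive MHD2} should give, at least formally, $\partial_t\psi_j = b_j\times u_j + \mu_j\,\curl b_j$ up to an additive function that is constant on $\Omega$ (and vanishes after fixing the gauge built into $\Phi$), because $\nabla^\perp$ applied to a scalar field $b_j\times u_j$ is exactly the right-hand side of \eqref{2D resistive MHD2}. More carefully, I would test \eqref{2D momentum equation interpretation} against $\theta=\nabla^\perp\varphi$ for $\varphi\in\Phi$, integrate by parts, and read off that $\psi_j$ (or a time-mollification $\psi_j*\chi^\delta$, to make $\partial_t$ legitimate as in Lemma \ref{Lemma on strong limit of vector potentials}) satisfies $\partial_t\psi_j = b_j\times u_j + \mu_j\curl b_j + c_j(t)$ weakly, where $c_j(t)$ is spatially constant; one then estimates $\|b_j\times u_j\|_{L^{4/3}(0,T;L^2(\Omega))}$ by Lemma \ref{Interpolation lemma} and $\mu_j\|\curl b_j\|_{L^2(0,T;L^2(\Omega))}\le\sqrt{\mu_j}(\|u_{j,0}\|_{L^2}^2+\|b_{j,0}\|_{L^2}^2)^{1/2}$ by the energy inequality, both uniformly in $j$. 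The constant $c_j(t)$ is harmless: it can be absorbed either by working modulo constants or by noting the gauge normalization in $\Phi$ forces $c_j\equiv 0$ once one differentiates the constraints defining $\Phi$ in time (the boundary values of $\psi_j$ on each $\Gamma_i$ are $L^2$ in time, constant in space, and testing kills them).

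With both Aubin-Lions ingredients in hand (and the time-mollification device of \textsection\ref{Strong convergence of the vector potentials} to justify handling $\partial_t$, choosing $\delta_j\to0$ so that $\|\psi_j*\chi^{\delta_j}-\psi_j\|_{L^2(\epsilon_j,T-\epsilon_j;L^2(\Omega))}<1/j$), the Aubin-Lions Lemma yields a subsequence of $(\psi_j)$ converging strongly in $L^2_{loc}(0,T;L^2(\Omega))$; the weak identification $\psi_j\rightharpoonup\psi$ forces the strong limit to be $\psi$, and the usual subsequence-of-every-subsequence argument promotes this to convergence of the whole sequence. I expect the main obstacle to be the bookkeeping around the constant-in-space term $c_j(t)$ coming from inverting $\nabla^\perp$ on a multiply connected domain: one must be careful that the gauge fixed by $\Phi$ (namely $\psi|_{\Gamma_1}=0$ and $\psi|_{\Gamma_i}$ constant) is genuinely preserved under time differentiation and under the mollification, so that $(-\nabla^\perp)^{-1}\partial_t(\nabla^\perp\psi_j)$ really is $\partial_t\psi_j$ and not $\partial_t\psi_j$ plus an uncontrolled drift. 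Everything else is a routine combination of the energy inequality, Lemma \ref{Interpolation lemma}, Lemma \ref{Lemma on time integrability of mollified functions}, and Lemma \ref{Aubin-Lions lemma}.
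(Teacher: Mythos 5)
Your overall skeleton (linearity of $(-\nabla^\perp)^{-1}$ to identify the weak limit, a uniform bound on $\partial_t\psi_j$ in a negative space, Aubin--Lions, subsequence-of-subsequence) is the same as the paper's, but the step that carries the whole weight is wrong: you cannot bound $\|b_j\times u_j\|_{L^{4/3}(0,T;L^2(\Omega))}$ uniformly in $j$ via Lemma \ref{Interpolation lemma}. That inequality requires control of $\|u_j\|_{L^2(0,T;W^{1,2}(\Omega))}$ and $\|b_j\|_{L^2(0,T;W^{1,2}(\Omega))}$, and for Leray--Hopf solutions the energy inequality only gives $\sqrt{\nu_j}\,\|\nabla u_j\|_{L^2(0,T;L^2)}$ and $\sqrt{\mu_j}\,\|\curl b_j\|_{L^2(0,T;L^2)}$ bounded; in the weak ideal limit $\nu_j,\mu_j\to 0$ these Sobolev norms may blow up like $\nu_j^{-1/2},\mu_j^{-1/2}$ (this is exactly why the whole paper avoids any $j$-uniform use of $L^2_tW^{1,2}_x$ bounds on the solutions themselves). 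The only product bound that is uniform in $j$ is $b_j\times u_j\in L^\infty(0,T;L^1(\Omega))$, and in two dimensions $L^1(\Omega)$ does \emph{not} embed into $W^{-1,2}(\Omega)$, since $W^{1,2}_0(\Omega)\not\hookrightarrow L^\infty(\Omega)$ is precisely the borderline case. So as written your Aubin--Lions hypothesis on $\partial_t\psi_j$ is unproven, and this is the genuine content of the lemma, not bookkeeping about the constant $c_j(t)$.

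The paper closes exactly this gap with compensated compactness: extend $b_j,u_j$ by zero to divergence-free fields $B_j,U_j$ on $\R^2$, note that $B_j\times U_j$ is a div-curl quantity, and use the Coifman--Lions--Meyer--Semmes estimate $\|B_j\times U_j\|_{\mathcal{H}^1(\R^2)}\lesssim\|B_j\|_{L^2}\|U_j\|_{L^2}$ together with Fefferman's $\mathcal{H}^1$--$\operatorname{BMO}$ duality and $\|\Phi\|_{\operatorname{BMO}}\lesssim\|\nabla\Phi\|_{L^2}$ to get $\|b_j\times u_j(\cdot,t)\|_{W^{-1,2}(\Omega)}\lesssim\|b_j(\cdot,t)\|_{L^2}\|u_j(\cdot,t)\|_{L^2}$, which \emph{is} uniform in $j$; the resistive term is trivially bounded in $L^1(0,T;W^{-1,2}(\Omega))$. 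If you want to avoid Hardy spaces, a cruder salvage in the spirit of the 3D Lemma \ref{Lemma on strong limit of vector potentials} is available: keep only the uniform bound $\|b_j\times u_j\|_{L^2(0,T;L^1(\Omega))}\le\|b_j\|_{L^\infty_tL^2}\|u_j\|_{L^2_tL^2}$ and take the third Aubin--Lions space to be $(W^{1,4}_0(\Omega))^*$, using $W^{1,4}_0(\Omega)\hookrightarrow L^\infty(\Omega)$ in 2D; that also yields $L^2_{loc}$ strong convergence of $\psi_j$ (though the paper's $\mathcal{H}^1$--$\operatorname{BMO}$ machinery is needed anyway later, in the proof of Lemma \ref{Lemma for ideal MHD}). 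Either way, the estimate you propose must be replaced; the rest of your argument (including the treatment of the spatially constant drift, which the gauge built into $\Phi$ and the boundary conditions indeed eliminate) is fine.
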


\begin{proof}
Since $-\nabla^\perp \psi_j = b_j \rightharpoonup b = - \nabla^\perp \psi$ in $L^2(0,T;L^2_\sigma(\Omega,\R^2))$, we have $\psi^j \rightharpoonup \psi$ in $L^2(0,T;\Phi)$ by Theorem \ref{Stream function theorem}. Hence, it suffices, by the Aubin-Lions Lemma, to show that
\begin{equation} \label{Aim 2 for Aubin-Lions}
\sup_{j \in \N} \|\partial_t \psi_j\|_{L^1(0,T;W^{-1,2}(\Omega))} < \infty.
\end{equation}
We write $\partial_t \psi_j = - b_j \times u_j - \mu_j \curl b_j$ and estimate the terms separately. First, we set
\begin{equation} \label{Zero extensions}
B_j(x,t) \defeq \begin{cases}
                    b_j(x,t), & x \in \Omega, \\
                    0, & x \notin \Omega,
                  \end{cases} \quad
  U_j(x,t) \defeq \begin{cases}
                    u_j(x,t), & x \in \Omega, \\
                    0, & x \notin \Omega
                  \end{cases}
\end{equation}
and note that $\dive B_j = \dive U_j = 0$ in $\R^2 \times (0,T)$. Fix $t \in (0,T)$ and $\varphi \in C_c^\infty(\Omega)$, and denote $\Phi(x) = \varphi(x)$ for $x \in \Omega$ and $\Phi(x) = 0$ for $x \notin \Omega$. Fefferman's $\mathcal{H}^1$-$\operatorname{BMO}$ duality theorem and the $\dive$-$\curl$ estimate of Coifman, Lions, Meyer and Semmes give
\begin{align*}
\int_\Omega b_j(x,t) \times u_j(x,t) \varphi(x) \, dx
&= \int_{\R^2} B_j(x,t) \times U_j(x,t) \Phi(x) \, dx \\
&\lesssim \|B_j(\cdot,t) \times U_j(\cdot,t)\|_{\mathcal{H}^1} \|\Phi\|_{\operatorname{BMO}} \\
&\lesssim \|B_j(\cdot,t)\|_{L^2} \|U_j(\cdot,t)\|_{L^2} \|\nabla \Phi\|_{L^2},
\end{align*}
yielding, by the Cauchy-Schwartz inequality,
\[\sup_{j \in \N} \|b_j \times u_j\|_{L^1(0,T;W^{-1,2}(\Omega))} \lesssim \sup_{j \in \N} \norm{b_j}_{L^2(0,T;L^2(\Omega))} \norm{u_j}_{L^2(0,T;L^2(\Omega))} < \infty.\]
Furthermore, trivially, $\sup_{j \in \N} \|\mu_j \curl b_j\|_{L^1(0,T;W^{-1,2}(\Omega))} < \infty$, and so \eqref{Aim 2 for Aubin-Lions} holds.
\end{proof}

We next show that the limit mappings $u$ and $b$ satisfy the ideal momentum equation. Given $j \in \N$, the mappings $u_j$ and $b_j$ satisfy \eqref{2D resistive MHD}--\eqref{2D resistive MHD6} and using standard arguments (see e.g. \cite[Lemma 2.4]{Gal}), \eqref{2D momentum equation interpretation} and the initial value condition $b_j(\cdot,0) = b_{j,0}$ yield
\begin{equation} \label{2D alternative formula for weak solutions}
\int_0^T \int_\Omega (b_j \cdot \partial_t \phi - b_j \times u_j \curl \phi - \mu \curl b_j \curl \phi) \, dx \, dt + \int_\Omega b_{j,0} \cdot \phi(\cdot,0)
\end{equation}
for every $\phi \in C_c^\infty(\Omega \times [0,T),\R^2)$ with $\dive \phi = 0$.

\begin{lem}
$\partial_t b - \nabla^\perp (b \times u) = 0$ with initial value $b(\cdot,0) = b_0$.
\end{lem}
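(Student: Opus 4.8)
The plan is to pass to the limit $j \to \infty$ in the weak formulation \eqref{2D alternative formula for weak solutions}, which holds for every divergence-free $\phi \in C_c^\infty(\Omega \times [0,T),\R^2)$. Three of the four terms are routine. Since $b_j \overset{*}{\rightharpoonup} b$ in $L^\infty(0,T;L^2_\sigma(\Omega,\R^2))$ and $\partial_t \phi \in L^1(0,T;L^2_\sigma(\Omega,\R^2))$ (note $\phi(\cdot,t)$ is compactly supported in $\Omega$), we have $\int_0^T \int_\Omega b_j \cdot \partial_t \phi \to \int_0^T \int_\Omega b \cdot \partial_t \phi$; since $b_{j,0} \rightharpoonup b_0$ in $L^2_\sigma(\Omega,\R^2)$ we have $\int_\Omega b_{j,0} \cdot \phi(\cdot,0) \to \int_\Omega b_0 \cdot \phi(\cdot,0)$; and the resistive term vanishes because the energy inequality gives $\mu_j \norm{\curl b_j}_{L^2(0,T;L^2(\Omega))}^2 \lesssim \norm{u_{j,0}}_{L^2}^2 + \norm{b_{j,0}}_{L^2}^2$, so that
\[\abs{\mu_j \int_0^T \int_\Omega \curl b_j \, \curl \phi \, dx \, dt} \le \sqrt{\mu_j} \, \bigl( \mu_j \norm{\curl b_j}_{L^2(0,T;L^2(\Omega))}^2 \bigr)^{1/2} \norm{\curl \phi}_{L^2(0,T;L^2(\Omega))} \to 0\]
as $\mu_j \to 0$; this covers both the weak ideal and the weak non-resistive limit.

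The delicate term is the nonlinearity $\int_0^T \int_\Omega b_j \times u_j \, \curl \phi$, and I would treat it through the stream function rather than seeking compactness of $b_j$ itself. Writing $b_j = -\nabla^\perp \psi_j$ with $\psi_j \in \Phi$ as in Theorem \ref{Stream function theorem} and using $\dive u_j = 0$, one has the pointwise identity $b_j \times u_j = u_j \cdot \nabla \psi_j = \dive(\psi_j u_j)$, so integrating by parts (no boundary term arises, since $\curl \phi(\cdot,t)$ is compactly supported in $\Omega$)
\[\int_0^T \int_\Omega b_j \times u_j \, \curl \phi \, dx \, dt = - \int_0^T \int_\Omega \psi_j \, u_j \cdot \nabla(\curl \phi) \, dx \, dt.\]
By Lemma \ref{Lemma on strong limit of stream functions}, $\psi_j \to \psi$ strongly in $L^2(0,T;L^2(\Omega))$, while $\{u_j\}$ is bounded in, and converges weakly in, $L^2(0,T;L^2(\Omega,\R^2))$; since $\psi \in L^\infty(0,T;\Phi)$ and $\nabla(\curl\phi)$ is bounded, the splitting $\psi_j u_j - \psi u = (\psi_j - \psi) u_j + \psi (u_j - u)$ shows $\int_0^T \int_\Omega \psi_j u_j \cdot \nabla(\curl \phi) \to \int_0^T \int_\Omega \psi u \cdot \nabla(\curl \phi)$, and undoing the integration by parts identifies the limit of the nonlinear term as $\int_0^T \int_\Omega b \times u \, \curl \phi$.

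Collecting the four limits, $b$ and $u$ satisfy \eqref{2D alternative formula for weak solutions} with $\mu = 0$ for every divergence-free $\phi \in C_c^\infty(\Omega \times [0,T),\R^2)$, which is exactly the weak formulation of $\partial_t b - \nabla^\perp(b \times u) = 0$ with initial value $b(\cdot,0) = b_0$ (cf. the derivation of \eqref{2D alternative formula for weak solutions} from \cite[Lemma 2.4]{Gal}). The only genuine obstacle is the nonlinear term: the gradient bound on $b_j$ supplied by the energy inequality degenerates like $\mu_j^{-1}$, so $b_j$ cannot be compactified directly; the resolution is to shift the $\nabla^\perp$ onto the test field and invoke the strong $L^2(0,T;L^2(\Omega))$ convergence of the stream functions established in Lemma \ref{Lemma on strong limit of stream functions}.
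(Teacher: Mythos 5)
Your proposal is correct and follows essentially the same route as the paper: pass to the limit in \eqref{2D alternative formula for weak solutions}, kill the resistive term with the energy inequality, and handle the nonlinearity by writing $b_j \times u_j = \nabla \psi_j \cdot u_j$, moving the derivative onto $\curl\phi$ (the paper does this via the Helmholtz--Weyl orthogonality $u_j \perp \nabla W^{1,2}(\Omega)$, you via $\dive u_j = 0$ and the compact support of $\curl\phi$ --- the same computation), and then using the strong convergence of the stream functions from Lemma \ref{Lemma on strong limit of stream functions} against the weak convergence of $u_j$.
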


\begin{proof}
Fix $\phi \in C_c^\infty(\Omega \times [0,T),\R^2)$ with $\dive \phi = 0$. By using the formula $b_j \times u_j = \nabla \psi_j \cdot u_j$, Theorem \ref{Helmholtz-Weyl decomposition} and Lemma \ref{Lemma on strong limit of stream functions},
\[\begin{array}{lcl}
& & \displaystyle \int_0^T \int_\Omega b_j(x,t) \times u_j(x,t) \curl \phi(x,t) \, dx \, dt \\
&=& \displaystyle \int_0^T \int_\Omega \nabla [\curl \phi(x,t) \psi_j(x,t)] \cdot u_j(x,t) \, dx \, dt \\
&-& \displaystyle \int_0^T \int_\Omega \psi_j(x,t) \nabla \curl \phi(x,t) \cdot u_j(x,t) \, dx \, dt \\
&\to& \displaystyle - \int_0^T \int_\Omega \psi(x,t) \nabla \curl \phi(x,t) \cdot u(x,t) \, dx \, dt \\
&=& \displaystyle \int_0^T \int_\Omega b(x,t) \times u(x,t) \curl \phi(x,t) \, dx \, dt.
\end{array}\]
The claim now follows immediately by inspection of \eqref{2D alternative formula for weak solutions}, since the energy inequality yields $\mu_j \int_0^T \int_\Omega \curl b_j(x,t) \curl \phi(x,t) \, dx \, dt \to 0$.
\end{proof}

As in 3D, by adapting ~\cite[Lemmas 2.1 and 2.2]{Gal} we may assume that $b \in C_w([0,T);L^2_\sigma(\Omega,\R^2))$. Thus the stream function $\psi$ belongs to $C_w([0,T); \Phi) \subset C([0,T); L^2(\Omega))$. 

Theorem \ref{2D Taylor theorem} will be proved once we show \eqref{Limit of magnetic mean square potentials}. The right equality in \eqref{Limit of magnetic mean square potentials} follows from the assumption $b_{j,0} \rightharpoonup b_0$ and the Rellich-Kondrachov Theorem. We next prove the left equality -- in fact, we also prove that every weak solution of ideal MHD in the energy space conserves magnetic helicity in time.

\begin{lem} \label{Lemma for ideal MHD}
Suppose $u \in L^\infty(0,T;L^2_\sigma(\Omega,\R^2))$ and $b \in C_w([0,T);L^2_\sigma(\Omega,\R^2))$ satisfy $\partial_t b - \nabla^\perp (b \times u) = 0$ with initial value $b(\cdot,0) = b_0 \in L^2_\sigma(\Omega,\R^2)$. Then $b$ conserves mean square magnetic potential in time.
\end{lem}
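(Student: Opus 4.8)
The plan is as follows. By Theorem~\ref{Stream function theorem} the stream function $\psi := (-\nabla^\perp)^{-1}b$ lies in $C_w([0,T);\Phi)$, and since $\Phi \subset W^{1,2}(\Omega)$ embeds compactly into $L^2(\Omega)$, weak continuity in $\Phi$ improves to strong continuity in $L^2(\Omega)$; hence $t \mapsto \int_\Omega \abs{\psi(x,t)}^2\,dx$ is continuous on $[0,T)$, and it suffices to prove $\int_0^T \eta'(t) \int_\Omega \abs{\psi(x,t)}^2\,dx\,dt = 0$ for every $\eta \in C_c^\infty(0,T)$. The algebraic identity to keep in mind is $b \times u = u \cdot \nabla \psi = \dive(\psi u)$, which uses only $\dive u = 0$ and $b = -\nabla^\perp \psi$; consequently $\psi\,(b\times u) = \tfrac12 \dive(\psi^2 u)$ as well.

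First I would pass to the time-mollified fields $\psi_\delta, b_\delta, u_\delta$ (with the mollifiers $\chi^\delta$ of \textsection\ref{Bochner spaces}). Mollifying the induction equation and using $b = -\nabla^\perp\psi$ gives $\nabla^\perp(\partial_t\psi_\delta + (b\times u)_\delta) = 0$ on $\Omega$; since $\Omega$ is connected this forces $\partial_t\psi_\delta + (b\times u)_\delta = c_\delta(t)$ for some function of time alone, smooth on the interior time interval, and $\psi_\delta(\cdot,t) \in \Phi$ as an average of elements of the closed subspace $\Phi$. The next point is that $\int_\Omega \psi(x,t)\,dx$ is independent of $t$: testing the induction equation against a fixed $\theta \in W^{1,2}_\sigma(\Omega,\R^2)$ with $\curl\theta \equiv 1$ and vanishing circulation around $\Gamma_2,\dots,\Gamma_K$ (such $\theta$ is produced by a scalar elliptic problem), one gets $\frac{d}{dt}\langle b,\theta\rangle = -\int_\Omega (b\times u)\,\curl\theta\,dx = -\int_\Omega \dive(\psi u)\,dx = 0$, while Green's formula together with $\psi|_{\Gamma_1} = 0$ and the circulation condition gives $\langle b,\theta\rangle = \int_\Omega \psi\,\curl\theta\,dx = \int_\Omega \psi\,dx$. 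Since also $\int_\Omega b\times u\,dx = \langle u\cdot n,\psi\rangle_\Gamma = 0$, integrating $\partial_t\psi_\delta + (b\times u)_\delta = c_\delta(t)$ over $\Omega$ then yields $c_\delta \equiv 0$.

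With $c_\delta \equiv 0$ the computation reduces: for $\delta$ small relative to $\operatorname{dist}(\supp\eta,\{0,T\})$, integration by parts in $t$ gives
\[
\int_0^T \eta'(t) \int_\Omega \abs{\psi_\delta}^2\,dx\,dt = -2\int_0^T \eta(t)\int_\Omega \psi_\delta\,\partial_t\psi_\delta\,dx\,dt = 2\int_0^T \eta(t)\int_\Omega \psi_\delta\,(b\times u)_\delta\,dx\,dt,
\]
and by Lemma~\ref{Lemma on time integrability of mollified functions}, Lemma~\ref{Mollifier approximation lemma} and $\psi_\delta \to \psi$ in $C([0,T);L^2(\Omega))$, letting $\delta \to 0$ turns this into $\int_0^T \eta'(t)\int_\Omega \abs{\psi}^2\,dx\,dt = 2\int_0^T \eta(t)\int_\Omega \psi\,(b\times u)\,dx\,dt$. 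It remains to show the right-hand side vanishes, i.e.\ that $\int_\Omega \psi\,(b\times u)\,dx = 0$ for a.e.\ $t$.

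This last point is the technical heart of the argument and is where the Hardy-space machinery of \textsection\ref{A two-dimensional analogue} enters, exactly as in \cite{FL}. Formally $\int_\Omega \psi\,(b\times u)\,dx = \tfrac12 \int_\Omega \dive(\psi^2 u)\,dx = 0$ because $u\cdot n|_\Gamma = 0$, but $\psi^2$ only lies in $W^{1,4/3}(\Omega)$, so this integration by parts is not licit at face value. Instead one notes that in two dimensions $\psi \in W^{1,2}(\Omega) \subset \operatorname{BMO}$, that the zero extension of $b\times u = u\cdot\nabla\psi$ is bounded in $\mathcal{H}^1(\R^2)$ by $\norm{b}_{L^2}\norm{u}_{L^2}$ via the div-curl estimate of \cite{CLMS93}, and that for $u_k \in C_{c,\sigma}^\infty(\Omega,\R^2)$ the identity $\int_\Omega \psi\,(u_k\cdot\nabla\psi)\,dx = \tfrac12\int_\Omega \dive(\psi^2 u_k)\,dx = 0$ is legitimate; approximating $u$ by such $u_k$ in $L^2_\sigma(\Omega,\R^2)$ and using Fefferman's $\mathcal{H}^1$--$\operatorname{BMO}$ duality \cite{FS72} together with continuity of the div-curl bilinear map $L^2 \times L^2 \to \mathcal{H}^1$ passes to the limit. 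When $\Gamma$ is disconnected one first subtracts from $\psi$ a fixed $C^\infty(\overline\Omega)$ function equal to the relevant constant near each $\Gamma_i$, so that the remainder extends by zero to $W^{1,2}(\R^2)$ (making $\nabla\psi$ curl-free on all of $\R^2$), treating the smooth correction separately. Granting $\int_\Omega \psi\,(b\times u)\,dx = 0$, we conclude that the mean-square magnetic potential is constant in time; the two equalities in \eqref{Limit of magnetic mean square potentials} then follow from $b_{j,0}\rightharpoonup b_0$, Theorem~\ref{Stream function theorem} and the Rellich--Kondrachov theorem. The main obstacle is thus the low-regularity bookkeeping needed to make the formal manipulations rigorous, concentrated in the identity $\int_\Omega \psi\,(b\times u)\,dx = 0$, and, to a lesser extent, the verification that the induction equation may be tested against the non-compactly-supported field $\theta$ in the settings considered.
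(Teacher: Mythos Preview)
Your proposal is correct and follows the same overall arc as the paper: mollify in time, reduce to showing $\int_0^T \eta(t)\int_\Omega \psi\,(b\times u)\,dx\,dt=0$, and handle the last integral via the div--curl/$\mathcal{H}^1$--$\mathrm{BMO}$ machinery. A few tactical choices differ and are worth noting.

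First, the constant $c_\delta(t)$. The paper simply asserts $\partial_t\psi+b\times u=0$ ``by the boundary values of $u,b,\psi$'' (informally: $\psi|_{\Gamma_1}=0$ for all $t$ forces the trace of $\partial_t\psi_\delta+(b\times u)_\delta$ on $\Gamma_1$ to vanish), while you prove it by constructing $\theta\in W^{1,2}_\sigma$ with $\curl\theta\equiv 1$ and zero inner circulations to show $\int_\Omega\psi\,dx$ is time-independent. Your route is more explicit, but be aware that producing $\theta\in W^{1,2}$ requires $W^{2,2}$-regularity for the associated scalar problem, which on a merely Lipschitz domain is not automatic; you may need to approximate $1$ by smooth compactly supported right-hand sides, or fall back on the trace argument.

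Second, the vanishing of $\int_\Omega\psi\,(b\times u)\,dx$. You approximate $u$ by $u_k\in C^\infty_{c,\sigma}$ and then take the limit in the $\mathrm{VMO}$--$\mathcal{H}^1$ pairing. The paper instead extends $\psi$ to a \emph{compactly supported} $\Psi\in W^{1,2}(\R^2)$ in one stroke (equal to $\psi$ in $\Omega$, to the constant $\psi|_{\Gamma_i}$ in each hole, and to $0$ outside $\Gamma_1$), extends $b,u$ by zero to $B,U$, passes to the limit in $\delta$ using $\Psi\in\mathrm{VMO}$ and $B\times U\in\mathcal{H}^1$, and then mollifies $\Psi$ and $B$ \emph{in space} to finish via $\int_{\R^2}\nabla|\Psi_\epsilon|^2\cdot U=0$. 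Both arguments are valid; the paper's single extension is tidier than your ``subtract a smooth function equal to the boundary constants'' (note these constants depend on $t$, so the cut-off varies in time, which is harmless but needs saying), and mollifying $\Psi,B$ rather than approximating $u$ spares you from checking that the duality pairing coincides with the Lebesgue integral in the smooth-$u_k$ case.
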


\begin{proof}
Since $\psi \in C([0,T);L^2(\Omega))$, it suffices to show that for every $\eta \in C_c^\infty(0,T)$ we have $\int_0^T \partial_t \eta(t) \int_\Omega \abs{\psi(x,t)}^2 dx \, dt = 0$. Fix such an $\eta$ and choose $\epsilon > 0$ such that $\supp(\eta) \subset [\epsilon,T-\epsilon]$. Whenever $0 < \delta < \epsilon$, we mollify in time and write $\psi_\delta \defeq \psi * \chi_\delta$. By Lemma \ref{Mollifier approximation lemma}, $\int_0^T \partial_t \eta(t) \int_\Omega \abs{\psi(x,t)}^2 dx \, dt = \lim_{\delta \to 0} \int_0^T \partial_t \eta(t) \int_\Omega \abs{\psi_\delta(x,t)}^2 dx \, dt$.

Let now $0 < \delta < \epsilon$. The induction equation $\partial_t b - \nabla^\perp (b \times u) = 0$ and the assumptions about the boundary values of $u$, $b$ and $\psi$ imply that $\partial_t \psi + b \times u = 0$, and thus $(b \times u)_\delta = - \psi * \partial_t \chi_\delta \in L^\infty(\epsilon,T-\epsilon; W^{1,2}(\Omega))$, giving
\[\int_0^T \partial_t \eta(t) \int_\Omega \abs{\psi_\delta(x,t)}^2 dx \, dt
= 2 \int_0^T \eta(t) \int_\Omega \psi_\delta(x,t) (b \times u)_\delta(x,t) \, dx \, dt.\]
As in \eqref{Zero extensions}, we denote the zero extensions of $b$ and $u$ outside $\Omega$ by $B$ and $U$. Likewise, for every $t \in [0,T)$, we denote by $\Psi(\cdot,t) \in W^{1,2}(\R^2)$ the unique compactly supported solution of $-\nabla^\perp \Psi(\cdot,t) = B(\cdot,t)$. Thus $\Psi \in L^\infty(0,T; W^{1,2}(\R^2)) \subset L^2(0,T;\operatorname{VMO}(\R^2))$ and $B \times U \in L^\infty(0,T;\mathcal{H}^1(\R^2)) \subset (L^2(0,T;\operatorname{VMO}(\R^2)))^*$. This allows us to write, using Lemma \ref{Mollifier approximation lemma},
\begin{align*}
   \int_0^T \eta(t) \int_\Omega \psi_\delta(x,t) (b \times u)_\delta(x,t) \, dx \, dt
&= \int_0^T \eta(t) \int_{\R^2} \Psi_\delta(x,t) (B \times U)_\delta(x,t) \, dx \, dt \\
&\to \int_0^T \eta(t) \langle \Psi(\cdot,t), B \times U(\cdot,t) \rangle_{\operatorname{VMO}-\mathcal{H}^1} dt.
\end{align*}
We finally mollify $\Psi$ and $B$ in \emph{space} and use the Dominated Convergence Theorem in time to conclude that
\[\begin{array}{lcl}
& & \displaystyle \int_0^T \eta(t) \langle \Psi(\cdot,t), B \times U(\cdot,t) \rangle_{\operatorname{VMO}-\mathcal{H}^1} dt \\
&=& \displaystyle \lim_{\epsilon \to 0} \int_0^T \eta(t) \int_{\R^2} \Psi_\epsilon(x,t) B_\epsilon(x,t) \times U(x,t) \, dx \, dt \\
&=& \displaystyle \frac{1}{2} \lim_{\epsilon \to 0} \int_0^T \eta(t) \int_{\R^2} \nabla \abs{\Psi_\epsilon(x,t)}^2 \cdot U(x,t) \, dx \, dt = 0.
\end{array}\]

\end{proof}

\appendix

\section{The existence of Leray-Hopf solutions in multiply connected domains}
We give a proof of the existence of Leray-Hopf solutions of \eqref{Resistive MHD}--\eqref{Resistive MHD6}, referring to the literature on some of the steps that are familiar from Navier-Stokes equations. A proof for simply connected domains is sketched in ~\cite{ST} and presented in more detail in ~\cite{GLBL}. As we cover multiply connected domains, more technicalities are needed although we follow the general scheme of the proof given in ~\cite{GLBL}. We reformulate Theorem \ref{Theorem on Leray-Hopf solutions} for the convenience of the reader.

\begin{thm} \label{Appendix theorem}
Suppose $\Omega$ satisfies Assumption \ref{Assumption on Omega} and let $u_0,b_0 \in L^2_\sigma(\Omega,\R^3)$. Then there exists a Leray-Hopf solution $(u,b)$ of \eqref{Resistive MHD}--\eqref{Resistive MHD6}.
\end{thm}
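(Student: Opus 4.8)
\emph{Proof sketch.} The plan is a Galerkin approximation adapted to the boundary conditions \eqref{Resistive MHD5}--\eqref{Resistive MHD6}. First I would fix two Hilbertian bases of $L^2_\sigma(\Omega,\R^3)$: let $\{a_k\}_{k\ge 1}\subset W^{1,2}_{0,\sigma}(\Omega,\R^3)$ be the eigenfunctions of the Stokes operator, i.e. $\int_\Omega \nabla a_k:\nabla\varphi = \lambda_k\int_\Omega a_k\cdot\varphi$ for all $\varphi\in W^{1,2}_{0,\sigma}(\Omega,\R^3)$, and let $\{c_k\}_{k\ge 1}\subset W^{1,2}_\sigma(\Omega,\R^3)$ be the eigenfunctions of the problem $\int_\Omega \curl c_k\cdot\curl\theta = \kappa_k\int_\Omega c_k\cdot\theta$ for all $\theta\in W^{1,2}_\sigma(\Omega,\R^3)$. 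Both families exist and form orthonormal bases of $L^2_\sigma(\Omega,\R^3)$ that are orthogonal in $W^{1,2}(\Omega,\R^3)$, because $W^{1,2}_{0,\sigma}(\Omega,\R^3)$ and $W^{1,2}_\sigma(\Omega,\R^3)$ embed compactly into $L^2_\sigma(\Omega,\R^3)$ (Rellich--Kondrachov, using Theorem \ref{Foias-Temam theorem} for the second space so that its graph norm is equivalent to the $W^{1,2}$-norm); here $\kappa_k = 0$ precisely on the finite-dimensional space $L^2_H(\Omega,\R^3)$, which is harmless, the corresponding eigenfunctions being chosen as an orthonormal basis of $L^2_H(\Omega,\R^3)$. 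Note that the weak formulation \eqref{Resistive MHD weak definition 2a} tested against the $c_k$ is exactly what encodes the condition $(\curl b)\times n|_\Gamma = 0$.

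For each $m$ I would seek $u_m(\cdot,t)=\sum_{k=1}^m\alpha_k(t)a_k$ and $b_m(\cdot,t)=\sum_{k=1}^m\beta_k(t)c_k$ solving \eqref{Resistive MHD weak definition 1}--\eqref{Resistive MHD weak definition 2a} tested against $\varphi=a_k$ and $\theta=c_k$ for $k\le m$, with initial data the $L^2_\sigma$-projections of $u_0,b_0$. This is a quadratic ODE system for $(\alpha,\beta)$ with locally Lipschitz right-hand side, hence it admits a unique maximal solution. Testing the $u_m$-equation with $u_m$ and the $b_m$-equation with $b_m$, the Navier--Stokes trilinear term vanishes, and since $u_m|_\Gamma=0$ one checks using $\curl(b_m\times u_m)=(u_m\cdot\nabla)b_m-(b_m\cdot\nabla)u_m$ and an integration by parts that $\int_\Omega(b_m\times u_m)\cdot\curl b_m = -\int_\Omega (b_m\cdot\nabla)u_m\cdot b_m = \int_\Omega(b_m\cdot\nabla)b_m\cdot u_m$, so the Lorentz and induction coupling terms cancel. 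One thus obtains the exact energy identity $\tfrac12\tfrac{d}{dt}(\norm{u_m}_{L^2}^2+\norm{b_m}_{L^2}^2)+\nu\norm{\nabla u_m}_{L^2}^2+\mu\norm{\curl b_m}_{L^2}^2=0$, which gives global existence of the Galerkin solutions and the uniform bounds: $u_m$ bounded in $L^\infty(0,T;L^2_\sigma(\Omega,\R^3))\cap L^2(0,T;W^{1,2}_{0,\sigma}(\Omega,\R^3))$ and $b_m$ bounded in $L^\infty(0,T;L^2_\sigma(\Omega,\R^3))\cap L^2(0,T;W^{1,2}_\sigma(\Omega,\R^3))$, where for the latter one uses $\norm{b_m}_{W^{1,2}(\Omega)}\lesssim_\Omega\norm{b_m}_{L^2(\Omega)}+\norm{\curl b_m}_{L^2(\Omega)}$ from Theorem \ref{Foias-Temam theorem}.

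Next I would bound the time derivatives: estimating the nonlinear terms by Lemma \ref{Interpolation lemma} and using that the Galerkin projections are bounded on $W^{1,2}_{0,\sigma}(\Omega,\R^3)$ and $W^{1,2}_\sigma(\Omega,\R^3)$ (hence on their duals) one gets $\partial_t u_m$ bounded in $L^{4/3}(0,T;(W^{1,2}_{0,\sigma}(\Omega,\R^3))^*)$ and $\partial_t b_m$ bounded in $L^{4/3}(0,T;(W^{1,2}_\sigma(\Omega,\R^3))^*)$, uniformly in $m$. The Aubin--Lions Lemma \ref{Aubin-Lions lemma} then yields a subsequence with $u_m\to u$ and $b_m\to b$ strongly in $L^2(0,T;L^2(\Omega,\R^3))$, together with $u_m\rightharpoonup u$, $b_m\rightharpoonup b$ in $L^2(0,T;W^{1,2}(\Omega,\R^3))$, weakly-$*$ in $L^\infty(0,T;L^2(\Omega,\R^3))$, and $\partial_t u_m\rightharpoonup\partial_t u$, $\partial_t b_m\rightharpoonup\partial_t b$ in the corresponding dual Bochner spaces. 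Passing to the limit in the Galerkin equations is then routine: the linear terms pass by weak convergence, and the nonlinear terms $u_m\otimes u_m$, $b_m\otimes b_m$, $b_m\times u_m$ pass by combining the strong $L^2(0,T;L^2)$ convergence with the weak convergence of the gradients, after which density of $\bigcup_m\operatorname{span}\{a_k\}$ in $W^{1,2}_{0,\sigma}(\Omega,\R^3)$ and of $\bigcup_m\operatorname{span}\{c_k\}$ in $W^{1,2}_\sigma(\Omega,\R^3)$ lets one take arbitrary test functions. The initial conditions $u(\cdot,0)=u_0$, $b(\cdot,0)=b_0$ follow from the weak continuity in time, and the energy inequality follows from the Galerkin energy identity by weak lower semicontinuity of the $L^2$-norms (and convergence of the initial energies). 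Hence $(u,b)$ is a Leray--Hopf solution.

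I expect the main obstacle to be the functional-analytic set-up of the magnetic Galerkin basis in a multiply connected $\mathscr{C}^{1,1}$ domain: one must verify that the form $\int_\Omega\curl c\cdot\curl\theta$ on $W^{1,2}_\sigma(\Omega,\R^3)$, whose natural kernel is the space $L^2_H(\Omega,\R^3)$ of harmonic Neumann fields, generates together with the $L^2$-pairing an operator with compact resolvent, so that a spectral basis exists and the weak formulation correctly captures $(\curl b)\times n|_\Gamma=0$. This rests on the compact embedding $W^{1,2}_\sigma(\Omega,\R^3)\hookrightarrow L^2_\sigma(\Omega,\R^3)$, which in turn relies on the $\mathscr{C}^{1,1}$ (not merely Lipschitz) regularity of $\Gamma$ via Theorem \ref{Foias-Temam theorem}; the remaining steps follow the classical Navier--Stokes/MHD scheme of \cite{GLBL}.
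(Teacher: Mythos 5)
Your proposal is correct and follows essentially the same route as the paper's appendix proof: a Galerkin scheme built on the Stokes eigenbasis for $u$ and, for $b$, on the harmonic Neumann fields $h_1,\dots,h_N$ together with eigenfunctions of the curl--curl form on $W^{1,2}_\sigma(\Omega,\R^3)$, then the energy identity, uniform bounds on $\partial_t u_m$ and $\partial_t b_m$ via the uniform $W^{1,2}$-boundedness of the spectral Galerkin projections, Aubin--Lions compactness, and passage to the limit. The only (inessential) difference is that the paper removes the kernel $L^2_H(\Omega,\R^3)$ by working with the equivalent inner product of Lemma \ref{Equivalent inner product} (curl--curl plus flux-across-cuts terms), so that the whole magnetic basis is orthonormal in an equivalent Hilbert norm on $W^{1,2}_\sigma(\Omega,\R^3)$, whereas you keep the degenerate form and treat $L^2_H(\Omega,\R^3)$ as its zero-eigenspace; accordingly, your claim that the basis is ``orthogonal in $W^{1,2}$'' should be read as orthogonality with respect to that equivalent inner product (or, equivalently, $L^2$- and curl--curl-orthogonality combined with Theorem \ref{Foias-Temam theorem}), which is exactly what makes the projections $Q_n$ uniformly bounded.
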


The basic strategy of the proof, via finite-dimensional \emph{Galerkin approximations}, is classical, but we discuss the main ideas. The solution is built via orthonormal bases $\{v_j\}_{j \in \N}$ and $\{w_j\}_{j \in \N}$ of $L^2_\sigma(\Omega,\R^3)$ satisfying the sought boundary conditions, that is, $v_j \in W^{1,2}_{0,\sigma}(\Omega,\R^3)$ and $w_j \in W^{1,2}_\sigma(\Omega,\R^3)$ with $(\curl w_j \times n)|_\Gamma = 0$.

\begin{defin} \label{Definition of Galerkin approximations}
Suppose $u_0, b_0 \in L^2_\sigma(\Omega,\R^3)$ and let $n \in \N$. 
Mappings of the forms
\begin{equation} \label{u_n and b_n}
u_n(x,t) = \sum_{j=1}^n c_{nj}(t) v_j(x), \quad
b_n(x,t) = \sum_{j=1}^n d_{nj}(t) w_j(x),
\end{equation}
where $c_{nj}, d_{nj} \in C^1([0,T))$, satisfy the $n$th order \emph{Galerkin approximation} of \eqref{Resistive MHD}--\eqref{Resistive MHD6} if
\begin{align*}
& \frac{d}{dt} (u_n,v_j)_{L^2} + \nu (\nabla u_n,\nabla v_j)_{L^2} + \langle (u_n \cdot \nabla) u_n - (b_n \cdot \nabla) b_n, v_j \rangle_{(W^{1,2}_{0,\sigma})^* - W^{1,2}_{0,\sigma}} = 0, \quad \\
& \frac{d}{dt} (b_n,w_j)_{L^2} + \mu (\curl b_n,\curl w_j)_{L^2} + \langle \curl (b_n \times u_n), w_j \rangle_{(W^{1,2}_\sigma)^* - W^{1,2}_\sigma} = 0, \\
& u_n(\cdot,0) = P_n u_0, \quad b_n(\cdot,0) = Q_n b_0
\end{align*}
for all $j = 1,\ldots,n$.
\end{defin}

For every $n \in \N$, standard theory of ordinary differential equations gives a unique solution of the Galerkin approximation satisfying the energy equality
\begin{equation} \label{Energy equality}
\begin{array}{lcl}
& & \displaystyle \frac{1}{2} \int_{\Omega} (\abs{u_n(x,t)}^2 + \abs{b_n(x,t)}^2) \, dx \\
&+& \displaystyle \int_0^t \int_{\Omega} (\nu \abs{\nabla u_n(x,\tau)}^2 + \mu \abs{\curl b_n(x,\tau)}^2) \, dx \, d\tau \\
&=& \displaystyle \frac{1}{2} \int_{\Omega} (\abs{P_n u_0(x)}^2 + \abs{Q_n b_0(x)}^2) \, dx
\end{array}
\end{equation}
for all $t \in (0,T)$ (see Lemma \ref{Lemma on Galerkin approximations}). With some work, the energy equality allows us to subtract a subsequence with $u_n \rightharpoonup u$ in $L^2(0,T;W^{1,2}_{0,\sigma}(\Omega,\R^3))$ and $b_n \rightharpoonup b$ in $L^2(0,T;W^{1,2}_\sigma(\Omega,\R^3))$. Our goal is to show that $(u,b)$ is a Leray-Hopf solution with initial data $(u_0,b_0)$.

\vspace{0.3cm}
For every $n \in \N$ we denote by $P_n$ and $Q_n$ the projections of $L^2_\sigma(\Omega,\R^3)$ onto $\operatorname{span} \{v_1,\ldots,v_n\}$ and $\operatorname{span} \{w_1,\ldots,w_n\}$. Note that $P_n \colon W^{1,2}_{0,\sigma}(\Omega,\R^3) \to W^{1,2}_{0,\sigma}(\Omega,\R^3)$ and $Q_n \colon W^{1,2}_\sigma(\Omega,\R^3) \to W^{1,2}_\sigma(\Omega,\R^3)$ are also bounded operators, and we denote their (Banach space) adjoints by $P_n^* \colon (W^{1,2}_{0,\sigma}(\Omega,\R^3))^* \to(W^{1,2}_{0,\sigma}(\Omega,\R^3))^*$ and $Q_n^* \colon (W^{1,2}_\sigma(\Omega,\R^3))^* \to (W^{1,2}_\sigma(\Omega,\R^3))^*$. We also define the Stokes operator and a corresponding operator for magnetic fields,
\[\Lambda_1 \colon W^{1,2}_{0,\sigma}(\Omega,\R^3) \to (W^{1,2}_{0,\sigma}(\Omega,\R^3))^* \quad \text{and} \quad \Lambda_2 \colon W^{1,2}_\sigma(\Omega,\R^3) \to (W^{1,2}_\sigma(\Omega,\R^3))^*,\]
by
\begin{align*}
& \langle \Lambda_1 u, v \rangle_{(W^{1,2}_{0,\sigma})^*-W^{1,2}_{0,\sigma}} \defeq \int_\Omega \nabla u \colon \nabla v, \\
& \langle \Lambda_2 b, w \rangle_{(W^{1,2}_\sigma)^*-W^{1,2}_\sigma} \defeq \int_\Omega \curl b \cdot \curl w.
\end{align*}
We write the Galerkin approximation in the condensed form
\begin{align}
& \partial_t u_n - \nu P_n^* \Lambda_1 u_n + P_n^* [(u_n \cdot \nabla) u_n - (b_n \cdot \nabla) b_n] = 0, \label{Galerkin abridged 1}\\
& \partial_t b_n - \mu Q_n^* \Lambda_2 b_n + Q_n^* [\curl(b_n \times u_n)] = 0, \label{Galerkin abridged 2} \\
& u_n(\cdot,0) = P_n u_0, \quad b_n(\cdot,0) = Q_n b_0. \label{Galerkin abridged 3}
\end{align}

In order for the weak limit $(u,b)$ to satisfy the MHD equations \eqref{Resistive MHD}--\eqref{Resistive MHD6} we need to gain enough compactness in the nonlinear terms $P_n^*[(u_n \cdot \nabla) \cdot u_n - b_n \times (\curl b_n)]$ and $Q_n^*[\curl (b_n \times u_n)]$. This is eventually achieved by using the Aubin Lions Lemma to get $u_n \to u$ and $b_n \to b$ in $L^2(0,T;L^2_\sigma(\Omega,\R^3))$. In order to satisfy the assumptions of the Aubin-Lions Lemma we wish to choose suitable bases $\{v_j\}_{j \in \N}$ and $\{w_j\}_{j \in \N}$ (see \textsection \ref{The choice of bases}) that ensure the uniform norm control
\begin{equation} \label{Uniform bound on operator norms}
\sup_{n \in \N} (\|P_n^*\|_{(W^{1,2}_{0,\sigma}(\Omega,\R^3))^* \to (W^{1,2}_{0,\sigma}(\Omega,\R^3))^*} + \|Q_n^*\|_{(W^{1,2}_\sigma(\Omega,\R^3))^* \to (W^{1,2}_\sigma(\Omega,\R^3))^*}) < \infty.
\end{equation}
As is customary, we select $v_j$ to be eigenfunctions of $\Lambda_1$, while a basis of $L^2_\Sigma(\Omega,\R^3)$ consists of eigenfunctions of $\Lambda_2$. Since we deal with multiply connected domains, $\{w_j\}_{j \in \N}$ also needs to include a basis of $L^2_H(\Omega,\R^3)$, and some care is needed in the ensuing arguments. The proof is completed in \textsection \ref{Passing to the limit}.

\subsection{The choice of bases} \label{The choice of bases}

This subsection is devoted to the choice of the orthonormal bases of $L^2_\sigma(\Omega,\R^3)$. The first basis $\{v_j\}_{j \in \N}$ consists of eigenfunctions of the Stokes operator and its existence is classical; we refer to ~\cite[p. 39]{Tem}. We endow $W^{1,2}_{0,\sigma}(\Omega,\R^3)$ with the Hilbert norm $\norm{\cdot}_{W^{1,2}_{0,\sigma}} \defeq \norm{\nabla \cdot}_{L^2}$.

\begin{lem} \label{Stokes lemma}
$L^2_\sigma(\Omega,\R^3)$ has an orthonormal basis $\{v_j\}_{j \in \N}$ with the following properties: for every $j \in \N$ there exists $\lambda_j > 0$ such that $v_j \in W^{1,2}_{0,\sigma}(\Omega,\R^3)$ satisfies
\[(v_j, \phi)_{W^{1,2}_{0,\sigma}} = \lambda_j (v_j, \phi)_{L^2}\]
for all $\phi \in W^{1,2}_{0,\sigma}(\Omega,\R^3)$. In particular, $\{v_j/\sqrt{\lambda_j}\}_{j \in \N}$ is an orthonormal system in $W^{1,2}_{0,\sigma}(\Omega,\R^3)$.
\end{lem}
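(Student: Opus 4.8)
The plan is to realise $\{v_j\}_{j\in\N}$ as the eigenfunctions of the inverse Stokes operator, which is a compact, self-adjoint, positive operator on $L^2_\sigma(\Omega,\R^3)$, and then to invoke the spectral theorem. First I would observe that, by the Poincar\'{e} inequality, $\norm{\cdot}_{W^{1,2}_{0,\sigma}} = \norm{\nabla \cdot}_{L^2}$ is a Hilbert norm on $W^{1,2}_{0,\sigma}(\Omega,\R^3)$ equivalent to the ambient $W^{1,2}$ norm, and that for each $f \in L^2_\sigma(\Omega,\R^3)$ the linear functional $\phi \mapsto (f,\phi)_{L^2}$ is bounded on $W^{1,2}_{0,\sigma}(\Omega,\R^3)$. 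The Riesz representation theorem then produces a unique $Sf \in W^{1,2}_{0,\sigma}(\Omega,\R^3)$ with $(Sf,\phi)_{W^{1,2}_{0,\sigma}} = (f,\phi)_{L^2}$ for all $\phi \in W^{1,2}_{0,\sigma}(\Omega,\R^3)$; since $W^{1,2}_{0,\sigma}(\Omega,\R^3)$ is dense in $L^2_\sigma(\Omega,\R^3)$, the operator $S$ is injective, and it is clearly linear.

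Next I would verify the operator-theoretic properties of $S \colon L^2_\sigma(\Omega,\R^3) \to L^2_\sigma(\Omega,\R^3)$. Symmetry and positivity follow from $(Sf,g)_{L^2} = (Sf,Sg)_{W^{1,2}_{0,\sigma}} = (Sg,f)_{L^2}$ and $(Sf,f)_{L^2} = \norm{Sf}_{W^{1,2}_{0,\sigma}}^2 \ge 0$. Boundedness follows from $\norm{Sf}_{L^2} \lesssim_\Omega \norm{Sf}_{W^{1,2}_{0,\sigma}} = \sup_{\norm{\phi}_{W^{1,2}_{0,\sigma}} \le 1} (f,\phi)_{L^2} \lesssim_\Omega \norm{f}_{L^2}$. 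Compactness is the key point: $S$ maps bounded subsets of $L^2_\sigma(\Omega,\R^3)$ into bounded subsets of $W^{1,2}_{0,\sigma}(\Omega,\R^3)$, and the latter embeds compactly into $L^2_\sigma(\Omega,\R^3)$ by the Rellich--Kondrachov theorem, so $S$ is compact.

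Finally I would apply the spectral theorem for compact self-adjoint operators on the separable Hilbert space $L^2_\sigma(\Omega,\R^3)$: there is an orthonormal basis $\{v_j\}_{j\in\N}$ consisting of eigenfunctions of $S$, say $Sv_j = \mu_j v_j$. If $\mu_j = 0$ then $v_j = 0$, a contradiction; hence $\mu_j \norm{v_j}_{L^2}^2 = (Sv_j,v_j)_{L^2} = \norm{Sv_j}_{W^{1,2}_{0,\sigma}}^2 > 0$ and $\mu_j > 0$ for every $j$. Setting $\lambda_j \defeq 1/\mu_j > 0$, we have $v_j = \lambda_j S v_j \in W^{1,2}_{0,\sigma}(\Omega,\R^3)$, and the defining identity for $S$ gives $(v_j,\phi)_{W^{1,2}_{0,\sigma}} = \lambda_j (Sv_j,\phi)_{W^{1,2}_{0,\sigma}} = \lambda_j (v_j,\phi)_{L^2}$ for all $\phi \in W^{1,2}_{0,\sigma}(\Omega,\R^3)$, which is the asserted eigenrelation. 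Testing it with $\phi = v_k$ yields $(v_j,v_k)_{W^{1,2}_{0,\sigma}} = \lambda_j \delta_{jk}$, so $\{v_j/\sqrt{\lambda_j}\}_{j\in\N}$ is an orthonormal system in $W^{1,2}_{0,\sigma}(\Omega,\R^3)$. The only substantial ingredient in this argument is the compact embedding $W^{1,2}_{0,\sigma}(\Omega,\R^3) \hookrightarrow L^2_\sigma(\Omega,\R^3)$, which is standard for bounded domains; the rest is routine bookkeeping, and one may alternatively simply cite ~\cite[p. 39]{Tem}.
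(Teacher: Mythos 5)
Your proof is correct, and it is essentially the classical argument that the paper itself does not spell out but delegates to the citation of Temam: inverting the Stokes operator via Riesz representation on $(W^{1,2}_{0,\sigma},\norm{\nabla\cdot}_{L^2})$, establishing compactness through Rellich--Kondrachov, and applying the spectral theorem. This is also exactly the scheme the paper uses for the companion basis in Lemma \ref{Magnetostatic lemma}, so there is nothing to add.
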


The analysis of the second basis is simplified by using the following lemma which is essentially a special case of ~\cite[Corollary 3.16]{ABDG}.

\begin{lem} \label{Equivalent inner product}
On $W^{1,2}_\sigma(\Omega,\R^3)$, the norm $\norm{\cdot}_{W^{1,2}_\sigma}$ induced by the inner product
\[(v, w)_{W^{1,2}_\sigma} \defeq \int_\Omega \curl v(x) \cdot \curl w(x) \, dx + \sum_{i=1}^N \gamma_i \langle v \cdot n, 1 \rangle_{\Sigma_i} \langle w \cdot n, 1 \rangle_{\Sigma_i}\]
(where $\gamma_i > 0$ is chosen such that $\|h_i\|_{W^{1,2}_\sigma} = 1$ for all $i \in \{1,\ldots,N\}$) is equivalent to the norm inherited from $W^{1,2}(\Omega,\R^3)$.
\end{lem}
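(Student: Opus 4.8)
The plan is to establish the two-sided estimate
$c\,\|v\|_{W^{1,2}(\Omega)} \le \|v\|_{W^{1,2}_\sigma} \le C\,\|v\|_{W^{1,2}(\Omega)}$ for every $v \in W^{1,2}_\sigma(\Omega,\R^3)$; the equivalence of the norms, and the fact that $(\cdot,\cdot)_{W^{1,2}_\sigma}$ is genuinely an inner product, follow at once. The upper bound is routine: pointwise $|\curl v| \lesssim |\nabla v|$, and for each cut $\Sigma_i$ the flux functional $w \mapsto \langle w\cdot n, 1\rangle_{\Sigma_i}$ is bounded on $W^{1,2}(\Omega,\R^3)$, since the trace of $w$ onto the surface $\Sigma_i \subset \mathscr{M}_i$ is bounded into $L^2(\Sigma_i)$ and is then paired with the constant function $1$ (here $\overline{\Sigma_i}$ is compact and $\mathscr{M}_i$ smooth, so the trace makes sense). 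Summing over $i$ gives $\|v\|_{W^{1,2}_\sigma}^2 \lesssim_\Omega \|v\|_{W^{1,2}(\Omega)}^2$.

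For the lower bound I would first reduce to an $L^2$ estimate. Since $\dive v = 0$ and $v\cdot n|_\Gamma = 0$ for $v \in W^{1,2}_\sigma(\Omega,\R^3)$, the Foias--Temam characterisation (Theorem \ref{Foias-Temam theorem}) gives $\|v\|_{W^{1,2}(\Omega)} \lesssim_\Omega \|v\|_{L^2(\Omega)} + \|\curl v\|_{L^2(\Omega)}$, so it suffices to show $\|v\|_{L^2(\Omega)} \lesssim_\Omega \|v\|_{W^{1,2}_\sigma}$. I would prove this by a compactness/contradiction argument: if it failed there would be $v_k \in W^{1,2}_\sigma(\Omega,\R^3)$ with $\|v_k\|_{L^2(\Omega)} = 1$ but $\|\curl v_k\|_{L^2(\Omega)} \to 0$ and $\langle v_k\cdot n, 1\rangle_{\Sigma_i} \to 0$ for every $i$. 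The reduction step bounds $(v_k)$ in $W^{1,2}(\Omega,\R^3)$, so along a subsequence $v_k \rightharpoonup v$ in $W^{1,2}(\Omega,\R^3)$ and $v_k \to v$ in $L^2(\Omega,\R^3)$ by Rellich--Kondrachov; hence $\|v\|_{L^2(\Omega)} = 1$, $\dive v = 0$, $v\cdot n|_\Gamma = 0$, and $\curl v = 0$ (as $\curl v_k \rightharpoonup \curl v$ while $\curl v_k \to 0$), so $v \in L^2_H(\Omega,\R^3)$. Weak continuity in $W^{1,2}(\Omega,\R^3)$ of the flux functionals then forces $\langle v\cdot n, 1\rangle_{\Sigma_i} = 0$ for all $i$, i.e.\ $v \in L^2_\Sigma(\Omega,\R^3)$. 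But $L^2_\Sigma(\Omega,\R^3) \cap L^2_H(\Omega,\R^3) = \{0\}$ by the direct sum in Theorem \ref{Foias-Temam decomposition theorem}, so $v = 0$, contradicting $\|v\|_{L^2(\Omega)} = 1$.

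The step I expect to be the crux is this compactness argument, and inside it the identification that the limit field $v$ lies simultaneously in $L^2_H(\Omega,\R^3)$ and in $L^2_\Sigma(\Omega,\R^3)$: the former is immediate from weak continuity of $\curl$, while the latter rests on the continuity, hence weak continuity, of the flux-through-cut functionals on $W^{1,2}(\Omega,\R^3)$ (the same small trace fact used for the upper bound). Once $v$ is pinned into the trivial intersection $L^2_\Sigma \cap L^2_H$, the contradiction is automatic. An essentially equivalent alternative would be to split $v = v_\Sigma + v_H$ via Theorem \ref{Foias-Temam decomposition theorem}, note that $\|v\|_{W^{1,2}_\sigma}^2 = \|\curl v_\Sigma\|_{L^2(\Omega)}^2 + \sum_{i=1}^N \gamma_i |\langle v_H\cdot n, 1\rangle_{\Sigma_i}|^2$, and handle the two summands separately — the first via the Poincar\'e-type bound $\|v_\Sigma\|_{W^{1,2}(\Omega)} \lesssim_\Omega \|\curl v_\Sigma\|_{L^2(\Omega)}$ on $L^2_\Sigma(\Omega,\R^3) \cap W^{1,2}(\Omega,\R^3)$, the second using that $L^2_H(\Omega,\R^3)$ is finite-dimensional and the flux map $v_H \mapsto (\langle v_H\cdot n, 1\rangle_{\Sigma_i})_{i=1}^N$ is injective on it (again by Theorem \ref{Foias-Temam decomposition theorem}). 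This variant hides the same compactness input, so I would favour the direct argument above.
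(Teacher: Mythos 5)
Your proof is correct, and it is genuinely different in character from what the paper does: the paper gives no argument for Lemma \ref{Equivalent inner product} at all, importing it as ``essentially a special case'' of ~\cite[Corollary 3.16]{ABDG}, whereas you supply a self-contained proof. Your two ingredients are the right ones: Theorem \ref{Foias-Temam theorem} (with $\dive v=0$ and $v\cdot n|_\Gamma=0$) reduces the lower bound to $\|v\|_{L^2(\Omega)}\lesssim_\Omega\|\curl v\|_{L^2(\Omega)}+\sum_{i=1}^N\abs{\langle v\cdot n,1\rangle_{\Sigma_i}}$, and the Rellich--Kondrachov contradiction argument pins the limit field into $L^2_\Sigma(\Omega,\R^3)\cap L^2_H(\Omega,\R^3)=\{0\}$ via Theorem \ref{Foias-Temam decomposition theorem}; this is essentially the Peetre-type compactness scheme that underlies the cited result, so nothing is circular. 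Two points are worth making explicit if you write it up: (a) the boundedness, hence weak continuity, of the flux functionals $v\mapsto\langle v\cdot n,1\rangle_{\Sigma_i}$ on $W^{1,2}(\Omega,\R^3)$ rests on a trace estimate into $L^2(\Sigma_i)$, which uses that each $\overline{\Sigma_i}$ is a compact piece of the smooth manifold $\mathscr{M}_i$ from Assumption \ref{Assumption 2 on Omega} (extend to $W^{1,2}(\R^3)$ and trace onto the surface), and one should note that for $W^{1,2}$ fields this surface integral agrees with the duality definition of the flux implicit in Theorem \ref{Foias-Temam decomposition theorem}, so that vanishing fluxes really do place the limit in $L^2_\Sigma(\Omega,\R^3)$; (b) your argument proves equivalence for arbitrary fixed constants $\gamma_i>0$, the normalisation $\|h_i\|_{W^{1,2}_\sigma}=1$ being irrelevant to the equivalence --- which is as it should be. Your alternative route via the splitting $v=v_\Sigma+v_H$, a Poincar\'e-type inequality on $W^{1,2}(\Omega,\R^3)\cap L^2_\Sigma(\Omega,\R^3)$ and injectivity of the flux map on the finite-dimensional space $L^2_H(\Omega,\R^3)$, is closer to how ~\cite{ABDG} organise their proof; as you say, it hides the same compactness input, so the direct argument is a reasonable choice.
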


Lemma \ref{Equivalent inner product} has the following consequence (~\cite[Lemme II.6]{Dominguez}).

\begin{lem} \label{Orthogonal decomposition lemma}
The vector spaces $W^{1,2}_\Sigma(\Omega,\R^3) \defeq W^{1,2}_\sigma(\Omega,\R^3) \cap L^2_\Sigma(\Omega,\R^3)$ and $W^{1,2}_H(\Omega,\R^3) \defeq W^{1,2}_\sigma(\Omega,\R^3) \cap L^2_H(\Omega,\R^3)$ satisfy
\begin{equation} \label{Orthogonal decomposition of Sobolev space}
W^{1,2}_\sigma(\Omega,\R^3) = W^{1,2}_\Sigma(\Omega,\R^3) \oplus W^{1,2}_H(\Omega,\R^3).
\end{equation}
Furthermore, $W^{1,2}_\Sigma(\Omega,\R^3)$ is dense in $L^2_\Sigma(\Omega,\R^3)$.
\end{lem}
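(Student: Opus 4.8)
The plan is to read off \eqref{Orthogonal decomposition of Sobolev space} from the Foias--Temam decomposition $L^2_\sigma(\Omega,\R^3) = L^2_\Sigma(\Omega,\R^3) \oplus L^2_H(\Omega,\R^3)$ of Theorem \ref{Foias-Temam decomposition theorem}, the only extra ingredient being the inclusion $L^2_H(\Omega,\R^3) \subset W^{1,2}_\sigma(\Omega,\R^3)$ furnished by Theorem \ref{Foias-Temam theorem}.

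First I would show that the two subspaces span $W^{1,2}_\sigma(\Omega,\R^3)$. Given $w \in W^{1,2}_\sigma(\Omega,\R^3)$, write $w = w_\Sigma + w_H$ with $w_\Sigma = P_\Sigma w$ and $w_H = P_H w$. Since $w_H \in L^2_H(\Omega,\R^3) \subset W^{1,2}_\sigma(\Omega,\R^3)$, we also get $w_\Sigma = w - w_H \in W^{1,2}_\sigma(\Omega,\R^3)$, and $w_\Sigma \in L^2_\Sigma(\Omega,\R^3)$ by definition of $P_\Sigma$; hence $w_\Sigma \in W^{1,2}_\Sigma(\Omega,\R^3)$ and $w_H \in W^{1,2}_H(\Omega,\R^3)$. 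Next I would verify that the sum is orthogonal for the inner product of Lemma \ref{Equivalent inner product}: if $v \in W^{1,2}_\Sigma(\Omega,\R^3)$ and $w \in W^{1,2}_H(\Omega,\R^3)$, then $\curl w = 0$ annihilates the first term of $(v,w)_{W^{1,2}_\sigma}$, while $v \in L^2_\Sigma(\Omega,\R^3)$ forces $\langle v \cdot n, 1 \rangle_{\Sigma_i} = 0$ for $i = 1,\ldots,N$ and annihilates the second term, so $(v,w)_{W^{1,2}_\sigma} = 0$. In particular $W^{1,2}_\Sigma(\Omega,\R^3) \cap W^{1,2}_H(\Omega,\R^3) = \{0\}$, so \eqref{Orthogonal decomposition of Sobolev space} is an orthogonal direct sum.

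For the density statement I would use that $C^\infty_{c,\sigma}(\Omega,\R^3)$ is dense in $L^2_\sigma(\Omega,\R^3)$ and $C^\infty_{c,\sigma}(\Omega,\R^3) \subset W^{1,2}_\sigma(\Omega,\R^3)$. Given $v \in L^2_\Sigma(\Omega,\R^3)$, pick $v_k \in W^{1,2}_\sigma(\Omega,\R^3)$ with $v_k \to v$ in $L^2(\Omega,\R^3)$. The projection $P_\Sigma \colon L^2_\sigma(\Omega,\R^3) \to L^2_\Sigma(\Omega,\R^3)$ is bounded (the decomposition of Theorem \ref{Foias-Temam decomposition theorem} is a topological direct sum of closed subspaces, $L^2_H(\Omega,\R^3)$ being finite-dimensional), so $P_\Sigma v_k \to P_\Sigma v = v$ in $L^2(\Omega,\R^3)$; by the first step $P_\Sigma v_k \in W^{1,2}_\Sigma(\Omega,\R^3)$, so $W^{1,2}_\Sigma(\Omega,\R^3)$ is dense in $L^2_\Sigma(\Omega,\R^3)$.

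There is no serious obstacle here: the argument is a short manipulation of orthogonal projections once $L^2_H(\Omega,\R^3) \subset W^{1,2}_\sigma(\Omega,\R^3)$ is available. The one point worth flagging is that this inclusion is exactly where the regularity hypothesis enters — it requires $\Gamma$ to be $\mathscr{C}^{1,1}$ and genuinely fails for general Lipschitz boundaries — so the whole statement rests on Theorem \ref{Foias-Temam theorem}.
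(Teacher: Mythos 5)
Your proposal is correct and follows essentially the same route as the paper: both reduce \eqref{Orthogonal decomposition of Sobolev space} to the inclusion $L^2_H(\Omega,\R^3) \subset W^{1,2}_\sigma(\Omega,\R^3)$ furnished by Theorem \ref{Foias-Temam theorem}, and both prove density by approximating $v \in L^2_\Sigma(\Omega,\R^3)$ with $C^\infty_{c,\sigma}$ fields and applying the $L^2$-bounded projection $P_\Sigma$. Your extra verification of orthogonality in the inner product of Lemma \ref{Equivalent inner product} is a harmless addition the paper leaves implicit.
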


\begin{proof}
In the proof of \eqref{Orthogonal decomposition of Sobolev space} the only non-trivial condition to check is that when $w \in W^{1,2}_\sigma(\Omega,\R^3)$, we have $w_\Sigma \defeq P_\Sigma w \in W^{1,2}_\Sigma(\Omega,\R^3)$ and $w_H \defeq P_H w \in W^{1,2}_H(\Omega,\R^3)$. Note that Theorem \ref{Foias-Temam theorem} gives $w_H \in W^{1,2}_H(\Omega,\R^3)$, which immediately implies $w_\Sigma \in W^{1,2}_\Sigma(\Omega,\R^3)$. Furthermore, the projection $P_\Sigma \colon L^2_\sigma(\Omega,\R^3) \to L^2_\Sigma(\Omega,\R^3)$ is also a bounded operator from $W^{1,2}_\sigma(\Omega,\R^3)$ into $W^{1,2}_\Sigma(\Omega,\R^3)$.

Let now $f \in L^2_\Sigma(\Omega,\R^3)$ and choose mappings $\psi_j \in C_{c,\sigma}^\infty(\Omega,\R^3) \subset W^{1,2}_\sigma(\Omega,\R^3)$ such that $\|\psi_j - f\|_{L^2} \to 0$. Then $P_\Sigma \psi_j \in W^{1,2}_\Sigma(\Omega,\R^3)$ for all $j \in \N$ and $P_\Sigma \psi_j \to P_\Sigma f = f$ in $L^2(\Omega,\R^3)$.
\end{proof}

We use Lemma \ref{Orthogonal decomposition lemma} to find the basis of $L^2_\sigma(\Omega,\R^3)$ that is used to construct the magnetic field in Theorem \ref{Appendix theorem}. In the case of simply connected domains this is done by analysing the magnetostatic problem instead of the stationary Stokes problem (see ~\cite[pp. 67--69]{GLBL}). In multiply connected domains the situation is a bit more complicated because $W^{1,2}_H(\Omega,\R^3)$ is non-trivial.

\begin{lem} \label{Magnetostatic lemma}
$L^2_\sigma(\Omega,\R^3)$ has an orthonormal basis $\{w_j\}_{j \in \N}$ with the following properties: $\{w_1,\ldots,w_N\} = \{h_1,\ldots,h_N\}$ and for every $j \in \N$ there exists $\tilde{\lambda_j} > 0$ such that $w_j \in W^{1,2}_\sigma(\Omega,\R^3)$ satisfies
\begin{equation} \label{Magnetostatic equation}
(w_j, \psi)_{W^{1,2}_\sigma} = \tilde{\lambda}_j (w_j, \psi)_{L^2}
\end{equation}
for all $\psi \in W^{1,2}_\sigma(\Omega,\R^3)$. In particular, $\{w_j/\sqrt{\tilde{\lambda}_j}\}_{j \in \N}$ is an orthonormal system in $W^{1,2}_\sigma(\Omega,\R^3)$.
\end{lem}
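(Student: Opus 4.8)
The plan is to construct the basis in two pieces, corresponding to the decomposition $W^{1,2}_\sigma(\Omega,\R^3) = W^{1,2}_\Sigma(\Omega,\R^3) \oplus W^{1,2}_H(\Omega,\R^3)$ from Lemma \ref{Orthogonal decomposition lemma}. First, note that by Lemma \ref{Equivalent inner product} the inner product $(\cdot,\cdot)_{W^{1,2}_\sigma}$ is chosen precisely so that $\|h_i\|_{W^{1,2}_\sigma} = 1$; moreover $\{h_1,\ldots,h_N\}$ is already $L^2$-orthonormal, and since $\curl h_i = 0$ and the surface terms $\langle h_i \cdot n, 1\rangle_{\Sigma_k}$ are the only contribution, one checks directly that $\{h_1,\ldots,h_N\}$ is in fact orthonormal in the $(\cdot,\cdot)_{W^{1,2}_\sigma}$ inner product as well, and spans $W^{1,2}_H(\Omega,\R^3)$. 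These will be $w_1,\ldots,w_N$, and \eqref{Magnetostatic equation} holds for them with $\tilde\lambda_i = 1$ because $\curl w_i = 0$ kills the first term on both sides and the surface-term bilinear form agrees with $(w_i,\cdot)_{L^2}$ restricted to... — more carefully, since $W^{1,2}_H$ and $W^{1,2}_\Sigma$ are mutually orthogonal in \emph{both} inner products, testing \eqref{Magnetostatic equation} against $\psi \in W^{1,2}_H$ reduces to the finite-dimensional identity on $L^2_H$, while testing against $\psi \in W^{1,2}_\Sigma$ gives $0 = 0$.

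Second, on the closed subspace $W^{1,2}_\Sigma(\Omega,\R^3)$, which by Lemma \ref{Orthogonal decomposition lemma} is dense in the Hilbert space $L^2_\Sigma(\Omega,\R^3)$, I would apply the standard spectral theorem for the compact self-adjoint inverse of an elliptic operator. Concretely, the embedding $W^{1,2}_\Sigma(\Omega,\R^3) \hookrightarrow L^2_\Sigma(\Omega,\R^3)$ is compact by Rellich--Kondrachov (Lemma \ref{Equivalent inner product} guarantees the $W^{1,2}_\sigma$-norm controls the full $W^{1,2}$-norm), and on $W^{1,2}_\Sigma$ the surface terms in $(\cdot,\cdot)_{W^{1,2}_\sigma}$ vanish, so the inner product is simply $(v,w)\mapsto \int_\Omega \curl v \cdot \curl w$, which is coercive on $W^{1,2}_\Sigma$ again by Lemma \ref{Equivalent inner product}. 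The Lax--Milgram theorem then produces a bounded solution operator $L^2_\Sigma \to W^{1,2}_\Sigma$, which composed with the compact embedding is a compact, self-adjoint, positive operator on $L^2_\Sigma(\Omega,\R^3)$. Its eigenfunctions $\{w_{N+1}, w_{N+2}, \ldots\}$ form an orthonormal basis of $L^2_\Sigma(\Omega,\R^3)$, lie in $W^{1,2}_\Sigma(\Omega,\R^3)$, and satisfy $(w_j,\psi)_{W^{1,2}_\sigma} = \tilde\lambda_j (w_j,\psi)_{L^2}$ for all $\psi \in W^{1,2}_\Sigma$; extending the test function to $\psi \in W^{1,2}_\sigma$ is immediate since both sides vanish on the $W^{1,2}_H$ component by orthogonality.

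Concatenating, $\{w_j\}_{j\in\N} = \{h_1,\ldots,h_N\} \cup \{w_{N+1},\ldots\}$ is an orthonormal basis of $L^2_\sigma(\Omega,\R^3) = L^2_H \oplus L^2_\Sigma$, each $w_j \in W^{1,2}_\sigma(\Omega,\R^3)$, and \eqref{Magnetostatic equation} holds for every $\psi \in W^{1,2}_\sigma(\Omega,\R^3)$ by splitting $\psi = \psi_H + \psi_\Sigma$ and using the two cases above together with the bi-orthogonality of the decomposition. Finally, plugging $\psi = w_k$ into \eqref{Magnetostatic equation} gives $(w_j, w_k)_{W^{1,2}_\sigma} = \tilde\lambda_j \delta_{jk}$, so $\{w_j/\sqrt{\tilde\lambda_j}\}$ is $W^{1,2}_\sigma$-orthonormal. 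The main subtlety — and the only place genuine care is needed — is the interaction between the harmonic and non-harmonic pieces: one must verify that the chosen inner product makes the two subspaces orthogonal (so that a weak formulation posed on $W^{1,2}_\Sigma$ automatically upgrades to all of $W^{1,2}_\sigma$) and that the regularity/weak-continuity statement $(\curl w_j \times n)|_\Gamma = 0$ implicit in membership of $W^{1,2}_\sigma$ is consistent with the eigenvalue equation; this is exactly what Lemmas \ref{Equivalent inner product} and \ref{Orthogonal decomposition lemma} were set up to provide, so no new ideas are required beyond invoking them carefully.
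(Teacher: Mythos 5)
Your proposal takes essentially the same route as the paper: the harmonic fields $h_1,\ldots,h_N$ are taken as the first $N$ basis vectors with $\tilde{\lambda}_j = 1$ (using that they decouple from $W^{1,2}_\Sigma(\Omega,\R^3)$ in both inner products), and on $L^2_\Sigma(\Omega,\R^3)$ one diagonalizes a compact, self-adjoint, positive solution operator via the spectral theorem, with the density of $W^{1,2}_\Sigma(\Omega,\R^3)$ in $L^2_\Sigma(\Omega,\R^3)$ ensuring strict positivity and completeness, and orthogonality of the two components upgrading the eigenvalue identity from test functions in $W^{1,2}_\Sigma(\Omega,\R^3)$ to all of $W^{1,2}_\sigma(\Omega,\R^3)$. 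The only (cosmetic) difference is that you obtain the solution operator from Lax--Milgram, whereas the paper minimizes the quadratic functional $C \mapsto \tfrac12 \int_\Omega \abs{\curl C}^2 - \int_\Omega f \cdot C$; these produce the same operator.
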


\begin{proof}
Given $j \in \{1,\ldots,N\}$ we first check that \eqref{Magnetostatic equation} holds for $w_j = h_j$ with $\tilde{\lambda}_j = 1$. Let $\psi \in W^{1,2}_\sigma(\Omega,\R^3)$. Writing $\psi = \sum_{i=1}^N (\psi, h_i)_{L^2} h_i + \psi_\Sigma$ and using Lemma \ref{Orthogonal decomposition lemma}, \eqref{Magnetostatic equation} follows immediately.

Next we set out to find $w_j \in W^{1,2}_\Sigma(\Omega,\R^3)$ for every $j > N$. Given $f \in L^2_\Sigma(\Omega,\R^3)$ we define a quadratic functional $K \colon W^{1,2}_\Sigma(\Omega,\R^3) \to \R$ by
\[K(C) \defeq \frac{1}{2} \int_\Omega |\curl C(x)|^2 dx - \int_\Omega f(x) \cdot C(x) \, dx.\]
The quadratic part of $K$ is coercive, and therefore $K$ has a unique minimizer $w$ in $W^{1,2}_\Sigma(\Omega,\R^3)$. Thus
\begin{equation} \label{Definition of A2}
\int_\Omega \curl w(x) \cdot \curl \psi(x) \, dx = \int_\Omega f(x) \cdot \psi(x) \, dx
\end{equation}
for all $\psi \in W^{1,2}_\Sigma(\Omega,\R^3)$.

We define a bounded linear operator $\mathcal{A}_2 \colon L^2_\Sigma(\Omega,\R^3) \to L^2_\Sigma(\Omega,\R^3)$ by $\mathcal{A}_2 f \defeq w$. Our aim is to choose $w_j$, $j > N$, as eigenfunctions of $\mathcal{A}_2$. Since
\[\norm{w}_{W^{1,2}(\Omega)}^2 \lesssim_\Omega \norm{\curl w}_{L^2(\Omega)}^2 = \int_\Omega f \cdot w \le \norm{f}_{L^2(\Omega)} \norm{w}_{L^2(\Omega)}\]
for all $f \in L^2_\Sigma(\Omega,\R^3)$, the Rellich-Kondrachov Theorem implies that $\mathcal{A}_2$ is compact. In addition, $ \int_\Omega \mathcal{A}_2 f(x) \cdot g(x) \, dx = \int_\Omega \curl \mathcal{A}_2 f(x) \cdot \curl \mathcal{A}_2 g(x) \, dx = \int_\Omega f(x) \cdot \mathcal{A}_2 g(x) \, dx$ for all $f,g \in L^2_\Sigma(\Omega,\R^3)$ so that $\mathcal{A}_2$ is self-adjoint. Furthermore, $\mathcal{A}_2$ is a positive operator. Indeed, given $f \in L^2_\Sigma(\Omega,\R^3) \setminus \{0\}$ we write $\int_\Omega \mathcal{A}_2 f(x) \cdot f(x) \, dx = \norm{\curl \mathcal{A}_2 f}_{L^2(\Omega)}^2$. Now \eqref{Definition of A2} and the assumption $f \neq 0$ imply that $\curl \mathcal{A}_2 f \neq 0$: since $W^{1,2}_\Sigma(\Omega,\R^3)$ is dense in $L^2_\Sigma(\Omega,\R^3)$ by Lemma \ref{Orthogonal decomposition lemma}, we may choose $\psi \in W^{1,2}_\Sigma(\Omega,\R^3)$ such that $\int_\Omega \curl \mathcal{A}_2 f(x) \cdot \curl \psi(x) \, dx = \int_\Omega f(x) \cdot \psi(x) \, dx \neq 0$.

The Spectral Theorem for compact self-adjoint operators now yields an orthonormal basis $\{w_j\}_{j \in \N}$ of $L^2_\Sigma(\Omega,\R^3)$ and corresponding strictly positive eigenvalues $\mu_j \to 0$. We denote $\tilde{\lambda}_j \defeq 1/\mu_j \to \infty$. Equality \eqref{Magnetostatic equation} implies that the mappings $w_j/\sqrt{\tilde{\lambda_j}}$ form an orthonormal system in $W^{1,2}_\sigma(\Omega,\R^3)$.
\end{proof}

When $u \in W^{1,2}_{0,\sigma}(\Omega,\R^3)$, $b \in W^{1,2}_\sigma(\Omega,\R^3)$ and $n \in \N$, Lemmas \ref{Stokes lemma} and \ref{Magnetostatic lemma} allow us to write $P_n u$ and $Q_n b$ as
\[P_n u = \sum_{j=1}^n \left( u, \frac{v_j}{\sqrt{\lambda_j}} \right)_{W^{1,2}_{0,\sigma}} \frac{v_j}{\sqrt{\lambda_j}}, \quad Q_n b = \sum_{j=1}^n \left(b, \frac{w_j}{\sqrt{\tilde{\lambda}_j}} \right)_{W^{1,2}_\sigma} \frac{w_j}{\sqrt{\tilde{\lambda}_j}}.\]
This immediately implies the following result, which in turn yields the norm bound in \eqref{Uniform bound on operator norms}.

\begin{prop} \label{Proposition on projections}
Both of the linear operators $P_n \colon W^{1,2}_{0,\sigma}(\Omega,\R^3) \to W^{1,2}_{0,\sigma}(\Omega,\R^3)$ and $Q_n \colon W^{1,2}_\sigma(\Omega,\R^3) \to W^{1,2}_\sigma(\Omega,\R^3)$ are self-adjoint and bounded uniformly in $n$. 
\end{prop}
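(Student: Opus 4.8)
The plan is to recognise $P_n$ and $Q_n$, which are defined as the $L^2$-orthogonal projections onto $\operatorname{span}\{v_1,\dots,v_n\}$ and $\operatorname{span}\{w_1,\dots,w_n\}$, as \emph{orthogonal} projections with respect to the Hilbert space inner products of $W^{1,2}_{0,\sigma}(\Omega,\R^3)$ and $W^{1,2}_\sigma(\Omega,\R^3)$, and then to invoke the elementary facts that an orthogonal projection on a Hilbert space is self-adjoint and has operator norm at most one.

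First I would treat $P_n$. By Lemma \ref{Stokes lemma} the vectors $v_1/\sqrt{\lambda_1},\dots,v_n/\sqrt{\lambda_n}$ are orthonormal in $\big(W^{1,2}_{0,\sigma}(\Omega,\R^3),(\cdot,\cdot)_{W^{1,2}_{0,\sigma}}\big)$, and the eigenrelation $(v_j,\phi)_{W^{1,2}_{0,\sigma}}=\lambda_j(v_j,\phi)_{L^2}$ applied with $\phi=u$ shows that $(u,v_j)_{L^2}\,v_j=(u,v_j/\sqrt{\lambda_j})_{W^{1,2}_{0,\sigma}}\,v_j/\sqrt{\lambda_j}$ for every $u\in W^{1,2}_{0,\sigma}(\Omega,\R^3)$. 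Summing over $j=1,\dots,n$ gives exactly the identity displayed just before the proposition, namely $P_n u=\sum_{j=1}^n (u,v_j/\sqrt{\lambda_j})_{W^{1,2}_{0,\sigma}}\,v_j/\sqrt{\lambda_j}$, which is manifestly the $(\cdot,\cdot)_{W^{1,2}_{0,\sigma}}$-orthogonal projection onto $\operatorname{span}\{v_1,\dots,v_n\}$. Hence $P_n$ is self-adjoint on $W^{1,2}_{0,\sigma}(\Omega,\R^3)$ with $\norm{P_n}_{W^{1,2}_{0,\sigma}\to W^{1,2}_{0,\sigma}}\le 1$ for all $n$, and consequently $\norm{P_n^*}_{(W^{1,2}_{0,\sigma})^*\to(W^{1,2}_{0,\sigma})^*}=\norm{P_n}\le 1$.

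The case of $Q_n$ is identical once the right inner product is used. By Lemma \ref{Magnetostatic lemma} the vectors $w_1/\sqrt{\tilde\lambda_1},\dots,w_n/\sqrt{\tilde\lambda_n}$ are orthonormal in $W^{1,2}_\sigma(\Omega,\R^3)$ equipped with the inner product $(\cdot,\cdot)_{W^{1,2}_\sigma}$ of Lemma \ref{Equivalent inner product}, and \eqref{Magnetostatic equation} gives, exactly as above, $Q_n b=\sum_{j=1}^n (b,w_j/\sqrt{\tilde\lambda_j})_{W^{1,2}_\sigma}\,w_j/\sqrt{\tilde\lambda_j}$, the $(\cdot,\cdot)_{W^{1,2}_\sigma}$-orthogonal projection onto $\operatorname{span}\{w_1,\dots,w_n\}$. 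Thus $Q_n$ is self-adjoint and has $(\cdot,\cdot)_{W^{1,2}_\sigma}$-operator norm at most one; since by Lemma \ref{Equivalent inner product} the norm $\norm{\cdot}_{W^{1,2}_\sigma}$ is equivalent to the norm inherited from $W^{1,2}(\Omega,\R^3)$ with constants depending only on $\Omega$, this yields $\sup_n\norm{Q_n}_{W^{1,2}_\sigma\to W^{1,2}_\sigma}\lesssim_\Omega 1$, and correspondingly for $Q_n^*$. Together with the bound on $P_n^*$ this establishes \eqref{Uniform bound on operator norms}. I do not anticipate any genuine obstacle: the only points needing a moment's care are verifying that the $L^2$-orthogonal projections really do coincide with the Hilbert-space orthogonal projections in the $W^{1,2}$ inner products (immediate from the eigenrelations) and, for $Q_n$, tracking the harmless $\Omega$-dependent loss of constant when passing from $(\cdot,\cdot)_{W^{1,2}_\sigma}$ back to the standard $W^{1,2}$ norm.
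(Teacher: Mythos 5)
Your proposal is correct and follows essentially the same route as the paper: the displayed identity $P_n u=\sum_{j=1}^n (u,v_j/\sqrt{\lambda_j})_{W^{1,2}_{0,\sigma}}\,v_j/\sqrt{\lambda_j}$ (and its analogue for $Q_n$), which the paper derives from Lemmas \ref{Stokes lemma} and \ref{Magnetostatic lemma}, exhibits $P_n$ and $Q_n$ as orthogonal projections in the $W^{1,2}_{0,\sigma}$ and $W^{1,2}_\sigma$ inner products, whence self-adjointness and uniform boundedness (via the norm equivalence of Lemma \ref{Equivalent inner product} for $Q_n$). Your verification of the eigenrelation step and the passage to \eqref{Uniform bound on operator norms} by duality match the paper's intended argument.
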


In the next subsection we give a solution of the Galerkin approximation equations.

\subsection{The Galerkin approximation}
In order to smoothen the exposition we work with the bases constructed in the previous subsection, although the following lemma holds for any orthonormal bases $\{v_j\}_{j \in \N}$ and $\{w_j\}_{j \in \N}$ of $L^2_\sigma(\Omega,\R^3)$ with $v_j \in W^{1,2}_{0,\sigma}(\Omega,\R^3)$ and $w_j \in W^{1,2}_\sigma(\Omega,\R^3)$.

\begin{lem} \label{Lemma on Galerkin approximations}
For every $n \in \N$, the Galerkin approximation has a solution of the form
\eqref{u_n and b_n} with the energy equality \eqref{Energy equality} holding for all $t \in [0,T)$.
\end{lem}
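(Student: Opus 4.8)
The plan is to turn the Galerkin system into a finite-dimensional ODE solved locally by Cauchy--Lipschitz, to derive the energy equality by testing with $u_n$ and $b_n$, and to use the resulting a priori bound to upgrade the local solution to a global one on $[0,T)$.

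First I would insert the ansatz \eqref{u_n and b_n} into the equations of Definition \ref{Definition of Galerkin approximations}. Because $\{v_j\}_{j\in\N}$ and $\{w_j\}_{j\in\N}$ are $L^2$-orthonormal, $\frac{d}{dt}(u_n,v_j)_{L^2}=\dot c_{nj}$ and $\frac{d}{dt}(b_n,w_j)_{L^2}=\dot d_{nj}$; the dissipative terms $\nu(\nabla u_n,\nabla v_j)_{L^2}$ and $\mu(\curl b_n,\curl w_j)_{L^2}$ are linear in $(c_n,d_n)$ with constant coefficients (by Lemmas \ref{Stokes lemma} and \ref{Magnetostatic lemma} these are in fact diagonal, equal to $\nu\lambda_j c_{nj}$ and $\mu\,\tilde\lambda_j d_{nj}$ up to the $\gamma_i$-corrections, though the argument needs only that the coefficients are fixed numbers); and the convective terms $\langle (u_n\cdot\nabla)u_n-(b_n\cdot\nabla)b_n,v_j\rangle$ and $\langle\curl(b_n\times u_n),w_j\rangle$ are quadratic polynomials in $(c_n,d_n)$, being finite sums of the trilinear forms $\int_\Omega(v_k\cdot\nabla)v_l\cdot v_j$, $\int_\Omega(w_k\cdot\nabla)w_l\cdot v_j$ and $\int_\Omega(w_k\times v_l)\cdot\curl w_j$ of the fixed basis fields. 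Hence the Galerkin approximation is an ODE system $\dot y=Ay+B(y,y)$ for $y=(c_n,d_n)\in\R^{2n}$, with $A$ linear and $B$ bilinear, so the right-hand side is polynomial and in particular locally Lipschitz. The Picard--Lindel\"of theorem yields a unique $C^1$ solution on a maximal interval $[0,T_n)$ with $T_n\le T$, with $u_n(\cdot,0)=P_n u_0$ and $b_n(\cdot,0)=Q_n b_0$ since $P_n u_0\in\operatorname{span}\{v_1,\ldots,v_n\}$ and $Q_n b_0\in\operatorname{span}\{w_1,\ldots,w_n\}$.

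Next I would establish \eqref{Energy equality} on $[0,T_n)$. Multiplying the $j$-th velocity equation by $c_{nj}(t)$ and the $j$-th induction equation by $d_{nj}(t)$ and summing over $j=1,\ldots,n$ is exactly testing with $\varphi=u_n\in W^{1,2}_{0,\sigma}(\Omega,\R^3)$ and $\theta=b_n\in W^{1,2}_\sigma(\Omega,\R^3)$; this is admissible with no regularity issue since these are finite linear combinations of the admissible test fields. The point is the cancellation of the nonlinear terms: using $\dive u_n=0$ and $u_n|_\Gamma=0$ one has $\langle(u_n\cdot\nabla)u_n,u_n\rangle=0$; using $\dive b_n=0$ and $b_n\cdot n|_\Gamma=0$ together with the identity $(b\cdot\nabla)b=(\curl b)\times b+\tfrac12\nabla|b|^2$ one has $\langle(b_n\cdot\nabla)b_n,u_n\rangle=\int_\Omega(b_n\times u_n)\cdot\curl b_n\,dx$; and Green's formula \eqref{Green's formula}, whose boundary term $\langle(b_n\times u_n)\times n,b_n\rangle_\Gamma$ vanishes because $u_n$ has zero trace, gives $\langle\curl(b_n\times u_n),b_n\rangle=\int_\Omega(b_n\times u_n)\cdot\curl b_n\,dx$. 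Hence the velocity-magnetic cross terms cancel exactly and, after integrating in time from $0$ to $t$, one obtains \eqref{Energy equality}, using $\|P_n u_0\|_{L^2}\le\|u_0\|_{L^2}$ and $\|Q_n b_0\|_{L^2}\le\|b_0\|_{L^2}$.

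Finally I would use \eqref{Energy equality} to conclude $T_n=T$: it gives $\sum_{j=1}^n (c_{nj}(t)^2+d_{nj}(t)^2)=\|u_n(t)\|_{L^2}^2+\|b_n(t)\|_{L^2}^2\le\|u_0\|_{L^2}^2+\|b_0\|_{L^2}^2$ for every $t\in[0,T_n)$, so the solution curve $y(t)$ remains in a fixed ball of $\R^{2n}$; the standard blow-up alternative for ODEs then forces the maximal solution to exist up to $T$, i.e. $T_n=T$, which completes the proof. The only delicate part is the bookkeeping of boundary terms and vector identities behind the nonlinear cancellation, for which I would invoke density of $C^\infty_{c,\sigma}(\Omega,\R^3)$ in $W^{1,2}_{0,\sigma}(\Omega,\R^3)$, the trace theorems of \textsection\ref{Traces of Sobolev functions}, and Green's formula \eqref{Green's formula}; everything else is routine finite-dimensional ODE theory.
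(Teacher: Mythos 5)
Your proposal is correct and follows essentially the same route as the paper: reduce to a polynomial ODE system for the coefficients, obtain the energy equality by testing with $u_n$ and $b_n$ (the paper encodes the same cancellations via the coefficient identities $\alpha_{jkl}=-\alpha_{lkj}$, $\beta_{jkl}=-\gamma_{lkj}$), and use the resulting bound to continue the solution to all of $[0,T)$. Your vector-calculus/Green's-formula justification of the nonlinear cancellation is just the field-level version of the paper's coefficient antisymmetries, so there is no substantive difference.
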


\begin{proof}
When $u_n$ and $b_n$ are of the form \eqref{u_n and b_n}, Lemmas \ref{Stokes lemma} and \ref{Magnetostatic lemma} imply that the Galerkin equations read as
\begin{align}
& \dot{c}_{nj}(t) - \nu \lambda_j c_{nj}(t) + \sum_{k,l=1}^n c_{nk}(t) c_{nl}(t) \alpha_{jkl} - \sum_{k,l=1}^n d_{nk}(t) d_{nl}(t) \beta_{jkl} = 0, \label{ODE 1} \\
& \dot{d}_{nj}(t) - \mu \tilde{\lambda}_j \chi_{j > N} d_{nj}(t) + \sum_{k,l=1}^n c_{nk}(t) d_{nl}(t) \gamma_{jkl} - \sum_{k,l=1}^n d_{nk}(t) c_{nl}(t) \delta_{jkl} = 0, \label{ODE 2} \\
& c_{nj}(0) = \int_\Omega u_0(x) \cdot v_j(x) \, dx, \quad d_{nj}(0) = \int_\Omega b_0(x) \cdot w_j(x) \, dx \label{ODE 3}
\end{align}
for $j = 1,\ldots,n$, where
\begin{align*}
& \alpha_{jkl} \defeq \int_\Omega (v_k(x) \cdot \nabla) v_l(x) \cdot v_j(x) \, dx, \quad \beta_{jkl} \defeq \int_\Omega (w_k(x) \cdot \nabla) w_l(x) \cdot v_j(x) \, dx \\
& \gamma_{jkl} \defeq \int_\Omega (w_k(x) \cdot \nabla) v_l(x) \cdot w_j(x) \, dx, \quad \delta_{jkl} \defeq \int_\Omega (v_k(x) \cdot \nabla) w_l(x) \cdot w_j(x) \, dx.
\end{align*}
Note that \eqref{ODE 1}--\eqref{ODE 3} is an initial value problem for a system of $2n$ ODE's on the $2n$ functions $c_{nj}, d_{nj}$, and by standard theory of ODE's there exists $T_n > 0$ and a solution $c_{n1},\ldots,d_{nn} \in C^\infty([0,T_n))$. Note also that
\begin{equation} \label{Values of coefficients}
\alpha_{jkl} = - \alpha_{lkj}, \quad \beta_{jkl} = -\gamma_{lkj}, \qquad \delta_{jkl} = - \delta_{lkj} = 0.
\end{equation}
The energy equality can be written as
\[\begin{array}{lcl}
& & \displaystyle \sum_{j=1}^n c_{nj}(t)^2 + \sum_{j=1}^n d_{nj}(t)^2 + 2 \sum_{j=1}^n \int_0^t (\nu \lambda_j c_{nj}(\tau)^2 + \mu \tilde{\lambda}_j \chi_{j > N} d_{nj}(\tau)^2) \, d\tau \\
&=& \displaystyle \sum_{j=1}^n c_{nj}(0)^2 +  \sum_{j=1}^n d_{nj}(0)^2
\end{array}\]
and is proved by multiplying \eqref{ODE 1} by $c_{nj}(t)$ and \eqref{ODE 2} by $d_{nj}(t)$, summing in $j$, integrating in time and using \eqref{Values of coefficients}. The energy equality allows us to continue the solution to \eqref{ODE 1}--\eqref{ODE 3} to the whole interval $[0,T)$.
\end{proof}

\subsection{Passing to the limit} \label{Passing to the limit}
The Leray-Hopf solution $(u,b)$ of \eqref{Resistive MHD}--\eqref{Resistive MHD6} will be obtained as a strong $L^2$ limit of $(u_n,b_n)$ by using the Aubin-Lions Lemma. To that end we prove norm bounds on $(u_n,b_n)$.

\begin{lem} \label{Lemma for Aubin-Lions}
There exists $C > 0$ such that
\begin{align*}
& \norm{u_n}_{L^\infty(0,T;L^2(\Omega))} + \norm{u_n}_{L^2(0,T;W^{1,2}(\Omega))} + \norm{\partial_t u_n}_{L^{4/3}(0,T;(W^{1,2}_{0,\sigma}(\Omega))^*)} \le C, \\
& \norm{b_n}_{L^\infty(0,T;L^2(\Omega))} + \norm{b_n}_{L^2(0,T;W^{1,2}(\Omega))} + \norm{\partial_t b_n}_{L^{4/3}(0,T;(W^{1,2}_\sigma(\Omega))^*)} \le C
\end{align*}
for all $n \in \N$.
\end{lem}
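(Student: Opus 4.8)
The plan is to derive the three norm bounds for $u_n$ (the bounds for $b_n$ are entirely analogous) directly from the energy equality \eqref{Energy equality}, the uniform operator bound \eqref{Uniform bound on operator norms} from Proposition \ref{Proposition on projections}, and the interpolation estimates of Lemma \ref{Interpolation lemma}. First I would observe that the right-hand side of \eqref{Energy equality} is bounded by $\frac12(\norm{u_0}_{L^2(\Omega)}^2+\norm{b_0}_{L^2(\Omega)}^2)$, since $P_n$ and $Q_n$ are $L^2$-orthogonal projections; hence the full left-hand side of \eqref{Energy equality} is bounded uniformly in $n$ and $t$. Dropping the time-integral term gives the uniform bound $\norm{u_n}_{L^\infty(0,T;L^2(\Omega))}+\norm{b_n}_{L^\infty(0,T;L^2(\Omega))}\le C$. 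Dropping instead the pointwise-in-time term and using the coercivity of $\nu\norm{\nabla u_n}_{L^2}^2$ together with the equivalence of $\norm{\curl\cdot}_{L^2}$ (plus the flux terms) with the full $W^{1,2}$-norm on $W^{1,2}_\sigma$ from Lemma \ref{Equivalent inner product} and the Poincar\'{e} inequality on $W^{1,2}_{0,\sigma}$, we get $\norm{u_n}_{L^2(0,T;W^{1,2}(\Omega))}+\norm{b_n}_{L^2(0,T;W^{1,2}(\Omega))}\le C$ as well (with $C$ depending on $\nu,\mu,T,\Omega$ and the data, which is allowed here since $\nu,\mu$ are fixed within each Galerkin scheme).

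The substantive step is the bound on the time derivatives. I would test the condensed equation \eqref{Galerkin abridged 1} against an arbitrary $\varphi\in W^{1,2}_{0,\sigma}(\Omega,\R^3)$ and estimate each term. The linear term contributes $\nu|\langle\Lambda_1 u_n,P_n\varphi\rangle|\le\nu\norm{\nabla u_n}_{L^2}\norm{\nabla P_n\varphi}_{L^2}\lesssim\norm{u_n}_{W^{1,2}}\norm{\varphi}_{W^{1,2}_{0,\sigma}}$ using \eqref{Uniform bound on operator norms}, and this lies in $L^2(0,T)$ as a function of $t$, hence in $L^{4/3}(0,T)$. For the nonlinear term I would rewrite $(u_n\cdot\nabla)u_n-(b_n\cdot\nabla)b_n=\dive(u_n\otimes u_n-b_n\otimes b_n)$ in the distributional sense, so that after integration by parts the pairing against $P_n\varphi$ becomes $\int_\Omega(u_n\otimes u_n-b_n\otimes b_n):\nabla P_n\varphi$, bounded by $\norm{u_n\otimes u_n-b_n\otimes b_n}_{L^2(\Omega)}\norm{\nabla P_n\varphi}_{L^2}\lesssim(\norm{u_n\otimes u_n}_{L^2}+\norm{b_n\otimes b_n}_{L^2})\norm{\varphi}_{W^{1,2}_{0,\sigma}}$, again using \eqref{Uniform bound on operator norms}. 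The third inequality of Lemma \ref{Interpolation lemma} with $v=w=u_n$ (and $v=w=b_n$) gives precisely $\norm{u_n\otimes u_n}_{L^{4/3}(0,T;L^2(\Omega))}\lesssim_\Omega\norm{u_n}_{L^2(0,T;W^{1,2})}^{3/2}\norm{u_n}_{L^\infty(0,T;L^2)}^{1/2}\le C$ from the two bounds just established. Combining, $\norm{\partial_t u_n}_{L^{4/3}(0,T;(W^{1,2}_{0,\sigma})^*)}\le C$. The identical argument with \eqref{Galerkin abridged 2}, writing $\curl(b_n\times u_n)$ tested against $Q_n\theta$ and integrating the curl onto $Q_n\theta$ so that the pairing becomes $\int_\Omega(b_n\times u_n)\cdot\curl Q_n\theta$, bounded via $\norm{b_n\otimes u_n}_{L^{4/3}(0,T;L^2)}\le C$ (the same interpolation inequality with $v=b_n$, $w=u_n$), gives $\norm{\partial_t b_n}_{L^{4/3}(0,T;(W^{1,2}_\sigma)^*)}\le C$; note that the resistive term $\mu Q_n^*\Lambda_2 b_n$ is handled exactly as the viscous term above.

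The main obstacle I anticipate is purely bookkeeping rather than conceptual: one must be careful that the duality pairings are taken against $P_n\varphi$ (resp.\ $Q_n\theta$) and not $\varphi$ itself, so the uniform operator-norm control \eqref{Uniform bound on operator norms} is genuinely needed to pass from $\norm{\nabla P_n\varphi}_{L^2}$ back to $\norm{\varphi}_{W^{1,2}_{0,\sigma}}$; without the judicious choice of bases in \textsection\ref{The choice of bases} this step would fail. A secondary point worth stating cleanly is that the exponent $4/3$ is dictated exactly by the third estimate of Lemma \ref{Interpolation lemma}, which is the optimal integrability one gets for the quadratic terms from the energy bounds in 3D; this is why $4/3$ (rather than $2$) appears in the statement, and it is still more than enough for the Aubin-Lions Lemma \ref{Aubin-Lions lemma} to apply with $X=W^{1,2}_\sigma$, $Y=L^2_\sigma$, $Z=(W^{1,2}_\sigma)^*$.
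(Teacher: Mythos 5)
Your proposal is correct and follows essentially the same route as the paper: the energy equality gives the $L^\infty(0,T;L^2)$ and $L^2(0,T;W^{1,2})$ bounds, and the $L^{4/3}$ bounds on $\partial_t u_n$, $\partial_t b_n$ come from testing the Galerkin equations, using the uniform bounds on $P_n$, $Q_n$ from Proposition \ref{Proposition on projections} and the interpolation estimates of Lemma \ref{Interpolation lemma}. The only differences are cosmetic: the paper controls the harmonic contribution by bounding $\|\nabla b_{n,H}\|_{L^2}$ via the finite-dimensionality of $L^2_H(\Omega,\R^3)$, which is the same observation you need (but leave implicit) to bound the flux terms $\langle b_n \cdot n, 1\rangle_{\Sigma_i}$ in the equivalent norm of Lemma \ref{Equivalent inner product} by the already-established $L^\infty(0,T;L^2)$ estimate; and for the nonlinear terms the paper uses the first interpolation inequality ($L^4_t L^3_x$) together with H\"older in time, where you invoke the third ($L^{4/3}_t L^2_x$ for tensor products) --- both are valid and give the same exponent $4/3$.
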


\begin{proof}
First, since $u_n$ and $b_n$ satisfy the energy equality for every $n \in \N$, $\|P_n u_0 - u_0\|_{L^2} \to 0$ and $\|Q_n b_0 - b_0\|_{L^2} \to 0$, it follows that $\sup_{n \in \N} (\|u_n\|_{L^\infty(0,T;L^2(\Omega))} + \|b_n\|_{L^\infty(0,T;L^2(\Omega))} ) < \infty$. By another use of the energy equality and Lemma \ref{Equivalent inner product}, $|\nabla u_n|$ and $|\nabla b_{n,\Sigma}|$ are uniformly bounded in $L^2(0,T;L^2(\Omega))$. Furthermore,
\begin{align*}
\|\nabla b_{n,H}\|_{L^2(0,T;L^2(\Omega,\R^{3 \times 3}))}
&= \left\| \sum_{i=1}^{\min{n,N}} d_{nj} \nabla w_j \right\|_{L^2(0,T;L^2(\Omega,\R^{3 \times 3}))} \\
&\lesssim_\Omega \left\| \sum_{i=1}^{\min{n,N}} d_{nj} w_j \right\|_{L^2(0,T;L^2(\Omega,\R^{3 \times 3}))} \\
&= \|b_{n,H}\|_{L^2(0,T;L^2(\Omega,\R^{3 \times 3}))}
\end{align*}
for all $n \in \N$, and thus $\sup_{n \in \N} (\norm{u_n}_{L^2(0,T;W^{1,2}(\Omega))} + \norm{b_n}_{L^2(0,T;W^{1,2}(\Omega))}) < \infty$.

We now deal with $\partial_t b_n$, $\partial_t u_n$ being similar but slightly simpler. At a.e. $t \in [0,T)$ and for all $\theta \in W^{1,2}_\sigma(\Omega,\R^3)$ we write
\[\langle \partial_t b_n, \theta \rangle_{(W^{1,2}_\sigma(\Omega))^*-W^{1,2}_\sigma(\Omega)} + \langle \mu \Lambda_2 b_n + \curl(b_n \times u_n), Q_n \theta \rangle_{(W^{1,2}_\sigma(\Omega))^*-W^{1,2}_\sigma(\Omega)} = 0.\]
Proposition \ref{Proposition on projections} gives
\begin{align*}
\langle \mu \Lambda_2 b_n, Q_n \theta \rangle_{(W^{1,2}_\sigma(\Omega))^*-W^{1,2}_\sigma(\Omega)}
&= \mu (\curl b_n(\cdot,t), \curl Q_n \theta)_{L^2} \\
&\lesssim \mu \norm{b_n(\cdot,t)}_{W^{1,2}_\sigma(\Omega)} \norm{\theta}_{W^{1,2}_\sigma(\Omega)}.
\end{align*}
By using Proposition \ref{Proposition on projections} again, given $n \in \N$ and $\theta \in W^{1,2}_\sigma(\Omega,\R^3)$ we get
\begin{align*}
|\langle \curl(b_n \times u_n), Q_n \theta \rangle_{(W^{1,2}_\sigma(\Omega))^*-W^{1,2}_\sigma(\Omega)}|
&\le \|b_n(\cdot,t)\|_{L^6} \|u_n(\cdot,t)\|_{L^3} \|\curl Q_n \theta\|_{L^2} \\
&\lesssim_\Omega \|\nabla b_n(\cdot,t)\|_{L^2} \|u_n(\cdot,t)\|_{L^3} \|\theta\|_{W^{1,2}_\sigma},
\end{align*}
so that, by using the previous inequality and H\"{o}lder's inequality with exponents $3/2$ and $3$ in $t$,
\begin{align*}
   \|\curl(u_n \times b_n)\|_{L^{4/3}(0,T;(W^{1,2}_{0,\sigma}(\Omega,\R^3))^*)}
&\lesssim_\Omega \|\norm{\nabla b_n}_{L^2(\Omega,\R^3)} \norm{u_n}_{L^3(\Omega,\R^3)}\|_{L^{4/3}(0,T)} \\
&\le \norm{\nabla b_n}_{L^2(0,T;L^2(\Omega,\R^{3 \times 3}))} \norm{u_n}_{L^4(0,T;L^3(\Omega,\R^3))}
\end{align*}
which, when combined with Lemma \ref{Interpolation lemma}, completes the proof.
\end{proof}

The Aubin-Lions Lemma and interpolation give various convergence properties.

\begin{lem} \label{Lemma on weak limits}
There exist $u \in L^\infty(0,T;L^2_\sigma(\Omega,\R^3)) \cap L^2(0,T;W^{1,2}_{0,\sigma}(\Omega,\R^3))$ and $b \in L^\infty(0,T;L^2_\sigma(\Omega,\R^3)) \cap L^2(0,T;W^{1,2}_\sigma(\Omega,\R^3))$ such that, up to a subsequence, the following convergences hold:
\renewcommand{\labelenumi}{(\roman{enumi})}
\begin{enumerate}
\item $u_n \to u$ and $b_b \to b$ in $L^2(0,T;L^2_\sigma(\Omega,\R^3))$,

\item $u_n \rightharpoonup u$ in $L^2(0,T;W^{1,2}_{0,\sigma}(\Omega,\R^3))$ and $b_n \rightharpoonup b$ in $L^2(0,T;W^{1,2}_\sigma(\Omega,\R^3))$,

\item $\partial_t u_n \rightharpoonup \partial_t u$ in $L^{4/3}(0,T;(W^{1,2}_{0,\sigma}(\Omega,\R^3)^*)$ and furthermore $\partial_t b_n \rightharpoonup \partial_t b$ in $L^{4/3}(0,T;(W^{1,2}_\sigma(\Omega,\R^3)^*)$,

\item $u_n \otimes u_n \rightharpoonup u \otimes u$ and $b_n \otimes b_n \rightharpoonup b \otimes b$ in $L^{4/3}(0,T;L^2(\Omega,\R^{3 \times 3}))$, $b_n \times u_n \rightharpoonup b \times u$ in $L^{4/3}(0,T;L^2(\Omega,\R^3))$.
\end{enumerate}
\end{lem}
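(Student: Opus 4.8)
The plan is to combine the uniform bounds of Lemma~\ref{Lemma for Aubin-Lions} with weak compactness in reflexive Bochner spaces, the Aubin--Lions Lemma for the strong convergences in (i), and the strong $L^2$ convergence to identify the limits of the nonlinear quantities in (iv); at the end one passes to a single subsequence along which all of (i)--(iv) hold, obtained by a finite succession of extractions.

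\emph{Weak limits and (ii), (iii).} Since $L^2(0,T;W^{1,2}_{0,\sigma}(\Omega,\R^3))$ and $L^2(0,T;W^{1,2}_\sigma(\Omega,\R^3))$ are reflexive and $(u_n)$, $(b_n)$ are bounded in them by Lemma~\ref{Lemma for Aubin-Lions}, a subsequence satisfies $u_n \rightharpoonup u$ and $b_n \rightharpoonup b$ in these spaces, which is (ii) and shows $u \in L^2(0,T;W^{1,2}_{0,\sigma})$, $b \in L^2(0,T;W^{1,2}_\sigma)$. The uniform bound in $L^\infty(0,T;L^2_\sigma(\Omega,\R^3)) = (L^1(0,T;L^2_\sigma(\Omega,\R^3)))^*$ (recall $L^2_\sigma$ is separable) allows, after a further refinement, weak-$*$ convergence to a limit that must agree with $u$ (resp. $b$), so $u,b \in L^\infty(0,T;L^2_\sigma(\Omega,\R^3))$. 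Similarly $(\partial_t u_n)$ and $(\partial_t b_n)$ are bounded in the reflexive spaces $L^{4/3}(0,T;(W^{1,2}_{0,\sigma}(\Omega,\R^3))^*)$ and $L^{4/3}(0,T;(W^{1,2}_\sigma(\Omega,\R^3))^*)$, hence converge weakly along a subsequence; testing the identity $\int_0^T \eta'(t) \langle u_n(\cdot,t), \varphi \rangle \, dt = - \int_0^T \eta(t) \langle \partial_t u_n(\cdot,t), \varphi \rangle \, dt$ with $\eta \in C_c^\infty(0,T)$ and $\varphi \in W^{1,2}_{0,\sigma}(\Omega,\R^3)$ and letting $n \to \infty$ identifies the weak limits as $\partial_t u$ and $\partial_t b$, which is (iii).

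\emph{Strong convergence (i).} I would apply the Aubin--Lions Lemma (Lemma~\ref{Aubin-Lions lemma}) with $X = W^{1,2}_{0,\sigma}(\Omega,\R^3)$ (resp. $W^{1,2}_\sigma(\Omega,\R^3)$), $Y = L^2_\sigma(\Omega,\R^3)$, $Z = (W^{1,2}_{0,\sigma}(\Omega,\R^3))^*$ (resp. $(W^{1,2}_\sigma(\Omega,\R^3))^*$), $p = 2$ and $q = 4/3$: the embedding $X \hookrightarrow Y$ is compact by the Rellich--Kondrachov Theorem and $Y \hookrightarrow Z$ is continuous, so $\{v \in L^2(0,T;X) \colon \partial_t v \in L^{4/3}(0,T;Z)\}$ embeds compactly into $L^2(0,T;Y)$. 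By Lemma~\ref{Lemma for Aubin-Lions} the sequences $(u_n)$ and $(b_n)$ are bounded in this set, so a further subsequence converges strongly in $L^2(0,T;L^2_\sigma(\Omega,\R^3))$, and by uniqueness of limits the strong limits are $u$ and $b$; this is (i).

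\emph{Nonlinear terms (iv) and the main point.} The third inequality of Lemma~\ref{Interpolation lemma} (with $v = w$) shows that $(u_n \otimes u_n)$, $(b_n \otimes b_n)$ and $(b_n \times u_n)$ are bounded in $L^{4/3}(0,T;L^2(\Omega,\R^{3\times 3}))$, so along a further subsequence they converge weakly there. To identify the limits I would use (i): writing $u_n \otimes u_n - u \otimes u = (u_n - u) \otimes u_n + u \otimes (u_n - u)$ and estimating $\norm{(u_n - u) \otimes u_n}_{L^1(0,T;L^1(\Omega))} \le \norm{u_n - u}_{L^2(0,T;L^2(\Omega))} \norm{u_n}_{L^2(0,T;L^2(\Omega))} \to 0$, and analogously for the remaining summand and for $b_n \otimes b_n$ and $b_n \times u_n$ (using strong $L^2$-convergence of both $u_n$ and $b_n$), one gets convergence to $u \otimes u$, $b \otimes b$ and $b \times u$ in $L^1(0,T;L^1(\Omega))$, hence in $\D'$; by uniqueness of distributional limits the weak $L^{4/3}(0,T;L^2)$ limits coincide with these. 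The step needing the most care is verifying the hypotheses of the Aubin--Lions Lemma; this is exactly why Lemma~\ref{Lemma for Aubin-Lions} was set up to supply the time-derivative bounds with exponent $4/3$ in time, a choice Lemma~\ref{Aubin-Lions lemma} accommodates since it permits any derivative exponent $q \in [1,\infty]$.
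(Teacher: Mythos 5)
Your proposal is correct and follows essentially the same route as the paper: the uniform bounds of Lemma \ref{Lemma for Aubin-Lions} combined with the Aubin--Lions Lemma give (i)--(ii), weak(-$*$) compactness in the reflexive Bochner spaces gives (ii)--(iii) with the limits identified by testing, and the interpolation bounds of Lemma \ref{Interpolation lemma} together with the strong $L^2$ convergence from (i) identify the weak limits of $u_n \otimes u_n$, $b_n \otimes b_n$ and $b_n \times u_n$ in (iv). You merely spell out details (choice of $X,Y,Z$ in Aubin--Lions, the splitting $(u_n-u)\otimes u_n + u\otimes(u_n-u)$, uniqueness of distributional limits) that the paper's terse proof leaves implicit.
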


\begin{proof}
While (i) and (ii) follow immediately from the Aubin-Lions Lemma and Lemma \ref{Lemma for Aubin-Lions}, claims (iii)--(iv) follow from (i) and Lemma \ref{Interpolation lemma}. The claim $u,b \in L^\infty(0,T;L^2_\sigma(\Omega,\R^3))$ follows from the fact that up to a subsequence, $u_n \overset{*}{\rightharpoonup} u$ and $b_n \overset{*}{\rightharpoonup} b$ in $L^\infty(0,T;L^2_\sigma(\Omega,\R^3))$.
\end{proof}

We show that $(u,b)$ solves the equations \eqref{Resistive MHD}--\eqref{2D resistive MHD3} and \eqref{Resistive MHD5}--\eqref{Resistive MHD6}, and we refer to ~\cite{Gal} for the proof of the claims that $u,b \in C_w([0,T);L^2_\sigma(\Omega,\R^3))$ and that $u(\cdot,0) = u_0$ and $b(\cdot,0) = b_0$. The energy inequality is then obtained as a consequence.

\begin{lem} \label{Lemma on equivalent definitions of weak solution}
The mappings $u$ and $b$ mentioned in Lemma \ref{Lemma on weak limits} form a Leray-Hopf solution of \eqref{Resistive MHD}--\eqref{Resistive MHD6}.
\end{lem}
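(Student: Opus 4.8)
The plan is to pass to the limit $n\to\infty$ in the Galerkin equations of Definition \ref{Definition of Galerkin approximations}, along the subsequence furnished by Lemma \ref{Lemma on weak limits}, and then to deduce the energy inequality from the energy equality \eqref{Energy equality}.

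First I would fix $j\in\N$ and $\eta\in C_c^\infty(0,T)$. For $n\ge j$ one has $P_n v_j=v_j$ and $Q_n w_j=w_j$, so multiplying the two Galerkin identities by $\eta(t)$, integrating over $(0,T)$, and integrating by parts in space in the nonlinear terms (using $\dive u_n=\dive b_n=0$ together with $v_j|_\Gamma=0$, and writing the induction term as $\int_\Omega b_n\times u_n\cdot\curl w_j$) produces an identity in which every term is either linear in $(u_n,b_n,\partial_t u_n,\partial_t b_n)$ or bilinear in one of $u_n\otimes u_n$, $b_n\otimes b_n$, $b_n\times u_n$ paired against a fixed test field. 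By Lemma \ref{Lemma on weak limits}\,(ii)--(iv) each term converges: the linear ones by the weak convergences in $L^2(0,T;W^{1,2})$ and in the corresponding $L^{4/3}$-dual spaces, and the bilinear ones because $u_n\otimes u_n\rightharpoonup u\otimes u$, $b_n\otimes b_n\rightharpoonup b\otimes b$ and $b_n\times u_n\rightharpoonup b\times u$ in $L^{4/3}(0,T;L^2(\Omega))$, while $\eta\,\nabla v_j,\ \eta\,\nabla w_j\in L^4(0,T;L^2(\Omega))$. Undoing the spatial integration by parts yields the time-integrated forms of \eqref{Resistive MHD weak definition 1} and \eqref{Resistive MHD weak definition 2a} with test fields $\eta(t)v_j(x)$, respectively $\eta(t)w_j(x)$.

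Next I would upgrade to arbitrary test fields. By linearity the identities hold with $v_j$ (resp.\ $w_j$) replaced by any element of $\operatorname{span}\{v_k\}$ (resp.\ $\operatorname{span}\{w_k\}$); since each term of \eqref{Resistive MHD weak definition 1}--\eqref{Resistive MHD weak definition 2a} is, for a.e.\ $t$, continuous in the test field in the $W^{1,2}(\Omega)$ norm (the convective terms via $W^{1,2}(\Omega)\hookrightarrow L^6(\Omega)$ and Lemma \ref{Interpolation lemma}, the dissipative terms via Cauchy--Schwarz, and the $\partial_t$ terms by the definition of the dual norm), the density of $\operatorname{span}\{v_k\}$ in $W^{1,2}_{0,\sigma}(\Omega,\R^3)$ (Lemma \ref{Stokes lemma}) and of $\operatorname{span}\{w_k\}$ in $W^{1,2}_\sigma(\Omega,\R^3)$ (Lemmas \ref{Magnetostatic lemma} and \ref{Orthogonal decomposition lemma}) extends the identities to all admissible test fields. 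As $\eta$ is arbitrary and all the $t$-integrands lie in $L^1(0,T)$, the pointwise identities \eqref{Resistive MHD weak definition 1} and \eqref{Resistive MHD weak definition 2a} hold at a.e.\ $t\in[0,T)$; moreover $\partial_t u\in L^{4/3}(0,T;(W^{1,2}_{0,\sigma})^*)\subset L^1(0,T;(W^{1,2}_{0,\sigma})^*)$ and similarly for $\partial_t b$. The weak continuity $u,b\in C_w([0,T);L^2_\sigma(\Omega,\R^3))$ and the attainment of the initial data $u(\cdot,0)=u_0$, $b(\cdot,0)=b_0$ then follow as in \cite{Gal}, using $P_n u_0\to u_0$ and $Q_n b_0\to b_0$ in $L^2(\Omega,\R^3)$. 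Finally I would derive the energy inequality from \eqref{Energy equality} by fixing $t\in(0,T)$, passing to the limit along the subsequence, and invoking weak lower semicontinuity of $v\mapsto\|v\|_{L^2(\Omega)}^2$ at time $t$ (applied to the $C_w$-representatives) and of $w\mapsto\int_0^t\int_\Omega|\nabla w|^2$ and $w\mapsto\int_0^t\int_\Omega|\curl w|^2$ under weak $L^2(0,T;W^{1,2})$ convergence, together with $\|P_n u_0\|_{L^2}^2+\|Q_n b_0\|_{L^2}^2\to\|u_0\|_{L^2}^2+\|b_0\|_{L^2}^2$.

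I expect the main obstacle to be the passage to the limit in the nonlinear convective and induction terms; this is precisely what forces the Aubin--Lions argument behind Lemma \ref{Lemma on weak limits}, which upgrades the weak $L^2(0,T;W^{1,2})$ bounds to strong $L^2(0,T;L^2_\sigma)$ convergence and hence to the weak $L^{4/3}(0,T;L^2)$ convergence of the quadratic quantities. A secondary subtlety, specific to multiply connected $\Omega$, is that the test fields $w_k$ must have dense span in $W^{1,2}_\sigma(\Omega,\R^3)$ and not merely in $L^2_\sigma(\Omega,\R^3)$; this is exactly where the decomposition $W^{1,2}_\sigma=W^{1,2}_\Sigma\oplus W^{1,2}_H$ and the inclusion of $h_1,\dots,h_N$ among the $w_k$ (Lemmas \ref{Orthogonal decomposition lemma} and \ref{Magnetostatic lemma}) are used.
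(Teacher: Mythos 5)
Your proposal is correct and takes essentially the same route as the paper: pass to the limit in the Galerkin identities tested against $\eta(t)v_j$ and $\eta(t)w_j$ using Lemma \ref{Lemma on weak limits}, extend to arbitrary test fields (the paper does this via $P_k\varphi\rightharpoonup\varphi$ weakly in $W^{1,2}_{0,\sigma}(\Omega,\R^3)$ using Proposition \ref{Proposition on projections}, you via norm-density of the eigenfunction spans, which is also valid since the bases satisfy Lemmas \ref{Stokes lemma} and \ref{Magnetostatic lemma}), cite \cite{Gal} for $C_w$-continuity and attainment of the initial data, and deduce the energy inequality from \eqref{Energy equality}. The one step to tighten is the energy inequality at a \emph{fixed} $t$: weak lower semicontinuity of $v\mapsto\|v\|_{L^2(\Omega)}^2$ at time $t$ presupposes $u_n(\cdot,t)\rightharpoonup u(\cdot,t)$ and $b_n(\cdot,t)\rightharpoonup b(\cdot,t)$, which is not among the convergences you list; the paper instead uses the strong convergence in $L^2(0,T;L^2(\Omega,\R^3))$ to get, along a further subsequence, $u_n(\cdot,t)\to u(\cdot,t)$ and $b_n(\cdot,t)\to b(\cdot,t)$ at a.e.\ $t$, obtains the inequality at those times, and then extends to all $t$ using $u,b\in C_w([0,T);L^2_\sigma(\Omega,\R^3))$.
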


\begin{proof}
We first show that $u$ and $b$ satisfy \eqref{Resistive MHD weak definition 1} and \eqref{Resistive MHD weak definition 2a} a.t a.e. $t \in [0,T)$ for every $\varphi \in W^{1,2}_{0,\sigma}(\Omega,\R^3)$ and $\theta \in W^{1,2}_\sigma(\Omega,\R^3)$. Note that whenever $\eta \in C_c^\infty([0,T))$ and $k \in \N$, Lemmas \ref{Lemma on Galerkin approximations} and \ref{Lemma on weak limits} give
\begin{align*}
   0
&= - \eta(0) (P_n u_0,v_k)_{L^2} - \int_0^T \eta'(t) (u_n,v_k)_{L^2} \, dt \\
&- \int_0^T \eta(t) (u_n \otimes u_n - b_n \otimes b_n, \nabla v_k)_{L^2} \, dt + \nu \int_0^T \eta(t) (\nabla u_n, \nabla v_k)_{L^2} \, dt \\
&\to - \eta(0) (u_0,v_k)_{L^2} - \int_0^T \eta'(t) (u,v_k)_{L^2} \, dt \\
&- \int_0^T \eta(t) (u \otimes u - b \otimes b, \nabla v_k)_{L^2} \, dt + \nu \int_0^T \eta(t) (\nabla u, \nabla v_k)_{L^2} \, dt,
\end{align*}
Given any $\varphi \in W^{1,2}_{0,\sigma}(\Omega,\R^3)$ we can replace $v_k$ above by $\varphi_k \defeq P_k \varphi$ by taking linear combinations. Now $\norm{P_k \varphi - \varphi}_{L^2(\Omega,\R^3)} \to 0$ and $\sup_{k \in \N} \norm{P_k \varphi}_{W^{1,2}_{0,\sigma}(\Omega,\R^3)} < \infty$ imply that $P_k \varphi \rightharpoonup \varphi$ in $W^{1,2}_{0,\sigma}(\Omega,\R^3)$. We let $k \to \infty$ to obtain
\begin{align*}
0 &= - \eta(0) (u_0,\varphi)_{L^2} - \int_0^T \eta'(t) (u,\varphi)_{L^2} \, dt \\
&- \int_0^T \eta(t) (u \otimes u - b \otimes b, \nabla \varphi)_{L^2} \, dt + \nu \int_0^T \eta(t) (\nabla u, \nabla \varphi)_{L^2} \, dt,
\end{align*}
which in particular gives \eqref{Resistive MHD weak definition 1} at a.e. $t \in [0,T)$. Similarly, if $\theta \in W^{1,2}_\sigma(\Omega,\R^3)$, equation \eqref{Resistive MHD weak definition 2a} holds at a.e. $t \in [0,T)$.

The claims that $u,b \in C_w([0,T);L^2_\sigma(\Omega,\R^3))$ and that $u(\cdot,0) = u_0$ and $b(\cdot,0) = b_0$ can be proved by slightly modifying \cite[Lemmas 2.1--2.2]{Gal}. Since we have $u,b \in C_w([0,T);L^2_\sigma(\Omega,\R^3))$, it suffices to show the energy inequality at a.e. $t \in [0,T)$. Since $u_n \to u$ and $b_n \to b$ in $L^2(0,T;L^2(\Omega,\R^3))$, passing to a subsequence we get $u_n(\cdot,t) \to u(\cdot,t)$ and $b_n(\cdot,t) \to b(\cdot,t)$ in $L^2(\Omega,\R^3)$ at a.e. $t \in [0,T)$. At those times $t$ the energy inequality for $u$ and $b$ now follows from the energy  equality of $u_n$ and $b_n$.
\end{proof}

\bibliography{TAYLOR}

\providecommand{\bysame}{\leavevmode\hbox to3em{\hrulefill}\thinspace}
\providecommand{\MR}{\relax\ifhmode\unskip\space\fi MR }
\providecommand{\MRhref}[2]{%
  \href{http://www.ams.org/mathscinet-getitem?mr=#1}{#2}
}
\providecommand{\href}[2]{#2}
\begin{thebibliography}{10}

\bibitem{ABDG}
C.~Amrouche, C.~Bernardi, M.~Dauge, and V.~Girault, \emph{Vector potentials in
  three-dimensional non-smooth domains}, Math. Methods Appl. Sci. \textbf{21}
  (1998), no.~9, 823--864.

\bibitem{Berger}
M.~A. Berger, \emph{Rigorous new limits on magnetic helicity dissipation in the
  solar corona}, Geophysical \& Astrophysical Fluid Dynamics \textbf{30}
  (1984), no.~1-2, 79--104.

\bibitem{BF}
M.~A. Berger and G.~B. Field, \emph{The topological properties of magnetic
  helicity}, J. Fluid Mech. \textbf{147} (1984), 133--148.

\bibitem{BS}
W.~Borchers and H.~Sohr, \emph{On the equations {${\rm rot}\,{\bf v}={\bf g}$}
  and {${\rm div}\,{\bf u}=f$} with zero boundary conditions}, Hokkaido Math.
  J. \textbf{19} (1990), no.~1, 67--87.

\bibitem{BLFNL}
A.~C. Bronzi, M.~C. Lopes~Filho, and H.~J. Nussenzveig~Lopes, \emph{Wild
  solutions for 2{D} incompressible ideal flow with passive tracer}, Commun.
  Math. Sci. \textbf{13} (2015), no.~5, 1333--1343.

\bibitem{BCP}
M.~Brown, R.~C. Canfield, and A.A. Pevtsov, \emph{{Magnetic Helicity in Space
  and Laboratory Plasmas}}, Geophysical Monograph Series 111, AGU, 1999.

\bibitem{BDLSV}
T.~{Buckmaster}, C.~{De Lellis}, L.~{Sz{\'e}kelyhidi}, Jr., and V.~{Vicol},
  \emph{{Onsager's conjecture for admissible weak solutions}}, arXiv:1701.08678
  (2017).

\bibitem{CKS}
R.~E. Caflisch, I.~Klapper, and G.~Steele, \emph{{Remarks on Singularities,
  Dimension and Energy Dissipation for Ideal Hydrodynamics and MHD}}, Comm.
  Math. Phys. \textbf{184} (1997), 443--455.

\bibitem{CDTG}
J.~Cantarella, D.~DeTurck, and H.~Gluck, \emph{Vector calculus and the topology
  of domains in 3-space}, Amer. Math. Monthly \textbf{109} (2002), no.~5,
  409--442. \MR{1901496}

\bibitem{CMRR}
J.-Y. Chemin, D.~S. McCormick, J.~C. Robinson, and J.~L. Rodrigo, \emph{Local
  existence for the non-resistive {MHD} equations in {B}esov spaces}, Adv.
  Math. \textbf{286} (2016), 1--31. \MR{3415680}

\bibitem{CLMS93}
R.~Coifman, P.-L. Lions, Y.~Meyer, and S.~Semmes, \emph{{Compensated
  compactness and {H}ardy spaces}}, J. Math. Pures Appl. \textbf{72} (1993),
  no.~3, 247--286.

\bibitem{CET}
P.~Constantin, W.~E, and E.~S. Titi, \emph{Onsager's conjecture on the energy
  conservation for solutions of {E}uler's equation}, Comm. Math. Phys.
  \textbf{165} (1994), no.~1, 207--209.

\bibitem{DA}
V.~Dallas and A.~Alexakis, \emph{{The Signature Of Initial Conditions On
  Magnetohydrodynamic Turbulence}}, The Astrophysical Journal Letters
  \textbf{788} (2014), no.~2, 4 pp.

\bibitem{DLe}
J.~I. D\'{i}az and M.~B. Lerena, \emph{On the inviscid and non-resistive limit
  for the equations of incompressible magnetohydrodynamics}, Math. Models
  Methods Appl. Sci. \textbf{12} (2002), no.~10, 1401--1419.

\bibitem{Dominguez}
J.~M. de la~Rasilla Dominguez, \emph{{Etude des \'{e}quations de la
  magn\'{e}tohydrodynamique stationnaires et de leur approximation par
  \'{e}l\'{e}ments finis}}, Ph.D. thesis, Universit\'{e} de Paris VI, 1982.

\bibitem{DL}
G.~Duvaut and J.-L. Lions, \emph{In\'equations en thermo\'elasticit\'e et
  magn\'etohydrodynamique}, Arch. Rational Mech. Anal. \textbf{46} (1972),
  241--279.

\bibitem{Eyi}
G.~L. Eyink, \emph{Dissipative anomalies in singular {E}uler flows}, Phys. D
  \textbf{237} (2008), no.~14-17, 1956--1968.

\bibitem{FL}
D.~Faraco and S.~Lindberg, \emph{Magnetic helicity and subsolutions in ideal
  {MHD}}, arXiv:1801.04896v2 (2018).

\bibitem{FS72}
C.~Fefferman and E.~Stein, \emph{{$H^p$ spaces of several variables}}, Act
  Math. \textbf{129} (1972), no.~3-4, 137--193.

\bibitem{FMRR}
C.~L. Fefferman, D.~S. McCormick, J.~C. Robinson, and J.~L. Rodrigo,
  \emph{Higher order commutator estimates and local existence for the
  non-resistive {MHD} equations and related models}, J. Funct. Anal.
  \textbf{267} (2014), no.~4, 1035--1056. \MR{3217057}

\bibitem{FMRR2}
\bysame, \emph{Local existence for the non-resistive {MHD} equations in nearly
  optimal {S}obolev spaces}, Arch. Ration. Mech. Anal. \textbf{223} (2017),
  no.~2, 677--691. \MR{3590662}

\bibitem{FA}
J.~M. Finn and T.~M. Antonsen, \emph{Magnetic helicity: What is it and what is
  it good for?}, Comments Plasma Phys. Controlled Fusion \textbf{9} (1985),
  no.~3, 111--126.

\bibitem{FT}
C.~Foia\c{s} and R.~Temam, \emph{Remarques sur les \'equations de
  {N}avier-{S}tokes stationnaires et les ph\'enom\`enes successifs de
  bifurcation}, Ann. Scuola Norm. Sup. Pisa Cl. Sci. (4) \textbf{5} (1978),
  no.~1, 28--63.

\bibitem{Gal}
G.~P. Galdi, \emph{An introduction to the {N}avier-{S}tokes initial-boundary
  value problem}, Fundamental directions in mathematical fluid mechanics, Adv.
  Math. Fluid Mech., Birkh\"auser, Basel, 2000, pp.~1--70.

\bibitem{Gal2}
\bysame, \emph{An introduction to the mathematical theory of the
  {N}avier-{S}tokes equations}, second ed., Springer Monographs in Mathematics,
  Springer, New York, 2011, Steady-state problems.

\bibitem{GLBL}
J.-F. Gerbeau, C.~Le~Bris, and T.~Leli\`evre, \emph{Mathematical methods for
  the magnetohydrodynamics of liquid metals}, Numerical Mathematics and
  Scientific Computation, Oxford University Press, Oxford, 2006.

\bibitem{GR}
V.~Girault and P.-A. Raviart, \emph{Finite element methods for
  {N}avier-{S}tokes equations}, Springer Series in Computational Mathematics,
  vol.~5, Springer-Verlag, Berlin, 1986, Theory and algorithms.

\bibitem{HVNVW}
T.~Hyt\"onen, J.~van Neerven, M.~Veraar, and L.~Weis, \emph{Analysis in
  {B}anach spaces. {V}ol. {I}. {M}artingales and {L}ittlewood-{P}aley theory},
  Ergebnisse der Mathematik und ihrer Grenzgebiete. 3. Folge. A Series of
  Modern Surveys in Mathematics, vol.~63, Springer, Cham, 2016.

\bibitem{Ise16}
P.~Isett, \emph{{A Proof of Onsager's Conjecture}}, arXiv:1608.08301 (2016).

\bibitem{KL}
E.~Kang and J.~Lee, \emph{Remarks on the magnetic helicity and energy
  conservation for ideal magneto-hydrodynamics}, Nonlinearity \textbf{20}
  (2007), no.~11, 2681--2689.

\bibitem{Khe}
K.~Kherief, \emph{{Quelques propri\'{e}t\'{e}s des \'{e}quations de la
  magn\'{e}tohydrodynamique stationnaires et d'\'{e}volution}}, Ph.D. thesis,
  Universit\'{e} de Paris VII, 1984.

\bibitem{KY}
H.~Kozono and T.~Yanagisawa, \emph{{$L^r$}-variational inequality for vector
  fields and the {H}elmholtz-{W}eyl decomposition in bounded domains}, Indiana
  Univ. Math. J. \textbf{58} (2009), no.~4, 1853--1920.

\bibitem{LTY}
J.~Li, W.~Tan, and Z.~Yin, \emph{Local existence and uniqueness for the
  non-resistive {MHD} equations in homogeneous {B}esov spaces}, Adv. Math.
  \textbf{317} (2017), 786--798. \MR{3682684}

\bibitem{LBMM}
M.~F. Linkmann, A.~Berera, W.~D. McComb, and M.~E. McKay, \emph{Nonuniversality
  and finite dissipation in decaying magnetohydrodynamic turbulence}, Physical
  Review Letters \textbf{114} (2015), no.~23.

\bibitem{MY}
C.~Miao and B.~Yuan, \emph{Well-posedness of the ideal {MHD} system in critical
  {B}esov spaces}, Methods Appl. Anal. \textbf{13} (2006), no.~1, 89--106.
  \MR{2275873}

\bibitem{MP}
P.~D. Mininni and A.~Pouquet, \emph{Finite dissipation and intermittency in
  magnetohydrodynamics}, Physical Review E \textbf{80} (2009), no.~2.

\bibitem{Morrey}
C.~B. Morrey, Jr., \emph{Multiple integrals in the calculus of variations}, Die
  Grundlehren der mathematischen Wissenschaften, Band 130, Springer-Verlag New
  York, Inc., New York, 1966.

\bibitem{Necas}
J.~Ne\v{c}as, \emph{Direct methods in the theory of elliptic equations},
  Springer Monographs in Mathematics, Springer, Heidelberg, 2012, Translated
  from the 1967 French original by Gerard Tronel and Alois Kufner, Editorial
  coordination and preface by \v S\'arka Ne\v casov\'a and a contribution by
  Christian G. Simader. \MR{3014461}

\bibitem{Onsager}
L.~Onsager, \emph{Statistical hydrodynamics}, Nuovo Cimento (9) \textbf{6}
  (1949), no.~Supplemento, 2 (Convegno Internazionale di Meccanica Statistica),
  279--287.

\bibitem{RRS}
J.~C. Robinson, J.~L. Rodrigo, and W.~Sadowski, \emph{The three-dimensional
  {N}avier-{S}tokes equations}, Cambridge Studies in Advanced Mathematics, vol.
  157, Cambridge University Press, Cambridge, 2016, Classical theory.

\bibitem{Rou}
T.~Roub\'\i\v{c}ek, \emph{Nonlinear partial differential equations with
  applications}, second ed., International Series of Numerical Mathematics,
  vol. 153, Birkh\"auser/Springer Basel AG, Basel, 2013.

\bibitem{Schmidt}
P.~G. Schmidt, \emph{On a magnetohydrodynamic problem of {E}uler type}, J.
  Differential Equations \textbf{74} (1988), no.~2, 318--335. \MR{952901}

\bibitem{Secchi}
P.~Secchi, \emph{On the equations of ideal incompressible
  magnetohydrodynamics}, Rend. Sem. Mat. Univ. Padova \textbf{90} (1993),
  103--119. \MR{1257135}

\bibitem{ST}
M.~Sermange and R.~Temam, \emph{Some mathematical questions related to the
  {MHD} equations}, Comm. Pure Appl. Math. \textbf{36} (1983), no.~5, 635--664.

\bibitem{Tay}
J.~B. Taylor, \emph{Relaxation of toroidal plasma and generation of reverse
  magnetic fields}, Phys. Rev. Lett. \textbf{33} (1974), 1139--1141.

\bibitem{Tay2}
\bysame, \emph{Relaxation and magnetic reconnection in plasmas}, Rev. Mod.
  Phys. \textbf{58} (1986), 741--763.

\bibitem{Tem}
R.~Temam, \emph{Navier-{S}tokes equations}, revised ed., Studies in Mathematics
  and its Applications, vol.~2, North-Holland Publishing Co., Amsterdam-New
  York, 1979.

\bibitem{WWLW}
S.~Wang, B.~Wang, C.~Liu, and N.~Wang, \emph{Boundary layer problem and zero
  viscosity-diffusion limit of the incompressible magnetohydrodynamic system
  with no-slip boundary conditions}, J. Differential Equations \textbf{263}
  (2017), no.~8, 4723--4749.

\bibitem{Wu}
J.~Wu, \emph{Viscous and inviscid magnetohydrodynamics equations}, J. Anal.
  Math. \textbf{73} (1997), 251--265.

\bibitem{WW}
Z.~Wu and S.~Wang, \emph{Zero viscosity and diffusion vanishing limit of the
  incompressible magnetohydrodynamic system with perfectly conducting wall},
  Nonlinear Anal. Real World Appl. \textbf{24} (2015), 50--60.

\bibitem{XXW}
Y.~Xiao, Z.~Xin, and J.~Wu, \emph{Vanishing viscosity limit for the 3{D}
  magnetohydrodynamic system with a slip boundary condition}, J. Funct. Anal.
  \textbf{257} (2009), no.~11, 3375--3394.

\bibitem{YG}
Z.~Yoshida and Y.~Giga, \emph{Remarks on spectra of operator rot}, Math. Z.
  \textbf{204} (1990), no.~2, 235--245.

\bibitem{Zhang}
J.~Zhang, \emph{The inviscid and non-resistive limit in the {C}auchy problem
  for 3-{D} nonhomogeneous incompressible magneto-hydrodynamics}, Acta Math.
  Sci. Ser. B (Engl. Ed.) \textbf{31} (2011), no.~3, 882--896. \MR{2830530}

\end{thebibliography}
\bibliographystyle{amsplain}

\end{document}